\theoremstyle{plain}
    \newtheorem{thm}{Theorem}[section]
    \newtheorem{prop}[thm]{Proposition}
    \newtheorem{lemma}[thm]{Lemma}
    \newtheorem{corollary}[thm]{Corollary}
    \newtheorem*{thma}{Theorem A}
    \newtheorem*{thmb}{Theorem B}
    \newtheorem*{propa}{Proposition}
    \newtheorem{subsec}[thm]{}
\theoremstyle{definition}
    \newtheorem{defn}[thm]{Definition}
    \newtheorem{example}[thm]{Example}
    \newtheorem{notation}[thm]{Notation}
    \newtheorem{remark}[thm]{Remark}
    \newtheorem{ack}[thm]{Acknowledgements}
\theoremstyle{remark}
\newenvironment{myeq}[1][]
{\stepcounter{thm}\begin{equation}\tag{\thethm}{#1}}
{\end{equation}}
\newcommand{\supsect}[2]
{\vspace*{-5mm}\quad\\\begin{center}\textbf{{#1}}\vsm.~~~~\textbf{{#2}}\end{center}}
\newcommand{\mysdg}[2][]{\myeq[#1]\xymatrix@R=10pt@C=15pt{#2}}
\newcommand{\mydiagram}[2][]
{\stepcounter{thm}\begin{equation}
     \tag{\thethm}{#1}\vcenter{\xymatrix{#2}}\end{equation}}
\newcommand{\mysdiag}[2][]
{\stepcounter{thm}\begin{equation}
     \tag{\thethm}{#1}\vcenter{\xymatrix@R=20pt@C=15pt{#2}}\end{equation}}
\newcommand{\mydiagrm}[2][]
{\stepcounter{thm}\begin{equation}
    \tag{\thethm}{#1}\vcenter{\entrymodifiers={+++[o]}\xymatrix@R=25pt@C=25pt{#2}}\end{equation}}
\newenvironment{mysubsection}[2][]
{\begin{subsec}\begin{upshape}\begin{bfseries}{#2.}
\end{bfseries}{#1}}
{\end{upshape}\end{subsec}}
\newenvironment{mysubsect}[2][]
{\begin{subsec}\begin{upshape}\begin{bfseries}{#2\vsn.}
\end{bfseries}{#1}}
{\end{upshape}\end{subsec}}
\newcommand{\sect}{\setcounter{thm}{0}\section}
\newcommand{\wh}{\ -- \ }
\newcommand{\wwh}{-- \ }
\newcommand{\w}[2][ ]{\ \ensuremath{#2}{#1}\ }
\newcommand{\ww}[1]{\ \ensuremath{#1}}
\newcommand{\www}[2][ ]{\ensuremath{#2}{#1}\ }
\newcommand{\wwb}[1]{\ \ensuremath{(#1)}-}
\newcommand{\wb}[2][ ]{\ (\ensuremath{#2}){#1}\ }
\newcommand{\wref}[2][ ]{\ (\ref{#2}){#1}\ }
\newcommand{\wwref}[2][ ]{\ (\ref{#2}){#1}}
\newcommand{\hsp}{\hspace*{7 mm}}
\newcommand{\hs}{\hspace*{4 mm}}
\newcommand{\hsm}{\hspace*{2 mm}}
\newcommand{\vsn}{\vspace{1 mm}}
\newcommand{\vs}{\vspace{7 mm}}
\newcommand{\vsm}{\vspace{3 mm}}
\newcommand{\col}{\colon\thinspace}
\newcommand{\ssr}{\!\!\!\!\!\!\!\!\!\!\!\!\!\!}
\newcommand{\adj}[2]{\hspace*{3mm}\substack{{#1}\\ {\raisebox{-.5ex}{$\longrightarrow$} \hspace*{-7mm}\raisebox{.5ex}{$\longleftarrow$}}\\ {#2}}\hspace*{3mm}}
\newcommand{\hra}{\hookrightarrow}
\newcommand{\lto}{\longrightarrow}
\newcommand{\lra}[1]{\langle{#1}\rangle}
\newcommand{\rws}[1]{\overset{#1}{\rightarrow}}
\newcommand{\supar}[1]{\overset{#1}{-\!\!-\!\!\!\rightarrow}}
\newcommand{\lws}[1]{\overset{#1}{\leftarrow}}
\newcommand{\suparle}[1]{\overset{#1}{\leftarrow\!\!\!-\!\!-}}
\newcommand{\Rw}{\Rightarrow}
\newcommand{\up}[1]{^{(#1)}}
\newcommand{\lo}[1]{_{(#1)}}
\newcommand{\xrw}{\supar}
\newcommand{\xlw}{\suparle}
\newcommand{\tiund}[1]{{\times}_{#1}}
\newcommand{\ovl}[1]{\overline{#1}}
\newcommand{\bsim}{/\!\!\sim}
\newcommand{\pro}[3]{#1\tiund{#2}\overset{#3}{\cdots}\tiund{#2}#1}
\newcommand{\pros}[3]{#1\tiund{#2}\overset{#3}{\cdots}{#1}}
\newcommand{\tens}[2]{#1\,\tiund{#2}\,#1}
\newcommand{\doublerightarrow}{%
  \begin{tikzpicture}
  \node (a) at (0,0){\!\!\!};
  \node (b) at (1,0){\!\!\!};
  \path[->]
  ([yshift=3pt]a.east)edge node[below]{}([yshift=3pt]b.west)
  ([yshift=-3pt]a.east)edge node[above]{}([yshift=-3pt]b.west);
  \end{tikzpicture}
}
\newcommand{\toto}{\,{\doublerightarrow}\,}
\newcommand{\hxi}{\hat{\xi}}
\newcommand{\hphi}{\hat{\phi}}
\newcommand{\hpsi}{\hat{\psi}}
\newcommand{\var}{\varepsilon}
\newcommand{\wvar}{\widehat{\var}}
\newcommand{\ovar}{\ovl{\var}}
\newcommand{\seg}[1]{\mu\sb{#1}}
\newcommand{\iseg}[1]{\widehat{\mu}\sb{#1}}
\newcommand{\pis}{\pi_{\ast}}
\newcommand{\hp}[1]{\hat{\pi}\sb{#1}}
\newcommand{\hpv}[1]{\hat{\pi}^{v}_{#1}}
\newcommand{\hpi}{\hp{1}}
\newcommand{\hpu}[1]{\hpi\up{#1}}
\newcommand{\tX}{\widetilde{X}}
\newcommand{\tY}{\widetilde{Y}}
\newcommand{\bPz}[1]{\Pi\sb{0}\up{#1}}
\newcommand{\oPz}{\ovl{\Pi}\sb{0}}
\newcommand{\opz}[1]{\ovl{\pi}\sb{0}\up{#1}}
\newcommand{\ata}{A\tiund{B}A}
\newcommand{\Aut}{\operatorname{Aut}}
\newcommand{\cons}{\operatorname{c}\!}
\newcommand{\cat}{\operatorname{cat}}
\newcommand{\csk}[1]{\operatorname{csk}_{#1}}
\newcommand{\Disc}{\operatorname{Disc}}
\newcommand{\diz}{\Disc_{0}}
\newcommand{\Dec}{\operatorname{Dec}}
\newcommand{\Decp}{\Dec'}
\newcommand{\Dex}{\Dec\,X}
\newcommand{\Diag}{\operatorname{Diag}}
\newcommand{\Dlo}[1]{\Diag\lo{#1}}
\newcommand{\oDlo}[1]{\ovl{\Diag}\lo{#1}}
\newcommand{\Dup}[1]{\Diag\up{#1}}
\newcommand{\Glo}[1]{\gamma\lo{#1}}
\newcommand{\hd}{\operatorname{hd}}
\newcommand{\ho}{\operatorname{ho}}
\newcommand{\Hom}{\operatorname{Hom}}
\newcommand{\Id}{\operatorname{Id}}
\newcommand{\Obj}{\operatorname{Obj}\,}
\newcommand{\op}{\sp{\operatorname{op}}}
\newcommand{\ordin}{\operatorname{or}}
\newcommand{\ord}{\operatorname{Or}}
\newcommand{\ost}{\ord}
\newcommand{\osl}[1]{\ost\lo{#1}}
\newcommand{\ovt}[2]{\ovl{\ord}\sp{({#1})}\lo{#2}}
\newcommand{\pr}{\operatorname{pr}}
\newcommand{\ps}{\operatorname{ps}}
\newcommand{\sk}[1]{\operatorname{sk}_{#1}}
\newcommand{\red}{\sp{\operatorname{red}}}
\newcommand{\wg}{\operatorname{wg}}
\newcommand{\Twg}[1]{T\sp{\wg}\lo{#1}}
\newcommand{\Tps}[1]{T\sp{\ps}\lo{#1}}
\newcommand{\Tm}{\operatorname{Tm}}
\newcommand{\TTm}[1]{T\sp{\Tm}\lo{#1}}
\newcommand{\map}{\operatorname{map}}
\newcommand{\hy}[2]{{#1}({#2})}
\newcommand{\hyp}[2]{${#1}$-${#2}$}
\newcommand{\Cat}{\mbox{\sf Cat}}
\newcommand{\Catc}{\Cat(\cC)}
\newcommand{\Gp}{\mbox{\sf Gp}}
\newcommand{\Gpd}{\mbox{\sf Gpd}}
\newcommand{\Ghd}[1]{\Gpd\sb{\hd}\sp{#1}}
\newcommand{\Gpt}[1]{\Gpd\sb{\wg}\sp{#1}}
\newcommand{\Gptk}[1]{\Gpd\sb{\wg}\up{#1}}
\newcommand{\Po}[1]{P\sp{#1}}
\newcommand{\PsGpd}{\mbox{\sf PsGpd}}
\newcommand{\PsG}[1]{\PsGpd\sb{\wg}\sp{#1}}
\newcommand{\PhG}[1]{\PsGpd\sb{\hd}\sp{#1}}
\newcommand{\PsGk}[1]{\PsGpd\sb{\wg}\up{#1}}
\newcommand{\Set}{\mbox{\sf Set}}
\newcommand{\Tam}[1]{{\mbox{\sf Tam}}\sp{#1}}
\newcommand{\tTam}[1]{{\underline{\mbox{\sf Tam}}}\sp{#1}}
\newcommand{\Top}{\mbox{\sf Top}}
\newcommand{\Topa}{\Top\sb{\ast}}
\newcommand{\Track}[1]{{\mbox{\sf Track}}\sb{#1}}
\newcommand{\PS}[1]{\mbox{\sf P}\sp{#1}\sS}
\newcommand{\PT}[1]{{\mbox{\sf P}}\sp{#1}\Top}
\newcommand{\Pud}[2]{{\mbox{\sf P}}\sp{#1}\sb{#2}\Topa}
\newcommand{\VC}{\cV$-$\Cat}
\newcommand{\bDelta}{\Delta}
\newcommand{\Dop}{\bDelta\op}
\newcommand{\sC}[1]{[\Dop,\,{#1}]}
\newcommand{\sS}{\sC{\Set}}
\newcommand{\Dox}[1]{\bDelta\!^{{#1}\op}}
\newcommand{\sCx}[2]{[\Dox{#1},\,{#2}]}
\newcommand{\Sx}[1]{\sCx{#1}{\Set}}
\newcommand{\Dnop}{\Dox{n}}
\newcommand{\Snx}{\Sx{n}}
\newcommand{\va}{a}
\newcommand{\vb}{b}
\newcommand{\bk}{[k]}
\newcommand{\bm}{[m]}
\newcommand{\bn}{[n]}
\newcommand{\bze}{[0]}
\newcommand{\bon}{[1]}
\newcommand{\Gd}{G\sb{\bullet}}
\newcommand{\Pd}{P\sb{\bullet}}
\newcommand{\Wd}{W\sb{\bullet}}
\newcommand{\Wu}{W\sp{\bullet}}
\newcommand{\Xd}{X\sb{\bullet}}
\newcommand{\Xdd}{X\sb{\bullet\bullet}}
\newcommand{\Yd}{Y\sb{\bullet}}
\newcommand{\Zdd}[1]{Z\sb{\bullet\bullet{#1}}}
\newcommand{\FFp}{{\mathbb F}\sb{p}}
\newcommand{\bP}{{\mathbb P}}
\newcommand{\bN}{{\mathbb N}}
\newcommand{\cC}{\EuScript C}
\newcommand{\cD}{\EuScript D}
\newcommand{\Llo}[1]{L\sb{#1}}
\newcommand{\Ld}{\Llo{\bullet}}
\newcommand{\cN}{N}
\newcommand{\dN}{d\!N}
\newcommand{\diN}{B}
\newcommand{\odiN}[1]{\ovl{B}\up{#1}}
\newcommand{\TamR}{B}
\newcommand{\FTm}[1]{F\sb{\Tm}\sp{#1}}
\newcommand{\Nup}[1]{N\up{#1}}
\newcommand{\Nlo}[1]{N\lo{#1}}
\newcommand{\oNl}[1]{\ovl{N}\lo{#1}}
\newcommand{\cP}[1]{\mathcal{P}[{#1}]}
\newcommand{\Pup}[1]{P\up{#1}}
\newcommand{\Plo}[1]{P\lo{#1}}
\newcommand{\oPl}[1]{\ovl{P}\lo{#1}}
\newcommand{\Qlo}[1]{Q\lo{#1}}
\newcommand{\hQ}{\widehat{Q}}
\newcommand{\hQlo}[1]{\hQ\lo{#1}}
\newcommand{\oQl}[2]{\ovl{Q}\up{#1}\lo{#2}}
\newcommand{\cS}{\mathcal{S}}
\newcommand{\cV}{\EuScript V}
\newcommand{\cW}{\mathcal{W}}
\newcommand{\Wlo}[1]{\cW\lo{#1}}
\newcommand{\gup}[1]{\gamma\up{#1}}
\newcommand{\hgup}[1]{\ovl{\gamma}\up{#1}}
\title {Segal-type algebraic models of $n$-types}
\author{David Blanc}
\address{Department of Mathematics\\ University of Haifa\\ 31905 Haifa\\ Israel}
\email{blanc@math.haifa.ac.il}
\author{Simona Paoli}
\address{Department of Mathematics\\ University of Leicester\\ Leicester
  LE1 7RH, UK}
\email{sp424@le.ac.uk}
\date{\today}
\subjclass{55S45; 18G50, 18B40}
\keywords{$n$-type, $n$-fold groupoid, weakly globular, algebraic model}
\begin{document}
\begin{abstract}
For each \w[,]{n\geq 1} we introduce two new Segal-type models
of $n$-types of topological spaces: \emph{weakly globular $n$-fold
groupoids}, and a lax version of these.
We show that any $n$-type can be represented up to homotopy
by such models via an explicit \emph{algebraic fundamental $n$-fold
groupoid} functor. We compare these models to Tamsamani's
weak $n$-groupoids, and extract from them a model for
\wwb{k-1}connected $n$-types.
\end{abstract}

\maketitle

%
%
\section{Introduction and summary}
\label{cover}

Many homotopy invariants of a topological space $T$, such as its homotopy, homology,
and cohomology groups, are graded by dimension, so that we do not need to know
all of $T$ to determine \w[,]{\pi\sb{n}T} \w[,]{H\sb{n}T} or \w[,]{H\sp{n}(T;G)} but
only a skeleton or Postnikov section of $T$. Thus, for many purposes
a good first approximation to homotopy theory is the study of $n$-\emph{types}:
that is, spaces $T$ whose homotopy groups \w{\pi\sb{k}(T,t\sb{0})}
vanish for \w[.]{k>n}

One advantage of such approximations is that they have algebraic models:
the classical example is the homotopy category of connected $1$-types, which
is equivalent to the category of groups. More generally, all $1$-types
are modelled by groupoids via the fundamental groupoid functor
\w[.]{\hpi\col \Top\to\Gpd}

The arrows of \w{\hpi T} are homotopy classes of paths, so higher order
approximations should encode higher homotopies (see \cite{GrothP}), and thus
involve higher categorical structures.

Many such structures have been shown to model the homotopy category
\w{\ho\PT{n}} of $n$-types of topological spaces:
in the path-connected case, these include the \ww{\cat^{n}}-groups of
\cite{LodaS}, the crossed $n$-cubes of \cite{ESteH} and \cite{TPorN},
the $n$-hyper-crossed complexes of \cite{CCegG}, and the weakly globular
\ww{\cat^{n}}-groups of \cite{PaolW}.
Special models exist for \w[,]{n=2,3} starting with the crossed modules of
\cite{MWhitT}, and including the homotopy double groupoids of
\cite{BHKPortH}, the homotopy bigroupoids of \cite{HKKieHB},
the strict $2$-groupoids of \cite{MSvenA}, the double groupoids of
\cite{CHRemeD}, the double groupoids with connections of
\cite{BSpenD}, the Gray groupoids of \cite{LeroT,BergD,JTierA}, and
the quadratic modules of \cite{BauCH}.
In the general case, such models include Batanin's higher groupoids
(see \cite{BataMG,CisiB}), the $n$-hypergroupoids of \cite{GlennR},
and Tamsamani's weak $n$-groupoids (see \cite{TamsN,SimpH})\vsm.

In this paper we discuss three algebraically defined categories of
Segal-type objects which can be used to model all $n$-types of
topological spaces. All three are full subcategories of the category
\w{\sCx{n-1}{\Gpd}} of $(n-1)$-fold simplicial objects in groupoids.

\begin{mysubsection}{The three models}
\label{nthreem}
The first is the known category \w{\Tam{n}} of \emph{Tamsamani weak $n$-groupoids}.
The second is a new category \w{\Gpt{n}} of \emph{weakly globular $n$-fold
groupoids}. This is a full subcategory of the category \w{\Gpd\sp{n}} of
$n$-fold groupoids (iteratively defined as groupoids internal to
\w[).]{\Gpd\sp{n-1}} The third is another new category \w{\PsG{n}} of
\emph{weakly globular pseudo $n$-fold groupoids}.

To grasp the idea behind these notions, it is useful to consider another
higher categorical structure which embeds in all three of the above,
the category \w{n\mbox{-}\Gpd} of strict $n$-groupoids (iteratively
defined as groupoids enriched in \w[).]{(n-1)\mbox{-}\Gpd}

There are full and faithful inclusions:
\mydiagrm[\label{overvdiag1}]{
     & \PsG{n} &\\
    \Tam{n} \ar@{^{(}->}[ur] && \Gpt{n} \ar@{_{(}->}[ul]\\
& n\mbox{-}\Gpd \ar@{^{(}->}[ur] \ar@{_{(}->}[ul] &
    }

The category \w{n\mbox{-}\Gpd} admits a multi-simplicial description
as the full subcategory of those $(n-1)$-fold simplicial objects
\w{X\in\sCx{n-1}{\Gpd}} satisfying:

\begin{itemize}
\item [(i)] \www{X\sb{0}\up{1}\in\sCx{n-2}{\Gpd}} and
\w{X\up{1,\dotsc,r+1}\sb{1\underset{r}{\cdots}10}\in[\Dox{n-r-2},\,\Gpd]}
are discrete \wh that is, constant multi-simplicial sets \wh for all
\w[.]{1\leq r\leq n-2} Here we use the notation of \S \ref{nsimp}(b).
\item [(ii)] The Segal maps (see \S \ref{dsegal} below) in all directions
are isomorphisms.
\end{itemize}

In addition, we require that after applying \w{\pi_0\col \Gpd\to\Set} in each
simplicial dimension we obtain a strict $(n-1)$-groupoid.

The sets in (i) corresponds to the set of $r$-cells \wb{1\leq r\leq n-2}
in the strict $n$-groupoid. By (ii), their composition is associative
and unital.

Condition (i) is also called the \emph{globularity condition}, since it
determines the globular shape of the cells in a strict $n$-groupoid. For
instance, when \w{n=2} we can picture the $2$-cells as globes:
$$
\xymatrix@R=13pt{
& \ar @{=>}[dd]_{\zeta} & \\
\bullet  \ar@/^{2.0pc}/[rr]^{f} \ar@/_{2.0pc}/[rr]_{g} && \bullet\\
&&
}
$$

Although strict $n$-groupoids have applications in homotopy theory,
especially in their equivalent form of crossed $n$-complexes
(cf.\ \cite{BHSiveN}), they cannot model all $n$-types of topological
spaces (see \cite[\S 5]{SimpHT} for a counterexample in dimension $3$).
Therefore, we must relax the strict structure in order to recover all
$n$-types.

We consider three approaches to this:

\begin{itemize}
\item [(a)] In the first approach, we preserve condition (i) and relax
(ii), by allowing the Segal maps to be suitable iteratively-defined
equivalences. The composition of cells is then no longer
strictly associative and unital. This leads to the category \w{\Tam{n}} of
Tamsamani weak $n$-groupoids (Definition \ref{dtamn}).
\item [(b)] In this paper we offer a second approach, in which condition (ii)
is preserved, while (i) is replaced by weak globularity, so that the
multi-simplicial objects in (i) are no longer required to be discrete, but
only ``homotopically discrete'' (in a way that allows iteration). This
leads to the category \w{\Gpt{n}} of weakly globular $n$-fold groupoids
(Definition \ref{dntng}).
\item [(c)] We also describe a third approach, in which both (i) and (ii)
are relaxed. This yields the category \w{\PsG{n}} of weakly globular
pseudo $n$-fold groupoids (Definition \ref{dwgpng}).
\end{itemize}

Moreover, we have a \emph{realization} functor
\w{\diN\col \PsG{n}\to\Top} \wwh the composite:
\medskip\noindent
\begin{myeq}\label{eqrealize}
\PsG{n}~\hra~\sCx{n-1}{\Gpd}~\xrw{N}~\sCx{n}{\Set}~\xrw{\Dlo{n}}~\sC{\Set}~
\xrw{\|-\|}~\Top~,
\medskip
\end{myeq}

\noindent where \w{\Dlo{n}} is the $n$-fold diagonal.
The same is therefore true of the two subcategories
\w{\Tam{n}} and \w[.]{\Gpt{n}} In all three categories, maps which induce
weak homotopy equivalences on realizations are called
\emph{geometric weak equivalences}\vsm .

The precise definitions of these three categories appear as cited above; here
we will only highlight some key features common to all three\vsm:

\noindent 1)~The construction of each category is by induction, starting in
all three cases from the category of groupoids for \w[.]{n=1}
A weakly globular pseudo $n$-fold groupoid is in particular
a simplicial object $X$ in \w[,]{\PsG{n-1}} (and similarly for the other
two categories)\vsm.

\noindent 2)~Moreover, \w{X\sb{0}} is a homotopically discrete
weakly globular pseudo \wwb{n-1}fold groupoid (Definition \ref{dphdng})
and similarly for \w[,]{\Gpt{n}} while in the case of a Tamsamani weak
$n$-groupoid \w[,]{X\in\sC{\Tam{n-1}}} \w{X\sb{0}} is actually
discrete\vsm .

\noindent 3)~Since \w{\PsG{n}} is a subcategory of \w[,]{\sCx{n-1}{\Gpd}} we can
apply the functor \w{\pi\sb{0}} to each groupoid of any weakly globular
pseudo $n$-fold groupoid $X$ to obtain \w[.]{\opz{n}X\in\sCx{n-1}{\Set}}
In each of our three categories the functor \w{\opz{n}} lifts to functors
\begin{equation*}
\begin{split}
\bPz{n}\col \PsG{n}~&\to~\PsG{n-1}~,\hsp \bPz{n}\col \Gpt{n}~\to~\Gpt{n-1},\\
\text{and} & \hsp \bPz{n}\col \Tam{n}~\to~\Tam{n-1}~.
\end{split}
\end{equation*}

These serve as algebraic \wwb{n-1}Postnikov section functors, so we have a
natural Postnikov tower:
$$
\PsG{n}~\supar{\Pi_0\up{n}}~\PsG{n-1}~\supar{\Pi_0\up{n-1}}~\cdots~\Gpd~
\supar{\pi_0}~\Set~.
$$
\noindent and similarly for the other two categories\vsm .

\noindent 4)~In all three categories, let
\w{\gamma\col X\sb{0}\to X\sb{0}\sp{d}} denote the weak equivalence from
the homotopically discrete object \w{X\sb{0}} to its discretization
(so $\gamma$ is the identity for \w[).]{X\in\Tam{n}}
For each \w{k\geq 2} the composite of the maps
\begin{myeq}\label{eqindsegal}
X\sb{k}~\xrw{\seg{k}}~\pro{X_1}{X_0}{k}~\xrw{\gamma\sb{\ast}}~
\pro{X_1}{X\sb{0}\sp{d}}{k}
\end{myeq}
\noindent (cf. \S \ref{dsegal}) is called the $k$-th
\emph{induced Segal map}. We require that these maps be geometric
weak equivalences.

When \w[,]{X\in\Tam{n}} the second map in \wref{eqindsegal}
is the identity, while when \w[,]{X\in\Gpt{n}} the first map is an
isomorphism.
\end{mysubsection}

\begin{mysubsection}{Main results}
\label{sbasicf}
The process of discretizing the homotopically discrete sub-objects in
\w{\Gpt{n}} and \w{\PsG{n}} gives rise to discretization functors \w{D\sb{n}}
making the following diagram commute
\mydiagrm[\label{overvdiag2}]{
     & \PsG{n} \ar_{D_n}[dl]&\\
    \Tam{n}  && \Gpt{n}. \ar^{D_n}[ll] \ar@{_{(}->}[ul]
  \vsm  }

All three categories \w[,]{\PsG{n}} \w[,]{\Gpt{n}} and \w{\Tam{n}} share
some useful features\vsm:

First, the realization functor \w{\diN\col \PsG{n}\to\Top} actually lands in
the category \w{\PT{n}} of $n$-types, so the same is true of \w{\Gpt{n}} and
\w[\vsm .]{\Tam{n}}

Furthermore, all three models have \emph{algebraic homotopy groups}
\w{\omega_{k}(X,x)} (cf.\ \S \ref{sahgp}), which allow one to extract
\w{\pi\sb{k}\diN X} directly from the model $X$. In addition, we have
higher-dimensional analogues of the categorical notion of
an equivalence of groupoids. Together, these two notions allow to define
\emph{algebraic weak equivalences} in each of the categories, and show that
these are the same as the geometric weak equivalences
(see Corollary \ref{chtpygp}, [T, \S \ref{cwgpng}], \S \ref{sahgpps},
and \ref{rhtpyps}). Thus each of these models is entirely algebraic\vsm .

Our main results are as follows\vsm:

\begin{thma}
For each \w[:]{n\geq 1}
\begin{enumerate}
\renewcommand{\labelenumi}{(\alph{enumi})~}
\item The functor \w{\hQlo{n}} induces a faithful embedding
$$
\ho\PT{n}~\hra~\ho\Gpt{n}~,
$$
\noindent so for each \w{T\in\PT{n}} there is an isomorphism in \w{\ho\PT{n}}
between \w{T} and \w[.]{\diN\hQlo{n}T}
\item There is a functor
\w{\bPz{n}\col \Gpt{n}\to\Gpt{n-1}} with a natural isomorphism
\w[,]{\bPz{n}\hQlo{n}\cong\hQlo{n-1}} so we can extract the
model for the \wwb{n-1}st Postnikov section \w{\Po{n-1}T} from
\w{\hQlo{n}T} algebraically.
\item There are \emph{algebraic homotopy group} functors
\w{\omega_{k}\col \Gpt{n}\to\Gp} such that
\w{\pi_{k}(\diN G;x_{0})~\cong~\omega_{k}(G;x_{0})} \wb[.]{0\leq k\leq n}
\end{enumerate}
\end{thma}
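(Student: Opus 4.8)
The plan is to prove (a), (b) and (c) together by induction on $n$, the base case $n=1$ being the classical equivalence of homotopy categories between $1$-types and groupoids: there $\hQlo{1}=\hpi$ is the fundamental groupoid functor, $\diN$ is the classifying space, $\bPz{1}=\pi_{0}$, and $\omega_{0}=\pi_{0}$, $\omega_{1}=\pi_{1}$. Assume the statement for $n-1$. For (a) it helps to separate two points. First, $\diN\col\Gpt{n}\to\Top$ lands in $\PT{n}$ and, by the very definition of geometric weak equivalence, carries such maps to weak homotopy equivalences, so it descends to $\diN\col\ho\Gpt{n}\to\ho\PT{n}$. Second, one must produce, for every $T$, a natural zig-zag of weak homotopy equivalences between $T$ and $\diN\hQlo{n}T$ (e.g.\ routed through the singular complex of $T$, on which $\hQlo{n}$ is defined); passing to homotopy categories this yields a natural isomorphism $\diN\hQlo{n}\cong\Id_{\ho\PT{n}}$, whence $\hQlo{n}$ is faithful (any functor with a one-sided inverse up to isomorphism is injective on hom-sets) and the asserted isomorphism $T\cong\diN\hQlo{n}T$ in $\ho\PT{n}$ holds. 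Thus (a) reduces to the construction of $\hQlo{n}$ and to the equivalence $\diN\hQlo{n}T\simeq T$.

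I would build $\hQlo{n}T$ inductively as a simplicial object in $\Gpt{n-1}$, starting from a functorial ``Segal resolution'' $\Wd T$ of $T$: a simplicial space whose space of objects $W_{0}T$ is a homotopically discrete model of $\pi_{0}T$, whose $W_{1}T$ is the path space, whose $W_{k}T$ is the space of $k$-tuples of composable paths, and whose Segal maps are weak equivalences --- essentially Tamsamani's construction (cf.\ Definition~\ref{dtamn}) --- followed by a \emph{rigidification} which turns the Segal maps into isomorphisms and installs on the object part a genuine homotopically discrete weakly globular structure (Definition~\ref{dphdng}) that can be iterated; applying the inductive $\hQlo{n-1}$ in each simplicial degree then produces $\hQlo{n}T\in\sC{\Gpt{n-1}}$. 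One then checks the defining conditions of $\Gpt{n}$ (Definition~\ref{dntng}): that $(\hQlo{n}T)_{0}$ is homotopically discrete, that the Segal maps are isomorphisms (exactly what the rigidification achieves), and that the induced Segal maps \eqref{eqindsegal} are geometric weak equivalences --- for this last point one uses the inductive comparison of $\diN\hQlo{n-1}$ with the identity.

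The remaining analytic input is a ``Fubini'' identity for the $n$-fold diagonal: for any $X\in\Gpt{n}$ the realization $\diN X$ may be computed as the geometric realization of the simplicial space $k\mapsto\diN X_{k}$. Applied to $X=\hQlo{n}T$, the inductive hypothesis identifies $\diN X_{k}\simeq W_{k}T$ compatibly with faces, degeneracies and Segal maps, so $\diN\hQlo{n}T$ is weakly equivalent to $|\Wd T|\simeq T$ by the standard recognition principle for Segal-type simplicial spaces; this supplies the missing half of (a). The same computation gives, for a basepoint $x_{0}$, a weak equivalence $\Omega_{x_{0}}\diN X\simeq\diN X(x_{0},x_{0})$, where $X(x_{0},x_{0})\in\Gpt{n-1}$ is the ``hom'' object, the fibre of $(d_{0},d_{1})\col X_{1}\to X_{0}^{d}\times X_{0}^{d}$ over $(x_{0},x_{0})$ (a homotopy fibre since $X_{0}^{d}$ is a set); defining $\omega_{k}(X;x_{0}):=\omega_{k-1}(X(x_{0},x_{0});\ast)$, with $\omega_{0}(X)=\pi_{0}D_{n}X$ and $\omega_{1}(X;x_{0})=\pi_{1}(\diN X,x_{0})$ at the bottom, the isomorphism $\pi_{k}(\diN X;x_{0})\cong\omega_{k}(X;x_{0})$ follows by induction, giving (c). For (b), $\bPz{n}$ is defined by applying $\pi_{0}$ to each constituent groupoid and checking (as noted in \S\ref{nthreem}(3)) that the result lies in $\Gpt{n-1}$; the natural isomorphism $\bPz{n}\hQlo{n}\cong\hQlo{n-1}$ then reduces, via the inductive descriptions of $\hQlo{n}$ and $\hQlo{n-1}$, to the fact that applying $\pi_{0}$ levelwise to $\Wd T$ recovers the Segal resolution $\Wd(\Po{n-1}T)$ of the $(n-1)$st Postnikov section, together with the inductive compatibility of $\hQlo{n-1}$ with $\bPz{n-1}$.

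The principal obstacle is the rigidification step: replacing a Tamsamani-type $n$-simplicial space --- in which the Segal maps are only weak equivalences and the object part is only weakly homotopically discrete --- by an honest object of $\Gpt{n}$, \emph{functorially in $T$} and without altering the weak homotopy type, while arranging that the new object part carries a homotopically discrete weakly globular structure robust enough to survive the next step of the induction and that the induced Segal maps remain geometric weak equivalences afterwards. Keeping all of this coherent through the induction is the delicate point; everything else is either formal (the faithfulness argument), a diagonal/Fubini bookkeeping exercise, or an application of the classical recognition principle for Segal-type simplicial spaces.
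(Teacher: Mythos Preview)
Your plan diverges from the paper's precisely at the point you identify as the principal obstacle, and that obstacle is a genuine gap. The paper does not build $\hQlo{n}$ from a Tamsamani-style Segal resolution followed by rigidification. Instead $\hQlo{n}T$ is defined directly as $\Plo{n}\osl{n}\cS T$, where $\osl{n}$ is the $n$-fold ordinal sum (iterated d\'ecalage, \S\ref{sorsum}) and $\Plo{n}$ is left adjoint to the multinerve. The key input (Proposition~\ref{pntfibor} and Theorem~\ref{tadjoint}) is that for a Kan complex $X$ the object $\osl{n}X$ is $(n,2)$-fibrant, so that $\Plo{n}$ reduces to applying the ordinary fundamental groupoid $\hpi$ in each direction: $\Qlo{n}X=\hpu{1}\cdots\hpu{n}\osl{n}X$. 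This is an honest $n$-fold groupoid from the outset --- the Segal maps are isomorphisms automatically, and no rigidification is ever needed. An iterative description close to your $\Wd T$ does emerge a posteriori (Lemma~\ref{lnerve}: $(\Nup{n}\Qlo{n}X)_k\cong\Qlo{n-1}\Llo{k}X$ with $\Llo{k}X=\pro{\Dex}{X}{k+1}$), but as a \emph{consequence} of the d\'ecalage construction, not as its definition. The d\'ecalage trick is exactly the device that trades strict globularity for weak globularity and in return obtains strict Segal maps for free; without it, the rigidification you require (functorial in $T$, levelwise homotopy-preserving, and stable under iteration) is not supplied by any standard argument, and you have not supplied one either.

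Parts (b) and (c) are also handled differently. For (c) the paper does not use a loop-space identification $\Omega_{x_0}\diN X\simeq\diN X(x_0,x_0)$ but runs the Bousfield--Friedlander spectral sequence (Theorem~\ref{thdnt}), verifying the $\pis$-Kan condition from the weak globularity axioms (Lemma~\ref{lpikan}); this works for an arbitrary $G\in\Gpt{n}$, whereas your inductive argument for (c) tacitly assumes $X$ arose as $\hQlo{n}T$. The paper's $\omega_k$ is also defined via the $k$-fold object of arrows $\Wlo{n,k}$ and iterated $\bPz{}$ (\S\ref{sahgp}), not via hom-objects as you propose. For (b), the isomorphism $\bPz{n}\Qlo{n}X\cong\Qlo{n-1}X$ is established inside Proposition~\ref{pnequiv}(d) directly from the d\'ecalage description of $\Qlo{n}$, and (a) then follows formally (Theorem~\ref{teqcat}).
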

\noindent [See Theorem \ref{teqcat}, Proposition \ref{pnequiv}, and
Theorem \ref{thdnt}]\vsm .

\begin{thmb}
The functors \w{\hQlo{n}} and \w{\diN} induce equivalences of categories
$$
\ho\PT{n}~\approx~\ho\PsG{n}~.
$$
\end{thmb}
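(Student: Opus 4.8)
The plan is to exhibit the realization functor $\diN\col\PsG{n}\to\PT{n}$ and the functor $\hQlo{n}\col\PT{n}\to\PsG{n}$ — the latter being the functor of Theorem~A composed with the inclusion $\Gpt{n}\hra\PsG{n}$ of \eqref{overvdiag1} — as mutually inverse equivalences after passing to homotopy categories. Once both functors are seen to descend to $\ho$, it then suffices to produce natural isomorphisms $\ho\diN\circ\ho\hQlo{n}\cong\Id_{\ho\PT{n}}$ and $\ho\hQlo{n}\circ\ho\diN\cong\Id_{\ho\PsG{n}}$, since a pair of functors related by such isomorphisms is automatically an equivalence of categories.

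That the two functors descend is almost formal. By the very definition of a geometric weak equivalence — a map inducing a weak homotopy equivalence on realizations — $\diN$ sends these to weak equivalences, hence induces $\ho\diN$. For $\hQlo{n}$ we invoke part~(a) of Theorem~A: since the inclusion $\Gpt{n}\hra\PsG{n}$ is compatible with the realization \eqref{eqrealize} (both are $N$ followed by the $n$-fold diagonal and $\|-\|$), that statement provides a natural isomorphism $\diN\hQlo{n}T\cong T$ in $\ho\PT{n}$; this shows at once that $\hQlo{n}$ carries weak equivalences of $n$-types to geometric weak equivalences — so that $\ho\hQlo{n}$ exists — and that $\ho\diN\circ\ho\hQlo{n}\cong\Id_{\ho\PT{n}}$.

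Everything therefore reduces to the other composite: one must connect $\hQlo{n}\diN X$ and $X$ by a zig-zag of geometric weak equivalences, naturally in $X\in\PsG{n}$. Since $\diN$ both preserves and reflects geometric weak equivalences, this is the same as constructing a natural zig-zag of morphisms in $\PsG{n}$ whose realizations are weak homotopy equivalences. I would attempt this by induction on $n$, the base case $n=1$ being the classical equivalence $\ho\Gpd\approx\ho\PT{1}$. For the inductive step one uses that a weakly globular pseudo $n$-fold groupoid is a simplicial object $X\in\sC{\PsG{n-1}}$ with $X_0$ homotopically discrete and with the induced Segal maps of \eqref{eqindsegal} geometric weak equivalences; applying the inductive hypothesis in each simplicial degree — and using the algebraic Postnikov functor $\bPz{n}$ (with $\bPz{n}\hQlo{n}\cong\hQlo{n-1}$) and the algebraic homotopy groups $\omega_{k}$ of part~(c) of Theorem~A to control homotopy types degreewise — one assembles the comparison zig-zag level by level and then passes to realizations, the Segal conditions ensuring that the realized maps are weak equivalences. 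The discretization functor $D_n\col\PsG{n}\to\Tam{n}$ of \eqref{overvdiag2}, together with the known behaviour of the Tamsamani model, can be brought in as an intermediate step to organize the bookkeeping.

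The main obstacle is exactly this last natural zig-zag. There is no morphism between $X$ and $\hQlo{n}\diN X$ available ``on the nose'' in either direction, so one must interpolate through auxiliary objects of $\PsG{n}$ — the bar/rigidification-type construction underlying the definition of $\hQlo{n}$ — and verify at each stage both that the comparison morphism is a geometric weak equivalence (using the $\omega_{k}$ and the induced Segal maps) and that the whole construction is natural in $X$. This is precisely the point at which the extra flexibility of $\PsG{n}$ over $\Gpt{n}$ is needed, and why one gets a genuine equivalence $\ho\PT{n}\approx\ho\PsG{n}$ here rather than merely the faithful embedding of Theorem~A(a). Granting this zig-zag, $\ho\hQlo{n}\circ\ho\diN\cong\Id_{\ho\PsG{n}}$, and together with the isomorphism $\ho\diN\circ\ho\hQlo{n}\cong\Id_{\ho\PT{n}}$ established above this completes the proof.
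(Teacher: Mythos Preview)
Your overall architecture is correct: both functors descend to homotopy categories, and one composite is handled by Theorem~A. The issue is the other composite, where your proposal diverges significantly from the paper and leaves a genuine gap.

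You propose to build the zig-zag between $X$ and $\hQlo{n}\diN X$ by induction on $n$, applying the inductive hypothesis ``in each simplicial degree.'' But there is no evident degreewise relationship between $X_k$ and $(\hQlo{n}\diN X)_k$: the latter is $\Qlo{n-1}$ applied to $\Llo{k}(\cS\diN X)$ (by Lemma~\ref{lnerve}), which bears no direct comparison to $X_k\in\PsG{n-1}$. The ``bar/rigidification-type construction'' you allude to is not developed in the paper and would amount to reproving a substantial portion of Tamsamani's theorem from scratch. Your remark that $D_n$ and the Tamsamani model ``can be brought in as an intermediate step to organize the bookkeeping'' undersells what is in fact the entire mechanism of the proof.

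The paper's argument (Theorem~\ref{tpswgntype}) does not attempt a direct inductive comparison at all. Instead it routes through $\Tam{n}$: Theorem~\ref{tpsg} supplies natural zig-zags $D_n X\leftrightarrow X$ and $D_n\hQlo{n}\diN X\leftrightarrow\hQlo{n}\diN X$ in $\PsG{n}$; applying $\diN$ and using Theorem~A gives a weak equivalence $\diN D_n X\simeq\diN D_n\hQlo{n}\diN X$; and then Tamsamani's already-established equivalence $\ho\PT{n}\approx\ho\Tam{n}$ (Theorem~\ref{ttamseq}) forces $D_n X\cong D_n\hQlo{n}\diN X$ in $\ho\Tam{n}\subset\ho\PsG{n}$. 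Splicing the zig-zags gives the required isomorphism $X\cong\hQlo{n}\diN X$ in $\ho\PsG{n}$. The point is that the hard comparison is offloaded to Tamsamani's theorem rather than reproved; $D_n$ (and the zig-zag of Theorem~\ref{tpsg}) is not bookkeeping but the bridge that makes this offloading possible.
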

\noindent [See Theorem \ref{tpswgntype}]\vsm .

Furthermore, every object of \w{\PsG{n}} is weakly equivalent through a
zig-zag to an object of \w{\Gpt{n}} as well as to an object of
\w{\Tam{n}} (see Remark \ref{rzigzag}). Thus we can regard \w{\Tam{n}}
and \w{\Gpt{n}} as two different types of partial strictifications of the
category \w{\PsG{n}} which preserve the homotopy type. The passage from
\w{\PsG{n}} to \w{\Tam{n}} strictifies the globularity condition, while the
passage from \w{\PsG{n}} to \w{\Gpt{n}} strictifies the Segal maps.

The fundamental $n$-fold groupoid functor \w{\hQlo{n}\col \Top\to\Gpt{n}}
provides an explicit form for the algebraic model of an $n$-type. This
is a desirable feature of an algebraic model, especially in view of
applications.

 In Subsection \ref{cappl}.A we discuss an application to
the modelling of \wwb{k-1}connected $n$-types. For this purpose we
identify suitable subcategories \w{\PsG{(n,k)}} and \w{\Gpt{(n,k)}}
of \w{\PsG{n}} and \w{\Gpt{n}} respectively, which are algebraic models of
\wwb{k-1}connected $n$-types, and we also establish a connection with
iterated loop spaces (Proposition \ref{pkloop}).
\end{mysubsection}

\vspace{5mm}
\begin{mysubsect}{Organization of the paper}
\label{nmainst}

In Section \ref{cfng} we describe the construction of the
\emph{fundamental $n$-fold grou\-poid} functor \w[:]{\hQlo{n}} to obtain
a multi-simplicial algebraic model from a space, we first take a
fibrant simplicial set model using the singular functor
\w[.]{\cS\col \Top\to\sS} We can associate to any simplicial set
$X$ an ``$n$-fold resolution'' \w[,]{\osl{n} X} which is an object of
\w{\Snx} representing the same homotopy type (Lemma \ref{lores}).
We then obtain an $n$-fold groupoid by applying the left adjoint
\w{\Plo{n}\col \Snx\to \Gpd\sp{n}} to the $n$-fold nerve
\w[.]{N\lo{n}\col \Gpd\sp{n}\to\Snx} Thus \w{\hQlo{n}} is the composite:
$$
\Top~\supar{\cS}~\sS~\supar{\osl{n}}~\Snx~\supar{\Plo{n}}~\Gpd\sp{n}
$$
\noindent (cf.\ Definition \ref{dladjoint}).

For a general $n$-fold simplicial set $Y$, \w{\Plo{n}Y} does not
have a simple and explicitly computable expression. However, we show
that the fibrancy of \w{\cS T} induces a property of
\w[,]{\osl{n}\cS T} which we call \wwb{n,2}\emph{fibrancy}
(see Definition \ref{dfibrant} and Proposition \ref{pntfibor}).
We then show that to apply \w{\Plo{n}} to an \wwb{n,2}fibrant
$n$-simplicial set, we need only apply the usual fundamental groupoid
functor in each of the \w{n-1} simplicial directions.
We thus have
$$
\hQlo{n}T~=~\hpu{1}\hpu{2}\ldots\hpu{n}\osl{n} \cS T
$$
\noindent (cf.\ Theorem \ref{tadjoint}).

In Section \ref{cntng}, we describe certain features of
those $n$-fold groupoids which are in the image of the functor
\w{\hQlo{n}} (and thus will be used to represent $n$-types).
These are encoded in the notion of \emph{weakly globular $n$-fold
groupoids}. As explained in \S \ref{nthreem}, we first identify a
suitable subcategory of \emph{homotopically discrete} objects
(Subsection \ref{cntng}.A), which are needed for the weak globularity
condition in the definition of weakly globular $n$-fold groupoid
(Subsection \ref{cntng}.B).

In Section \ref{cnt} we show that  the $n$-Postnikov section
\w{\Po{n}T} and \w{\diN\hQlo{n}T} have the same homotopy type
(Proposition \ref{pnequiv}), so \w{\Gpt{n}} represents all $n$-types.
In Subsection \ref{cnt}.A we show that the realization of a weakly
globular $n$-fold groupoid is an $n$-type (an alternative proof using
a comparison with Tamsamani's model is given in Section \ref{ctmng}).
Subsection \ref{cnt}.B provides a new iterative description of the
fundamental $n$-fold groupoid functor \w[.]{\Qlo{n}}
This is used in Proposition \ref{pnequiv} of Subsection \ref{cnt}.C,
where we show that the functor \w{\hQlo{n}} lands in the category
\w[.]{\Gpt{n}} This leads to one of our main results, Theorem \ref{teqcat},
saying that $\diN$ and \w{\hQlo{n}} induce functors
\begin{myeq}\label{overveq1}
\ho\PT{n}~\adj{\diN}{\hQlo{n}}~\ho\Gpt{n}~,
\end{myeq}
\noindent with \w[.]{\diN\circ\hQlo{n}\cong\Id}

In Section \ref{ctmng} we provide an equivalent definition of Tamsamani's
weak $n$-groupoids (see Subsection \ref{ctmng}.A), and in
Subsection \ref{ctmng}.B we construct a \emph{discretization} functor
$$
D_n\col \Gpt{n}\to \Tam{n}~,
$$
\noindent which replaces a weakly globular $n$-fold groupoid
\w{G\in\Gpt{n}} by a Tamsamani weak $n$-groupoid \w{D\sb{n}G} of
the same homotopy type (Theorem \ref{ttamsamani})\vsm.

In Section \ref{cwgpng} we consider the wider context of
\emph{weakly globular pseudo $n$-fold groupoids}. These are defined in
Subsection \ref{cwgpng}.A, and compared to Tamsamani's model in Section
\ref{cwgpng}.B, where we again construct a discretization functor
$$
D_n\col \PsG{n}~\to~\Tam{n}~,
$$
\noindent and in Theorem \ref{tpsg} we show that for any
\w[,]{X\in\PsG{n}} there is a zig-zag of weak equivalences in
\w{\PsG{n}} between $X$ and \w[.]{D\sb{n}X}
Our main Theorem \ref{tpswgntype} then follows.

In Section \ref{cappl} we describe an application, and indicate some
directions for future work:   in Subsection \ref{cappl}.A, we show how to
extract from our results an algebraic model for \wwb{k-1}connected
$n$-types (Proposition \ref{pkconn}), and thus for iterated loop spaces.
In Subsection \ref{cappl}.B we define \emph{$n$-track categories}
(one of the original motivations for our work), with possible
future applications.

An appendix proves some technical facts about ${\osl{n}}$
needed in Section \ref{cfng}.
\end{mysubsect}
\newpage
%
%
%
\section*{Index of terminology and notation}
\begin{center}
\vspace{-5mm}
\renewcommand{\arraystretch}{1.35}
\begin{tabulary}{14.0cm}{l L l}
${\diN G}$ &  realization of an $n$-fold (pseudo) groupoid
$G$
& \S \ref{dwecs}\\
$\cons X$ &  discrete groupoid on a set $X$
& \S \ref{ddiscgpd}\\
${\ovl{c}\up{n}}$, ${c\up{n}}$  &  discrete groupoid functor applied to
an \wwb{n-1}fold simplicial groupoid or an \wwb{n-1}fold groupoid
& \S \ref{dbpz}, \S \ref{rbpz}\\
${D\sb{n}}$ &  discretization functors for \w{\Gpt{n}} and \w{\PsG{n}}
& \S \ref{dtndn}, \S \ref{ddnpsg}\\
${\diz}$ &  $0$-discretization functor on weakly globular
$n$-fold groupoids
& \S \ref{ddisc}\\
${\Dec}$, ${\Dec'}$ &  d\'{e}calage functors on simplicial sets
& \S \ref{sdec}\\
${\Dlo{n}}$  &  $n$-fold diagonal functor
& \S \ref{nsimp}\\
${\sCx{n}{\cC}}$  &  category of $n$-fold simplicial objects in $\cC$
& \S \ref{snssn}\\
${\FTm{n}}$  & Tamsamani's Poincar\'{e} $n$-groupoid functor
& \S \ref{ttamseq}\\
${\Gpd}$  &  category of groupoids
& \S \ref{cfng}.B\\
${n\mbox{-}\Gpd}$  &  category of  strict $n$-groupoids
& \S \ref{nthreem}\\
${\Gpd(\cV)}$  &  category of internal groupoids in $\cV$
& \S \ref{nfundg}\\
${\Gpd\sp{n}}$  &  category of $n$-fold groupoids
& \S \ref{nfundg}\\
${\Gpt{n}}$  &  category of weakly globular $n$-fold groupoids.
& \S \ref{dntng}\\
${\Gptk{n,k}}$  &  category of \wwb{n,k}weakly globular pseudo
$n$-fold groupoids
& \S \ref{dcont}\\
${\Ghd{n}}$  &  category of homotopically discrete $n$-fold groupoids
& \S \ref{dhdng}\\
${\Llo{k}X}$  &  simplicial ``bar-path construction''
& \S \ref{dllo}\\
${\seg{k}}$  &  $k$-th Segal map
& \S \ref{eqsegalmap}\\
${\iseg{k}}$  &  $k$-th induced Segal map
& \S \ref{eqindsegal}\\
${\Nup{i}}$  &  nerve functor of an $n$-fold groupoid in the
$i$-th direction
& \S \ref{addnote}\\
${\Nlo{n}}$  &  multinerve functor on $n$-fold groupoids
& \S \ref{dcspace}\\
${\osl{n}}$  &  $n$-fold ordinal sum of a simplicial set
& \S \ref{sorsum}\\
${\Pup{i}}$  &  left adjoint to \w{\Nup{i}}
& \S \ref{padjoint}\\
${\Plo{n}}$  &  left adjoint to \w{\Nlo{n}}
& \S \ref{dladjoint}\\
${\PhG{n}}$  &  category of homotopically discrete pseudo $n$-fold
groupoids
& \S \ref{dphdng}\\
${\PsGk{n,k}}$  &  category of \wwb{n,k}weakly globular pseudo
$n$-fold groupoids
& \S \ref{dcont}\\
${\PT{n}}$  &  full subcategory of $n$-Postnikov sections in \w{\Top}
& \S \ref{dpostnikov}\\
${\hpi}$  &  fundamental groupoid of a topological space
& \S \ref{dfundg}\\
${\bPz{n}}$  &  algebraic \wwb{n-1}st Postnikov section functor
& \S \ref{lhdng}, \S \ref{dntng}, \S \ref{dtamn}, \S \ref{dphdng}\\
$\Qlo{n},\, \hQlo{n}$  &  fundamental $n$-fold groupoid functors
& \S \ref{dladjoint}\\
${\Twg{n}}$  &  fundamental groupoid functor for \w{\Gpt{n}}
& \S \ref{ntwg}\\
${\TTm{n}}$  &  Tamsamani fundamental groupoid functor
& \S \ref{sbptam}\\
\end{tabulary}
\end{center}

\begin{center}
\renewcommand{\arraystretch}{1.35}
\begin{tabulary}{14.0cm}{l L L}
${\Tps{n}}$  &  fundamental groupoid functor for \w{\PsG{n}}
& \S \ref{nfundgppsg}\\
${\Tam{n}}, \,{\tTam{n}}$  & two equivalent formulations of the
category of Tamsamani weak $n$-groupoids
& \S \ref{dtamn}, \S \ref{sbptam}\\
${\Top}$  &  category of topological spaces.\\
${\Wlo{n,k}}$  &  $k$-fold object of arrows of an $n$-fold groupoid
&\S \ref{diao}\\
${\omega_{k}(G;x\sb{0})}$  &  $k$-th algebraic homotopy group
& \S \ref{sahgp}, \vs\S \ref{sahgpps}\\
\end{tabulary}
\end{center}

See also the list of special notations for $n$-fold simplicial objects
in \S \ref{nsimp}, in particular for the notation $\ovl{F}$ for any functor $F$.
\begin{ack}
This research was supported by the first author's Israel Science
Foundation Grant No.~47377, and the second author's  Marie Curie
International Reintegration Grant No.~256341 and London Mathematical Society
Grant No.~41134. The second author would also like to thank the
Department of Mathematics at the University of Haifa for its hospitality
during visits in March-April, 2012, and June and August, 2013.
We thank Dorette Pronk for many helpful comments. Finally, we would like to
thank the referee for his or her detailed and pertinent remarks.
\end{ack}

%
%
%
\sect{The fundamental $n$-fold groupoid of a space}
\label{cfng}

As noted above, the fundamental groupoid \w{\hpi T} of a (not necessarily
connected) space $T$ is an algebraic model for its $1$-type. We now
show how the notion of the fundamental $2$-typical double groupoid defined in
\cite[\S 2.21]{BPaolT} generalizes to all $n$. We consider the standard model
structure on \w[,]{\Top} so that \w{\ho\Top} means its localization with respect
to the class of weak homotopy equivalences.

\supsect{\protect{\ref{cfng}}.A}{Simplicial constructions}

Given a topological space $T$, we construct its fundamental $n$-fold groupoid
from a fibrant simplicial set model for $T$, such as its singular
set \w[.]{X:=\cS T\in \sS} We therefore first recall some notation and
constructions for simplicial sets.

\begin{defn}\label{dsimp}
For any category $\cC$, \w{\sC{\cC}} is the category of simplicial objects in
$\cC$, where $\bDelta$  denotes the category of finite ordered sets:
\w[,]{\bze,\bon} and so on. As usual, we write \w{X\sb{n}} for \w[.]{X(\bn)}
If $\cC$ is concrete, the \emph{$n$-skeleton} \w{\sk{n}X\in\sC{\cC}} of any
\w{X\in\sC{\cC}} is generated under the degeneracy maps by \w[.]{X_{0},\dotsc, X_{n}}
The \emph{$n$-coskeleton} functor \w{\csk{n}\col \sC{\cC}\to\sC{\cC}} is left adjoint
to \w[.]{\sk{n}} We say that $X$ is $n$-\emph{coskeletal} if the
natural map \w{X\to\csk{n}X} is an isomorphism.
\end{defn}

\begin{remark}\label{rinvolution}
There is an order-reversing involution \w[,]{I\col \Delta\to\Delta} which induces a
functor \w{I^{\ast}\col \sC{\cC}\to\sC{\cC}} (sending \w{d_{i}\col X_{n}\to X_{n-1}} to
\w[).]{d_{n-i}} This functor \w{I^{\ast}} is not generally an isomorphism, but
for a Kan complex \w{X\in\sS} we have a natural isomorphism of fundamental
groupoids \w{(\hpi I^{\ast}X)\op\cong\hpi X} (cf.\ \cite[I.8]{GJarS}).
\end{remark}

\begin{defn}\label{dsegal}
Let \w{X\in\sC{\cC}} be a simplicial object in any category $\cC$ with
pullbacks. For each \w[,]{1\leq j\leq k} let \w{\nu_j\col X_k\to X_1} be induced
by the map  \w{[1]\to[k]} in $\Delta$ sending $0$ to \w{j-1} and $1$ to $j$. Then the
following diagram commutes:
\mysdiag[\label{eqsegalmap}]{
&&&& X\sb{k} \ar[llld]_{\nu\sb{1}} \ar[ld]_{\nu\sb{2}} \ar[rrd]^{\nu\sb{k}} &&&& \\
& X\sb{1} \ar[ld]_{d\sb{1}} \ar[rd]^{d\sb{0}} &&
X\sb{1} \ar[ld]_{d\sb{1}} \ar[rd]^{d\sb{0}} && \dotsc &
X\sb{1} \ar[ld]_{d\sb{1}} \ar[rd]^{d\sb{0}} & \\
X\sb{0} && X\sb{0} && X\sb{0} &\dotsc X\sb{0} && X\sb{0}
}

If we let \w{\pro{X_1}{X_0}{k}} denote the limit of the lower part of
Diagram \wref[,]{eqsegalmap} the $k$-th \emph{Segal map} for $X$
is the unique map
$$
\seg{k}\col X\sb{k}~\to~\pro{X\sb{1}}{X\sb{0}}{k}
$$
\noindent such that \w[,]{\pr_j\,\seg{k}=\nu\sb{j}} where
\w{\pr\sb{j}} is the $j$-th projection (see \cite{SegCC}).

Note that $X$ is the nerve of an internal category in $\cC$ if and only if
all the Segal maps are isomorphisms.
\end{defn}

\begin{mysubsection}{$\mathbf{n}$-fold simplicial objects}
\label{snssn}
An \emph{$n$-fold simplicial object} in $\cC$ is a functor
\w[,]{\Dnop\to\cC} and we denote the category of such by
\w[.]{\sCx{n}{\cC}} Thus \w{X\in\sCx{n}{\cC}} consists of
objects \w{X_{i_{1}i_{2}\dotsc i_{n}}} in $\cC$ for each $n$-fold
multi-index \w[,]{i_{1},i_{2},\dotsc,i_{n}\in\bN} along with face and
degeneracy maps in each of the $n$ directions, satisfying the usual
simplicial identities. We assume a fixed ordering of these directions as
first, second, and so on.
\end{mysubsection}

\begin{mysubsection}{Notation and conventions}
\label{nsimp}
\begin{enumerate}
\renewcommand{\labelenumi}{(\alph{enumi})~}
\item We can identify \w{\sCx{n}{\cC}} with \w{\sC{\sCx{n-1}{\cC}}} in $n$
different ways:  thus, given an $n$-fold simplicial object \w[,]{X\in\sCx{n}{\cC}}
for each \w{1\leq i\leq n} we write \w{X\up{i}\in\sC{\sCx{n-1}{\cC}}} to indicate
that the primary simplicial direction is the $i$-th one of the original $X$.
\item More generally, if we choose $k$ of the $n$ directions
\w[,]{1\leq j_{1}<j_{2}<\dotsc<j_{k}\leq n}
we obtain a $k$-fold simplicial object
\w{X\up{j_{1},j_{2},\dotsc,j_{k}}} in \w[.]{\sCx{n-k}{\cC}} Thus
$$
X\up{j_{1},j_{2},\dotsc,j_{k}}\in\sCx{k}{\sCx{n-k}{\cC}}
$$
\noindent is a diagram of objects
\w{X\up{j_{1},j_{2},\dotsc,j_{k}}\sb{i_{1}\dotsc i_{k}}} in
\w[.]{\sCx{n-k}{\cC}} For example,
\w[,]{X\up{1,\dotsc,k}\sb{i_{1}\dotsc i_{k}}=X([i_{1}],\dotsc[i_{k}],-)}
in the notation of \S \ref{dsimp}.

Equivalently, for each object
\w[,]{\va\in\bDelta^{n-k}}
\w{X\up{j_{1},j_{2},\dotsc,j_{k}}(\va)\in\sCx{k}{\cC}} is a $k$-fold
simplicial object in $\cC$, natural in $\va$.
\item In particular,
$$
X\up{\widehat{\imath}}=X\up{1,\dotsc,i-1,i+1,\dotsc,n}\in\sCx{n-1}{\sC{\cC}}
$$
\noindent is an \wwb{n-1}fold simplicial object in \w{\sC{\cC}}
(in the $i$-th direction).
\item Given \w{X\in\sCx{n}{\cC}} and a functor \w[,]{F\col \sC{\cC}\to\cC}
we denote by \w{F\up{k}X\in\sCx{n-1}{\cC}} the object obtained by applying $F$
``objectwise'' to \w{X\up{\widehat{k}}}\!\! (thought of as a
\ww{\Dox{n-1}}-indexed diagram in \w{\sC{\cC}}\!\!\!).

Thus for every \w[,]{i\sb{1},\dotsc,i\sb{n-1}\in\bN} we have
$$
(F\up{k}X)\sb{i\sb{1},\dotsc,i\sb{n-1}}~=~
FX\up{1,\dotsc,k-1,k+1,\dotsc,n}\sb{i\sb{1},\dotsc,i\sb{n-1}}~.
$$
\item The composite \w{F\up{1}F\up{2}\dotsc F\up{n-1}F\up{n}}
will be denoted by \w[.]{F\lo{n}\col \sCx{n}{\cC}\to\cC}
\item In particular, the $n$-fold diagonal functor
\w{\Dlo{n}\col \sCx{n}{\cC}\to\sC{\cC}} is given by
\w[.]{(\Dlo{n}X)_{m}:=X_{m,m,\dotsc,m}} (In this case, the order does not matter.)
\item For any functor \w[,]{F\col \cC\to\cD} the prolongation of $F$ to simplicial
objects is denoted by \w[.]{\ovl{F}\col \sC{\cC}\to\sC{\cD}}
\item In particular, for a functor \w[,]{G\col \sCx{n-1}{\cC}\to\cD} the result
of applying $G$ to an $n$-fold simplicial object \w{X\in\sCx{n}{\cC}} in each
simplicial dimension in the $k$-th direction will be denoted by
\w[.]{\ovl{G}\up{k}X\in\sC{\cD}}
Thus for every \w{j\in\bN} we have
$$
(\ovl{G}\up{k}X)\sb{j}~=~GX\up{k}\sb{j}~.
$$
\end{enumerate}
\end{mysubsection}

\begin{mysubsection}{D\'{e}calage}
\label{sdec}
Recall from \cite[\S 2.6]{DuskSM} the comonad \w{\Dec\col \sS\to\sS} on
simplicial sets, where \w[,]{(\Dex)_{n}=X_{n+1}} forgetting the \emph{last} face
and degeneracy operators in each dimension (see also \cite{IlluC2}). The
counit \w{\var\col \Dex\to X} is given by \w{d_{n}\col X_{n+1}\to X_{n}} in simplicial
dimension $n$. It has a section \w[,]{\sigma\col X\to\Dex}
given by \w[.]{s_{n}\col X_{n}\to X_{n+1}}

There is also a version forgetting the \emph{first} face and degeneracy operators,
which we denote by \w[.]{\Decp\col \sS\to\sS} In the notation of
\S \ref{rinvolution}, \w[.]{\Decp X:=I^{\ast}\Dec I^{\ast}X}

The comonad \w{\Dec} yields a simplicial resolution \w{\Yd\in\sC{\sS}} for any
\w[,]{X\in\sS} with
$$
Y_{k-1}~:=~\Dec^{k}X~:=~\underset{k}{\underbrace{\Dec(\Dec\ldots\Dec X\ldots)}}
\hspace*{7mm}\text{in~~}\sS~,
$$
\noindent and the counit $\var$ for \w{\Dec} induces a map of
bisimplicial sets \w[,]{\var\col Y\to\cons\up{2}X} where
\w{\cons\up{2}X} is the constant simplicial object on $X$ in \w{\sC{\sS}}
(thinking of the outer simplicial direction of \w{\sC{\sS}} as second).
The bisimplicial set \w{\Yd} is depicted in Figure \ref{for}, viewed
as a horizontal simplicial object over \w{\sS} (degeneracy maps and $\var$
are not shown).

The corresponding resolution using \w{\Decp} is also depicted in
Figure \ref{for}, viewed as a vertical simplicial object over \w[.]{\sS}

%
%
\begin{figure}[ht]
\begin{center}
$$
\entrymodifiers={+++[o]}
\xymatrix@R=25pt{
\cdots \ssr &
X_{5} \ar@<2ex>[rrr]^{d_{5}} \ar[rrr]^{d_{4}}\ar@<-2ex>[rrr]^{d_{3}}
     \ar@<2.5ex>[d]^{d_{2}} \ar[d]^{d_{1}}\ar@<-2.5ex>[d]^{d_{0}} &&&
X_{4} \ar@<1ex>[rrr]^{d_{4}} \ar@<-1ex>[rrr]^{d_{3}}
     \ar@<2.5ex>[d]^{d_{2}} \ar[d]^{d_{1}}\ar@<-2.5ex>[d]^{d_{0}} &&&
X_{3}  \ar@<2.5ex>[d]^{d_{2}} \ar[d]^{d_{1}}\ar@<-2.5ex>[d]^{d_{0}} \\
\cdots \ssr  &
X_{4} \ar@<2ex>[rrr]^{d_{4}} \ar[rrr]^{d_{3}}\ar@<-2ex>[rrr]^{d_{2}}
     \ar@<0.5ex>[d]^{d_{1}} \ar@<-0.5ex>[d]_{d_{0}} &&&
X_{3} \ar@<1ex>[rrr]^{d_{3}} \ar@<-1ex>[rrr]^{d_{2}}
     \ar@<0.5ex>[d]^{d_{1}} \ar@<-0.5ex>[d]_{d_{0}} &&&
X_{2}       \ar@<0.5ex>[d]^{d_{1}} \ar@<-0.5ex>[d]_{d_{0}} \\
\cdots \ssr &
X_{3} \ar@<2ex>[rrr]^{d_{3}} \ar[rrr]^{d_{2}}\ar@<-2ex>[rrr]^{d_{1}} &&&
X_{2} \ar@<1ex>[rrr]^{d_{2}} \ar@<-1ex>[rrr]^{d_{1}} &&& X_{1}
}
$$
\end{center}
\caption{Corner of \ $\osl{2} X$}
\label{for}
\end{figure}
\end{mysubsection}

\begin{remark}\label{rdex}
Note that if $X$ is a fibrant simplicial set, then so is
\w[,]{\Dec X} and the augmentation \w{\var\col \Dec X\to X} is a fibration
(with section \w[).]{\sigma\col X\to\Dex} Similarly for \w[.]{\Decp}
\end{remark}

\begin{mysubsection}{Ordinal sum}
\label{sorsum}
In order to produce an $n$-fold simplicial set out of a Kan complex
\w[,]{X\in\sS} with the same homotopy type (that is, an $n$-fold resolution
of $X$), we shall use the functor
\w[,]{\osl{n}:=\ordin\sb{n}\sp{\ast}\col \sS\to\Snx} induced by the ordinal sum
\w{\ordin\sb{n}\col \Delta^{n}\to\Delta} (cf.\ \cite[\S 2]{EPorJ}). Thus
\begin{myeq}\label{eqorn}
(\osl{n} X)_{p\sb{1}\dotsc p\sb{n}}~:=~X_{n-1+p\sb{1}+\dotsc p\sb{n}}
\end{myeq}

If we define \w{\ovt{i}{n-1}\col \Sx{2}\to\Snx} for a bisimplicial set
$X$ by applying \w{\osl{n-1}} to $X$ in each simplicial dimension in
the $i$-th direction \wb{i=1,2} (cf.\ \S \ref{nsimp}(h)), we have:
\begin{myeq}\label{eqovt}
\osl{n} X~=~\ovt{2}{n-1}\osl{2}X~.
\end{myeq}

See Figure \ref{fort} for a depiction of \w[,]{\osl{3}X} where the
vertical direction is first, the diagonal is second, and the horizontal is
third.

The bisimplicial set \w{\osl{2} X} appears in Figure \ref{for}:  this means that
if we choose the vertical direction to be first and the horizontal to be
second, then

\begin{myeq}\label{eqorder}
(\osl{2} X)^{(1)}_{i}~=~\Dec^{i+1}X~\hspace*{7mm}\text{and}\hspace*{3mm}
(\osl{2} X)^{(2)}_{i}~=~(\Decp)^{i+1}X~.
\end{myeq}
\end{mysubsection}
%
%
\begin{lemma}\label{lores}
For any  simplicial set \w[,]{X\in\sS} there is a natural weak
equivalence \w[.]{\var\lo{n}\col \Dlo{n}\osl{n} X\to X}
\end{lemma}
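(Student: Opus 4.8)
The plan is to prove this by induction on $n$, using the decalage description of $\osl{n}$ in terms of $\osl{2}$ provided by equation \wref{eqovt}, together with the basic homotopical properties of the decalage comonad recorded in Remark \ref{rdex}.

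First I would establish the base case $n=2$ directly. By \wref{eqorder}, the bisimplicial set $\osl{2}X$ has $(\osl{2}X)\up{1}\sb{i}=\Dec^{i+1}X$ in the first direction. The key classical fact is that for any Kan complex $X$, the counit $\var\col \Dec X\to X$ is a trivial fibration (indeed, $\Dec X$ is fibrant by Remark \ref{rdex}, it has the section $\sigma$, and a standard extra-degeneracy argument shows $\var$ is a weak equivalence, hence a trivial fibration since it is a fibration). Consequently $\var^{k}\col \Dec^{k}X\to X$ is a trivial fibration for every $k\geq 1$, so the augmented simplicial object $(\osl{2}X)\up{1}\to\cons X$ in $\sS$ is a levelwise weak equivalence onto a constant simplicial object, and moreover is ``contractible'' in the sense of admitting extra degeneracies. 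Taking the diagonal of a bisimplicial set sends levelwise weak equivalences to weak equivalences (by the Bousfield--Friedlander / Reedy argument, or since everything in sight is fibrant one can use the realization-of-the-simplicial-diagram description); applied to the augmentation $\osl{2}X\to\cons\up{2}X$ this yields a weak equivalence $\Dlo{2}\osl{2}X=\Diag(\osl{2}X)\to\Diag(\cons\up{2}X)=X$, which one checks is the composite of $d$'s, i.e.\ $\var\lo{2}$. Naturality is clear since every construction used is functorial.

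For the inductive step, assume the statement for $n-1$. Using \wref{eqovt}, write $\osl{n}X=\ovt{2}{n-1}\osl{2}X$, meaning: apply $\osl{n-1}$ in the second simplicial direction of the bisimplicial set $\osl{2}X$. Now compute the $n$-fold diagonal in two stages, collapsing first the last $n-1$ directions (those produced by $\osl{n-1}$) and then the remaining first direction. In the first stage, for each fixed value $i$ of the first simplicial coordinate we have the $(n-1)$-fold simplicial set $\osl{n-1}\big((\osl{2}X)\up{1}\sb{i}\big)=\osl{n-1}(\Dec^{i+1}X)$, to which the induction hypothesis applies, giving a natural weak equivalence $\var\lo{n-1}\col \Dlo{n-1}\osl{n-1}(\Dec^{i+1}X)\to \Dec^{i+1}X$. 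Since $\Dec^{i+1}X$ is fibrant, this is a levelwise (in $i$) weak equivalence of simplicial sets; applying $\Diag$ (the remaining first direction) and invoking again that $\Diag$ preserves levelwise weak equivalences, we get $\Dlo{n}\osl{n}X\to \Diag_{i}(\Dec^{i+1}X)=\Dlo{2}\osl{2}X$. Composing with $\var\lo{2}$ from the base case produces the desired $\var\lo{n}\col \Dlo{n}\osl{n}X\to X$, and unravelling the face maps confirms it is the expected composite of last-face operators. Naturality propagates from naturality of $\var\lo{n-1}$, $\var\lo{2}$, and $\Diag$.

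The main obstacle is the homotopical bookkeeping around the diagonal: one must be careful that ``$\Diag$ preserves levelwise weak equivalences'' is being applied to bisimplicial sets that are genuinely levelwise fibrant (or otherwise in a situation where this holds), and that the two-stage computation of $\Dlo{n}$ via $\Dlo{2}$ and $\Dlo{n-1}$ is compatible with the bisimplicial decomposition \wref{eqovt} of $\osl{n}$. The fibrancy needed is exactly what Remark \ref{rdex} supplies for the decalage iterates $\Dec^{k}X$, so the induction stays inside the class of fibrant objects; this is the point one should check carefully, but it is not deep. Everything else is naturality and the classical contractibility of decalage.
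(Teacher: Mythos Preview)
Your inductive scheme matches the paper's, but the $n=2$ base case contains a real error. The claim that $\var\col \Dec X\to X$ is a weak equivalence is false: the extra-degeneracy argument shows that $\Dec X$ is simplicially homotopy equivalent to the \emph{discrete} simplicial set $\cons X_{0}$, not to $X$. For instance, if $X$ is the nerve of the group $\bZ$ (a Kan complex with the homotopy type of $S^{1}$), then $\Dec X$ is contractible, so $\var$ cannot be a weak equivalence. Hence $\var^{k}\col \Dec^{k}X\to X$ is not a weak equivalence either, your map $(\osl{2}X)\up{1}\to\cons X$ is not levelwise a weak equivalence, and the ``diagonal of a levelwise weak equivalence'' step does not apply as written.

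The paper repairs exactly this point by changing the target: it maps $\osl{2}X$ to $\cons\up{2}X$, constant in the \emph{other} direction on $X$. Sliced at level $i$, the resulting map of simplicial sets is $\Dec^{i+1}X\to\cons X_{i}$ (the augmentation of $\Dec(\Dec^{i}X)$ onto $(\Dec^{i}X)_{0}=X_{i}$), and \emph{this} is a genuine weak equivalence for every simplicial set $X$, by the very extra-degeneracy fact you cite. Now the levelwise argument goes through, and no Kan hypothesis on $X$ is needed (the lemma is stated for arbitrary $X$, and the fact that diagonals preserve levelwise weak equivalences requires no fibrancy), so the concerns in your final paragraph are moot. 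Your parenthetical that the resolution ``admits extra degeneracies'' is correct and in fact points to an alternative proof of the base case --- but that is the extra degeneracy in the \emph{resolution} direction of the comonad, a different mechanism from the levelwise one you actually invoke. Once the base case is corrected, your inductive step goes through essentially unchanged.
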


\begin{proof}
By induction on \w[.]{n\geq 2}

For \w[,]{n=2} as noted in \S \ref{sdec}, the counit \w{\var\col \Dex\to X} induces
a map of bisimplicial sets \w{\wvar\col \osl{2}X\to\cons\up{2}X} which is a weak
equivalence of horizontal simplicial sets \w{(\Decp)^{i+1}X\to\cons X_{i}}
(where \w{\cons X_{i}} is the constant simplicial set on the set \w[),]{X_{i}}
using \wref[.]{eqorder} Thus  by \cite[\S 2.6]{DuskSM} it induces a weak
equivalence
$$
\var\lo{2}\col \Dlo{2}\osl{2}X~\to~\Dlo{2}\cons\up{2}X~=~X~.
$$

In the induction stage we have a weak equivalence
$$
\var\lo{n-1}\col \Dlo{n-1}\osl{n-1} Y\to Y~,
$$
\noindent natural in $Y$.
Using \wref[,]{eqovt} and applying \w{\var\lo{n-1}} to \w{\osl{n}X}
in each simplicial dimension (in direction 2), we obtain a map of bisimplicial
sets:
$$
\oDlo{n-1}\up{2}\osl{n}X~=~\oDlo{n-1}\up{2}\ovt{2}{n-1}\osl{2}X~
\xrw{\ovar\lo{n-1}\up{2}}~\osl{2}X
$$
\noindent which is a weak equivalence in each simplicial dimension in direction
2, by the induction hypothesis. Therefore, after applying \w{\Dlo{2}} we obtain a
weak equivalence of simplicial sets
$$
\Dlo{2}\ovar\lo{n-1}\up{2}\col \Dlo{n}\osl{n}X~\to~\Dlo{2}\osl{2}X~.
$$
\noindent Post-composing with \w{\var\lo{2}\col \Dlo{2}\osl{2} X\to X} yields the
required weak equivalence \w[.]{\var\lo{n}\col \Dlo{n}\osl{n} X\to X}
\end{proof}

\supsect{\protect{\ref{cfng}}.B}{$\mathbf{n}$-Fold groupoids}

Recall that a \emph{groupoid} is a small category $G$ in which all morphisms
are isomorphisms. It can thus be described by a diagram of
sets:
\mydiagram[\label{eqgpoid}]{
G_{1}\times_{G_{0}}G_{1}\ar@<2ex>[rr]^{d_{0}}\ar[rr]^{m}\ar@<-1.5ex>[rr]_{d_{2}}&&
G_{1}\ar@<0.7ex>[rr]^{s} \ar@<-.7ex>[rr]_{t} \ar@/^2pc/[ll]^{s_{1}}
\ar@/_2pc/[ll]_{s_{0}} &&  G_{0} \ar@/_1.3pc/[ll]_{i}
}
\noindent where \w{G_{0}} is the set of objects of $G$ and \w{G_{1}} the
set of arrows. Here $s$ and $t$ are the source and target functions, $i$
associates to an object its identity map, \w{d_{0}} and \w{d_{2}} are
the respective projections, with sections \w{s_{0}} and \w[,]{s_{1}}
and $m$ is the composition \wh all satisfying appropriate identities.
Let \w{\Gpd} denote the category of small groupoids (a full subcategory of
the category \w{\Cat} of small categories).

We can think of \wref{eqgpoid} as the $2$-skeleton of a simplicial
set (with \w[,]{G_{2}:=G_{1}\times_{G_{0}}G_{1}} and
\w[).]{d_{1}=m\col G_{2}\to G_{1}} The \emph{nerve} functor
\w{\cN\col \Gpd\to\sS} (cf.\ \cite{SegCC}) assigns to $G$ the
corresponding $2$-coskeletal simplicial set \w[,]{\cN G} so:
\begin{myeq}\label{eqcoskel}
(\cN G)_{n}~:=~
G_{1}\times_{G_{0}}G_{1}~\overset{n}{\cdots}~G_{1}\times_{G_{0}}G_{1}
\end{myeq}
\noindent for all \w[,]{n\geq 2} with face and degeneracy maps determined
by the associativity and unit laws for the composition $m$.
%

\begin{defn}\label{nfundg}
If $\cV$ is any category with pullbacks, an \emph{internal groupoid}
in $\cV$ is a diagram in $\cV$ of the form \wref[,]{eqgpoid} satisfying
the same axioms (see \cite[I, \S 8.1]{BorcH}). The category of
internal groupoids in $\cV$ is denoted by \w[.]{\Gpd(\cV)} Thus an
(ordinary) groupoid is an internal groupoid in \w[.]{\Set}

When \w{\cV} is locally finitely presentable, the nerve functor
\w{N\col \Gpd(\cV) \to [\Dop, \cV]} has a left adjoint,
the fundamental internal groupoid (see \cite[II, \S 5.5-5.6]{BorcH}).

For each \w[,]{n\geq 1} an \emph{$n$-fold groupoid} is defined inductively to
be an internal groupoid  in the category \w{\cV=\Gpd^{n-1}} of \wwb{n-1}fold
groupoids (where \w[),]{\Gpd^{0}:=\Set} so
$$
\Gpd^{n}~:=~\Gpd(\Gpd^{n-1})~.
$$
\end{defn}

\begin{defn}\label{dfundg}
Let \w{\hpi\col \sS\to\Gpd} denote the \emph{fundamental groupoid} functor.
See \cite[\S I.8]{GJarS} and \wref{nfundg} when \w[.]{\cV = \Set}
When $X$ is fibrant, \w{\hpi X} has the simple form described in
\cite[\S I.8]{GJarS}. If \w{X\in\Snx} is an $n$-fold simplicial set, then for each
\w[,]{1\leq i\leq n} \w{\hpu{i}X} is the \wwb{n-1}fold simplicial
object in \w{\Gpd} obtained by applying the fundamental groupoid
functor \w{\hpi} in the $i$-th direction \wh that is, objectwise to the
\ww{\Dox{n-1}}-indexed diagram \w[.]{X\up{i}}
\end{defn}

\begin{notation}\label{addnote}
As in \S \ref{nsimp}(d), for each \w[,]{1\leq i\leq n} let
\w{\Nup{i}\col \Gpd^{n}\to\sC{\Gpd^{n-1}}} denote the nerve functor in the
$i$-th direction. More generally, for any $k$ of
the $n$ indices \w[,]{1\leq i_{1}<i_{2}<\dotsc<i_{k}\leq n}
\w{\Nup{i_{1},i_{2},\dotsc,i_{k}}\col \Gpd^{n}\to\sCx{k}{\Gpd^{n-k}}} takes an
$n$-fold groupoid $G$ to a $k$-fold simplicial object in \wwb{n-k}fold
groupoids by applying the nerve functor in the indicated $k$ directions. In
particular, \w{\Nup{\widehat{\imath}}} means that we take nerves in
all but the $i$-th direction.
\end{notation}

\begin{defn}\label{dcspace}
The \emph{multinerve}
$$
\Nlo{n}\col \Gpd^{n}\to\Snx
$$
\noindent is defined by applying \w{\Nup{i}} for \w{1\leq i\leq n} to obtain
the $n$-fold simplicial set \w[.]{\Nlo{n}G:=\Nup{1}\Nup{2}\dotsc\Nup{n}G} We say
that an $n$-fold groupoid $G$ is \emph{discrete} if \w{\Nlo{n}G} is
a constant $n$-fold simplicial set. It is readily verified that we have an
adjoint pair \w[:]{\Plo{n}\dashv\Nlo{n}}
\begin{myeq}\label{eqadj}
\Snx~\adj{\Nlo{n}}{\Plo{n}}~\Gpd^{n}
\end{myeq}
where \w{\Plo{n}} is the left adjoint to \w{N\lo{n}} as in \wref{nfundg}
with \w[.]{\cV = \Gpd^{n-1}}
\end{defn}

\begin{defn}\label{dwecs}
The composite of \w{\Nlo{n}} with \w{\Dlo{n}} (cf.\ \S \ref{nsimp}(f))
yields the \emph{diagonal nerve} functor \w[,]{\dN:=\Dlo{n}\Nlo{n}} and its
geometric realization \w{\diN G:=\|\dN G\|\in\Top} is called the
\emph{classifying space} of $G$.

A map of $n$-fold groupoids \w{f\col G\to G'} is called a
\emph{geometric weak equivalence} if it induces a weak equivalence of
simplicial sets \w{\dN f\col \dN G\to\dN G'}  (that is, a homotopy equivalence
of topological spaces on geometric realizations
\w[).]{\diN f\col \diN G\to\diN G'}
\end{defn}

\begin{remark}\label{rgwe}
Since the diagonal of a bisimplicial set is its homotopy colimit, a map
\w{f\col X\to Y} in \w{\sCx{2}{\Set}} which is a weak equivalence
\w{f\sb{k}\col X\sb{k}\to Y\sb{k}} in each simplicial dimension \w{k\geq 0}
is a geometric weak equivalence (cf.\ \cite[IV, Proposition 1.7]{GJarS}).
Thus by induction the same is true for a map
\w{f\col X\to Y} in \w{\sCx{n}{\Set}} which is a geometric weak equivalence
in each simplicial dimension.
\end{remark}

\begin{defn}\label{ddiscgpd}
If \w{G\in\Gpd^{n-1}} is an \wwb{n-1}fold groupoid, then \w{\cons\up{n}G} denotes
the $n$-fold groupoid which, as a groupoid object in \w[,]{\Gpd^{n-1}} is discrete
on $G$. In particular, if $A$ is a set, \w{A^{d}\lo{n}} denotes the discrete
$n$-fold groupoid \w{\cons\up{1}\dotsc\cons\up{n} A} on $A$. For an
$n$-fold groupoid $G$ we let \w{G\sp{d}} denote
the discrete $n$-fold groupoid \w[.]{(\pi\sb{0}\diN G)\sp{d}\lo{n}}
\end{defn}

\begin{notation}\label{nbpz}
If \w{G\in\Gpd^{n}} is an $n$-fold groupoid for \w[,]{n\geq 2} it is a
groupoid object in \wwb{n-1}fold groupoids (cf.\ \S \ref{nfundg}):
that is, it is described by a diagram
\w{G\up{1}_{1}\toto G\up{1}_{0}} in \w[,]{\Gpd^{n-1}} as in \wref[.]{eqgpoid}
Thus it has an \wwb{n-1}fold groupoid of objects denoted by
\w[,]{G\up{1}_{0}} in the notation of \S \ref{nsimp}(a) (which in turn has its
\wwb{n-2}fold groupoid of objects \w{G\up{1,2}_{00}} and \wwb{n-2}fold
groupoid of morphisms \w[).]{G\up{1,2}_{01}}
Similarly, the \wwb{n-1}fold groupoid of morphisms of $G$ (in the first
direction) is denoted by \w[.]{G\up{1}_{1}}

More explicitly, $G$ may be
described by a diagram in \w{\Gpd^{n-2}} of the form:
\mydiagram[\label{eqdgpd}]{
& G_{11}\times_{G_{10}}G_{11}\ar[d]^{c^{1\ast}} \ar@<0.5ex>[r]\ar@<-0.5ex>[r] &
G_{01}\times_{G_{00}}G_{01}\ar[d]^{c^{0\ast}} \\
G_{11}\times_{G_{01}}G_{11}\ar[r]^<<<<<<<{c^{\ast 1}}
\ar@<0.5ex>[d]\ar@<-0.5ex>[d] &
G_{11} \ar@<0.5ex>[d]^{d_{0}^{1\ast}} \ar@<-0.5ex>[d]_{d_{1}^{1\ast}}
\ar@<0.5ex>[r]^{d_{0}^{\ast 1}} \ar@<-0.5ex>[r]_{d_{1}^{\ast 1}}  &
G_{01} \ar@<0.5ex>[d]^{d_{1}^{0\ast}} \ar@<-0.5ex>[d]_{d_{1}^{0\ast}}
\ar@/_1.5pc/[l] \\
G_{10}\times_{G_{00}}G_{10} \ar[r]^<<<<<<<{c^{\ast 0}} &
G_{10} \ar@<0.5ex>[r]^{d_{0}^{\ast 0}} \ar@<-0.5ex>[r]_{d_{1}^{\ast 0}}
\ar@/^2pc/[u] &
G_{00}~. \ar@/^1.5pc/[l] \ar@/_2pc/[u]
}
\noindent Here we omit throughout the upper index \w[,]{(1,2)} which indicates
that we are showing only the first two directions of $G$.

More generally, for each \w{i\geq 2} we let
\begin{myeq}\label{eqextend}
G_{i1}:=\pro{G_{11}}{G_{01}}{i} \hsm \text{and}\hsm
G_{i0}:=\pro{G_{10}}{G_{00}}{i}
\end{myeq}
\noindent as limits of \wwb{n-2}fold groupoids, with
\w{d^{i,\ast}_{0},d^{i,\ast}_{1}\col G_{i1}\to G_{i0}} induced by the
source and target maps.
\end{notation}

\begin{remark}\label{rnfoldg}
Using this convention, an $n$-fold groupoid $G$ may be thought of as a
diagram of sets with objects \w{G\sb{i_{1},\dotsc,i_{n}}} for each
\w[,]{(i_{1},\dotsc,i_{n})\in\bN^{n}=\Obj(\bDelta^{n})} where all the maps
in the diagram are induced by those of \wref{eqgpoid} and the structure maps
for the limits \wref{eqextend} (in each of the $n$ directions).
\end{remark}
The following technical fact about \w{\osl{n}} will be used in Subsection
\ref{cnt}.A below:

\begin{lemma}\label{lnuph}
For any fibrant simplicial set \w{X\in\sS} and \w[,]{n\geq 2} we have:
\begin{myeq}\label{eqnuph}
\ovt{2}{n-1}\Nup{2}\hpu{2} \osl{2}X~=~\Nup{n}\hpu{n}\osl{n}X~.
\end{myeq}
\end{lemma}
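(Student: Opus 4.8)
The plan is to reduce \eqref{eqnuph} to the assertion that $\ovt{2}{n-1}$ commutes with the operator ``$N\hpi$ applied in a single simplicial direction'', the two acting in complementary directions. First I would use \eqref{eqovt} to rewrite the right-hand side of \eqref{eqnuph} as $\Nup{n}\hpu{n}\ovt{2}{n-1}\osl{2}X$; it then suffices to establish the identity of functors $\Sx{2}\to\Snx$, namely $\ovt{2}{n-1}\circ\Nup{2}\hpu{2}=\Nup{n}\hpu{n}\circ\ovt{2}{n-1}$, and to evaluate it at $\osl{2}X$. The reason this should hold is that, in the notation of \S\ref{nsimp}(h), $\ovt{2}{n-1}$ applies $\osl{n-1}$ to the first-direction simplicial set of a bisimplicial set, fibrewise over the second direction, so it modifies only the first simplicial direction (expanding it into the new directions $1,\dotsc,n-1$); whereas $\Nup{2}\hpu{2}$ replaces the second-direction slices by their images under $N\hpi$, and that second direction is carried along unchanged by $\ovt{2}{n-1}$, becoming the $n$-th direction of the output.

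To turn this into a proof I would record two elementary facts. First, from the defining formula \eqref{eqorn} for $\osl{n-1}$: for $Z\in\Sx{2}$ and indices $q_1,\dotsc,q_{n-1}$ one has, on components, $(\ovt{2}{n-1}Z)_{q_1,\dotsc,q_{n-1},j}=Z_{m,j}$ with $m:=n-2+q_1+\dotsb+q_{n-1}$, where the structure maps of $\ovt{2}{n-1}Z$ in the $n$-th direction are precisely those of $Z$ in its second direction, and those in directions $1,\dotsc,n-1$ are the first-direction structure maps of $Z$ reindexed along $\ordin_{n-1}$. Second, by Definition \ref{dfundg} and Notation \ref{addnote}, applying $\hpu{k}$ and then $\Nup{k}$ replaces the $k$-th-direction slices of a multisimplicial set, fibrewise over the other directions, by their images under $N\hpi$; in particular $(\Nup{2}\hpu{2}\osl{2}X)_{i,j}=\bigl(N\hpi\,(\osl{2}X)\up{1}_{i}\bigr)_{j}$, while the $n$-th-direction slices of $\Nup{n}\hpu{n}\osl{n}X$ are the $N\hpi$-images of those of $\osl{n}X=\ovt{2}{n-1}\osl{2}X$. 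Substituting the first fact into the second for both composites shows that each side of \eqref{eqnuph} has $n$-th-direction slice $N\hpi\,(\osl{2}X)\up{1}_{m}$ over every $(q_1,\dotsc,q_{n-1})$; and since reindexing along $\ordin_{n-1}$ commutes with applying $N\hpi$ objectwise, the structure maps in the remaining directions agree as well, so the two $n$-fold simplicial sets coincide. (For $n=2$ there is nothing to prove, since $\osl{1}=\Id$ forces $\ovt{2}{1}=\Id$, but the argument is uniform in $n\geq 2$.)

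The only genuine content here is the bookkeeping of simplicial directions, and the step I expect to cost the most care is the first fact: one must check that under $\osl{n}X=\ovt{2}{n-1}\osl{2}X$ the $n$-th direction of $\osl{n}X$ is exactly the second direction of $\osl{2}X$ transported unchanged --- so that $\hpu{n}$ on the left and $\hpu{2}$ on the right are literally the same application of $\hpi$, rather than applications in the two a priori different d\'{e}calage directions of \eqref{eqorder} (which are merely anti-isomorphic, via the order-reversing involution of \S\ref{rinvolution}) --- while the $\osl{n-1}$-expansion affects only direction $1$. Once this identification is pinned down, the equality is purely formal and uses no homotopy-theoretic input; fibrancy of $X$ enters only through our standing hypothesis, ensuring by Remark \ref{rdex} that the d\'{e}calage slices $(\osl{2}X)\up{1}_{i}$ to which $\hpi$ is applied are Kan complexes, so that $\hpi$ has the explicit form of Definition \ref{dfundg}.
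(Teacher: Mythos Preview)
Your argument is correct and takes a genuinely different route from the paper's.

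The paper proceeds by induction on $n$: the cases $n=2$ (trivial) and $n=3$ (checked by hand against Figures~\ref{for} and~\ref{fort}) serve as the base, and the step $n\geq 4$ unwinds via \eqref{eqovt}, applies the induction hypothesis for $n-1$, rewinds via \eqref{eqovt} for $n=3$, and then invokes the already-established case $n=3$ to close the loop. Your approach instead reduces \eqref{eqnuph} to the functor identity $\ovt{2}{n-1}\circ\Nup{2}\hpu{2}=\Nup{n}\hpu{n}\circ\ovt{2}{n-1}$ on $\Sx{2}$, which holds because the two operations act in complementary simplicial directions. The one substantive check --- that under the identification \eqref{eqovt} the second direction of $\osl{2}X$ is carried to the $n$-th direction of $\osl{n}X$, with the inner $\osl{n-1}$ filling directions $1,\dotsc,n-1$ --- is exactly the one you flag, and it follows by tracing face maps through the ordinal-sum description \eqref{eqorn}: the direction-$k$ face map $d^{(k)}_{\ell}$ of $\osl{m}$ corresponds to $d_{(k-1)+p_{1}+\dotsb+p_{k-1}+\ell}$ of $X$, so direction $n$ of $\osl{n}X$ and direction $2$ of $\osl{2}X$ both pick out the \emph{top} block of face maps. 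Once that is pinned down your argument is purely formal and avoids induction altogether, which makes the bookkeeping noticeably lighter than in the paper's proof. One small overstatement: the identity \eqref{eqnuph} actually holds for arbitrary $X$, since $\hpi$ is a well-defined functor on all of $\sS$; fibrancy is a standing hypothesis of the lemma needed only for its later uses (e.g.\ Theorem~\ref{tadjoint}), not for the equality itself.
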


\
\begin{proof}
\
%
%
%
\begin{figure}[ht]
\begin{center}
$$
\entrymodifiers={+++[o]}
\xymatrix @R=20pt @C=40pt{
X_{5}\ar@<0.7ex>[rr]^{d_{5}} \ar@<-0.7ex>[rr]_{d_{4}} \ar@<0.7ex>[dd]^{d_{1}}
    \ar@<-0.7ex>[dd]_{d_{0}} \ar@<0.7ex>[rd]^{d_{3}} \ar@<-0.7ex>[rd]_{d_{2}} &&
X_{4} \ar@{-}[d]<0.7ex> \ar@{-}[d]<-0.7ex> \ar@<0.7ex>[rd]^{d_{3}}
     \ar@<-0.7ex>[rd]_{d_{2}}\\
& X_{4} \ar@<0.7ex>[rr]^(0.3){d_{4}} \ar@<-0.7ex>[rr]_(0.3){d_{3}}
     \ar@<0.7ex>[dd]^(0.3){d_{1}} \ar@<-0.7ex>[dd]_(0.3){d_{0}}&
    \text{  }\ar@<0.7ex>[d]^{d_{1}} \ar@<-0.7ex>[d]_{d_{0}}&
X_{3} \ar@<0.7ex>[dd]^{d_{1}} \ar@<-0.7ex>[dd]_{d_{0}}\\
X_{4} \ar@<0.7ex>[rd]^{d_{2}} \ar@<-0.7ex>[rd]_{d_{1}} \ar@{-}[r]<0.7ex>
   \ar@{-}[r]<-0.7ex>  &\text{  }\ar@<0.7ex>[r]^{d_{4}} \ar@<-0.7ex>[r]_{d_{3}}&
X_{3} \ar@<0.7ex>[rd]^{d_{2}} \ar@<-0.7ex>[rd]_{d_{1}} \\
& X_{3} \ar@<0.7ex>[rr]^{d_{3}} \ar@<-0.7ex>[rr]_{d_{2}}  &&
X_{2} }
$$
\end{center}
\caption{Corner of \ $\osl{3} X$}
\label{fort}
\end{figure}
By induction on \w[.]{n\geq 2}

When \w[,]{n=2} \w{\osl{n-1}} is the identity, so both sides of \wref{eqnuph} are
the same.

 For \w[,]{n=3} \w{\hpu{3}\osl{3}X} is obtained from Figure \ref{fort}
by replacing the left-hand square by
$$
\xymatrix{
X_{5}\bsim \ar@<0.7ex>[d]^{d_{1}} \ar@<-0.7ex>[d]_{d_{0}}
\ar@<0.7ex>[rr]^{d_{3}} \ar@<-0.7ex>[rr]_{d_{2}} &&
X_{4}\bsim \ar@<0.7ex>[d]^{d_{1}} \ar@<-0.7ex>[d]_{d_{0}}\\
X_{4}\bsim \ar@<0.7ex>[rr]^{d_{2}} \ar@<-0.7ex>[rr]_{d_{1}}
&& X_{3}\bsim
}
$$
\noindent and from Figure \ref{for} we see this is the same as first applying
\w{\cN\hpi} to \w{\osl{2}X} horizontally in each vertical dimension (which is
\w[),]{\Nup{2}\hpu{2}} and then taking \w{\osl{2}} vertically in each
horizontal dimension (which is \w[).]{\ovt{2}{2}}

For \w[,]{n\geq 4} we see that
\begin{equation*}
\Nup{n}\hpu{n}\osl{n}X~=~\Nup{n}\hpu{n}\ovt{2}{n-1}\osl{2}X~=~
\ovl{\Nup{n-1}\hpu{n-1}\osl{n-1}}\up{2}\,\osl{2}X
\end{equation*}
\noindent using \wref{eqovt} and the convention of \S \ref{nsimp}(d). Applying
the induction hypothesis \wref{eqnuph} for \w[,]{n-1} we see this is equal
to:
$$
\ovl{\ovt{2}{n-2}\Nup{2}\hpu{2}\osl{2}}\up{2}\,\osl{2}X
~=~\ovl{\ovt{2}{n-2}\Nup{2}\hpu{2}}\up{2}\ovt{2}{2}\,\osl{2}X~,
$$
\noindent and using \wref{eqovt} for \w[,]{n=3} we see this is
$$
\ovl{\ovt{2}{n-2}\Nup{2}\hpu{2}}\up{3}\,\osl{3}X~=~
\ovl{\ovt{2}{n-2}}\up{3}\Nup{3}\hpu{3}\,\osl{3}X~,
$$
\noindent where for a $3$-fold simplicial object $Z$, we have
$$
\ovl{\ovt{2}{n-2}\Nup{2}\hpu{2}}\up{3}Z=
\ovl{\ovt{2}{n-2}\Nup{2}\hpu{2}}\up{2}Z
$$
\noindent by our indexing convention \ref{nsimp}(h).

Now applying \wref{eqnuph} for \w[,]{n=3} we see this equals:
$$
\ovl{\ovt{2}{n-2}}\up{2}\ovt{2}{2}\Nup{2}\hpu{2}\,\osl{2}X~=~
\ovt{2}{n-1}\Nup{2}\hpu{2}\,\osl{2}X~,
$$
\noindent using \wref{eqovt} once more.
\end{proof}
%
\supsect{\protect{\ref{cfng}}.C}
{The fundamental $\mathbf{n}$-fold groupoid of a space}

We now introduce the central construction of our paper. Its internal
analogue in the category of groups is the fundamental
\ww{\cat\sp{n}}-group of a space, due to Bullejos, Cegarra, and Duskin
(see \cite[\S 2]{BCDusC}).

\begin{defn}\label{dladjoint}
We define \w{\Qlo{n}\col \sS\to\Gpd^{n}} to
be the composite
\begin{equation*}
\sS~\supar{\osl{n}}~\Snx~\supar{\Plo{n}}~\Gpd^{n}~,
\end{equation*}
\noindent for \w{\Plo{n}} the left adjoint to \w{\Nlo{n}} of \wref[.]{eqadj}
We define \w{\hQlo{n}\col \Top\to\Gpd^{n}} to be the composite
\begin{equation*}
\Top~\supar{\cS}~\sS~\supar{\Qlo{n}}~\Gpd^{n}~,
\end{equation*}
\noindent where \w{\cS\col \Top\to\sS} is the singular set functor
(cf.\ \cite[I, \S 1]{GJarS}, and call \w{\hQlo{n}T} the
\emph{fundamental $n$-fold groupoid} of \w[.]{T\in\Top}
\end{defn}

We shall show that if \w{Y\in\Snx} satisfies certain fibrancy conditions, then
\w{\Plo{n} Y} has a particularly simple form. These require that a certain
$2$-dimensional notion of fibrancy (introduced in \cite[\S 2]{BPaolT}) hold in
every bisimplicial bidirection. They hold for \w{Y=\osl{n} X} when
$X$ is fibrant (in particular, for \w[),]{X=\cS T} leading to a simple
expression for \w{\hQlo{n} T} in Theorem \ref{tadjoint} below.

\begin{defn}\label{dfibrant}
Let \w[.]{n\geq 2} An $n$-fold simplicial set \w{X\in\Snx} is
called \emph{\wwb{n,2}fibrant} if for each \w{1\leq i\neq j\leq n}
and \w[,]{\va\in\bDelta^{n-2}} the bisimplicial set  $Y$
obtained by applying the $2$-coskeleton functor to each
vertical simplicial set \w{X\up{i,j}(\va)_{k\bullet}} \wwh that is,
\w{Y_{k\bullet}:=\csk{2}X\up{i,j}(\va)_{k\bullet}} \wwh is a Kan complex for
\w[,]{k=0,1,2} and the horizontal face map
\w{d_{0}\col Y_{1\bullet}\to Y_{0\bullet}} is a fibration in \w[.]{\sS}
\end{defn}

\begin{defn}\label{dgfibrant}
Let \w{G\in\sCx{m}{\Gpd^{n-m}}} be an $m$-fold simplicial object in
\wwb{n-m}fold groupoids (cf.\ \S \ref{nfundg}). We say that $G$ is
\emph{\wwb{n,2}fibrant} if, after applying the nerve functor in each of the
\w{n-m} groupoid directions, the resulting $n$-fold simplicial set
\w{\Nup{1,2,\dotsc,n-m}G\in\Snx} is \wwb{n,2}fibrant in the sense of Definition
\ref{dfibrant}.
\end{defn}

We recall the following results from \cite{BPaolT} where the left adjoint
\w{\Pup{1}\col \sC{\Gpd}\to\Gpd^{2}} to the nerve
\w[,]{\Nup{1}\col \Gpd^{2}\to\sC{\Gpd}} is as in \wref{nfundg} with \w[.]{\cV=\Gpd}

%
%
\begin{prop}[\protect{\cite[Prop.\ 2.10]{BPaolT}}]\label{ptadjoint}
The left adjoint \w{\Pup{1}\col \sC{\Gpd}\to\Gpd^{2}} to the nerve
\w[,]{\Nup{1}\col \Gpd^{2}\to\sC{\Gpd}} when applied to a \wwb{2,2}fibrant
simplicial groupoid \w[,]{\Gd} is given by \w{\hpu{1}\Gd} (the functor
\w{\hpi} applied in the simplicial direction).
\end{prop}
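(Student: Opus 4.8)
The plan is to identify $\hpu{1}\Gd$, equipped with a canonical unit map, with the value at $\Gd$ of the left adjoint $\Pup{1}$ by checking the defining universal property directly on the full subcategory of \wwb{2,2}fibrant simplicial groupoids. Since the nerve $\Nup{1}\mathcal{K}$ of an internal groupoid $\mathcal{K}$ in $\Gpd$ is $2$-coskeletal, any morphism $\Gd\to\Nup{1}\mathcal{K}$ in $\sC{\Gpd}$ is determined by its restriction to $\sk{2}\Gd$, so only the $2$-truncation of $\Gd$ enters. Recall from Definition \ref{dfundg} and \cite[\S I.8]{GJarS} that for a Kan complex $K$ the fundamental groupoid $\hpi K$ has $K\sb{0}$ as set of objects and $K\sb{1}$ modulo the homotopy relation (two $1$-simplices are identified when they are the $d\sb{2}$- and $d\sb{1}$-faces of a $2$-simplex whose $d\sb{0}$-face is degenerate) as set of morphisms, with composites and inverses produced by filling inner, respectively outer, horns. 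Each of these operations makes sense internally to $\Gpd$, and the role of the \wwb{2,2}fibrancy hypothesis on $\Gd$ is exactly to guarantee that the $2$-coskeleta occurring in Definition \ref{dfibrant} are Kan complexes and that the relevant face map $d\sb{0}$ is a fibration, so that these horns can be filled ``with parameters'' at the level of the groupoids $G\sb{0},G\sb{1},G\sb{2}$, and so that the coequalizer in $\Gpd$ defining the groupoid of morphisms is the expected quotient. Carrying this out, one obtains an internal groupoid $\hpu{1}\Gd$ in $\Gpd$ — an object of $\Gpd^{2}$ — whose object of objects is $G\sb{0}$ and whose object of morphisms is $\Coeq(Z\rightrightarrows G\sb{1})$, where $Z\subseteq G\sb{2}$ is the subobject of $2$-simplices with degenerate $d\sb{0}$-face and the two maps are induced by $d\sb{2}$ and $d\sb{1}$.

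The second step is to verify that $\hpu{1}\Gd$ genuinely is a groupoid object in $\Gpd$: that the composition produced by inner-horn filling is independent of the chosen filler, is associative and unital, and admits inverses. This is the technical heart of the argument, and it is the transcription, into the internal language of $\Gpd$, of the classical verification that a Kan complex has a well-defined fundamental groupoid (cf.\ \cite[\S I.8]{GJarS}); at each step one invokes the fillers supplied by \wwb{2,2}fibrancy together with the existence and exactness in $\Gpd$ of the pullbacks over $G\sb{0}$ and $G\sb{1}$ that occur. At the same time one reads off the unit $\eta\col \Gd\to\Nup{1}\hpu{1}\Gd$ in $\sC{\Gpd}$: it is the identity in simplicial degree $0$, the coequalizer projection $G\sb{1}\to(\hpu{1}\Gd)\sb{1}$ in degree $1$, and is then forced in all higher degrees by $2$-coskeletality; that $\eta$ is a simplicial map follows from the simplicial identities in $\Gd$.

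Finally, I would verify universality. Given $\mathcal{K}\in\Gpd^{2}$ and $f\col \Gd\to\Nup{1}\mathcal{K}$ in $\sC{\Gpd}$, the component $f\sb{1}\col G\sb{1}\to\mathcal{K}\sb{1}$ coequalizes $Z\rightrightarrows G\sb{1}$: a $2$-simplex of $\Nup{1}\mathcal{K}$ with degenerate $d\sb{0}$-face exhibits two arrows of $\mathcal{K}$ as becoming equal after composition with an identity, and composition with an identity is the identity in the genuine groupoid $\mathcal{K}\sb{1}$; hence $f\sb{1}$ factors uniquely through the coequalizer, and together with $f\sb{0}$ on objects this assembles into a morphism $\bar f\col \hpu{1}\Gd\to\mathcal{K}$ of internal groupoids, compatibility with source, target, composition, and identities following because $f$ is simplicial and $\Nup{1}\mathcal{K}$ is $2$-coskeletal. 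Uniqueness of $\bar f$ with $\Nup{1}\bar f\circ\eta=f$ is immediate from the construction, and naturality in $\Gd$ is clear; this exhibits $\hpu{1}\Gd$ as $\Pup{1}\Gd$. The main obstacle is the second step: establishing, entirely inside $\Gpd$, that the internal horn-fillings exist and that the resulting composition obeys the groupoid axioms, which is precisely where the full strength of the \wwb{2,2}fibrancy hypothesis (Kan-ness of the pertinent $2$-coskeleta and the fibration condition on $d\sb{0}$) is consumed.
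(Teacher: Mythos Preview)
The paper does not give its own proof of this proposition: it is cited as \cite[Prop.~2.10]{BPaolT} and used as a black box, so there is no in-paper argument to compare against directly.

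That said, your plan is the natural one and is consistent with how the paper handles the $n$-fold generalization in Proposition~\ref{padjoint}: there too one first argues that $\hpu{i}X$ lands in $\Gpd^{n}$ (reducing to the present $n=2$ case and invoking \cite[(8.12)]{BPaolT} for the composition), and then verifies the adjunction identity $\Hom(\hpu{i}X,Y)\cong\Hom(X,\Nup{i}Y)$ using full faithfulness of the nerve. Your direct construction of the unit $\eta$ and the factorization of an arbitrary $f\col \Gd\to\Nup{1}\mathcal{K}$ through the coequalizer is precisely this argument unwound. You have also correctly isolated the technical crux: showing that the internal horn-fillings produce a well-defined, associative, unital, invertible composition on $\hpu{1}\Gd$ \emph{in} $\Gpd$. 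The one point worth sharpening is that this step implicitly requires $\hpi$ to preserve the pullback $G\sb{\bullet 1}\times\sb{G\sb{\bullet 0}}G\sb{\bullet 1}$, which is exactly the content of Lemma~\ref{ltpi} (proved in \cite[App.~A]{BPaolT}); the fibration condition on $d\sb{0}$ in the \wwb{2,2}fibrancy hypothesis is what makes that pullback-preservation go through.
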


%
%
\begin{prop}[\protect{\cite[Prop.\ 2.11]{BPaolT}}]\label{ptfibrant}
If \w{X\in\Sx{2}} is a \wwb{2,2}fibrant bisimplicial set, then \w{\hpu{1} X} is a
\wwb{2,2}fibrant simplicial groupoid.
\end{prop}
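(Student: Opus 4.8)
Proof proposal for Proposition \ref{ptfibrant} ($X \in \Sx{2}$ a $(2,2)$-fibrant bisimplicial set $\Rightarrow$ $\hpu{1}X$ is a $(2,2)$-fibrant simplicial groupoid).

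The plan is to unwind both occurrences of the term ``$(2,2)$-fibrant'' via Definitions \ref{dfibrant} and \ref{dgfibrant} and reduce everything to a statement about ordinary simplicial sets and the fundamental groupoid functor $\hpi$. Concretely, $\hpu{1}X$ is a simplicial object in $\Gpd$ obtained by applying $\hpi$ in the first (say, horizontal) direction, so it has one remaining simplicial direction (the second, vertical). To check that $\hpu{1}X$ is $(2,2)$-fibrant in the sense of Definition \ref{dgfibrant}, we must apply the nerve $\Nup{2}$ in the one groupoid direction and verify that the resulting bisimplicial set $\Nup{2}\hpu{1}X \in \Sx{2}$ is $(2,2)$-fibrant in the sense of Definition \ref{dfibrant}. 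Since here $n=2$ and $m=1$, the only pair of distinct directions $\{i,j\}$ is $\{1,2\}$, and $\bDelta^{n-2}=\bDelta^{0}$ is trivial, so there is a single Kan/fibration condition to check, not a family.

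The key steps, in order, are as follows. First I would identify the bisimplicial set $W := \Nup{2}\hpu{1}X$ explicitly: in bidegree $(p,q)$ we have $W_{pq} = (\cN(\hpi(X_{\bullet q})))_{p}$, where $X_{\bullet q}$ denotes the $q$-th vertical slice (a simplicial set in the horizontal direction). Thus vertically $W$ is built levelwise from nerves of fundamental groupoids of the horizontal slices of $X$. Second, I would recognize that the vertical simplicial direction of $W$ is exactly the one that was not touched by $\hpi$, so the vertical slices $W_{k\bullet}$ are, up to the $2$-coskeleton that appears in Definition \ref{dfibrant}, controlled by the vertical structure of $X$ combined with the levelwise application of $\cN\hpi$. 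Third — and this is where the real content lies — I would invoke Proposition \ref{ptfibrant}'s own hypothesis together with standard facts about $\hpi$ on Kan complexes (Remark following \ref{dfundg}, and \cite[\S I.8]{GJarS}): the assumption that $X$ is $(2,2)$-fibrant says that $\csk{2}$ applied to the vertical slices $X_{k\bullet}$ gives Kan complexes for $k=0,1,2$ and that $d_0\colon X_{1\bullet}\to X_{0\bullet}$ is suitably fibrant after $2$-coskeleton. I would then show that applying $\cN\hpi$ horizontally preserves these properties, using that $\cN\hpi$ of a Kan complex is a $2$-coskeletal Kan complex and that $\hpi$ sends horizontal fibrations of Kan complexes to fibrations of groupoids, hence $\cN\hpi$ sends them to fibrations of nerves. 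Finally I would assemble these to conclude the two required conditions of Definition \ref{dfibrant} for $W$: the $\csk{2}$ of $W_{k\bullet}$ is Kan for $k=0,1,2$, and $d_0\colon W_{1\bullet}\to W_{0\bullet}$ is a fibration.

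The main obstacle I expect is the interchange step: showing that applying the fundamental-groupoid-then-nerve operation in the horizontal direction commutes appropriately with the $2$-coskeleton in the vertical direction, and that it preserves the relevant Kan and fibration conditions. This is delicate because $\csk{2}$ is a vertical operation while $\cN\hpi$ is a horizontal one, and $\cN\hpi$ is only well-behaved on Kan complexes — so one must first know the vertical slices are Kan (which is where the $(2,2)$-fibrancy of $X$ enters) before the horizontal operation can be controlled. I anticipate that the cleanest route is to mimic closely the proof of \cite[Prop.\ 2.11]{BPaolT} cited as Proposition \ref{ptfibrant} in the source material — indeed, since this \emph{is} that proposition, the proof should largely consist of quoting \cite{BPaolT}, and the genuinely new work (if any) is only in re-checking that the single non-trivial instance of the fibrancy condition matches up under the identifications above. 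A secondary technical point to watch: one must use that for a fibration $f\colon K \to L$ of Kan complexes, the induced map $\hpi K \to \hpi L$ is a fibration of groupoids in the folk model structure, so that $\cN$ of it is a fibration of simplicial sets, which then feeds into the ``$d_0\colon Y_{1\bullet}\to Y_{0\bullet}$ is a fibration'' clause of Definition \ref{dfibrant}.
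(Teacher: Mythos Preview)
The paper does not supply its own proof here: Proposition~\ref{ptfibrant} is quoted directly from \cite[Prop.~2.11]{BPaolT}, as you yourself observe near the end of your proposal. So strictly speaking the ``paper's proof'' is the citation, and your reconstruction goes beyond what the present paper does.

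Your outline is on the right track, but one point needs correcting. Definition~\ref{dfibrant} quantifies over \emph{ordered} pairs $1\leq i\neq j\leq n$, so for $n=2$ there are two instantiations, not one as you claim. In one of them the ``vertical'' direction is the nerve direction of the groupoid, where the Kan and fibration conditions are essentially automatic (nerves of groupoids are $2$-coskeletal Kan complexes, and isofibrations of groupoids give Kan fibrations on nerves). In the other the ``vertical'' direction is the remaining simplicial direction of $\hpu{1}X$, and this is where the genuine work lies: one must show that the simplicial sets $q\mapsto(\cN\hpi X_{\bullet q})_{p}$ for $p=0,1,2$ have Kan $2$-coskeleta and that the relevant $d_{0}$ is a fibration, using the $(2,2)$-fibrancy of $X$. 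Your ``secondary technical point'' --- that $\hpi$ takes Kan fibrations of Kan complexes to isofibrations of groupoids --- is correct and is one of the ingredients; the other is that the quotient $X_{1q}\to X_{1q}/\!\sim$ defining morphisms in $\hpi X_{\bullet q}$ behaves well enough in the $q$-direction under the hypothesis on $X$. This is exactly the interchange difficulty you flagged, and it is handled in \cite{BPaolT}.
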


%
%
\begin{lemma}\label{ltpi}
If \w{\Gd} is a \wwb{2,2}fibrant simplicial groupoid (with simplicial
sets of objects \w{G\sb{\bullet0}}
and morphisms \w[),]{G\sb{\bullet1}} then
\w[.]{\Nup{2}\hpu{1}\Gd=\hpu{1}\Nup{2}\Gd}
\end{lemma}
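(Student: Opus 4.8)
The plan is to compute both sides of the claimed identity
\w{\Nup{2}\hpu{1}\Gd=\hpu{1}\Nup{2}\Gd} levelwise in the second (simplicial)
direction and show they agree. First I would unwind the notation: \w{\Gd} is a
simplicial object in \w{\Gpd} whose objects and morphisms assemble into
simplicial sets \w{G\sb{\bullet 0}} and \w[,]{G\sb{\bullet 1}} and which is
\wwb{2,2}fibrant. Applying \w{\hpu{1}} (the fundamental groupoid functor in the
primary simplicial direction) produces, by Proposition \ref{ptadjoint}, the
$2$-fold groupoid \w[;]{\Pup{1}\Gd} by Proposition \ref{ptfibrant} this is
itself a \wwb{2,2}fibrant simplicial groupoid, so we may safely take its nerve
\w{\Nup{2}} in the groupoid (now second) direction. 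On the other side, applying
\w{\Nup{2}} first replaces each groupoid \w{G\sb{n}} by the simplicial set
\w{N G\sb{n}} in the second direction, yielding a bisimplicial set
\w[;]{\Nup{2}\Gd} then \w{\hpu{1}} acts in the first direction on this
bisimplicial set, producing a simplicial groupoid.

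The key point is that both \w{\Nup{2}} and \w{\hpu{1}} act in \emph{different}
directions, so morally they should commute; the only obstruction is that
\w{\hpi} does not commute with limits in general, so one cannot naively swap
``levelwise nerve'' past ``fundamental groupoid''. The second step is therefore
to check the two constructions agree in simplicial degree $k$ of the second
direction. In degree \w{k=0} and \w{k=1} both sides evaluate to the object and
morphism groupoids \w{\hpu{1}G\sb{\bullet 0}} and \w{\hpu{1}G\sb{\bullet 1}}
respectively (by the description of \w{\Pup{1}} via \w{\hpi} objectwise, using
\wref{nfundg}), which matches directly. For \w{k\geq 2}, the $k$-th level of
\w{\Nup{2}} of a groupoid is an iterated pullback
\w{\pro{G\sb{\bullet 1}}{G\sb{\bullet 0}}{k}} as in \wref[,]{eqcoskel} so the
question reduces to whether
$$
\hpu{1}\left(\pro{G\sb{\bullet 1}}{G\sb{\bullet 0}}{k}\right)~\cong~
\pro{\hpu{1}G\sb{\bullet 1}}{\hpu{1}G\sb{\bullet 0}}{k}~,
$$
i.e.\ whether \w{\hpu{1}} preserves these particular pullbacks. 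This is exactly
where the \wwb{2,2}fibrancy hypothesis is used: the face map
\w{d_{0}\col G\sb{\bullet 1}\to G\sb{\bullet 0}} is a fibration of (coskeletal)
Kan complexes in the first direction, so the pullbacks defining the Segal
objects are homotopy pullbacks, and \w{\hpi} applied to a fibration between Kan
complexes commutes with such pullbacks (using the explicit description of
\w{\hpi} on fibrant simplicial sets from \cite[\S I.8]{GJarS}, as invoked in
Definition \ref{dfundg} and in \cite[\S 2]{BPaolT}).

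The third step is to assemble these levelwise isomorphisms into an isomorphism
of simplicial groupoids, which is routine once naturality in \w{[k]} is checked:
the face and degeneracy maps in the second direction on both sides are induced
from the groupoid structure maps of \w{G\sb{n}} and from the Segal
decomposition, and \w{\hpu{1}} respects these because it is a functor applied in
an orthogonal direction. I expect the main obstacle to be the second step —
verifying that \w{\hpu{1}} carries the iterated Segal pullbacks to the
corresponding pullbacks — and the cleanest route is to reduce to the binary case
\w{G\sb{\bullet 1}\times_{G\sb{\bullet 0}}G\sb{\bullet 1}} and cite the fact,
already implicit in Propositions \ref{ptadjoint}--\ref{ptfibrant} and their
source in \cite{BPaolT}, that \w{\hpi} of a pullback along a fibration of Kan
complexes is the pullback of the \w{\hpi}'s; the general \w{k\geq 2} case then
follows by induction on $k$, each stage again being a pullback along a fibration
because \wwb{2,2}fibrancy is preserved under taking Segal objects.
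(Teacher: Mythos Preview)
Your proposal is correct and follows essentially the same approach as the paper: both reduce the claim to the levelwise statement that \w{\hpi} preserves the iterated Segal pullbacks \w[,]{\pro{G\sb{\bullet1}}{G\sb{\bullet0}}{k}} with the cases \w{k=0,1} being trivial. The paper's proof is simply terser --- it notes that equality on objects is evident (since taking objects commutes with pullbacks) and then cites \cite[App.~A, following (8.13)]{BPaolT} directly for equality on morphisms, rather than giving the homotopy-pullback argument you sketch.
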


\begin{proof}
It suffices to show that for each \w[:]{k\geq 2}
\begin{myeq}\label{eqhpisg}
\hpi(\pro{G\sb{\bullet1}}{G\sb{\bullet0}}{k})~\cong~
\pro{\hpi(G\sb{\bullet1})}{\hpi(G\sb{\bullet0})}{k}~.
\end{myeq}
\noindent Since both sides are groupoids, we evidently have equality
on objects, and \wref{eqhpisg} holds on morphisms by
\cite[App.~A, following (8.13)]{BPaolT}.
\end{proof}

\begin{lemma}\label{lfibrant}
If \w{X\in\Snx} is \wwb{n,2}fibrant, then \w{\hpu{k} X} is
\wwb{n,2}fibrant.
\end{lemma}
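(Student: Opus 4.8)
The plan is to reduce the $n$-dimensional statement to the $2$-dimensional results already proved in \cite{BPaolT}, namely Propositions \ref{ptfibrant} and \ref{ltpi}, using the inductive definition of \wwb{n,2}fibrancy given in Definition \ref{dfibrant}. Recall that to check \wwb{n,2}fibrancy of an $n$-fold simplicial set one looks, for each ordered pair of distinct directions \w{(i,j)} and each multi-index \w[,]{\va\in\bDelta^{n-2}} at the bisimplicial set obtained by restricting to directions $i,j$ and fixing the other coordinates; one needs the first three levels (after $2$-coskeleton) to be Kan complexes and the horizontal face map \w{d_0} between the first two to be a fibration. So the whole notion is really a family of conditions on bisimplicial slices, and the functor \w{\hpu{k}} acts in direction $k$ only.

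First I would split into two cases. When \w[,]{k\notin\{i,j\}} applying \w{\hpu{k}} does not touch directions $i,j$ at all: it only changes the objects \w{X\up{i,j}(\va)} by replacing \w{\va}-indexed diagrams of simplicial sets by \w{\va}-indexed diagrams of groupoids. Passing to nerves in the remaining directions to test fibrancy (Definition \ref{dgfibrant}), the relevant bisimplicial slice of \w{\hpu{k}X} is exactly the bisimplicial slice of $X$ in directions $i,j$ (now with a groupoid-nerve inserted in direction $k$), so the Kan and fibration conditions are inherited verbatim from the \wwb{n,2}fibrancy of $X$. The second, essential case is \w{k\in\{i,j\}}, say \w[;]{k=i} here applying \w{\hpu{i}} to the bisimplicial slice \w{X\up{i,j}(\va)} turns it into a simplicial groupoid in direction $i$, and we must show the resulting object is again \wwb{2,2}fibrant in the sense required. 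This is precisely the content of Proposition \ref{ptfibrant} (which says \w{\hpu{1}} of a \wwb{2,2}fibrant bisimplicial set is a \wwb{2,2}fibrant simplicial groupoid), together with Lemma \ref{ltpi} (which lets one commute \w{\hpu{i}} past the nerve in direction $j$, so that after taking nerves in all groupoid directions one recovers an honest \wwb{n,2}fibrant $n$-fold simplicial set). One must also use Proposition \ref{ptfibrant} applied with the roles of the two bisimplicial directions possibly swapped, invoking the involution $I^{\ast}$ of \S \ref{rinvolution} and Remark \ref{rinvolution} if the ``horizontal'' direction of the relevant slice is the one being hit by \w[.]{\hpu{i}}

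The main obstacle is bookkeeping: keeping straight which of the $n$ directions is being acted on by \w{\hpu{k}}, which pair \w{(i,j)} is being used to test fibrancy, and in which order nerves are applied, so that Lemma \ref{ltpi}'s identity \w{\Nup{2}\hpu{1}\Gd=\hpu{1}\Nup{2}\Gd} can be applied in each of the \w{n-2} ``spectator'' groupoid directions to move the \w{\hpu{i}} to the outside. Once that commutation is justified (by iterating Lemma \ref{ltpi}), the fibrancy conditions to be verified are literally the conclusions of Propositions \ref{ptfibrant} and, in the spectator case, trivially inherited ones, so no genuinely new homotopical input is needed beyond what \cite{BPaolT} supplies. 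I would therefore organize the write-up as: (1) unwind Definition \ref{dgfibrant} to a statement about bisimplicial slices; (2) dispose of the case \w{k\notin\{i,j\}} by naturality; (3) in the case \w[,]{k\in\{i,j\}} commute \w{\hpu{k}} to the outside using Lemma \ref{ltpi} in the spectator directions and then apply Proposition \ref{ptfibrant} (with Remark \ref{rinvolution} to handle the orientation) to the resulting $2$-dimensional slice.
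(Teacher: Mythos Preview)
Your core strategy matches the paper's: both reduce to the $2$-dimensional Proposition~\ref{ptfibrant} applied slicewise. The paper's proof, however, is a two-line argument: since each bisimplicial slice \w{X\up{i,j}(\va)} satisfies the hypotheses of Proposition~\ref{ptfibrant}, applying \w{\hpu{k}} yields a \wwb{2,2}fibrant object, and this is all that is claimed.

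Your write-up adds machinery the paper does not use. In particular, Lemma~\ref{ltpi} is not needed here: you are applying \w{\hpu{k}} in a \emph{single} direction, so there is only one groupoid direction to take nerves in, and no commutation of \w{\hpu{k}} past nerves in ``spectator'' directions is required. (Lemma~\ref{ltpi} earns its keep later, in Proposition~\ref{padjoint} and Lemma~\ref{lnerve}, where several \w{\hpu{\bullet}}'s and \w{\Nup{\bullet}}'s interact.) Likewise the appeal to the involution \w{I^{\ast}} is unnecessary: \wwb{2,2}fibrancy in Definition~\ref{dfibrant} is symmetric in the two directions, so Proposition~\ref{ptfibrant} covers either orientation. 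Your case split \w{k\in\{i,j\}} versus \w{k\notin\{i,j\}} is reasonable bookkeeping, but the paper simply suppresses it; note, though, that your claim that in the latter case the slices are ``inherited verbatim'' glosses over the fact that after taking the nerve in direction $k$ the slice at \w{k}-level $1$ is a quotient and at higher levels an iterated pullback --- the paper is equally silent on this point.
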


\begin{proof}
By definition of \wwb{n,2}fibrancy, for each \w{\va\in\bDelta^{n-2}}
and \w[,]{1\leq i\neq j\leq n} the bisimplicial set \w{X\up{i,j}()}
satisfies the hypotheses of Proposition \ref{ptfibrant}. Hence,
applying \w{\hpu{k}} to it yields an \wwb{n,2}fibrant object of
\w[.]{\sCx{n-2}{\sC{\Gpd}}}
\end{proof}

%
%
\begin{prop}\label{padjoint}
For each \w[,]{1\leq i\leq n} \w[,]{\Pup{i}\col \sC{\Gpd^{n-1}}\to\Gpd^{n}}
the left adjoint to
\w{\Nup{i}\col \Gpd^{n}\to\sC{\Gpd^{n-1}}} of \wref[,]{eqadj}
when applied to an \wwb{n,2}fibrant simplicial \wwb{n-1}fold
groupoid $X$, is given by \w[.]{\Pup{i} X=\hpu{i} X}
\end{prop}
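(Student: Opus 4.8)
The plan is to prove this by induction on $n\geq 1$, reducing the $n$-fold case to the $2$-fold case already handled in \cite{BPaolT} (Proposition \ref{ptadjoint}), using the fact that all the relevant fibrancy conditions are preserved under applying $\hpu{k}$ (Lemma \ref{lfibrant} and Proposition \ref{ptfibrant}). The base case $n=1$ is trivial, since $\Pup{1}\col\sC{\Gpd^{0}}=\sS\to\Gpd$ is just the fundamental groupoid functor $\hpi=\hpu{1}$ by definition (Definition \ref{dfundg}), and there is nothing to check. The case $n=2$ is exactly Proposition \ref{ptadjoint}.

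For the inductive step, I would exploit the adjointness \wref{eqadj}, together with the way the nerve and its left adjoint interact with the identification $\Gpd^{n}=\Gpd(\Gpd^{n-1})$. Fix $i$ and an \wwb{n,2}fibrant simplicial \wwb{n-1}fold groupoid $X\in\sC{\Gpd^{n-1}}$. One has to produce a natural bijection
$$
\Hom\sb{\Gpd^{n}}(\hpu{i}X,\,H)~\cong~\Hom\sb{\sC{\Gpd^{n-1}}}(X,\,\Nup{i}H)
$$
for all $H\in\Gpd^{n}$. Without loss of generality take $i=1$ (the other directions are symmetric, using the notational conventions of \S \ref{nsimp}). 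The key observation is that, thinking of $X$ as a simplicial object in $(n{-}1)$-fold groupoids, the functor $\hpu{1}$ applies $\hpi$ in the primary simplicial direction objectwise on the $\Dox{n-2}$-indexed diagram of $\Gpd$'s obtained after taking nerves in the remaining $n-2$ directions, and the point is that this ``objectwise $\hpi$'' construction is precisely the value of the left adjoint $\Pup{1}$. The main technical ingredient is a compatibility of $\Pup{1}$ with the nerve functors in the other directions: concretely, for an \wwb{n,2}fibrant $X$, and for any of the $n-1$ remaining groupoid directions $\ell$, one needs
$$
\Nup{\ell}\,\Pup{1}X~=~\Pup{1}\,\Nup{\ell}X
$$
(as objects of $\sCx{2}{\Gpd^{n-2}}$). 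This is the higher-dimensional analogue of Lemma \ref{ltpi}, and it lets me check the adjunction ``one groupoid-direction at a time'': apply $\Nup{2,\dotsc,n}$ to reduce $H$ to an $n$-fold simplicial set, apply Lemma \ref{lfibrant}/Proposition \ref{ptfibrant} to see that everything in sight stays \wwb{n,2}fibrant, and then invoke Proposition \ref{ptadjoint} (the $n=2$ case) in the first two directions together with the induction hypothesis in the rest.

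I expect the main obstacle to be the bookkeeping needed to verify that $\Pup{1}$ commutes with the nerve functors $\Nup{\ell}$ in the other directions on \wwb{n,2}fibrant objects — essentially the assertion that $\hpi$ applied in one direction commutes with the formation of the iterated pullbacks $\pro{X\sb{1}}{X\sb{0}}{k}$ in the Segal presentation \wref{eqcoskel} of the nerve in another direction. On objects this is automatic; on morphisms it is where the fibrancy hypothesis is genuinely used, and in the $2$-fold case it was handled by the computation ``following (8.13)'' in the appendix of \cite{BPaolT} cited in the proof of Lemma \ref{ltpi}. For general $n$ this should follow formally from the $2$-fold statement by the same inductive slicing, but care is needed to ensure the \wwb{n,2}fibrancy hypothesis is exactly what propagates through each pullback — this is the step where I would be most careful to invoke Lemma \ref{lfibrant} at the right spot rather than re-proving it.
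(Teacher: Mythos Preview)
Your proposal is essentially correct and would work, but it is organized differently from the paper's argument, and the induction you set up is in fact unnecessary. The paper does not induct on $n$ at all: it fixes one groupoid direction $j\neq i$, applies the iterated nerve $\Nup{\widehat{\imath}\widehat{\jmath}}$ in all the remaining $n-2$ directions to obtain a $\Dox{n-2}$-indexed diagram $\tX$ of simplicial groupoids, and then applies the $n=2$ case (Proposition \ref{ptadjoint}) objectwise in $\va\in\bDelta^{n-2}$. The functoriality in $\va$ of the composition map from \cite[(8.12)]{BPaolT} shows that $\hpu{i}X$ is an internal groupoid in $\Gpd^{n-1}$, and the adjunction is then read off directly from full faithfulness of the iterated nerve together with the $2$-fold adjunction applied objectwise:
\[
\Hom_{\Gpd^{n}}(\hpu{i} X,Y)\cong\Hom_{\sCx{n-2}{\sC{\Gpd}}}(\hpu{i}\tX,\tY)
=\Hom_{\sCx{n-2}{\sC{\Gpd}}}(\tX,\Nup{i}\tY)
=\Hom_{\sC{\Gpd^{n-1}}}(X,\Nup{i} Y).
\]
So your key commutation step $\Nup{\ell}\hpu{1}X=\hpu{1}\Nup{\ell}X$ (the higher analogue of Lemma \ref{ltpi}) and the inductive peeling of one groupoid direction at a time are replaced by a single application of full faithfulness of the $(n-2)$-fold nerve. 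Both routes ultimately rest on the same ingredient, namely Proposition \ref{ptadjoint} applied objectwise under the \wwb{n,2}fibrancy hypothesis; the paper's route just gets there in one step rather than $n-2$.
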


\begin{proof}
We think of the simplicial direction of $X$ as being the $i$-th, and
let \w{1\leq j\leq n} be one of the groupoidal directions
(so \w[).]{i\neq j} Applying the \wwb{n-2}fold iterated nerve functor
$$
\Nup{\widehat{\imath}\widehat{\jmath}}\col ~\sC{\Gpd^{n-1}}~\to~
\sC{\sCx{n-2}{\Gpd}}~\cong~\sCx{n-2}{\sC{\Gpd}}
$$
\noindent of \S \ref{addnote} (in all but the $i$ and $j$ directions)
to $X$ yields an \wwb{n-2}fold simplicial object in simplicial groupoids
$\tX$.
Since $X$ is\wwb{n,2}fibrant, for each \w[,]{\va\in\bDelta^{n-2}} the
simplicial groupoid \w{\tX(\va)} (see \S \ref{nsimp}(b)) satisfies the
hypotheses of Proposition \ref{ptadjoint}, where the simplicial
direction is the original $i$ and the groupoid direction is the
original $j$. Using \cite[(8.12)]{BPaolT}, we can therefore define a
composition map:
\begin{equation*}
\tens{(\Nup{i}\hpu{i}\tX(\va))_{1}}{(\Nup{i}\hpu{i}\tX(\va))_{0}}~\lto~
(\Nup{i}\hpu{i}\tX(\va))_{1}~.
\end{equation*}
\noindent As the construction is functorial in
\w[,]{\va\in\Dox{n-2}} it defines a map in \w[,]{\Gpd^{n-1}} since it
consists of maps in sets commuting with compositions
in each of the different directions (see \cite[Appendix A]{BPaolT}). Thus
\w{\hpu{i} X} is a groupoid object in \w{\Gpd^{n-1}} \wwh that is,
\w[.]{\hpu{i} X\in\Gpd^{n}}

It remains to show that \w[.]{\hpu{i}X=\Pup{i}X} Since the (iterated) nerve
functor is fully faithful, again using Proposition \ref{ptadjoint},
we see that for any $n$-fold groupoid $Y$ we have natural isomorphisms
\begin{equation*}
\begin{split}
 & \Hom_{\Gpd^{n}}(\hpu{i} X,Y)~\cong~\Hom_{\sCx{n-2}{\sC{\Gpd}}}
        (\hpu{i}\tX,~\tY)= \\
 & ~=~\Hom_{\sCx{n-2}{\sC{\Gpd}}}(\tX,~\Nup{i}\tY)
         ~=~\Hom_{\sC{\Gpd^{n-1}}}(X,~\Nup{i} Y)~.
\end{split}
\end{equation*}
Hence \w{\hpu{i}} is left adjoint to \w[,]{\Nup{i}} as required.
\end{proof}

%
%
\begin{prop}\label{pntfibor}
If \w{X\in\sS} is a Kan complex, then \w{\osl{n} X} (cf.\ \S \ref{sorsum})
is \wwb{n,2}fibrant.
\end{prop}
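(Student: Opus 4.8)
The plan is to argue by induction on $n\geq 2$, reducing every clause of Definition \ref{dfibrant} to a statement about iterated décalages $\Dec^{a}(\Decp)^{b}X$, each of which is a Kan complex by Remark \ref{rdex}.

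\emph{The case $n=2$.} Here one must verify that $\osl{2}X$ is $(2,2)$-fibrant. By \wref{eqorder} the vertical simplicial sets of $\osl{2}X$ are the décalages $\Dec^{k+1}X$ and $(\Decp)^{k+1}X$, which are Kan by Remark \ref{rdex}, and the face maps between consecutive ones are iterates of the décalage counits, hence fibrations. The two points that are \emph{not} formal are: (i) that $\csk{2}$ of such a décalage is still a Kan complex, and (ii) that $\csk{2}$ of such a counit is still a fibration — $\csk{2}$ preserving neither property in general. I would prove both by a direct translation. For $W=\Dec^{a}(\Decp)^{b}X$ with $a+b\geq 1$, a horn-filling problem $\Lambda^{r}[p]\to\csk{2}W$, or a lifting problem against $\csk{2}f$ for the relevant $f$, is automatic for $p\geq 4$ by $2$-coskeletality, is an instance of the Kan (respectively fibration) property of $X$ for $p\leq 2$, and for $p=3$ unwinds to filling a \emph{partial horn} of a simplex of $X$ — i.e.\ prescribing a compatible family of at most $m$ of the $m+1$ faces of some $X_{m}$ — which a Kan complex can always do, the union of such a family being a contractible sub-complex of $\Delta[m]$, so that its inclusion is a trivial cofibration. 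This is the only step where fibrancy of $X$ enters; it is essentially the content of \cite[\S 2]{BPaolT}, from which the case $n=2$ may be cited.

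\emph{Inductive step.} I would combine \wref{eqovt} with the multi-index form of \wref{eqorder}: freezing the first, respectively the last, of the $n$ simplicial directions of $\osl{n}X$ at a value $c$ yields $\osl{n-1}(\Dec^{c+1}X)$, respectively $\osl{n-1}((\Decp)^{c+1}X)$ — in either case $\osl{n-1}$ applied to a Kan complex. Fix a pair $\{i,j\}$ of directions and an $a\in\bDelta^{n-2}$ freezing the rest. If $1\notin\{i,j\}$, let $c$ be the component of $a$ at direction $1$; then $\osl{n}X$ with direction $1$ frozen at $c$ is $\osl{n-1}(\Dec^{c+1}X)$, and the $(2,2)$-conditions demanded of $\osl{n}X$ for the pair $\{i,j\}$ and the index $a$ are exactly an instance of $(n-1,2)$-fibrancy of $\osl{n-1}(\Dec^{c+1}X)$, which holds by the inductive hypothesis; symmetrically if $n\notin\{i,j\}$, using $\Decp$. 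The only pair not reached this way is $\{1,n\}$ (occurring for $n\geq 3$): for it the ordinal-sum identity $(\osl{n}X)_{p_{1}\cdots p_{n}}=X_{p_{1}+\cdots+p_{n}+n-1}$ shows directly that, after freezing the middle directions, the vertical slices are single décalages $(\Decp)^{k+s}X$ and the relevant $d_{0}$ is an $(\Decp)^{s}$-iterate of a décalage counit with $s\geq 1$, so the analysis of the case $n=2$ applies verbatim. (In fact the same identity removes the case split altogether: after freezing all but two directions the vertical slices are always of the form $\Dec^{a}(\Decp)^{b}X$ with $a+b\geq 1$, and the horizontal $d_{0}$ is always an iterate of a décalage counit with at least one décalage to spare.)

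The main obstacle is point (ii), that $\csk{2}$ sends these décalage counits to fibrations. This hinges on identifying $d_{0}$ explicitly as an $(\Decp)^{m}$-iterate of a counit with $m\geq 1$: the spare décalage is exactly what keeps the resulting $p=3$ lifting problem a \emph{proper} partial horn in $X$ — a trivial cofibration — rather than a full boundary-filling problem, which no Kan complex solves. Keeping track of which faces of $X$ become prescribed, as a function of the positions of $i$ and $j$ among the $n$ directions, is the only delicate bookkeeping, and it causes no essential trouble.
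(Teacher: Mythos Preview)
Your argument is correct and rests on the same mechanism as the paper's: each lifting condition of Definition \ref{dfibrant}, once unwound through the ordinal-sum identity \wref[,]{eqorn} becomes the problem of extending a map from a contractible subcomplex of some $\Delta[m]$ into $X$, which a Kan complex solves. The organization, however, is genuinely different. The paper isolates a structural lemma (Lemma \ref{lordec} in the Appendix, itself proved by induction on $n$ via \wref[)]{eqovt} showing that \emph{every} bisimplicial bidirection of $\osl{n}X$ has its lower-right corner of the form \wref{eqsquare} with the non-overlap condition \w[;]{i<k} it then runs a single explicit lifting calculation uniformly (Cases I and II in the Appendix). You instead induct on $n$ directly on the fibrancy statement: pairs not containing an extremal direction are reduced to \w{\osl{n-1}} of a d\'ecalage of $X$ and handled by the inductive hypothesis, with only the pair \w{\{1,n\}} needing the hand calculation of the base case. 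Your parenthetical observation --- that every frozen bidirection has mixed-d\'ecalage slices with at least one ``spare'' d\'ecalage, keeping the $p=3$ lifting problem a proper partial horn rather than a full boundary --- is exactly the content of Lemma \ref{lordec}, your spare d\'ecalage playing the role of the paper's condition \w[.]{i<k} Your route dispatches most pairs by a one-line appeal to induction; the paper's pays for Lemma \ref{lordec} up front but then performs the lifting calculation once and for all rather than recapitulating it at each stage.
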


See Appendix for the proof.

\begin{thm}\label{tadjoint}
The functor \w{\Qlo{n}} of \S \ref{dladjoint}, applied to a Kan complex
\w[,]{X\in\sS} is:
\begin{equation*}
\Qlo{n}X~=~\hpu{1}\hpu{2}\ldots\hpu{n}\osl{n} X~.
\end{equation*}
\end{thm}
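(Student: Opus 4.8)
The plan is to prove the identity $\Qlo{n}X = \hpu{1}\hpu{2}\cdots\hpu{n}\osl{n}X$ by exploiting the adjunction $\Plo{n}\dashv\Nlo{n}$ together with the fact that, under the fibrancy hypothesis, each individual left adjoint $\Pup{i}$ to the directional nerve $\Nup{i}$ is computed by the ordinary fundamental groupoid functor $\hpu{i}$ applied in that direction. Recall from Definition \ref{dladjoint} that $\Qlo{n} = \Plo{n}\circ\osl{n}$, and from Notation \ref{nsimp}(e) applied to Definition \ref{dcspace} that $\Plo{n}$ factors (up to the standard identifications) as the composite $\Pup{1}\Pup{2}\cdots\Pup{n}$ of the directional left adjoints, since $\Nlo{n} = \Nup{1}\Nup{2}\cdots\Nup{n}$ and a composite of right adjoints has the composite of left adjoints as its left adjoint.

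First I would invoke Proposition \ref{pntfibor}: since $X$ is a Kan complex, $\osl{n}X\in\Snx$ is \wwb{n,2}fibrant. Then I would peel off the directional left adjoints one at a time, from the innermost (the $n$-th direction) outward. At the innermost stage, $\osl{n}X$ is an \wwb{n,2}fibrant simplicial $(n-1)$-fold object (in the $n$-th direction), so Proposition \ref{padjoint} with $i=n$ gives $\Pup{n}\osl{n}X = \hpu{n}\osl{n}X$. To continue, I need to know that $\hpu{n}\osl{n}X$ is again \wwb{n,2}fibrant — now as an $(n-1)$-fold simplicial object in groupoids, or equivalently, after taking nerves in the remaining groupoidal direction, as an $n$-fold simplicial set. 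This is exactly Lemma \ref{lfibrant}: applying $\hpu{k}$ preserves \wwb{n,2}fibrancy. So I would set up an induction: assuming $\hpu{k+1}\cdots\hpu{n}\osl{n}X$ is \wwb{n,2}fibrant, Proposition \ref{padjoint} with $i=k$ yields $\Pup{k}(\hpu{k+1}\cdots\hpu{n}\osl{n}X) = \hpu{k}\hpu{k+1}\cdots\hpu{n}\osl{n}X$, and Lemma \ref{lfibrant} feeds the induction forward. Iterating down to $k=1$ gives the claimed formula.

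The one subtlety I would be careful about is the bookkeeping for the ``directions'': after applying $\hpu{n}$ to $\osl{n}X$, the result lives in $\sCx{n-1}{\Gpd}$ — an $(n-1)$-fold simplicial object in groupoids — and to apply Proposition \ref{padjoint} again in direction $n-1$ I must verify that the hypotheses of Proposition \ref{padjoint} (stated for a simplicial $(n-1)$-fold groupoid that is \wwb{n,2}fibrant) are correctly matched against an $(n-1)$-fold simplicial object in $\Gpd$. Here one uses that $\Gpd^{n-1}\hookrightarrow \sCx{n-1}{\Gpd}$ via the nerve in the remaining directions, and that \wwb{n,2}fibrancy of $G\in\sCx{m}{\Gpd^{n-m}}$ is defined (Definition \ref{dgfibrant}) precisely so that it matches \wwb{n,2}fibrancy of the associated $n$-fold simplicial set after taking the remaining $n-m$ nerves. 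So at stage $k$, the object $\hpu{k+1}\cdots\hpu{n}\osl{n}X$ is viewed as a simplicial object (in direction $k$) in $(n-1)$-fold groupoids (the remaining $n-1$ directions, $k-1$ of them still simplicial and $n-k$ now groupoidal), and Lemma \ref{lfibrant} guarantees exactly the \wwb{n,2}fibrancy that Proposition \ref{padjoint} demands.

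I expect the main obstacle to be not any single deep step but rather the careful tracking of how ``$m$-fold simplicial in $(n-m)$-fold groupoids'' evolves as the $\hpu{i}$ are applied, and making sure the fibrancy hypotheses transfer correctly at each stage — in particular that the intermediate objects $\hpu{k}\cdots\hpu{n}\osl{n}X$ genuinely satisfy the hypotheses of both Lemma \ref{lfibrant} and Proposition \ref{padjoint}. Lemma \ref{ltpi} (commuting $\hpi$ past the directional nerve for \wwb{2,2}fibrant simplicial groupoids) is the technical underpinning that makes the directional fundamental-groupoid functors interact well with the nerves, and I would cite it (or rather, rely on its having been absorbed into Proposition \ref{padjoint}) to justify that the composites are well-defined $n$-fold groupoids at each step. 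Modulo this bookkeeping, the proof is a clean induction, and I would present it compactly as: Propositions \ref{pntfibor}, \ref{padjoint} and Lemma \ref{lfibrant}, applied $n$ times.
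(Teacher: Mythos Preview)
Your argument is correct and uses the same core ingredients (Propositions \ref{pntfibor} and \ref{padjoint}, Lemma \ref{lfibrant}), but the organization differs from the paper's. The paper argues by induction on $n$: it factors $\Plo{n}=\Pup{1}\circ\oPl{n-1}\up{1}$, uses the identity $\osl{n}X=\ovt{2}{n-1}\osl{2}X$ of \wref{eqovt} together with the fact that each slice $(\osl{2}X)\up{1}_{k}=\Dec^{k+1}X$ is again a Kan complex to invoke the $(n-1)$-case dimensionwise, obtaining $\oPl{n-1}\up{1}\osl{n}X=\hpu{2}\cdots\hpu{n}\osl{n}X$ in one stroke, and then applies Proposition \ref{padjoint} \emph{once} for direction $1$. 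Your direct iteration over the $n$ directions avoids both \wref{eqovt} and the Kan property of $\Dec^{k+1}X$, which is a mild simplification. One imprecision in your write-up is worth tightening: Proposition \ref{padjoint} is stated only for $\Pup{i}\col\sC{\Gpd^{n-1}}\to\Gpd^{n}$, so it does not apply literally at your first step (where the input $\osl{n}X$ has no groupoidal directions yet --- that step is in fact trivial, being just the dimensionwise ordinary adjunction $\hpi\dashv N$), nor verbatim at intermediate stage $k$; what you really need there is Proposition \ref{padjoint} with its ambient ``$n$'' replaced by $n-k+1$, applied dimensionwise over the $k-1$ still-simplicial directions, and the required $(n-k+1,2)$-fibrancy of each such slice follows by restricting the global $(n,2)$-fibrancy supplied by Lemma \ref{lfibrant}. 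Once stated this way, your iteration goes through cleanly.
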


\begin{proof}
We prove the Theorem by induction on \w[.]{n\geq 2} For \w[,]{n=2}
see \cite[Corollary 2.12]{BPaolT}.
Suppose the claim holds for \w[.]{n-1} The left adjoint
\w{\Plo{n}\col \Snx\to\Gpd^{n}} to \w{\Nlo{n}} is the composite
$$
\Snx\cong\sC{\Sx{n-1}}~\supar{\oPl{n-1}\up{1}}~\sC{\Gpd^{n-1}}~
\supar{\Pup{1}}~\Gpd^{n}~,
$$
\noindent where \w{\oPl{n-1}\up{1}} is induced by applying \w{\Plo{n-1}} in each
dimension in the first simplicial direction, and \w{\Pup{1}} is left adjoint
to the nerve \w[.]{\Nup{1}\col \Gpd^{n}\to\sC{\Gpd^{n-1}}} By
the induction hypothesis and \wref[,]{eqovt}
\begin{equation*}
\begin{split}
&~\oPl{n-1}\up{1}\osl{n}X~=~\oPl{n-1}\up{1}\ovt{2}{n-1}\osl{2}X~=~\\
=&~\ovl{Q}_{(n-1)}\osl{2}X~=~\hpu{2}\ldots\hpu{n}\ovt{2}{n-1}\osl{2}X~
=~\hpu{2}\ldots\hpu{n}\osl{n} X~.
\end{split}
\end{equation*}
Since $X$ is a Kan complex, \w{\osl{n} X} is \wwb{n,2}fibrant by Proposition
\ref{pntfibor}. Therefore, by Lemma \ref{lfibrant},
\w{\hpu{2}\ldots\hpu{n}\osl{n} X} is \wwb{n,2}fibrant. It follows by
Proposition \ref{padjoint} that
$$
\Pup{1}\hpu{2}\ldots\hpu{n}\osl{n}X~=~\hpu{1}\hpu{2}\ldots\hpu{n}\osl{n} X.
$$
\noindent Therefore,
\begin{equation*}
\begin{split}
\Qlo{n} X~=&~\Plo{n}\osl{n}X~=~\Pup{1}\oPl{n-1}\osl{n}X\\
=&~\Pup{1}\hpu{2}\ldots\hpu{n}\osl{n}X~=~\hpu{1}\hpu{2}\ldots\hpu{n}\osl{n}X~,
\end{split}
\end{equation*}
\noindent which concludes the induction step.
\end{proof}

\begin{remark}\label{reprod}
The functor \w{\osl{n}\col \sS\to\Snx} has a right adjoint, a generalized
Artin-Mazur codiagonal (cf.\ \cite[\S III]{AMaV} and
\cite{BCDusC,CGarC}), so both \w{\osl{n}} and \w{\Plo{n}} \wwh and thus
\w{\Qlo{n}} \wwh preserve colimits, and in particular coproducts.

On the other hand, clearly \w{\osl{n}} and \w{\hpi} preserve products when
applied to Kan complexes, so \w{\Qlo{n}} does, too. Therefore, \w{\Qlo{n}}
preserves fiber products over discrete simplicial sets.
\end{remark}

%
%
\sect{Weakly globular $n$-fold groupoids}
\label{cntng}

We now introduce the central notion of this paper: that of
a weakly globular $n$-fold groupoid. We will show in the next Section that
the fundamental $n$-fold groupoid \w{\hQlo{n}T} of a space $T$
(see \S \ref{dladjoint}) is such an object.

\supsect{\protect{\ref{cntng}}.A}
{Homotopically discrete $\mathbf{n}$-fold groupoids}

A homotopically discrete groupoid $G$ is one in which there is at most
one arrow between every two objects (that is, all automorphism groups
are trivial). Hence its classifying space is homotopically trivial
(that is, a disjoint union of contractible spaces \wh i.e., a $0$-type).
For such a $G$, the set of arrows \w{G\sb{1}} is simply
\w[.]{G\sb{0}\times\sb{\pi\sb{0}G}G\sb{0}}

In order to provide a higher-dimensional analogue of this notion, we observe
that this construction can be made in any category with suitable (co)limits,
so we can iterate it.  For this purpose we make the following

\begin{defn}\label{dgc}
Let \w{f\col A\to B} be a morphism in a category $\cC$ with finite
 limits. The diagonal map defines a unique section \w{s\col A\to\ata} (so that
\w[,]{p_{1}s=\Id_{A}=p_{2}s} where \w{\ata} is the pullback of
 \w{A\xrw{f}B\xlw{f}A} and \w{p_{1},p_{2}\col \ata\to A} are the two
projections). The commutative diagram
\begin{equation*}
    \xymatrix{
    \ata \ar[rr]^{p_{1}} \ar[d]_{p_{2}} && A \ar[d]_{f} &&
    \ata \ar[ll]_{p_{2}} \ar[d]^{p_{1}}\\
    A \ar[rr]_{f} && B && A \ar[ll]^{f}
    }
\end{equation*}
defines a unique morphism \w{m\col (\ata)\tiund{A}(\ata)\to\ata} such that
\w{p_{2}m=p_{2}\pi_{2}} and \w[,]{p_{1}m=p_{1}\pi_{1}} where \w{\pi_{1}} and
\w{\pi_{2}} are the two projections. We  denote by \w{A^{f}} the following
object of \w[:]{\Catc}
\mydiagram[\label{eqintgpd}]{
(\ata)\tiund{A}(\ata)\ar[rr]^<<<<<<<<{m} &&
\ata \ar@<1.5ex>[rr]^{p_{1}} \ar@<-.5ex>[rr]^{p_{2}} && A \ar@<1.5ex>[ll]^{s}
}
\noindent It is easy to see that \w{A^{f}} is an internal groupoid.
\end{defn}

\begin{defn}\label{dhdng}
We define a full subcategory \w{\Ghd{n}\subset\Gpd^{n}} of
\emph{homotopically discrete $n$-fold groupoids} by induction on
\w[:]{n\geq 1}

A groupoid is called \emph{homotopically discrete} if \w{G\cong A^{f}} for some
surjective map of sets \w[.]{f\col A\to B} In general, an $n$-fold groupoid
\w{G\in\Gpd^{n}} is \emph{homotopically discrete} if \w{G\cong A^{f}} for some
map \w{f\col A\to B} in \w{\Ghd{n-1}} with a section \w{f'\col B\to A} (that is,
\w[).]{f\circ f'=\Id\sb{B}}
\end{defn}

\noindent As noted above, for an (ordinary) groupoid $G$ this just means that
$\pi_{1}(\diN G,x)=$ $0$ for any \w[.]{x\in G_{0}}
\begin{remark}\label{addrem}
Note that the category \w{\Ghd{n}} is closed under pullbacks. We show this by
induction on $n$. When $n=1$, let \w[,]{f\col A\to B} \w[,]{f'\col A'\to B'} and
\w{g\col C\to D} be surjections in \w[.]{\Set} Then
\begin{myeq} [\label{addrem.eq1}]
A^f\tiund{C^g}A'^{f'}= (A\tiund{C}A')\up{f,f'}
\end{myeq}
where \w{(f,f')\col \tens{A}{C}\to \tens{A'}{C'}} is a surjection in \w[.]{\Set}
Thus \w[.]{A^f\tiund{C^g}A'^{f'}\in \Ghd{}}

Suppose the statement holds for \w[,]{n-1} and let \w{f'\col A'\to B'} and
\w{g\col C\to D} be maps with sections in \w[.]{\Ghd{n-1}} Then
\w{(f,f')\col \tens{A}{C}\to \tens{A'}{C'}} is a map in \w{\Ghd{n-1}} with
a section, by the inductive hypothesis, and \eqref{addrem.eq1} holds, showing that
\w[.]{A^f\tiund{C^g}A'^{f'}\in \Ghd{n}}
\end{remark}

\begin{example}\label{eghdtg}
Given a commuting (inner) square of sets:
\mydiagram[\label{eqdiscsquare}]{
    A \ar[rr]_{f} \ar[d]^{g} && B \ar[d]_{h} \ar@/_1pc/[ll]_{f'}  \\
    C \ar[rr]^{\ell} \ar@/^1pc/[u]^{g'} && D \ar@/_1pc/[u]_{h'} \ar@/^1pc/[ll]^{\ell'}
}
\noindent with \w[,]{ff'=\Id\sb{B}} \w[,]{gg'=\Id\sb{C}}
\w[,]{hh'=\ell\ell'=\Id\sb{D}} and \w[,]{fg'=h'\ell}  we obtain a morphism of
homotopically discrete groupoids \w[.]{v\col A^f\to C^{\ell}} The homotopically discrete
double groupoid $G$ associated to $v$ is  described in Figure \ref{fseq}, where we
abbreviate \w{(\tens{A}{B})\tiund{(\tens{C}{D})}(\tens{A}{B})} by
\w[,]{(\tens{A}{B})\tiund{(g,g)}(\tens{A}{B})} and so on.

%
%
\begin{figure}[htb]
\scriptsize{
\begin{picture}(420,90)(40,0)
%
%
\put(200,80){$(\tens{A}{B}\tiund{B}A)\tiund{(g,g,g)}(\tens{A}{B}\tiund{B}A)$}
\put(350,86){\vector(1,0){20}}
\put(350,82){\vector(1,0){20}}
\put(270,72){\vector(0,-1){20}}
\put(375,80){$A\tiund{B}A\tiund{B}A$}
\put(370,77){\vector(-1,0){20}}
\put(395,72){\vector(0,-1){20}}
%
%
\put(40,40){$(\tens{A}{B})\tiund{(g,g)}(\tens{A}{B})\tiund{(g,g)}(\tens{A}{B})$}
\put(185,44){\vector(1,0){30}}
\put(110,33){\vector(0,-1){20}}
\put(115,33){\vector(0,-1){20}}
\put(120,13){\vector(0,1){20}}
\put(220,40){$(\tens{A}{B})\tiund{(g,g)}(\tens{A}{B})$}
\put(325,46){\vector(1,0){50}}
\put(325,42){\vector(1,0){50}}
\put(260,33){\vector(0,-1){20}}
\put(265,33){\vector(0,-1){20}}
\put(270,13){\vector(0,1){20}}
\put(380,40){$\tens{A}{B}$}
\put(375,37){\vector(-1,0){50}}
\put(390,33){\vector(0,-1){20}}
\put(395,33){\vector(0,-1){20}}
\put(400,13){\vector(0,1){20}}
%
%
\put(85,0){$\tens{A}{C}\tiund{C}A$}
\put(140,3){\vector(1,0){105}}
\put(250,0){$\tens{A}{C}$}
\put(290,6){\vector(1,0){95}}
\put(290,2){\vector(1,0){95}}
\put(390,0){$A$}
\put(385,-3){\vector(-1,0){95}}
\end{picture}
}
\caption{A homotopically discrete double groupoid}
\label{fseq}
\end{figure}

Note that
$$
(\tens{A}{B})\tiund{(g,g)}(\tens{A}{B})~\cong~
(\tens{A}{C})\tiund{(f,f)}(\tens{A}{C})
$$
\noindent via the map \w[,]{(a,b,c,d)\mapsto(a,c,b,d)} and more generally
$$
\pro{(\tens{A}{B})}{(g,g)}{k}~\cong~\pro{(\tens{A}{C})}{(f,f)}{k}
$$
\noindent for each \w[.]{k\geq 2} It follows that
\begin{myeq}[\label{eqnerveone}]
(N\up{1}G)_{k-1}=
      \begin{cases}
        A^{f}, & \text{if~}k=1; \\
        (\pro{A}{C}{k})\up{f,\ldots, f} & \text{if~}k\geq 2
      \end{cases}
\end{myeq}
\noindent and
\begin{myeq}[\label{eqnervetwo}]
(N\up{2}G)_{k-1}=
      \begin{cases}
        A^g, & \text{if~}k=1; \\
        (\pro{A}{B}{k})\up{g,\ldots, g} & \text{if~}k\geq 2~.
      \end{cases}
\end{myeq}
\noindent Therefore \w{(N\up{1}G)_{k}} and \w{(N\up{2}G)_{k}} are
homotopically discrete groupoids for all \w[.]{k\geq 0}

Moreover, applying \w{\pi_{0}} vertically to each column in Figure
\ref{fseq} yields the groupoid \w[,]{B^{h}} that is:
\mydiagram[\label{eqpizero}]{
\tens{B}{D}\tiund{D}B\ar[r] &
\tens{B}{D}\ar@<1.0ex>[r]\ar[r] & B \ar@<1.0ex>[l]
}
\noindent Similarly, applying \w{\pi_{0}} horizontally in each row
yields \w[.]{C^{\ell}}
\end{example}

\begin{remark}\label{rhtdg}
The construction of \S \ref{eghdtg} makes sense in any category with enough
limits. Conversely, any map \w{v\col A\sp{f}\to C\sp{\ell}} with a section \w{v'}
has a map of objects \w{g\col A\to C} and induces a map \w{h\col B\to D}
on \w[,]{\pi\sb{0}} which fits into a commuting square as in
\wref[.]{eqdiscsquare}
\end{remark}

\begin{defn}\label{dbpz}
Recall from \S \ref{nsimp}(g) that if \w{X\in\sCx{n-1}{\Gpd}} is an
\wwb{n-1}fold simplicial object in groupoids, \w{\opz{n}X} is the
\wwb{n-1}fold simplicial set obtained by applying \w{\pi_{0}} (the coequalizer
of the source and target maps of the groupoid) in each \wwb{n-1}fold
simplicial dimension of $X$. If \w{\cons X} denotes the discrete
groupoid on a set $X$ (cf.\ \S \ref{ddiscgpd}), \w{c\col \Set\to\Gpd} is right
adjoint to \w[,]{\pi\sb{0}} and the unit of the adjunction
\w{\gamma\col \Id\to c\pi\sb{0}} induces a natural transformation of
\wwb{n-1}simplicial groupoids
$$
\ovl{\gamma}~\col ~X~\to~\ovl{c}\up{n}\,\opz{n}X~.
$$
\end{defn}

\begin{remark}\label{rbpz}
Let \w{G\in\Gpd\sp{n}} be an $n$-fold groupoid and
$$
{X=\Nup{n-1}\dotsc\Nup{1}G\in\sCx{n-1}{\Gpd}}\;.
$$
Let us suppose that \w{\opz{n}X} is the multinerve of an \wwb{n-1}fold
groupoid, denoted by \w[,]{\bPz{n}G} so that
$$
\opz{n}X= N\lo{n-1}\bPz{n}G\;.
$$
\noindent Then \w{\ovl{c}\up{n}\,\opz{n}X} is the multinerve of an $n$-fold
groupoid \w{\cons\up{n}\bPz{n}G} (discrete in the new $n$-th direction)
and
$$
\ovl{\gamma}~=~\Nup{n-1}\cdots \Nup{1}\gamma\up{n}
$$
for a map of $n$-fold groupoids \w[.]{\gamma\up{n}\col G\to \w{\cons\up{n}\bPz{n}G}}
\end{remark}

\begin{remark}\label{rfibprod}
Since \w{\pi_{0}\col \Gpd\to\Set} preserves products and coproducts, it
preserves fiber products over discrete groupoids. Therefore, the same
is true of \w[.]{\opz{n}}
\end{remark}

\begin{lemma}\label{lhdng}
Let \w{G\in\Ghd{n}} be a homotopically discrete $n$-fold groupoid. Then:
\begin{enumerate}
\renewcommand{\labelenumi}{(\alph{enumi})~}
\item If \w{\Nup{i}\col \Gpd^{n}\to\sC{\Gpd^{n-1}}} for some
\w{1\leq i\leq n} is as in \ \S \ref{addnote}, then \w{(\Nup{i}G)_{k}} is
homotopically discrete for all \w[.]{k\geq 0}
\item The \wwb{n-1}simplicial set \w{\opz{n} \Nup{n-1}\cdots \Nup{1}G} is
the multinerve of a homotopically discrete \wwb{n-1}fold groupoid
\w[,]{\bPz{n}G} and there is a commutative diagram
$$
\xymatrix{
\Ghd{n} \ar^{\Nup{n-1}\cdots \Nup{1}} [rrr] \ar_{\bPz{n}}[d] &&&
\sCx{n-1}{\Gpd} \ar^{\opz{n}}[d]\\
\Ghd{n-1} \ar^{\Nlo{n-1}}[rrr] &&& \Sx{n-1}
}
$$
\item The map of $n$-fold groupoids \w{\gup{n}\col G\to\cons\up{n}\bPz{n}G} of
\S \ref{rbpz} is a geometric weak equivalence (\S \ref{dwecs}).
\item The set \w{\bPz{1}\dotsc \bPz{n}G} is isomorphic to
\w{\pi_{0}\diN G} (cf.\ \S \ref{dwecs}).
\item If we let \w{\Glo{n}} denote the composite
\begin{equation*}
\begin{split}
G~\supar{\gamma\up{n}}~\cons\up{n}\bPz{n}G~\supar{\cons\up{n}\gamma\up{n-1}}&~
\cons\up{n-1}\cons\up{n}\bPz{n-1}\bPz{n}G\\
~\cdots & ~\to ~\cons\up{1}\dotsc\cons\up{n}\bPz{1}\dotsc\bPz{n}G~,
\end{split}
\end{equation*}
\noindent it induces a geometric weak equivalence:
\begin{myeq}[\label{eqhd}]
\iseg{k}~\col ~\pro{G_{1}}{G_{0}}{k}~\to~
\pro{G_{1}}{G_{0}^{d}}{k}\hsp\text{for all} \hs k\geq 2
\end{myeq}
\noindent (where \w{G_{0}^{d}} is as in \S \ref{ddiscgpd}).
\end{enumerate}
\end{lemma}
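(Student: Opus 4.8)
The plan is to prove (a)--(e) together by induction on $n$. For $n=1$ everything follows from the explicit shape of the construction of Definition \ref{dgc}: given a surjection $f\col A\to B$ of sets, $A^{f}$ is a disjoint union over $b\in B$ of codiscrete groupoids on the nonempty fibres $f^{-1}(b)$, so the canonical map $A^{f}\to B^{d}$ is an equivalence of groupoids, whence $\bPz{1}A^{f}\cong\pi_{0}\diN A^{f}\cong B$ and (b)--(d) hold, while (a) and (e) are vacuous since $G_{0}$ is already a discrete $0$-fold groupoid.

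For the inductive step write $G\cong A^{f}$ with $f\col A\to B$ a morphism of $\Ghd{n-1}$ carrying a section $f'$. I would prove (a) first: the nerve of $A^{f}$ in its own groupoid direction has $k$-th level $\pro{A}{B}{k+1}$, an iterated pullback of the objects $A, A\tiund{B}A\in\Ghd{n-1}$ over $A$, which lies in $\Ghd{n-1}$ because that category is closed under pullback (Remark \ref{addrem}); the nerve in any other direction has $k$-th level of the form $P^{q}$ with $P=(\Nup{i}A)_{k}\in\Ghd{n-2}$ (induction hypothesis) and $q=(\Nup{i}f)_{k}$ admitting the section $(\Nup{i}f')_{k}$, hence in $\Ghd{n-1}$ by Definition \ref{dhdng}. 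For (b), applying $\opz{n}$ to $\Nup{n-1}\cdots\Nup{1}G$ amounts to taking, levelwise, $\pi_{0}$ of the pair groupoid associated with the split surjection obtained from $f$ after the remaining nerves; this $\pi_{0}$ is the target of that surjection, so $\opz{n}\Nup{n-1}\cdots\Nup{1}G=\Nlo{n-1}B$, i.e. $\bPz{n}G\cong B\in\Ghd{n-1}$, and the square of functors commutes by inspection.

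The crux is (c). The augmented simplicial object of $\Gpd^{n-1}$ given in the groupoid direction by the \v{C}ech nerve $\pro{A}{B}{\bullet+1}\to B$ of $f$ acquires an extra degeneracy from the section $f'$, hence is a simplicial homotopy equivalence; applying the multinerve in the remaining directions and then the diagonal $\Dlo{n}$ and realization --- operations that all preserve simplicial homotopy equivalences, the last carrying them to homotopy equivalences of spaces --- produces $\diN\gup{n}\col \diN G\xrightarrow{\ \simeq\ }\diN(\cons\up{n}\bPz{n}G)=\diN B$. Given (c), part (d) is immediate: $\bPz{1}\cdots\bPz{n}G=\bPz{1}\cdots\bPz{n-1}B\cong\pi_{0}\diN B\cong\pi_{0}\diN G$ by (b), the induction hypothesis for (d), and (c).

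Finally (e). Both $\pro{G_1}{G_0}{k}$ and $\pro{G_1}{G_0^{d}}{k}$ lie in $\Ghd{n-1}$: the first by (a), the second as an iterated pullback of $G_{1}\in\Ghd{n-1}$ over the discrete object $G_0^{d}$, once more by closure under pullback. Iterating (c), $\diN$ sends each to a $0$-type, so $\iseg{k}$ is a geometric weak equivalence iff $\pi_{0}\diN\iseg{k}=\bPz{1}\cdots\bPz{n-1}\iseg{k}$ is a bijection. On the target, discreteness of $G_0^{d}$ lets each $\bPz{i}$ pass through the pullback (Remark \ref{rfibprod}), yielding $\pi_{0}\diN\pro{G_1}{G_0^{d}}{k}\cong\pro{(\pi_{0}\diN G_{1})}{(\pi_{0}\diN G_{0})}{k}$; for the source one must run through the presentation of $\pro{G_1}{G_0}{k}$ as a $\Ghd{n-1}$-object of the form $P^{q}$ given by the iterated-pullback formula of Remark \ref{addrem} and part (a), and descend, by a secondary induction, to compute $\pi_{0}\diN\pro{G_1}{G_0}{k}$ and to verify that $\pi_{0}\diN\iseg{k}$ is a bijection. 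The careful bookkeeping needed here --- tracking how $\pi_{0}\diN$ behaves on iterated strict pullbacks taken over the non-discrete object $G_{0}$ --- is the point at which the real work sits, and the step I expect to be the main obstacle.
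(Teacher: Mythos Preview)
Your overall inductive strategy matches the paper's, and (a) and (d) are handled essentially the same way. The substantive points are (c) and (e).

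For (c), your extra-degeneracy argument is genuinely different from the paper's and, when the directions line up, cleaner: the paper instead unpacks $G$ one level further as a double-groupoid object in $\Gpd^{n-2}$ via the commuting square \wref[,]{eqcds} and shows $\gup{n}$ is a dimensionwise weak equivalence by checking each ``column'' is homotopically discrete. Your split-augmented \v{C}ech-nerve argument avoids that unpacking entirely. There is a caveat, though: it proves the stated (c) only if the $A^{f}$ direction coincides with the direction in which $\bPz{n}$ takes $\pi_{0}$, namely direction $n$; your identification $\bPz{n}G\cong B$ in (b) presupposes the same thing. The paper's own proof treats the $A^{f}$ direction as direction $1$, and accordingly computes $\bPz{n}G$ not as $B$ but as $(\bPz{n-1}A)^{\bPz{n-1}f}$ (the $B^{h}$ of \wref[).]{eqcds} Either convention can be made internally consistent, but you should fix yours explicitly and then verify that part (e), which by definition concerns $G_{0}=G^{(1)}_{0}$ and $G_{1}=G^{(1)}_{1}$ in direction $1$, is still talking about the objects you think it is.

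For (e), your reduction to checking that $\pi_{0}\diN\iseg{k}$ is a bijection is sound, and you are right that both source and target lie in $\Ghd{n-1}$. But the ``bookkeeping'' you flag as the main obstacle is precisely what the paper's proof avoids. Rather than computing $\pi_{0}\diN$ on an iterated pullback over the non-discrete base $G_{0}$, the paper places $\iseg{k}$ in a commuting square whose other three sides are the maps $\gup{n-1}$ on source and target (geometric weak equivalences by (c)) and the induced map between their $\bPz{n-1}$'s (a weak equivalence by the induction hypothesis for \wref{eqhd} and Remark~\ref{rfibprod}, since $\bPz{n-1}$ passes through fiber products over discrete objects). Two-out-of-three then finishes (e) with no direct computation; this is the step worth borrowing.
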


\begin{proof}
By Definition \ref{dhdng}, $G$ (as an object of
\w[)]{\hy{\Gpd^{2}}{\Gpd^{n-2}}} has the form of Figure
\ref{fseq} for some commuting square:
\mydiagram[\label{eqcds}]{
    A \ar[rr]_{f} \ar[d]^{g} && B \ar[d]_{h} \ar@/_1pc/[ll]_{f'}  \\
    C \ar[rr]^{\ell} \ar@/^1pc/[u]^{g'} && D \ar@/_1pc/[u]_{h'} \ar@/^1pc/[ll]^{\ell'}}
\noindent of \wwb{n-2}fold groupoids, as in \wref[,]{eqdiscsquare} by
Remark \ref{rhtdg}.

\noindent (a)~By \wref{eqnerveone} and \wref[,]{eqnervetwo} the
statement holds for \w[.]{n=2} Suppose by induction that it holds for
\w[:]{n-1}  then \w{(\Nup{1}G)_{0}=A^{f}} is in \w[.]{\Ghd{n-1}} Also
\w{(\Nup{1}G)_{k-1}=(\pro{A}{C}{k})^{(f, \cdots,f)}} for
\w[.]{k\geq 2} By definition and the induction hypothesis,
$$
(f,\ldots,f)\col \pro{A}{C}{k}~\lto~\pro{B}{D}{k}
$$
is a morphism with a section in \w[,]{\Ghd{n-1}} Hence, by definition,
\w[.]{(\Nup{1}G)_{k-1}\in\Ghd{n-1}} Similarly for any \w[\vsm .]{\Nup{i}G}

\noindent (b)~ By \wref{eqcds} and (a), \w{\Nup{1}\opz{n}G} is the
nerve of the \wwb{n-1} fold homotopically discrete groupoid
\w[,]{\bPz{n}G:=B^{h}}  and the map \w{\hgup{n}} lifts to a map of
$n$-fold groupoids\vsm .

\noindent (c)-(e)~ By induction on \w[.]{n\geq 2}
For \w[,]{n=2} we saw that \w[,]{\bPz{2}G=B^{h}} and since each column
in Figure \ref{fseq} is homotopically discrete,
we see from \wref{eqpizero} that the rightmost column is equivalent to
$B$, the next to \w[,]{B\times_{D}B} and so on. Thus
\w{\Nup{1}\gup{2}\col \Nup{1}G\to\Nup{1}c\bPz{2}G} induces
dimensionwise weak equivalences of simplicial spaces, so a weak
equivalence of classifying spaces.  Since \w{B^{h}} is a homotopically
discrete groupoid, it is weakly equivalent to \w{\cons D} (in the notation
of \wref[),]{eqcds} which is \w[.]{(\pi_{0}\diN G)^{d}}

By \wref{eqnerveone} for each \w[:]{k\geq 2}
$$
\pro{G_{1}}{G_{0}}{k}~=~(N\up{1}G)_{k-1}~=~(\pro{A}{C}{k})\up{f,\ldots,f}
~\cong~\pro{B}{D}{k}~,
$$
\noindent while since \w{G_{1}} is homotopically discrete and \w{G\sb{0}\sp{d}}
is discrete,  \w{\pro{G_{1}}{G_{0}^{d}}{k}} is homotopically discrete
(see Remark \ref{addrem}), so it is also weakly equivalent to
\w[.]{\pro{B}{D}{k}} Thus \wref{eqhd} holds for \w[.]{n=2}

In the induction step, \w{\Nup{1}G} is a simplicial \wwb{n-1}fold
homotopically discrete groupoid (by \wref{eqnerveone} again), and thus
by the induction hypothesis for \w{n-1} we have a weak equivalence
$$
(\Nup{1}\Glo{n-1})_{r}\col (\Nup{1}G)_{r}~\to~
(\cons\up{2}\dotsc\cons\up{n}\Nup{1}\bPz{2}\dotsc\bPz{n}G)_{r}~=\col P_{r}
$$
\noindent in each simplicial dimension \w[.]{r\geq 0} Applying
the \wwb{n-1}fold nerve \w{\Nlo{n-1}} to both sides, we obtain a map
of $n$-fold simplicial sets \w{\Nlo{n}G\to\Pd} which is a weak
equivalence in each simplicial
dimension, so induces a weak equivalence
$$
\Dlo{n}\Nlo{n}G\to\Dlo{n}\Pd~.
$$
\noindent However, \w{\Pd} is discrete in all but the first simplicial
direction, where it is (the nerve of) a homotopically discrete
groupoid \w[.]{H:=\bPz{2}\dotsc\bPz{n}G}  In fact,
\w[,]{H=(B^{d})^{h^{d}}} in the notation of \S \ref{dgc}, where
\w{h^{d}\col B^{d}\to D^{d}} is the  discretization of the map \w{h\col B\to D}
in \wref[.]{eqcds}

Therefore, \w{\Dlo{n}\Pd=\diN H} has
\w{\pi_{0}\diN H=\pi_{0}H^{d}=\pi_{0}\diN G} while \w{\pi_{i}\diN H=0}
for \w[,]{i\geq 1} and the map
\w{\Glo{n}=\gup{1}\circ\Glo{n-1}} induces the requisite
weak equivalence.  Since also
\w[,]{\Glo{n}=\cons\up{n}\Glo{n-1}\circ\gup{n}} we deduce by
induction that \w{\gup{n}} is a geometric weak equivalence, too.

To show \wref[,]{eqhd} note that by \wref{eqpizero} we have:
\begin{equation*}
\begin{split}
(\bPz{n}G)_{2}~=&~\bPz{n-1}(\tens{G_{1}}{G_{0}})~=~\tens{B}{D}\tiund{D}B\\
~=&~      (\tens{B}{D})\tiund{B}(\tens{B}{D})~,
\end{split}
\end{equation*}
\noindent which by the induction hypothesis \wref{eqhd} and Remark
\ref{rfibprod} equals:
\begin{equation*}
\begin{split}
\tens{\bPz{n-1}G_{1}}{\bPz{n-1}G_{0}}~\simeq&~
\tens{\bPz{n-1}G_{1}}{(\bPz{n-1}G_{0})^{d}}\\
=&~\bPz{n-1}(\tens{G_{1}}{G_{0}^{d}})~.
\end{split}
\end{equation*}
\noindent That is, we have a commuting square
$$
\xymatrix{
\tens{G_{1}}{G_{0}}\ar[rr]^{\gup{n-1}} \ar[d]_{\iseg{2}} &&
\bPz{n-1}(\tens{G_{1}}{G_{0}}) \ar[d]^{\simeq}\\
\tens{G_{1}}{G_{0}^{d}} \ar[rr]^{\gup{n-1}} && \bPz{n-1}(\tens{G_{1}}{G_{0}^{d}})~
}
$$
\noindent in which three of the maps are geometric weak equivalences, so
\w{\iseg{2}} is, too.

Similarly for all \w[.]{k>2}
\end{proof}

From (d) of the Lemma we see:

\begin{corollary}\label{chdng}
If $G$ is a homotopically discrete $n$-fold groupoid, the map
\w{\Glo{n}\col G\to\cons\up{1}\dotsc\cons\up{n}\bPz{1}\dotsc\bPz{n}G}
is a geometric weak equivalence, so \w{\diN G} is homotopically trivial
(i.e., \w{\pi_{i}\diN G=0} for all \w[).]{i\geq 1}
\end{corollary}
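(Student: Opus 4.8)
The plan is to read the statement off from Lemma \ref{lhdng}, by recognising $\Glo{n}$ as an iterated composite of maps the Lemma already shows are geometric weak equivalences, and then identifying its target. First I would observe that, applying part (b) of Lemma \ref{lhdng} repeatedly, $\bPz{j+1}\dotsc\bPz{n}G$ is a homotopically discrete $j$-fold groupoid for every $0\le j\le n$; hence part (c) applies to it, so each structure map $\gup{j}\colon\bPz{j+1}\dotsc\bPz{n}G\to\cons\up{j}\bPz{j}\dotsc\bPz{n}G$ is a geometric weak equivalence. By its very definition $\Glo{n}$ is the composite of the maps $\gup{j}$, for $j=n,n-1,\dotsc,1$, each prolonged along the discreteness functors $\cons\up{k}$ for the directions not yet present; so it remains to check that prolonging a geometric weak equivalence by one such $\cons\up{k}$ (which inserts a constant, i.e.\ discrete, groupoid direction) yields a geometric weak equivalence again.

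I expect this last point to be the only step needing genuine care, and it is a soft one. For a map $f$ of $j$-fold groupoids, $\Nlo{j+1}\cons\up{j+1}f$ is the $(j+1)$-fold simplicial map constant in the new direction with value $\Nlo{j}f$, so $\Dlo{j+1}\Nlo{j+1}\cons\up{j+1}f=\Dlo{j}\Nlo{j}f=\dN f$; thus $\dN\cons\up{j+1}f$ is naturally $\dN f$, and $\cons\up{j+1}$ preserves geometric weak equivalences (one may also invoke Remark \ref{rgwe}, the new direction being one of the simplicial directions in which $f$ is already a geometric weak equivalence). Iterating, every prolonged copy of a $\gup{j}$ is a geometric weak equivalence, and therefore so is the composite $\Glo{n}$ --- which essentially repackages the conclusion ``$\gup{n}$ is a geometric weak equivalence'' reached in the induction step of the proof of Lemma \ref{lhdng}(c)--(e).

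Finally I would identify the target of $\Glo{n}$. By part (d) of the Lemma the iterated construction yields a set $\bPz{1}\dotsc\bPz{n}G\cong\pi_{0}\diN G$, and by Definition \ref{ddiscgpd} the object $\cons\up{1}\dotsc\cons\up{n}\bPz{1}\dotsc\bPz{n}G$ is the discrete $n$-fold groupoid on that set, whose multinerve is the constant $n$-fold simplicial set on it; hence its realization $\diN\cons\up{1}\dotsc\cons\up{n}\bPz{1}\dotsc\bPz{n}G$ is the discrete space $\pi_{0}\diN G$. Since $\Glo{n}$ is a geometric weak equivalence, $\diN G$ is weakly equivalent to this discrete space, so $\pi_{i}\diN G=0$ for all $i\ge 1$; that is, $\diN G$ is homotopically trivial, as claimed.
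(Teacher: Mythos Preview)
Your proposal is correct and takes essentially the same approach as the paper: the paper treats the corollary as immediate from Lemma \ref{lhdng}, since the fact that $\Glo{n}$ is a geometric weak equivalence is established within the inductive proof of parts (c)--(e) of that Lemma, and part (d) identifies the target. Your write-up is slightly more explicit in that you re-derive this from the \emph{statements} of parts (b), (c), and (d) (together with the easy check that $\cons\up{k}$ preserves geometric weak equivalences), rather than citing the internal argument of the Lemma's proof; but the content is the same.
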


\supsect{\protect{\ref{cntng}}.B}
{Weakly globular $\mathbf{n}$-fold groupoids}

We are now in a position to define the main notion of this section.
At first glance, it does not appear to be fully algebraic, since it
uses the concept of a geometric weak equivalence.
However, as we shall show in Corollary \ref{chtpygp}
below, this concept has an equivalent purely algebraic description.

\begin{defn}\label{dntng}
For each \w[,]{n\geq 1}  the full subcategory \w{\Gpt{n}} of \w[,]{\Gpd^{n}}
whose objects are called  \emph{weakly globular $n$-fold groupoids}, is
defined by induction on $n$, as follows:

For \w[,]{n=1} any groupoid is weakly globular; suppose we have
defined \w[.]{\Gpt{n-1}} We say that an $n$-fold groupoid
\w{G=(G\up{1}_{1}\toto G\up{1}_{0})} is \emph{weakly globular} if
\begin{enumerate}
\renewcommand{\labelenumi}{(\roman{enumi})}
\item \www{G_{0}:=G\up{1}_{0}} is in \w[;]{\Ghd{n-1}}
\item \www{G_{1}:=G\up{1}_{1}} is in \w[,]{\Gpt{n-1}} and for each
  \w[,]{k\geq 2} \w{\pro{G_{1}}{G_{0}}{k}} is in \w[;]{\Gpt{n-1}}
\item The \wwb{n-1}simplicial set \w{\opz{n}\Nup{n-1}\cdots \Nup{1}G} is
the nerve of a weakly globular \wwb{n-1}fold groupoid \w{\bPz{n}G} such that
$$
\Nlo{n-1} \bPz{n}G = \opz{n}\Nup{n-1}\cdots \Nup{1}G\;.
$$
\item The map of \wwb{n-1}fold groupoids
$$
\pro{G_{1}}{G_{0}}{k}~\xrw{\iseg{k}}~\pro{G_{1}}{G_{0}^{d}}{k}
$$
\noindent induced by \w{\gamma_{(n)}\col G_{0}\to G_{0}^{d}} is a geometric weak
equivalence for all \w[.]{k\geq 2}
\end{enumerate}

Note the special role played by the \emph{first} of the $n$-directions in this
definition. Also, note that we have a functor \w{\bPz{n}} making the
following diagram commute:
$$
\xymatrix{
\Gpt{n} \ar^{\Nup{n-1}\cdots \Nup{1}} [rrr] \ar_{\bPz{n}}[d] &&&
\sCx{n-1}{\Gpd} \ar^{\opz{n}}[d]\\
\Gpt{n-1} \ar^{\Nlo{n-1}}[rrr] &&& \Sx{n-1}.
}
$$
\end{defn}

\begin{remark}\label{rgpoid}
For \w[,]{n=2} the above definition is slightly more general than
\cite[Definition 2.21]{BPaolT}. In fact, in \cite{BPaolT} $G$ is required to
be symmetric, and both maps \w{G_{1}\toto G_{0}} are required to be
fibrations of groupoids; the latter implies conditions (iii) and (iv).

Note also that if \w[,]{G\in\Gpt{n}} not only is
\w{\pro{G_1}{G_0}{k}\in\Gpt{n-1}} (by Definition \ref{dntng}), but also
\w[.]{\pro{G_1}{G_0^d}{k}\in\Gpt{n-1}} We show this for \w[,]{k=2} the general
case being similar. In fact we observe more generally that the pullback $P$
of \w{G \to H \leftarrow G'} with \w{G,\,G'} in \w{\Gpt{n}} and \w{H} discrete
is an object of \w[.]{\Gpt{n}}

We proceed by induction on \w[:]{n} For \w{n=1} the statement is clear, since
\w[.]{\Gpt{1}=\Gpd} Suppose it is true for \w[.]{n-1} We have
\w{P_0=G_0\tiund{H_0}G'_0 \in \Ghd{n-1}} since \w[,]{G_0,\,G'_0\in \Ghd{n-1}}
and \w{H\sb{0}} is discrete (using Remark \ref{addrem}). Furthermore,
\w{P_1=G_1\tiund{H_1}G'_1 \in \Ghd{n-1}} by the induction hypothesis.

Likewise, since \w{H} is discrete,
\begin{myeq}\label{rgpoideq1}
\begin{split}
   & \tens{P_1}{P_0} \cong (\tens{G_1}{G_0})\tiund{(\tens{H_1}{H_0})}
(\tens{G'_1}{G'_0}) = \\
    & =(\tens{G_1}{G_0})\tiund{H_0} (\tens{G'_1}{G'_0})\;.
\end{split}
\end{myeq}
\noindent Thus \w{\tens{P_1}{P_0} \in\Gpt{n-1}} by the induction hypothesis.
For the same reason, \w{\pro{P_1}{P_0}{k}} is in \w[.]{\Gpt{n-1}}
Since \w{\pi\sb{0}} commutes with fiber products over discrete objects,
we have \w[,]{\opz{n}P = \bPz{n}G \tiund{H}\bPz{n}G} and this is in
\w{\Gpt{n-1}} by the induction hypothesis.

Finally,
\begin{myeq}\label{rgpoideq2}
 \tens{P_1}{P^d_0} = (\tens{G_1}{G^d_0})\tiund{H_0} (\tens{G'_1}{G'^d_0})\;.
\end{myeq}
Since there are geometric weak equivalences
\w{\tens{G_1}{G_0}\to \tens{G_1}{G^d_0}} and
\w[,]{\tens{G'_1}{G'_0}\to \tens{G'_1}{G'^d_0}} by \wref{rgpoideq1} and
\wref{rgpoideq2} this induces a geometric weak equivalence
$$
\tens{P_1}{P_0} \to \tens{P_1}{P^d_0}\;.
$$
Similarly, one shows that for each \w[,]{k\geq 2} there is a geometric
weak equivalence
$$
\pro{P_1}{P_0}{k} \to \pro{P_1}{P^d_0}{k}\;.
$$
This completes the proof that \w[.]{P\in \Gpt{n}}
\end{remark}

\begin{defn}\label{diao}
For any $n$-fold groupoid $G$ and \w[,]{1\leq k\leq n} we
define its \emph{$k$-fold object of arrows} to be the \wwb{n-k}fold groupoid:
$$
\Wlo{n,k}G~:=~G\up{1\dotsc k}_{1\underset{k}{\cdots}1}~,
$$
\noindent using the indexing conventions of \S \ref{nsimp}(b).
\end{defn}

\begin{remark}\label{cwg}
Note that by Definition \ref{dntng}(ii), if $G$ is weakly globular,
so is \w[,]{\Wlo{n,1}G} so by induction we have a functor
\w[,]{\Wlo{n,k}\col \Gpt{n}\to\Gpt{n-k}} since
\begin{myeq}[\label{eqiao}]
\Wlo{n,k}~=~\Wlo{n-k+1,1}\Wlo{n-k+2,1}\dotsc\Wlo{n-1,1}\Wlo{n,1}~.
\end{myeq}
\noindent
\end{remark}

\begin{mysubsection}{Algebraic homotopy groups and algebraic weak
equivalences}
\label{sahgp}
For any weakly globular $n$-fold groupoid $G$, we define the
\emph{$k$-th algebraic homotopy group} of $G$ at
\w{x_{0}\in G_{0\underset{n}{\cdots}0}} to be:
\begin{myeq}[\label{eqpinag}]
\omega_{k}(G;x_{0})~\cong~
\begin{cases}
\Wlo{n,n}G(x_{0},x_{0}) & \hs\text{if}~k=n\\
\Wlo{n-k,n-k}(\bPz{k+1}\dotsc\bPz{n}G)(x_{0},x_{0}) &
\hs\text{if}~0<k<n\\
\end{cases}
\end{myeq}
with the \emph{$0$-th algebraic homotopy set} of $G$ defined:
$$
\omega_{0}(G)~:=~\bPz{1}\dotsc\bPz{n}G~.
$$
\noindent Here \w{\Wlo{n,n}G(a,b)} (cf.\ \S \ref{diao}) is the set of morphisms
from $a$ to $b$ in the groupoid \w{\Wlo{n,n-1}G} (in the $n$-th direction),
so in particular \w{\Wlo{n,n}G(a,a)} is the
group of automorphisms of $a$ (which is abelian for \w[).]{n\geq 2}

A map \w{f\col G\to G'} of weakly globular $n$-fold groupoids is called an
\emph{algebraic weak equivalence} if it induces bijections on the $k$-th
algebraic homotopy groups (set) for all \w{x_{0}\in G_{0\underset{n}{\cdots}0}} and
\w[.]{0\leq k\leq n}
\end{mysubsection}

\begin{defn}\label{dpostnikov}
For each \w[,]{n\geq 0} let \w{\PT{n}} denote the full subcategory of \w{\Top}
consisting of spaces $X$ for which the natural map
\w{X\to \Po{n}X} is a weak equivalence (that is, \w{\pi_{i}(X,x)=0} for
all \w{x\in X} and
\w[).]{i>n} An \emph{$n$-type} is an object in \w{\PT{n}} (or in the
corresponding full subcategory \w{\ho(\PT{n})} of \w[).]{\ho\Top}

We use similar notation for $n$-Postnikov simplicial sets (where for a Kan
complex $X$ (cf.\ \cite[I.3]{GJarS}), we can use \w{\csk{n+1}X}
as a model for the $n$-th Postnikov section \w[).]{\Po{n}X}

For any \w[,]{n\geq 0} a map \w{f\col X\to Y} in \w{\sS} (or in \w[)]{\Top}is
called an \emph{$n$-equivalence} if it induces isomorphisms
\w{f_{\ast}\col \pi_{0}X\to\pi_{0}Y} (of sets), and
\w{f_{\#}\col \pi_{i}(X,x)\to\pi_{i}(Y,f(x))} for every \w{1\leq i\leq n} and
\w[.]{x\in X_{0}}
\end{defn}

We recall the following notion and fact from \cite{BPaolT}:

\begin{defn}\label{dndiag}
A map \w{f\col W\to V} of bisimplicial sets is called a
\emph{diagonal $n$-equivalence} if \w{f_{k}^{h}\col W_{k}^{h}\to V_{k}^{h}} is an
\ww{(n-k)}-equivalence for each \w[.]{k\leq n}
\end{defn}

%
%
\begin{prop}[\protect{\cite[Prop.\ 3.9]{BPaolT}}]\label{pndiag}
If \w{f\col W\to V} is a diagonal $n$-equivalence, then the induced map
\w{\Diag f\col \Diag W\to\Diag V} is an $n$-equivalence.
\end{prop}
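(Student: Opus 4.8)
The plan is to deduce this from the standard connectivity estimate for the diagonal of a bisimplicial set, applied to the fibrewise homotopy fibre of $f$ rather than to $f$ itself.

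First I would pass to a convenient fibrant model. Since $\Diag$ takes levelwise weak equivalences of bisimplicial sets to weak equivalences (the connectivity-blind case of Remark \ref{rgwe}) and $\Ex$ preserves $n$-equivalences, we may assume each $W_k^h$ and $V_k^h$ is a Kan complex; replacing $f$ by a Reedy fibration between Reedy-fibrant bisimplicial sets --- a Reedy-fibrant replacement being a levelwise weak equivalence, hence preserving the hypothesis of Definition \ref{dndiag} --- we may also assume $f$ is of this form. Picking a vertex of $\Diag V$ in each path component and spreading it over the horizontal degeneracies to a compatible family $v_k \in V_{k,0}$, form the bisimplicial set $F$ with $F_k^h := \operatorname{fib}(f_k^h; v_k)$. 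Because $f$ is a Reedy fibration of Reedy-fibrant objects, the $\pi_\ast$-Kan condition holds, so $\Diag$ carries $F \to W \to V$ to a fibre sequence $\Diag F \to \Diag W \xrightarrow{\Diag f} \Diag V$.

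Next I would bound the connectivity of $\Diag F$. As $f_k^h$ is an $(n-k)$-equivalence for $0 \le k \le n$ (Definitions \ref{dpostnikov} and \ref{dndiag}), each fibre $F_k^h$ is $(n-k)$-connected, and for $k > n$ it is at least nonempty. I would then invoke the standard estimate that if $Z$ is a bisimplicial set with $Z_k^h$ $c_k$-connected for every $k$, then $\Diag Z$ is $\bigl(\min_k (c_k + k)\bigr)$-connected: this is proved by filtering $\Diag Z$ by the images of the horizontal Reedy skeleta, the $m$-th stage being obtained from the $(m-1)$-st by attaching cells of the shape $(\Delta^m/\partial\Delta^m)\wedge(Z_m^h/L_m Z)$, i.e. of dimension at least $m + c_m + 1$, where $L_m Z$ is the $m$-th latching object. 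Here $c_k \ge n-k$ for all $k$, so $\min_k(c_k+k) = n$, whence $\pi_i(\Diag F) = 0$ for $i \le n$. The long exact homotopy sequence of $\Diag F \to \Diag W \to \Diag V$ over each chosen basepoint, together with this vanishing and the connectedness of $\Diag F$, then shows that $\Diag f$ is an isomorphism on $\pi_i$ for $1 \le i \le n$ and a bijection on $\pi_0$; since the component was arbitrary, $\Diag f$ is an $n$-equivalence.

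The main obstacle is the connectivity estimate for $\Diag F$ itself: in low horizontal degrees ($m = 1, 2$) the attached cells need not be simply connected, so the shift by $m$ is not a naive dimension count and one must feed in homotopy excision (Blakers--Massey), while the latching objects $L_m F$ force a secondary induction on $m$. An alternative that sidesteps homotopy fibres and the $\pi_\ast$-Kan condition is to run the very same skeletal induction directly on $f$, replacing the long exact sequence by a gluing lemma for $n$-equivalences applied to each pushout square expressing one horizontal skeleton of $\Diag W$ (and of $\Diag V$) in terms of the preceding one; the connectivity bookkeeping is identical.
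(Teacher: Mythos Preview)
The paper does not prove this proposition; it is quoted from \cite[Prop.~3.9]{BPaolT} and used as a black box, so there is no argument in the present paper to compare yours against.

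On its own merits, your primary argument has a genuine off-by-one gap. With the paper's convention (Definition~\ref{dpostnikov}), an \(m\)-equivalence induces isomorphisms on \(\pi_i\) for all \(0\le i\le m\). The long exact sequence of a fibration then yields \(\pi_i(F_k^h)=0\) only for \(i\le (n-k)-1\): the group \(\pi_{n-k}(F_k^h)\) receives the cokernel of \(\pi_{n-k+1}(W_k^h)\to\pi_{n-k+1}(V_k^h)\), about which the hypothesis says nothing. Hence \(F_k^h\) is \((n-k-1)\)-connected, not \((n-k)\)-connected as you claim. Feeding \(c_k=n-k-1\) into your connectivity estimate gives \(\min_k\bigl((n-k-1)+k\bigr)=n-1\), so \(\Diag F\) is only \((n-1)\)-connected; the long exact sequence of \(\Diag F\to\Diag W\to\Diag V\) then shows that \(\Diag f\) is an isomorphism on \(\pi_i\) for \(i\le n-1\) and a surjection on \(\pi_n\), but injectivity on \(\pi_n\) does not follow. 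The argument stops exactly one degree short of the stated conclusion.

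Your alternative --- running the horizontal skeletal induction directly on \(f\) with a gluing lemma for \(n\)-equivalences --- is a more promising route and avoids the fibre entirely, but you have only sketched it. The delicate point is again the degree bookkeeping: one must verify that attaching the piece governed by the \((n-k)\)-equivalence \(f_k\) along \((k-1)\)-dimensional data preserves the \(n\)-equivalence property of the map of skeleta, and this requires a precise cofibre-side statement (Blakers--Massey or an equivalent relative-connectivity lemma) that you have not supplied.
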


\begin{lemma}\label{lnmoeq}
For any \w[,]{G\in\Gpt{n}} the map $\ovl{\gamma}$
of Definition \ref{dbpz}  corresponds to a map of $n$-fold
groupoids \w{\gup{n}\col G\to\cons\up{n}\bPz{n}G} with
\w[,]{\ovl{\gamma}= N\up{n-1}\dots N\up{1}\gamma\up{n}} which induces an
\wwb{n-1}equivalence \w{\diN\gup{n}\col \diN G\to\diN\cons\up{n}\bPz{n}G}
on classifying spaces.
\end{lemma}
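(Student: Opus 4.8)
The plan is to split the statement. The identification $\overline{\gamma}=\Nup{n-1}\dots\Nup{1}\gup{n}$ for a map of $n$-fold groupoids $\gup{n}\col G\to\cons\up{n}\bPz{n}G$ is bookkeeping: by Definition \ref{dntng}(iii) the $(n-1)$-fold simplicial set $\opz{n}\Nup{n-1}\cdots\Nup{1}G$ is the multinerve of the weakly globular $(n-1)$-fold groupoid $\bPz{n}G$, which is exactly the hypothesis of Remark \ref{rbpz}, so that remark produces $\gup{n}$ with $\overline{\gamma}=\Nup{n-1}\cdots\Nup{1}\gup{n}$. The substantive claim is that $\diN\gup{n}$ is an $(n-1)$-equivalence, which I would prove by induction on $n$, the case $n=1$ being clear since $\diN\gup{1}\col\diN G\to\pi_{0}\diN G$ is a bijection on $\pi_{0}$.

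For the inductive step, view $G$ as a groupoid object $(G\up{1}_{1}\toto G\up{1}_{0})$ in $\Gpt{n-1}$ in the first direction, so that $(\Nup{1}G)_{0}=G_{0}\in\Ghd{n-1}$ and $(\Nup{1}G)_{k}=\pro{G_{1}}{G_{0}}{k}\in\Gpt{n-1}$ for $k\geq 1$, by Definition \ref{dntng}(i)--(ii). Applying $\Dlo{n-1}\Nlo{n-1}$ in each simplicial degree to $\Nup{1}\gup{n}$ produces a map $f\col W\to V$ of bisimplicial sets; since the nerve functors preserve limits, in particular the iterated pullbacks $\pro{G_{1}}{G_{0}}{k}$, one has $\Diag W=\dN G$, $\Diag V=\dN\cons\up{n}\bPz{n}G$ and $\Diag f=\dN\gup{n}$. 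The key identification is that $f$ in simplicial degree $k$ equals $\Dlo{n-1}\Nlo{n-1}$ applied to the map $\gup{n-1}$ for the $(n-1)$-fold groupoid $(\Nup{1}G)_{k}$: the transformation $\overline{\gamma}$ is the unit $\Id\to\cons\,\pi_{0}$ applied to each groupoid (the $n$-th direction) objectwise over the first $n-1$ directions, so its restriction to first-index $k$ is the corresponding transformation for $(\Nup{1}G)_{k}$, while Definition \ref{dntng}(iii) read off in first-index $k$ forces $(\Nup{1}\bPz{n}G)_{k}=\bPz{n-1}\bigl((\Nup{1}G)_{k}\bigr)$, so that $\cons\up{n}\bPz{n}G$ is in the first direction levelwise $\cons\up{n-1}\bPz{n-1}$.

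It then suffices to check that $f$ is a diagonal $(n-1)$-equivalence in the sense of Definition \ref{dndiag}, i.e.\ that its component in simplicial degree $k$ is an $(n-1-k)$-equivalence for each $k\leq n-1$; Proposition \ref{pndiag} then shows that $\Diag f=\dN\gup{n}$ is an $(n-1)$-equivalence, hence so is its realization $\diN\gup{n}$. For $k\geq 1$ we have $(\Nup{1}G)_{k}\in\Gpt{n-1}$, so the induction hypothesis makes $\diN\gup{n-1}$ an $(n-2)$-equivalence, which dominates the required $(n-1-k)$-equivalence; for $k=0$ we have $G_{0}\in\Ghd{n-1}$, so Lemma \ref{lhdng}(c) already gives that $\gup{n-1}\col G_{0}\to\cons\up{n-1}\bPz{n-1}G_{0}$ is a geometric weak equivalence, in particular that $\diN\gup{n-1}$ is an $(n-1)$-equivalence.

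I expect the main difficulty to be the index bookkeeping in the inductive step --- identifying the bisimplicial sets $W,V$, matching $(\Nup{1}\gup{n})_{k}$ with $\gup{n-1}$ for $(\Nup{1}G)_{k}$, and checking that the partial diagonalization has full diagonal $\dN G$ --- rather than any homotopical subtlety. It is worth noting that condition Definition \ref{dntng}(iv) on the induced Segal maps is not used here: only the homotopical discreteness of $G_{0}$ from (i), together with the inductive clause (ii) and the definitional content of (iii), enter the argument.
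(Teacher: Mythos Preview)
Your proposal is correct and follows essentially the same route as the paper: induction on $n$, viewing $G$ as a simplicial object in \w{\Gpt{n-1}} in the distinguished direction, identifying the map in each simplicial degree with \w{\gup{n-1}} for \w[,]{(\Nup{1}G)_{k}} using Lemma \ref{lhdng}(c) at degree $0$ and the induction hypothesis in higher degrees, and concluding via Proposition \ref{pndiag}. Your write-up is in fact somewhat more careful about the bisimplicial bookkeeping (the identification \w{\Diag f=\dN\gup{n}} and the levelwise formula \w[)]{(\Nup{1}\bPz{n}G)_{k}=\bPz{n-1}(\Nup{1}G)_{k}} than the paper's own proof, and your closing observation that condition (iv) of Definition \ref{dntng} plays no role here is a useful remark not made explicit in the paper.
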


\begin{proof}
By Definition \ref{dntng} and Remark \ref{rbpz} the map \w{\ovl{\gamma}}
corresponds to a map of $n$-fold groupoids as stated.
We show that this is an \wwb{n-1}equivalence by induction on $n$. It is clear for
\w[.]{n=1} Suppose, inductively, it holds for \w[.]{n-1}

By construction we have
$$
(\bPz{n}G)_{r}~:=~(\Nup{n}\bPz{n}G)\up{n}_{r}~=~\bPz{n-1}(\Nup{n}G)\up{n}_{r}~,
$$
and therefore, for each \w{r\geq 0} there is a map
$$
(\Nup{n}\gup{n-1})_{r}\col (\bPz{n}G)_{r}\to(\cons\up{n}\bPz{n-1}G)_{r}~.
$$
\noindent By taking realizations, we obtain a map of simplicial spaces
\w[.]{\diN\gup{n-1}}  We claim that the corresponding map of
bisimplicial sets is a diagonal \wwb{n-1}equivalence (cf.\ \S
\ref{dndiag}). In fact, since \w{G_{0}=(\Nup{n}G)\up{n}_{0}} is
homotopically discrete, by Lemma \ref{lhdng},
\w{(\diN\gup{n-1})_{0}}is a weak equivalence, hence in
particular an \wwb{n-1}equivalence. By the induction hypothesis
\w{(\diN\gup{n-1})_{r}} is a \wwb{n-2}equivalence for all \w[.]{r\geq 1} Hence
\w{\diN\gup{n-1}} is an \wwb{n-1}equivalence by Proposition \ref{pndiag}.
\end{proof}

\begin{remark}\label{rshdnt}
From Lemmas \ref{lhdng} and \ref{lnmoeq} we see that a
homotopically discrete $n$-fold groupoid is weakly globular.
\end{remark}

%
%
\sect{$n$-Types}
\label{cnt}

In this section we prove one of the main result of this paper, Theorem
\ref{teqcat}, which asserts that all $n$-types are modelled by weakly
globular $n$-fold groupoids.

\supsect{\protect{\ref{cnt}}.A}
{The homotopy type of a weakly globular $\mathbf{n}$-fold groupoid}

We start by showing that if \w[,]{G\in \Gpt{n}} then its
classifying space \w{\diN G} (cf.\ \S \ref{dwecs}) is an $n$-type; that is,
\w{\pi_{i}(\diN G,x)=0} for all \w{x\in\diN G} and \w[.]{i>n}
We prove this using a spectral sequence computation of
\w[.]{\pi_{i}(\diN G,x)} In Section \ref{ctmng}, we give an alternative proof
using a comparison with Tamsamani's weak $n$-groupoids.

In \cite{QuiS}, Quillen constructed a spectral sequence for a bisimplicial
group, which was generalized in \cite[Appendix B]{BFrieH} to define
the \emph{Bousfield-Friedlander spectral sequence} of a bisimplicial
set \w[,]{\Xdd\in\sCx{2}{\Set}} with
\begin{myeq}[\label{eqbfried}]
E^{2}_{s,t}~=~\pi^{h}_{s}\pi^{v}_{t}\Xdd~\Rw~\pi_{s+t}\Diag\Xdd~.
\end{myeq}
\noindent See \cite[\S 8.4]{DKStB} for an alternative construction
when \w{\Xdd} is connected in each simplicial dimension. The spectral sequence
need not converge otherwise; however, we have the following
\emph{sufficient} condition for convergence (cf.\ \cite[B.3]{BFrieH}):

\begin{defn}\label{dpikan}
Think of a bisimplicial set \w{\Xdd\in\Sx{2}} as a
(horizontal) simplicial object in \w{\sS} (with the simplicial
direction inside \w{\sS} thought of as being vertical). In this notation,
a \hyp{k}{\pi_{t}}-\emph{matching collection} at \w{a\in X_{n,0}}
(for \w[)]{0\leq k\leq n} is a set of elements
\w{x_{i}\in\pi_{t}(X_{n-1\bullet},d_{i}^{h}a)} \wb[,]{0\leq i\leq n, i\neq k}
such that:
\begin{myeq}[\label{eqpitm}]
(d_{i}^{h})_{\ast}x_{j}~=~(d_{j-1}^{h})_{\ast}x_{i}
\end{myeq}
\noindent for every \w{0\leq i<j\leq n}
\wb[.]{i,j\neq k}

We say that \w{\Xdd} satisfies the \ww{\pis}-\emph{Kan condition} if
for every \w[,]{n,t\geq 1} \w[,]{0\leq k\leq n}  \w[,]{a\in X_{n,0}} and
\hyp{k}{\pi_{t}}-matching  collection \w{(x_{i})_{i\neq k}^{n}}
at $a$, there is a fill-in \w{w\in\pi^{v}_{t}(X_{n\bullet},a)} such that
\w{(d_{i}^{h})_{\ast}w=x_{i}} for all \w{0\leq i\leq n} \wb[.]{i\neq k}

By \cite[Theorem B.5]{BFrieH}, if \w{\Xdd} satisfies the
  \ww{\pis}-Kan condition \wh for example, if each \w{X_{n\bullet}} is
  connected \wh then the spectral sequence \wref{eqbfried} converges.
\end{defn}

\begin{notation}\label{nsdgpd}
For any simplicial set $Y$ and \w[,]{t\geq 1} the $t$-th
homotopy group \w[,]{\pi_{t}(Y,y)} as \w{y\in Y} varies, constitutes a
\emph{semi-discrete groupoid}, in the sense of
\cite[\S 1]{BPaolT} \wh that is, a disjoint union of groups (abelian,
if \w[).]{t\geq 2} We denote it by \w[.]{\hp{t}Y}
\end{notation}

\begin{lemma}\label{lpikan}
Let \w{\Gd\in\Gpd(\sS)} be a groupoid in \w[,]{\sS} such that
$$
\pro{G_{1}}{G_{0}}{k}~\to~\pro{G_{1}}{\cons\pi\sb{0}G\sb{0}}{k}
$$
\noindent is a weak equivalence of simplicial sets for all \w[,]{k\geq 2}
with \w{G_{0}} a homotopically trivial simplicial set.
Then the bisimplicial set \w{\Xdd:=\cN\Gd} satisfies the \ww{\pis}-Kan
condition, and for each \w[,]{t\geq 1} \w{\hp{t}\Xdd} is  a groupoid
object in semi-discrete groupoids, so is $2$-coskeletal.
\end{lemma}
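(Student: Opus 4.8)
The plan is to make $\Xdd=\cN\Gd$ explicit, reduce everything to the homotopy groups of the simplicial set of arrows $G\sb{1}$, and then read off both assertions from the internal groupoid structure of $\Gd$. Viewed as a horizontal simplicial object in $\sS$, the bisimplicial set $\Xdd$ has $X\sb{0\bullet}=G\sb{0}$, $X\sb{1\bullet}=G\sb{1}$ and $X\sb{n\bullet}=\pro{G\sb{1}}{G\sb{0}}{n}$ for $n\geq 2$, with horizontal faces and degeneracies given by the projections $\nu\sb{j}$, the composition $m$, and the unit of $\Gd$. The one homotopical input — and the only place the hypothesis enters — is the isomorphism
$$
\pi\sb{t}(X\sb{n\bullet},a)~\cong~\prod\sb{j=1}\sp{n}\,\pi\sb{t}(G\sb{1},g\sb{j})
\qquad(t\geq 1,\ a=(g\sb{1},\dots,g\sb{n})\in X\sb{n,0})
$$
induced by $\nu\sb{1},\dots,\nu\sb{n}$. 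For $n=1$ this is a tautology; for $n\geq 2$ one factors $(\nu\sb{1},\dots,\nu\sb{n})$ as $\pro{G\sb{1}}{G\sb{0}}{n}\to\pro{G\sb{1}}{\cons\pi\sb{0}G\sb{0}}{n}\to\prod\sb{j}G\sb{1}$, the first map being a weak equivalence by hypothesis, and the middle term — a fibre product of copies of $G\sb{1}$ over the \emph{discrete} simplicial set $\cons\pi\sb{0}G\sb{0}$ — decomposes as a disjoint union of products of (open and closed) sub-simplicial sets of $G\sb{1}$, so its homotopy groups split accordingly. Homotopical triviality of $G\sb{0}$ enters through the discreteness of this base (and through the case $n=0$, where $\pi\sb{t}G\sb{0}=0$).

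Granting this, the statement about $\hp{t}\Xdd$ is immediate. The displayed isomorphism is precisely the $n$-th Segal map (\S\ref{dsegal}) of the simplicial semi-discrete groupoid $\hp{t}\Xdd$ — the identity on objects, the above on morphisms — once one notes that $\hp{t}G\sb{0}$ has only trivial automorphism groups, so that the fibre product of groupoids over it is, levelwise, an ordinary product. Hence all the Segal maps of $\hp{t}\Xdd$ are isomorphisms, so $\hp{t}$ carries the nerve of $\Gd$ to the nerve of an internal groupoid $\mathcal H=(\hp{t}G\sb{1}\rightrightarrows\hp{t}G\sb{0})$ in semi-discrete groupoids; in particular $\hp{t}\Xdd$ is $2$-coskeletal, and $\mathcal H$ inherits an inversion functor from $\Gd$.

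For the $\pis$-Kan condition I would unwind Definition \ref{dpikan} through this isomorphism: a $(k,\pi\sb{t})$-matching collection at $a=(g\sb{1},\dots,g\sb{n})$ becomes a compatible system of tuples of automorphisms in the groupoids $\hp{t}(X\sb{n-1\bullet})$, the matching identities saying exactly that these tuples agree after the coordinatewise ``forget-or-compose'' operations induced by the horizontal faces, and a fill-in is a tuple $w=(\phi\sb{1},\dots,\phi\sb{n})\in\prod\sb{j}\pi\sb{t}(G\sb{1},g\sb{j})$ that restricts correctly along all faces except $d\sb{k}$. When $0<k<n$, the two extreme faces $d\sb{0}$ and $d\sb{n}$ between them already determine every coordinate of $w$, and the remaining face equations then hold by the matching identities; when $k\in\{0,n\}$, all but one coordinate is determined and the last is pinned down by an equation $m\sb{\ast}(\phi,-)=\psi$ in a homotopy group of $G\sb{1}$, which is solvable because $m\sb{\ast}$ is the morphism-level composition of the internal groupoid $\mathcal H$, so $m\sb{\ast}(\phi,-)$ is a bijection (its inverse is composition with the $\mathcal H$-inverse of $\phi$), exactly as in an ordinary groupoid.

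The step I expect to cost the most effort is this last one: matching the Bousfield--Friedlander matching/fill-in bookkeeping to a horn-filling statement for the nerve of $\mathcal H$, keeping careful track of the shifting basepoints $d\sb{i}a$ along the horizontal faces, and checking that it is precisely the induced-Segal-map hypothesis that makes the coordinatewise description — and with it the solvability of the composition equation — legitimate. The homotopical part is short once that hypothesis is in hand; the one genuine subtlety beyond bookkeeping is that the homotopical discreteness of $G\sb{0}$ is exactly what collapses every fibre product over $G\sb{0}$ to an honest product, without which the coordinates of $w$ would be entangled by monodromy.
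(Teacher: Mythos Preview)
Your proposal is correct and follows the same line as the paper: identify $\hp{t}\Xdd$ with the nerve of an internal groupoid in semi-discrete groupoids and then use invertibility of the induced composition to fill the horns in the $\pis$-Kan condition. The paper carries this out by hand only for $n\leq 2$ (dismissing $n>2$ as analogous but unneeded once $2$-coskeletality is known), whereas you set up the product decomposition $\pi_t(X_{n\bullet},a)\cong\prod_j\pi_t(G_1,g_j)$ uniformly for all $n$ and are more explicit about where the induced-Segal-map hypothesis actually enters --- the paper uses it tacitly when it writes $\pi^{v}_{t}(X_{2},a)=\pi^{v}_{t}(X_{1},a')\times_{\pi^{v}_{t}(X_{0},b)}\pi^{v}_{t}(X_{1},a'')$.
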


\begin{proof}
We think of the simplicial direction as vertical.
Let \w[.]{X_{k}=(\cN\Gd)_{k}} Since \w{X\sb{0}=G\sb{0}} is homotopically
trivial (that is, a disjoint union of contractible spaces), the
groupoid \w{\hp{t}X_{0}} is discrete on \w[,]{\pi\sb{0}G\sb{0}}
so any \hyp{k}{\pi_{t}}-matching collection for \w{n=1} is trivial.

For \w[,]{n=2} note that \w[,]{X_{2}=X_{1}\times_{X_{0}}X_{1}} so any \w{a\in
X_{2,0}} is of the form \w[,]{a=(a',a'')} where \w{d_{1}a'=d_{0}a''=\col b}
Moreover, \w[,]{d_{0}a=a'} \w{d_{1}a=a'\star a''} (where $\star$
denotes the groupoid composition), and \w[.]{d_{2}a=a''}

Thus for \w[,]{t\geq 1} there are three cases for a \hyp{k}{\pi_{t}}-matching
collection \w{(x_{i}\in\pi^{v}_{t}(X_{1},d_{i}a))_{i\neq k}} at $a$:

\begin{enumerate}
\renewcommand{\labelenumi}{(\roman{enumi})}
\item When \w[,]{k=1} the fill-in \w{w\in\pi^{v}_{t}(X_{2},a)} for
\w{x_{0}} and \w{x_{2}} is the pull-back pair \w{(x_{0},x_{2})} in
$$
\pi^{v}_{t}(X_{2},a)~=~
\pi^{v}_{t}(X_{1},a')~\times_{\pi^{v}_{t}(X_{0},b)}~\pi^{v}_{t}(X_{1},a'')~.
$$
\item When \w[,]{k=0} the fill-in \w{w=(y,x_{2})} for \w{x_{1}} and  \w{x_{2}}
should satisfy \w[,]{x_{1}=d_{1}w=y\star x_{2}} so
\w[,]{y=x_{1}\star(x_{2})^{-1}} using the groupoid structure on
\w[.]{\hpv{t}X_{1}}
\item The case \w{k=2} is similar.
\end{enumerate}

For \w{n>2} the proof of the \ww{\pis}-Kan condition is analogous;
however, because \w{\hpv{t}\Xdd} is $2$-coskeletal, we do not even
need to verify it, since the spectral sequence \wref{eqbfried} from
the \ww{E^{2}}-term on then depends only on the $2$-truncation of
\w{\Xdd} in the horizontal direction.
\end{proof}

In order to study the homotopy groups of the $n$-fold diagonal \w{\dN G} of
an $n$-fold groupoid, we think of it as an iterative construction in which
we take diagonals in successive bisimplicial bidirections. The weak globularity
allows us to iteratively apply Lemma \ref{lpikan}, and thus the
Bousfield-Friedlander spectral sequence.

\begin{thm}\label{thdnt}
For any weakly globular $n$-fold groupoid \w[,]{G\in \Gpt{n}} \w{\diN G}
is an $n$-type, and for each base point \w{x_{0}\in G_{0\underset{n}{\cdots}0}}
we have natural isomorphisms
\begin{myeq}[\label{eqpintg}]
\pi_{k}(\diN G;x_{0})~\cong~\omega_{k}(G;x_{0})\ \ \text{for} \ \ 0<k\leq n\ \
\text{and} \ \ \pi_{0}\diN G~\cong~\omega_{0}(G)
\end{myeq}
\noindent (see \wref[).]{eqpinag}
\end{thm}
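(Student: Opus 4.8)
The plan is to argue by induction on $n\geq 1$, the base case $n=1$ being immediate: $\dN G=\cN G$ is the ordinary nerve of a groupoid, so $\diN G$ is a $1$-type, $\pi_0\diN G=\bPz{1}G=\omega_0(G)$, and $\pi_1(\diN G,x_0)=\Wlo{1,1}G(x_0,x_0)=\omega_1(G;x_0)$.

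For $n\geq 2$, regard $G$ as a simplicial object $\Nup{1}G$ in $\Gpt{n-1}$ in its first direction, so that $G_0:=G\up{1}_0\in\Ghd{n-1}$, $G_1:=G\up{1}_1\in\Gpt{n-1}$, and $\pro{G_1}{G_0}{k}\in\Gpt{n-1}$ for all $k\geq 2$ (Definition \ref{dntng}, together with Remark \ref{rshdnt}). Splitting the $n$ simplicial directions of $\Nlo{n}G$ as ``direction $1$'' against ``directions $2,\dots,n$'', one gets $\dN G=\Diag\Xdd$ for the bisimplicial set $\Xdd$ whose $k$-th horizontal slice is $\dN(\pro{G_1}{G_0}{k})$. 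Since $\dN=\Dlo{n-1}\Nlo{n-1}$ is a composite of a right adjoint with a levelwise limit, it preserves pullbacks, hence $\Xdd=\cN\Gd$ is the nerve of the internal groupoid $\Gd=(\dN G_1\toto\dN G_0)$ in $\sS$, with $(\cN\Gd)_k=\pro{\dN G_1}{\dN G_0}{k}=\dN(\pro{G_1}{G_0}{k})$.

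Next I would check that $\Gd$ satisfies the hypotheses of Lemma \ref{lpikan}. Since $G_0\in\Ghd{n-1}$, Corollary \ref{chdng} shows $\dN G_0$ is homotopically trivial; and the map $\pro{\dN G_1}{\dN G_0}{k}\to\pro{\dN G_1}{\cons\pi_0\dN G_0}{k}$ is exactly $\dN$ applied to the induced Segal map $\iseg{k}\col\pro{G_1}{G_0}{k}\to\pro{G_1}{G_0^{d}}{k}$ of Definition \ref{dntng}(iv) (note $\cons\pi_0\dN G_0=\dN(G_0^{d})$ and $\dN$ preserves the relevant fibre products), hence a weak equivalence of simplicial sets for every $k\geq 2$. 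So Lemma \ref{lpikan} applies: $\Xdd$ satisfies the $\pis$-Kan condition, each $\hp{t}\Xdd$ with $t\geq 1$ is $2$-coskeletal, and the Bousfield--Friedlander spectral sequence $E^2_{s,t}=\pi^h_s\pi^v_t\Xdd\Rightarrow\pi_{s+t}\diN G$ converges. This step --- getting the convergence criterion to hold, which is precisely where weak globularity of $G$ (via condition (iv)) enters --- is the crux; everything afterward is bookkeeping.

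Finally I would compute the $E^2$-page. By the inductive hypothesis applied to the weakly globular $(n-1)$-fold groupoids $\pro{G_1}{G_0}{k}$, we get $\pi^v_t\Xdd=0$ for $t\geq n$, so $E^2_{s,t}=0$ for $t\geq n$; since $G_0$ is homotopically discrete the $0$-simplices $\pi_t(\dN G_0,x_0)$ of the simplicial group $\pi^v_t\Xdd$ vanish for $t\geq 1$, giving $E^2_{0,t}=0$ for $t\geq 1$; and $2$-coskeletality of $\hp{t}\Xdd$ forces $E^2_{s,t}=0$ for $s\geq 2$. A short Moore-complex computation then identifies $E^2_{1,t}=\pi_t(\dN G_1,\widetilde x_0)$ for $t\geq 1$, while for $t=0$ the simplicial set $\pi^v_0\Xdd$ is the nerve of the groupoid $\pi_0\dN G_1\toto\pi_0\dN G_0$. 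Thus the spectral sequence is concentrated in columns $s=0,1$ and rows $0\leq t\leq n-1$, all differentials vanish, $E^2=E^\infty$, and $\pi_m\diN G=0$ for $m>n$, which is the $n$-type assertion. For the homotopy groups in the range $0\leq m\leq n-1$ I would instead invoke Lemma \ref{lnmoeq}, which shows $\diN\gup{n}\col\diN G\to\diN\bPz{n}G$ is an $(n-1)$-equivalence, reducing these to the inductive hypothesis on $\bPz{n}G\in\Gpt{n-1}$ together with the identity $\omega_m(\bPz{n}G)=\omega_m(G)$ for $m\leq n-1$; and for $m=n$ the two-column filtration gives $\pi_n(\diN G,x_0)\cong E^\infty_{1,n-1}=\pi_{n-1}(\dN G_1,\widetilde x_0)\cong\omega_{n-1}(G_1;\widetilde x_0)$, which equals $\Wlo{n-1,n-1}(G_1)(\widetilde x_0,\widetilde x_0)=\Wlo{n,n}G(x_0,x_0)=\omega_n(G;x_0)$ by \eqref{eqiao}. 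Naturality of all the isomorphisms is clear from the construction. The main obstacle beyond the convergence issue above is the final matching of the $E^2$/$E^\infty$-terms with the algebraic homotopy groups $\omega_k$ --- i.e.\ the identities $\omega_{n-1}(G_1)=\omega_n(G)$ and $\omega_m(\bPz{n}G)=\omega_m(G)$ --- which requires the commutation relations between the functors $\bPz{\bullet}$ and $\Wlo{\bullet,1}$ from Remark \ref{cwg} and careful basepoint tracking.
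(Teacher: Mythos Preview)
Your argument is correct and uses the same essential tools as the paper --- Lemma \ref{lpikan} to guarantee convergence of the Bousfield--Friedlander spectral sequence, together with the weak globularity conditions to verify its hypotheses --- but your inductive organization is cleaner than the paper's. The paper proceeds by an \emph{internal} descending iteration: it first picks off directions $1$ and $2$ to get a $\bDelta^{n-2}$-diagram of double groupoids, applies the spectral sequence to show each diagonal is a $2$-type, then checks that the resulting object $Y\in\sC{\Gpd^{n-2}}$ again satisfies Lemma \ref{lpikan} in the next direction, and so on, building up from $2$-types to $n$-types one direction at a time. You instead run a single \emph{external} induction on $n$: view $G$ as a simplicial object in $\Gpt{n-1}$, apply $\dN$ in the remaining $n-1$ directions all at once, and invoke the inductive hypothesis directly to know each vertical slice is an $(n-1)$-type. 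This avoids the bookkeeping of the paper's intermediate objects $Y$, and your use of Lemma \ref{lnmoeq} to handle $\pi_{\leq n-1}$ (reducing to $\bPz{n}G$) is a further simplification over reading everything off the spectral sequence. One small point worth making explicit: your claim that $E^2_{s,0}=0$ for $s\geq 2$ requires knowing that $\pi^v_0\Xdd$ is itself the nerve of a groupoid; this follows because condition (iv) gives $\pi_0\dN(\pro{G_1}{G_0}{k})\cong\pi_0\dN(\pro{G_1}{G_0^d}{k})=\pro{\pi_0\dN G_1}{\pi_0\dN G_0}{k}$, since $\pi_0$ commutes with fibre products over discrete sets.
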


\begin{proof}
Since \w{\diN G} is the geometric realization of \w[,]{\dN G}
we prove the Theorem simplicially, for \w[,]{\dN G}
by induction on $n$.

Using the convention of \S \ref{rnfoldg}, for each \w{\va\in\bDelta^{n-2}}
we have a double groupoid \w{G\up{1,2}(\va)\in\Gpd^{2}} (in the notation
of \S \ref{nsimp}(b)).  Assuming that the first
of the $n$ directions of $G$ is not among those of \w[,]{\bDelta^{n-2}}
\w{\Nup{1}G\up{1,2}(\va)\in\sC{\Gpd}} satisfies the hypotheses of
Lemma \ref{lpikan}, by Definition \ref{dntng}.
Therefore, the Bousfield-Friedlander spectral sequence for the
bisimplicial set
$$
X(\va)~:=~\Nup{1,2}G\up{1,2}(\va)
$$
\noindent converges to \w[.]{\pis\Diag X(\va)} Moreover,
\w{\pi^{v}_{t}X(\va)} is $2$-coskeletal for each \w[,]{t\geq 1}
by the Lemma, as is
\w{\pi^{v}_{0}X(\va)} (by Definition \ref{dntng} again).
Thus in the \ww{E^{2}}-term of the spectral sequence only the two
right columns of two bottom rows can be non-zero, so that
\w{\Diag X(\va)} is a $2$-type. In fact, the rightmost column is zero
(except at the bottom), so we can read off the homotopy groups of
\w{\Diag X(\va)} from those of \w[.]{X(\va)}

Since \w{\Diag} is functorial in \w[,]{\va\in\Dox{n-2}} we see that the
resulting object \w{Y:=\Dup{1,2}\Nup{1,2}G} is in \w[,]{\sC{\Gpd^{n-2}}}
with each \w{Y(\va)\in\sS} a simplicial $2$-type. Since \w{G_{0}} was a
homotopically discrete \wwb{n-1}fold groupoid, the object \w{Y^{v}_{0}}
(in dimension $0$ in the first (simplicial) direction) is a homotopically
discrete \wwb{n-2}fold groupoid. Moreover, for any choice of a third
(groupoid) direction $i$, and each \w[,]{\vb\in\bDelta^{n-3}}
by Definition \ref {dntng}, we have a bisimplicial groupoid
$$
\Zdd{\ast}~:=~\Nup{1,2}G\up{1,2,i}(\vb)
$$
\noindent (where the third index is
the groupoid direction). This has a weak equivalence of bisimplicial sets
$$
\Zdd{k}~=~
\pro{\Zdd{1}}{\Zdd{0}}{k}~\supar{\simeq}~\pro{\Zdd{1}}{G_{0}^{d}}{k}
$$
\noindent for each \w[,]{k\geq 2} natural in $\vb$ (note that
\w{G^{d}} is independent of $\vb$).
This map therefore induces a weak equivalence in the bisimplicial
direction (cf.\ \S \ref{rgwe}). Thus each simplicial
groupoid \w{Y(\vb)=\Diag\Zdd{\ast}} satisfies the hypotheses of Lemma
\ref{lpikan}.

Now assume by descending induction on \w{2\leq k<n} that we have
\w[,]{Y\in\sC{\Gpd\sp{n-k}}} with \w{Y(\va)\in\sS} a $k$-type for
each \w[,]{\va\in\bDelta\sp{n-k}} with \w{Y\sp{v}\sb{0}} a homotopically
discrete \wwb{n-k}fold groupoid. Here the first (vertical) direction
is simplicial.

For any choice of a second (groupoid) direction, and
each \w[,]{\vb\in\bDelta^{n-k-1}} the simplicial groupoid
\w{Y\up{1,2}(\vb)\in\sC{\Gpd}} satisfies the hypotheses of Lemma
\ref{lpikan}. Therefore, \wref{eqbfried} converges, with only the two
right columns of the bottom $k$ rows non-zero, and
\w{\Diag Y(\va)} is thus a \wwb{k+1}type.
When \w[,]{k=n-1} $Y$ is a simplicial groupoid which is an
\wwb{n-1}type in the simplicial direction, with \w{\diN G}
appearing as the realization of \w[.]{\Diag Y}

For any weakly globular double groupoid $G$, the \ww{E^{2}}-term of the
Bousfield-Friedlander spectral sequence for the bisimplicial set
\w{\Xdd=\cN^{h}\cN^{v}G} survives to \w[.]{E^{\infty}}
Moreover, because \w{G_{0}} is homotopically trivial,
\w[,]{E^{2}_{1,0}=\pi_{1}\pi_{0}\Xdd=0} so in fact by Lemma \ref{lpikan}
$$
\pi_{i}(\Diag\Xdd,x_{0})~=~\begin{cases}
E^{2}_{0,0}=\pi_{0}\pi_{0}(\Xdd,x_{0}) & \text{if}~i=0\\
E^{2}_{0,1}=\pi_{0}\pi_{1}(\Xdd,x_{0}) & \text{if}~i=1\\
E^{2}_{1,1}=\pi_{1}\pi_{1}(\Xdd,x_{0}) & \text{if}~i=2~,
\end{cases}
$$
\noindent for each choice of a base-point \w{x_{0}} in \w[.]{G_{00}}
Actually, \w{\pi_{1}\pi_{1}(\Xdd,x_{0})} is just the automorphism
group of \w[,]{G_{1}} i.e., \w{\Wlo{2,2}G(x_{0},x_{0})}

Therefore, given a weakly globular $n$-fold groupoid $G$, by what we have
shown above we see that
$$
\pi_{n}(\diN G;x_{0})~\cong~\omega_{n}(G;x_{0})
$$
\noindent for each \w[.]{x_{0}\in G_{0,\dotsc,0}} Moreover, by Lemma
\ref{lnmoeq} we have
$$
\pi_{i}(\diN G,x_{0})~\cong~\pi_{i}(B\bPz{n-k+1}\dotsc\bPz{n}G,~x_{0})
$$
\noindent for all \w[,]{0\leq i\leq n-k} and \w{\bPz{n-k+1}\dotsc\bPz{n}G}
is an \wwb{n-k}weakly globular \wwb{n-k}-fold groupoid, so in particular
\wref{eqpintg} holds for each \w[.]{0\leq k\leq n}
\end{proof}

Observe that Theorem \ref{thdnt} provides an intrinsic algebraic definition
of the notion of geometric weak equivalences among weakly globular $n$-fold
groupoids, since we have:

\begin{corollary}\label{chtpygp}
(a) A map of weakly globular $n$-fold groupoids is a geometric
weak equivalence (\S \ref{dwecs}) if and only if it is an
algebraic weak equivalence (\S \ref{sahgp}).

\noindent (b) \ The notion of a weakly globular $n$-fold groupoid $G$ is
purely algebraic.
\end{corollary}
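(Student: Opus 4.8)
The plan is to derive both parts directly from Theorem \ref{thdnt}, with part (b) adding only a short induction on $n$ that feeds on part (a) one dimension lower.

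For part (a), let $f\colon G\to G'$ be a morphism in $\Gpt{n}$. First I would observe that the $0$-simplices of the diagonal nerve $\dN G$ are precisely the elements of the set $G_{0\underset{n}{\cdots}0}$, so the $0$-skeleton of the space $\diN G$ is this set and every path component of $\diN G$ contains such a point; hence it suffices to test $\pi_{k}(\diN G;x_{0})$ at basepoints $x_{0}\in G_{0\underset{n}{\cdots}0}$, which is exactly the range over which the algebraic homotopy groups $\omega_{k}(G;x_{0})$ are defined. By Theorem \ref{thdnt} the spaces $\diN G$ and $\diN G'$ are $n$-types, so their homotopy groups vanish above degree $n$ and $\diN f$ is automatically an isomorphism there; and the isomorphisms $\pi_{k}(\diN G;x_{0})\cong\omega_{k}(G;x_{0})$ (for $0<k\le n$) and $\pi_{0}\diN G\cong\omega_{0}(G)$ are natural in $G$, so $\diN f$ fits into commuting squares identifying $(\diN f)_{\#}$ on $\pi_{k}$ at each $x_{0}\in G_{0\underset{n}{\cdots}0}$ with $\omega_{k}(f)$, and $(\diN f)_{\ast}$ on $\pi_{0}$ with $\omega_{0}(f)$. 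Therefore $\diN f$ is a weak homotopy equivalence if and only if it induces a bijection on $\pi_{0}$ and an isomorphism on $\pi_{k}$ at every basepoint for $1\le k\le n$, which holds if and only if $f$ induces bijections on all the $\omega_{k}$ and on $\omega_{0}$, i.e.\ if and only if $f$ is an algebraic weak equivalence.

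For part (b), I would argue by induction on $n$. When $n=1$ every groupoid is weakly globular, so the notion is trivially algebraic. Assume the notion of weakly globular $(n-1)$-fold groupoid is purely algebraic; then part (a) in dimension $n-1$ shows that for a map between weakly globular $(n-1)$-fold groupoids ``geometric weak equivalence'' coincides with ``algebraic weak equivalence'', the latter being phrased entirely in terms of the functorially extracted algebraic homotopy groups. Now I would run through the four clauses of Definition \ref{dntng}: clause (i) refers to $\Ghd{n-1}$, which is defined algebraically in Definition \ref{dhdng}; clauses (ii) and (iii) refer to $\Gpt{n-1}$ and to the functor $\bPz{n}$ obtained by applying $\pi_{0}$ dimensionwise --- and for the homotopically discrete object $G_{0}$ its discretization $G_{0}^{d}$ is computed algebraically as $\bPz{1}\dotsc\bPz{n-1}G_{0}$ by Lemma \ref{lhdng}(d) --- so these are algebraic by the inductive hypothesis; and clause (iv) requires each induced Segal map $\iseg{k}\colon\pro{G_{1}}{G_{0}}{k}\to\pro{G_{1}}{G_{0}^{d}}{k}$ to be a geometric weak equivalence, but both its source and its target lie in $\Gpt{n-1}$ by Definition \ref{dntng}(ii) and Remark \ref{rgpoid}, so by part (a) this is equivalent to $\iseg{k}$ being an algebraic weak equivalence, which is an algebraic condition. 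Hence all four clauses, and therefore membership in $\Gpt{n}$, are purely algebraic, completing the induction.

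Almost all of the substantive work is already in Theorem \ref{thdnt}, so what remains is essentially bookkeeping; the two points that need care are, in part (a), that the relevant basepoints may indeed be taken in $G_{0\underset{n}{\cdots}0}$ so that the homotopical statement matches the definition of algebraic weak equivalence, and, in part (b), the apparent circularity in Definition \ref{dntng} --- geometric weak equivalences of $(n-1)$-fold groupoids are invoked there, so one must keep the reductions in clause (iv) strictly inside $\Gpt{n-1}$, via Remark \ref{rgpoid}, in order to be entitled to invoke part (a) at level $n-1$.
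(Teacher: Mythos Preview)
Your proof is correct and follows the approach the paper implicitly intends: the paper offers no explicit proof of this corollary, merely framing it as an immediate consequence of Theorem~\ref{thdnt}, and your argument spells out precisely the bookkeeping needed to make that reduction work. The two points you single out as delicate---that basepoints may be taken in $G_{0\underset{n}{\cdots}0}$, and that clause~(iv) must be kept inside $\Gpt{n-1}$ so that part~(a) at level $n-1$ applies---are exactly the details the paper suppresses.
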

\begin{remark}\label{rhtpygp}
It follows from above that the functor \w{\bPz{n}\col \Gpt{n}\to \Gpt{n-1}}
preserves geometric weak equivalences and serves as an algebraic
\wwb{n-1}Postnikov section functor.
\end{remark}
%
\vspace{-5mm}

\supsect{\protect{\ref{cnt}}.B}{An iterative description of \w{\Qlo{n}}}

We now use the notions of the previous section to provide a more
transparent iterative description of the fundamental $n$-fold groupoid
functor \w{\Qlo{n}X} (Definition \ref{dladjoint}) for a Kan complex $X$.

\begin{defn}\label{dllo}
For any simplicial set $X$, let
\begin{myeq}\label{eqllo}
\Llo{k}X~:=~\begin{cases}\Dex & \text{if}\hs k=0\\
\pro{\Dex}{X}{k+1} & \text{if}\hs k\geq 1~.
\end{cases}
\end{myeq}
\end{defn}

\begin{remark}\label{rkan}
If $X$ is a Kan complex, we have a natural fibration of simplicial
sets \w{u\col \Dex\to X} (cf.\  \S \ref{snssn}), yielding the
internal groupoid \w{(\Dex)^{u}\in\Gpd\sS} of \S \ref{dgc}. We see that
\begin{myeq}\label{eqllok}
(\cN(\Dex)^{u})_{k}~=~\Llo{k}X~=~\pro{\Llo{1}X}{\Dex}{k}
\end{myeq}
\noindent for all \w[,]{k\geq 1} so we may denote the bisimplicial set
\w{\cN(\Dex)^{u}} by \w[.]{\Ld X}
This is depicted in Figure \ref{fldot}, where the vertical maps are induced
by those indicated in the rightmost column, and the horizontal maps are
structure maps for the pullbacks, as in \wref[.]{eqintgpd}
%
%
\begin{figure}[ht]
\begin{center}
$$
\entrymodifiers={++++[]}
\xymatrix@R=15pt@C=15pt{
\cdots~~~
X_{3}\times_{X_{2}}~X_{3}\times_{X_{2}}~X_{3}\;
\ar@<2ex>[rr] \ar[rr]\ar@<-2ex>[rr]
     \ar@<2.5ex>[d] \ar[d]\ar@<-2.5ex>[d] &&
X_{3}\times_{X_{2}}~X_{3} \ar@<1ex>[rr]^{p\sb{2}} \ar@<-1ex>[rr]^{p\sb{1}}
     \ar@<2.5ex>[d] \ar[d]\ar@<-2.5ex>[d] &&
X_{3}  \ar@<2.5ex>[d]^{d_{2}} \ar[d]^{d_{1}}\ar@<-2.5ex>[d]^{d_{0}} \\
\cdots~~~
X_{2}\times_{X_{1}}~X_{2}\times_{X_{1}}~X_{2}\;
\ar@<2ex>[rr] \ar[rr] \ar@<-2ex>[rr]
     \ar@<0.5ex>[d] \ar@<-0.5ex>[d] &&
X_{2}\times_{X_{1}}~X_{2} \ar@<1ex>[rr]^{p\sb{2}} \ar@<-1ex>[rr]^{p\sb{1}}
     \ar@<0.5ex>[d] \ar@<-0.5ex>[d] &&
X\sb{2} \ar@<0.5ex>[d]^{d_{1}} \ar@<-0.5ex>[d]_{d_{0}} \\
\cdots~~~
X_{1}\times_{X_{0}}~X_{1}\times_{X_{0}}~X_{1}\;
\ar@<2ex>[rr] \ar[rr] \ar@<-2ex>[rr]
    \ar@{}[u] &&
X_{1}\times_{X_{0}}~X_{1} \ar@<1ex>[rr]^{p\sb{2}} \ar@<-1ex>[rr]^{p\sb{1}} && X_{1}
}
$$
\end{center}
\caption{Corner of \w{\Ld X}}
\label{fldot}
\end{figure}

If $X$ is reduced, \w{\Dex} is contractible, so
\w{\Llo{1}X} models the loop space \w[.]{\Omega X} In general,
\w{\Llo{1}X} is homotopy equivalent to the ``path object''
\w{\bP  X} of \cite[\S 2.2]{DuskSM}.
\end{remark}

\begin{lemma}\label{lnerve}
Let $X$ be a Kan complex, and \w{\cons X} the corresponding bisimplicial set,
constant in the horizontal direction\vsm .

(a) There is a natural map of bisimplicial sets \w[,]{\phi\col \cons X\to\Ld X}
which is a dimensionwise weak equivalence (as horizontal simplicial sets,
in each vertical dimension \wh see Figure \ref{fldot}), so induces a
weak equivalence \w[\vsm.]{\Diag\phi\col X\to\Diag\Ld X}

(b)~We have \w{\Nup{n}\Qlo{n}X=\oQl{n}{n-1}\Ld X} \wwh i.e.,
for each \w[:]{k\geq 0}
\begin{myeq}\label{eqnervezero}
(\Nup{n}\Qlo{n}X)_{k}~=~\Qlo{n-1}\Llo{k}X~.
\end{myeq}
\noindent Thus for each \w[:]{k\geq 1}
\begin{myeq}\label{eqnerveqk}
\Qlo{n-1}\Llo{k}X~\cong~\pro{\Qlo{n-1}\Llo{1}X}{\Qlo{n-1} \Dex}{k}~.
\end{myeq}

(c)~If $X$ is homotopically trivial, then for \w[;]{k\geq 1}
\begin{myeq}\label{eqprodec}
\Qlo{n}\Llo{k}X\cong \pro{\Qlo{n} \Dex}{\Qlo{n} X}{k+1}~.
\end{myeq}
\end{lemma}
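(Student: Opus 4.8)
The plan is to prove (a), (b), (c) in that order, with (b) the technical heart; (c) then follows almost formally.

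\smallskip\noindent (a)~By Remark~\ref{rkan}, \w{\Ld X=\cN(\Dex)^{u}} for the fibration \w{u\col \Dex\to X} of \S\ref{sdec}, which has a section \w{\sigma} (the unit \w[).]{\sigma\col X\to\Dex} Inserting \w{\sigma} diagonally into the fibre products yields maps \w[,]{\phi\sb{k}\col X\to\Llo{k}X} and since \w{u\sigma=\Id} these respect the source, target, composition and unit of the internal groupoid \w[,]{(\Dex)^{u}} so they assemble into a bisimplicial map \w[.]{\phi\col \cons X\to\Ld X} By Remark~\ref{rgwe} (that is, \cite[IV, Proposition 1.7]{GJarS}) it suffices to see that \w{\phi} is a weak equivalence of horizontal simplicial sets in each vertical dimension; but inspecting Figure~\ref{fldot}, in vertical dimension \w{m} the horizontal simplicial set \w{(\Ld X)\sb{\bullet,m}} is the nerve of the internal groupoid \w{(X_{m+1})^{u\sb{m}}} associated by Definition~\ref{dgc} to the split surjection \w[,]{u\sb{m}\col X_{m+1}\twoheadrightarrow X_{m}} which is homotopically discrete (Definition~\ref{dhdng}) with classifying space homotopically trivial and \w{\pi\sb{0}\cong X\sb{m}} (Lemma~\ref{lhdng}(c)--(d), Corollary~\ref{chdng}). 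Since \w{\phi\sb{\bullet,m}} is precisely the map induced by the section \w{s\sb{m}} of \w[,]{u\sb{m}} it is a one-sided, hence two-sided, weak equivalence.

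\smallskip\noindent (b)~The multinerve \w{\Nlo{n}=\Nup{1}\dotsc\Nup{n}} is independent of the ordering of the \w{n} simplicial directions, hence so is its left adjoint \w[;]{\Plo{n}} running the factorization from the proof of Theorem~\ref{tadjoint} in the \w{n}-th direction instead of the first gives \w[,]{\Plo{n}=\Pup{n}\circ\oPl{n-1}\up{n}} so \w[.]{\Qlo{n}X=\Pup{n}\bigl(\oPl{n-1}\up{n}\osl{n}X\bigr)} By Proposition~\ref{pntfibor}, \w{\osl{n}X} is \wwb{n,2}fibrant, so \w{\Pup{n}} acts on it as \w{\hpu{n}} (Proposition~\ref{padjoint}) and commutes past \w{\Nup{n}} in the remaining directions (Lemma~\ref{ltpi}); it then remains to compute the terms of the simplicial \wwb{n-1}fold groupoid \w[.]{\Nup{n}\Qlo{n}X} For \w{k=0} the slice of \w{\osl{n}X} at \w{0} in the \w{n}-th direction equals \w{\osl{n-1}\Dex} by \wref[,]{eqorn} so \w[.]{(\Nup{n}\Qlo{n}X)\sb{0}=\Plo{n-1}\osl{n-1}\Dex=\Qlo{n-1}\Dex=\Qlo{n-1}\Llo{0}X} For \w{k=1} one identifies, using the explicit form of \w[,]{(\Dex)^{u}} part~(a), and the presentation of \w{\Pup{n}} as a fundamental internal groupoid, that \w{(\Nup{n}\Qlo{n}X)\sb{1}=\Qlo{n-1}\Llo{1}X} together with all of its structure maps. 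Since \w{\Qlo{n}X} is an \w{n}-fold groupoid, \w{\Nup{n}\Qlo{n}X} is the nerve of an internal groupoid, so for \w{k\geq 2} its \w{k}-th term is \w[;]{\pro{\Qlo{n-1}\Llo{1}X}{\Qlo{n-1}\Dex}{k}} combining this with \w{\Llo{k}X=\pro{\Llo{1}X}{\Dex}{k}} of \wref{eqllok} --- the Segal maps of \w{(\Dex)^{u}} being isomorphisms, they remain so after applying \w{\Qlo{n-1}} --- gives \w[,]{(\Nup{n}\Qlo{n}X)\sb{k}=\Qlo{n-1}\Llo{k}X} i.e.\ \w[.]{\Nup{n}\Qlo{n}X=\oQl{n}{n-1}\Ld X} The second displayed identity of (b) is the Segal condition just recorded. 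The step identifying the object of arrows with \w{\Qlo{n-1}\Llo{1}X} (and all of its structure maps) is the main obstacle.

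\smallskip\noindent (c)~Assume \w{X} homotopically trivial. Then \w{\Dex} and each \w{\Llo{k}X} are homotopically trivial (reducing over the components of \w{X} to Remark~\ref{rkan}; alternatively \w{\Llo{k}X} is an iterated homotopy pullback of the trivial fibration \w{u\col \Dex\to X}), so the comparison map \w{\Psi\col \Qlo{n}\Llo{k}X\to\pro{\Qlo{n}\Dex}{\Qlo{n}X}{k+1}} is \w{\Qlo{n}} applied to weak equivalences between homotopically trivial Kan complexes. Now \w{\Qlo{n}} sends a homotopically trivial Kan complex to a \emph{discrete} \w{n}-fold groupoid --- by Theorem~\ref{tadjoint}, since in each direction one is applying \w{\hpi} to a homotopically trivial simplicial set --- and on homotopically trivial spaces it coincides with \w{\pi\sb{0}} followed by the discretization functor \w[;]{\Set\to\Gpd^{n}} as \w{\pi\sb{0}} preserves pullbacks of homotopically trivial Kan complexes along fibrations (every component of such a pullback is again contractible) and the discretization functor preserves limits, \w{\Qlo{n}} preserves the fibre product \w[,]{\Llo{k}X=\pro{\Dex}{X}{k+1}} whence \w{\Psi} is an isomorphism. (The reduction to homotopically trivial \w{X} uses only Remark~\ref{reprod}; alternatively (c) can be deduced from (b) by induction on \w[,]{n} since all induced Segal maps are then isomorphisms.)
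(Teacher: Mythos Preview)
Your argument for (a) is essentially the paper's.

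For (b), you explicitly flag the gap yourself: the identification \w{(\Nup{n}\Qlo{n}X)\sb{1}=\Qlo{n-1}\Llo{1}X} is ``the main obstacle,'' and you do not close it. It is a genuine obstacle. Before applying \w[,]{\hpu{n}} the slice \w{(\osl{n}X)\up{n}\sb{1}} has entries \w{X_{n+p_{1}+\dotsb+p_{n-1}}} by~\wref[,]{eqorn} which is \emph{not} \w{\osl{n-1}\Llo{1}X} (the latter involves fibre products \w[).]{X_{m+1}\times_{X_m}X_{m+1}} The pullback description only appears \emph{after} taking the fundamental groupoid in the \w{n}-th direction, and tracking this through \w{\Plo{n-1}} levelwise is exactly what you have not done. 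The paper avoids this by proving the global identity \w{\Nup{n}\Qlo{n}X=\oQl{2}{n-1}\Nup{2}\hpu{2}\osl{2}X} (via Lemma~\ref{lnuph} and an inductive commutation of \w{\Nup{n}} with the \w[),]{\hpu{i}} and then observing that \w{\Nup{2}\hpu{2}\osl{2}X=\cN(\Dex)^{u}} because each column \w{\Dec^{k}X} has fundamental groupoid the equivalence-relation groupoid \w[.]{(X_{k})^{u_{k}}} This gives all levels and all structure maps simultaneously; your approach would need an equivalent step.

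For (c) there is a false assertion: \w{\Qlo{n}} does \emph{not} send a homotopically trivial Kan complex to a \emph{discrete} \w{n}-fold groupoid. Applying \w{\hpi} to a homotopically trivial simplicial set yields a homotopically discrete groupoid, not a discrete one (e.g.\ \w{\hpi\Delta[1]} has two objects). What is true --- and what the paper uses via Proposition~\ref{pnequiv}(c) --- is that \w{\Qlo{n}X} is \emph{homotopically} discrete. That is far too weak for your argument: the statement of (c) asserts an \emph{isomorphism} of \w{n}-fold groupoids, not a weak equivalence, so ``both sides have the same \w{\pi_{0}}'' does not suffice. Your parenthetical alternative (deduce (c) from (b) by induction on \w[)]{n} is the correct route and is what the paper does, but it is not just ``Segal maps are isomorphisms'': one must compare the two iterated fibre products \wref{eqprodecnewleft} and \wref{eqprodecnewright} via the induction hypothesis, the \w{n=1} base case (an explicit identification of \w{\hpi\Llo{1}X} with \w[),]{\tens{\hpi\Dex}{\hpi X}} and the commutation of \w{\Dec} with fibre products.
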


\begin{proof}
\noindent (a) The section \w{\sigma\col X\to\Dex} to the augmentation
\w[,]{\var=\ovl{d}_{\ast}\col \Dex\to X} given in dimension $i$
by the degeneracy \w{s_{i}\col X\sb{i}\to X\sb{i+1}} (cf.\ \S \ref{sdec}),
fits into a diagram of vertical arrows in \w[:]{\sS}
\mydiagram[\label{eqsectionphi}]{
X\ar[d]^{=}\ar[rr]^{\sigma} && \Dex\ar[d]^{\var}\ar[rr]^{\var} && X\ar[d]^{=}\\
X\ar[rr]^{=} && X\ar[rr]^{=} && X
}
\noindent (where the horizontal composite is the identity). Applying the
construction of \S \ref{dgc} to each vertical arrow we obtain
\w[.]{\cons X\xrw{\phi}\Ld X\xrw{\ovl{\var}}\cons X} Here the map
of simplicial sets \w{\phi_{i}\col \cons\,(X_{i})\to(\Ld X)_{i}} in each
internal simplicial dimension $i$ is given by the vertical maps in:
\mydiagram[\label{eqsectwe}]{
\dotsc & X_{i}\ar[d]^{(s_{i},s_{i},s_{i})}\ar@<2ex>[r]^{=}\ar[r]^{=}\ar@<-2ex>[r]^{=} &
X_{i}\ar[d]^{(s_{i},s_{i})}\ar@<1ex>[r]^{=}\ar@<-1ex>[r]^{=} & X_{i}\ar[d]^{s_{i}}\\
\dotsc & X_{i+1}\times_{X_{i}}~X_{i+1}\times_{X_{i}}~X_{i+1}
\ar@<1ex>[r]\ar[r] \ar@<-1ex>[r] &
X_{i+1}\times_{X_{i}}~X_{i+1}\ar@<1ex>[r]\ar@<-1ex>[r] & X_{i+1}~.
}

Since the lower row in \wref{eqsectwe} is the nerve of a
homotopically discrete groupoid, the vertical map is a weak equivalence
(with inverse induced by the right square in \wref[\vsm).]{eqsectionphi}

\noindent (b)~~We will show that for \w[:]{n\geq 2}
\begin{myeq}\label{eqprodec1}
\Nup{n}\Qlo{n}X~=~\oQl{2}{n-1}\Nup{2}\hpu{2}\osl{2}X~,
\end{myeq}
\noindent where \w{\oQl{2}{n-1}} is obtained by applying \w{\Qlo{n-1}}
in each simplicial dimension in the second direction
to the bisimplicial object \w[.]{\Nup{2}\hpu{2}\osl{2}}

By Lemma \ref{lnuph}, we have
\begin{myeq}\label{eqnuphh}
\ovt{2}{n-1}\Nup{2}\hpu{2} \osl{2}X~=~\Nup{n}\hpu{n}\osl{n}X~,
\end{myeq}
\noindent and since by definition of \w[:]{\Qlo{n-1}}
$$
\oQl{2}{n-1}\Nup{2}\hpu{2}\osl{2}X~=~
\hpu{1}\dotsc\hpu{n-1}\ovt{2}{n-1}\Nup{2}\hpu{2}\osl{2}X
$$
\noindent we deduce that:
\begin{myeq}\label{eqorhpi}
\oQl{2}{n-1}\Nup{2}\hpu{2}\osl{2}X~=~
\hpu{1}\dotsc\hpu{n-1}\Nup{n}\hpu{n}\osl{n}X~.
\end{myeq}

Since \w{\Qlo{n}X:=\hpu{1}\dotsc\hpu{n}\osl{n}X} and \w{\osl{n}X} is
\wwb{n,2}fibrant, in order to show \wref{eqprodec1} it suffices to
show by induction on \w{n\geq 2} that
\begin{myeq}\label{eqorhhh}
\Nup{n}\hpu{1}\dotsc\hpu{n}Y~=~\hpu{1}\dotsc\hpu{n-1}\Nup{n}\hpu{n}Y
\end{myeq}
\noindent for any \wwb{n,2}fibrant $n$-fold simplicial set $Y$.
For \w[,]{n=2}
$$
\Nup{2}\hpu{1}\hpu{2}Y=\hpu{1}\Nup{2}\hpu{2}Y
$$
\noindent by Lemma \ref{ltpi} and Proposition \ref{ptfibrant}.

In the induction step, let \w{\Gd} be the simplicial \wwb{n-1}fold groupoid
\w[.]{\hpu{2}\dotsc\hpu{n}Y} By Lemma \ref{lfibrant}, \w{\Gd} is
\wwb{n-1,2}fibrant, so for each \w[,]{\va\in\bDelta^{n-2}}
the simplicial groupoid \w{\Gd(\va)} (in the first groupoid
direction of \w[)]{\Gd} is \wwb{2,2}fibrant. Thus by Lemma \ref{ltpi}
we have \w[,]{\Nup{n}\hpu{1}\Gd(\va)=\hpu{1}\Nup{n}\Gd(\va)} so
$$
\Nup{n}\hpu{1}\dotsc\hpu{n}Y=\Nup{n}\hpu{1}\Gd=\hpu{1}\Nup{n}\Gd=
\hpu{1}\Nup{n}\hpu{2}\dotsc\hpu{n}Y.
$$

If we think of $Y$ as a simplicial \wwb{n-1,2}fibrant \wwb{n-1}fold
simplicial set \w{Y\up{1}_{\bullet}} (in the first direction),
by the induction hypotheses
$$
\Nup{n}\hpu{2}\dotsc\hpu{n}Y\up{1}_{m}~=~
\hpu{2}\dotsc\hpu{n-1}\Nup{n}\hpu{n}Y\up{1}_{m}
$$
\noindent for each \w[,]{m\geq 0} so \wref{eqorhhh} holds for $Y$, too.
This concludes the proof of \wref[\vsm .]{eqprodec1}

Observe that
\begin{myeq}\label{eqaf}
\Nup{2}\hpu{2}\osl{2} X~=~\cN A^{f}
\end{myeq}
\noindent for \w{A^{u}\in\Gpd(\sS)} as in \wref[,]{eqintgpd} where
$u$ is the map of simplicial sets \w[.]{u\col \Dex\to X}
In fact, \w[,]{\hpu{2}\osl{2} X} thought of as a simplicial object in
\w[,]{\Gpd} has \w{(\hpu{2}\osl{2} X)_{k}=\hpi\Dec^{k} X} in
simplicial dimension $k$. This is isomorphic to the homotopically
discrete groupoid \w{(X_{k})^{u_{k}}} (where
\w{u_{k}\col X_{k}\to X_{k-1}} is a map of sets). Hence from
\wref{eqprodec1} and \wref{eqaf} we conclude that
$$
\Nup{n}\Qlo{n}X~=~\ovl{Q}\lo{n-1}\cN A^{u}~.
$$
\noindent Since \w{(\cN A^{u})_{k}=\Llo{k}X} for each \w[,]{k\geq 0}
\wref{eqnervezero} follows.

In particular, since \w[,]{\Qlo{n} X\in\Gpt{n}} we have, for \w[,]{k\geq 2}
$$
\Qlo{n-1}\Llo{k}X\!=\!(\Nup{n}\Qlo{n} X)\up{n}_{k}\!\cong\!
    \pros{(\Nup{n}\Qlo{n} X)\up{n}_{1}}{(\Nup{n}\Qlo{n}
      X)\up{n}_{0}}{k}
$$
\noindent so by \wref{eqnervezero} we have:
$$
\Qlo{n-1}\Llo{k}X~\cong~\pro{(\Qlo{n-1}\Llo{1}X)}{(\Qlo{n-1}\Dex)}{k}~\vsm.
$$

\noindent (c)~~By induction on $n$. For \w[,]{n=1} \w[.]{\Qlo{1}=\hpi}
Since by hypothesis $X$ is homotopically trivial and \w{u\col \Dex\to X} is
a fibration, \w{\Llo{1}X=\tens{\Dex}{X}} is also homotopically trivial; hence
\w{\hpi\Llo{1}X} is a homotopically discrete groupoid, and is therefore
isomorphic to \w{A^{f}} where \w{f\col A\to B} is the obvious map
$$
\tens{X_{1}}{X_{0}}~\lto~\tens{X_{0}}{\pi_{0} X}~.
$$
\noindent On the other hand, \w{\hpi\Dex\cong(X_{1})^{d_{0}}} and
\w{\hpi X=(X_{0})^{\gamma}} (for \w[),]{\gamma\col X_{0}\to\pi_{0}X} so:
\begin{equation*}
\hpi\Llo{1}X~\cong~\tens{\hpi\Dex}{\hpi X}~.
\end{equation*}

In the induction step, applying \w{\Nup{n}} to both sides of
\wref[,]{eqprodec} we must show that for each \w{k\geq 1} and \w{i\geq 1}
we have
$$
(\Nup{n}\Qlo{n}\Llo{k}X)\up{n}_{i-1}\!\cong\!
\pros{(\Nup{n}\Qlo{n}\Dex)\up{n}_{i-1}}{(\Nup{n}\Qlo{n}X)\up{n}_{i-1}}{k-1}
$$
or equivalently (after applying (b)), that:
\begin{myeq}\label{eqprodecnew}
\Qlo{n-1}\Llo{i}(\Llo{k}X)~\cong~
\pros{\Qlo{n-1}\Llo{i}\Dex}{\Qlo{n-1}\Llo{i}X}{k+1}
\end{myeq}

Since $X$ is homotopically trivial, so are \w{\Dex} and \w{\Llo{k}X} (since
\w{u\col \Dex\to X} is a fibration), so we can apply induction hypothesis
(c) for \wb{n-1} to replace the left hand side of \wref{eqprodecnew} by:
$$
\pro{\Qlo{n-1}\Dec(\Llo{k}X)}{\Qlo{n-1}\Llo{k}X}{i+1}~,
$$
\noindent and since \w{\Dec} commutes with fiber products, and thus
with \w[,]{\Llo{k}} this equals:
{\scriptsize
$$
\pros{(\Qlo{n-1}(\pros{\Dec^{2}\!X}{\Dex}{k+1}))}{(\Qlo{n-1}
(\pros{\Dex}{X}{k+1}))}{i+1}.
$$
}
\noindent If we write \w[,]{A:=\Qlo{n-1}\Dec^{2}X}
\w[,]{B:=\Qlo{n-1}\Dex} and \w[,]{C:=\Qlo{n-1}X} applying (b) for
\w{n-1} to this last expression yields:
\begin{myeq}\label{eqprodecnewleft}
\pro{(\pro{A}{B}{k+1})}{(\pro{B}{C}{k-1}))}{i+1}~.
\end{myeq}

Similarly, (c) applied to the right hand side of \wref{eqprodecnew} yields
\begin{myeq}\label{eqprodecnewright}
\pro{(\pro{A}{B}{i+1}))}{(\pro{B}{C}{i-1}))}{k+1}~,
\end{myeq}
\noindent and the two limits \eqref{eqprodecnewleft} and
\eqref{eqprodecnewright} are evidently equal, proving \eqref{eqprodecnew}.
\end{proof}

\supsect{\protect{\ref{cnt}}.C}{Modelling $\mathbf{n}$-types}

In the last part of this section we finally show that weakly globular
$n$-fold groupoids indeed model $n$-types.

\begin{prop}\label{pnequiv}
Let $X$ be a Kan complex. Then:
\begin{enumerate}
\renewcommand{\labelenumi}{(\alph{enumi})~}
\item There is a natural $n$-equivalence \w[.]{\psi\sp{X}\lo{n}\col X\to\dN\Qlo{n}X}
\item \www{\Qlo{n}} preserves weak equivalences of Kan complexes.
\item If $X$ is homotopically trivial (i.e., all higher homotopy
  groups vanish), then \w{\Qlo{n}X} is a homotopically trivial $n$-fold groupoid.
\item \www{\Qlo{n} X} is a weakly globular $n$-fold groupoid, and
\w{\bPz{n}\Qlo{n} X} is isomorphic to \w[.]{\Qlo{n-1} X}
\end{enumerate}
\end{prop}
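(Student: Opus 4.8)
I would prove (a)--(d) together, by induction on $n\geq 1$. The base case $n=1$ is the classical theory of the fundamental groupoid $\Qlo{1}=\hpi$: the unit $X\to\cN\hpi X$ is a $1$-equivalence for a Kan complex $X$ (this is (a)); $\hpi$ sends weak equivalences of Kan complexes to equivalences of groupoids (this is (b)); a homotopically trivial Kan complex $X$ has $\hpi X\cong(X_{0})\sp{f}$ in the sense of \S\ref{dgc}, with $f\colon X_{0}\twoheadrightarrow\pi_{0}X$ the quotient, so $\hpi X\in\Ghd{1}$ (this is (c)); and every groupoid is weakly globular, with $\bPz{1}\hpi X=\pi_{0}X$ (this is (d), under the conventions $\Gpd\sp{0}=\Set$ and $\bPz{1}=\pi_{0}=\Qlo{0}$). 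For the inductive step fix a Kan complex $X$. The engine is Lemma \ref{lnerve}(b), which exhibits $\Qlo{n}X$ as a groupoid object in $\Gpd\sp{n-1}$ with object of objects $\Qlo{n-1}\Dex$, object of arrows $\Qlo{n-1}\Llo{1}X$, and $k$-th nerve term $\Qlo{n-1}\Llo{k}X\cong\pro{\Qlo{n-1}\Llo{1}X}{\Qlo{n-1}\Dex}{k}$ for $k\geq 2$ (the first of the $n$ directions, which is the one relevant to Definition \ref{dntng}, admits the analogous description). I use throughout that $\Dex$, $\Decp X$ and $\Llo{k}X=\pro{\Dex}{X}{k+1}$ are Kan complexes (iterated pullbacks of the fibration $\var\colon\Dex\to X$) and that $\Dex$ is homotopically trivial with $\pi_{0}\Dex\cong X_{0}$.

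\textbf{(c) and (a).} For (c): if $X$ is homotopically trivial then so is $\Llo{1}X$, and Lemma \ref{lnerve}(c) for $n-1$ gives $\Qlo{n-1}\Llo{1}X\cong\tens{\Qlo{n-1}\Dex}{\Qlo{n-1}X}$, so $\Qlo{n}X$ has exactly the form $A\sp{f}$ of \S\ref{dgc} with $A=\Qlo{n-1}\Dex$ and $f=\Qlo{n-1}(u)$, which has the section $\Qlo{n-1}(\sigma)$. By the inductive hypothesis (c), both $\Qlo{n-1}\Dex$ and $\Qlo{n-1}X$ lie in $\Ghd{n-1}$, so $\Qlo{n}X\in\Ghd{n}$ by Definition \ref{dhdng} (whence it is homotopically trivial, by Corollary \ref{chdng}). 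For (a): Lemma \ref{lnerve}(a) gives a weak equivalence $\Diag\phi\colon X\to\Diag\Ld X$, and applying $\Qlo{n-1}$ degreewise and taking diagonal nerves identifies $\dN\Qlo{n}X$ with the diagonal of the bisimplicial set $k\mapsto\dN\Qlo{n-1}\Llo{k}X$. The natural $(n-1)$-equivalences $\psi\sp{\Llo{k}X}\lo{n-1}$ of the inductive hypothesis (a) assemble to a map of bisimplicial sets $\Ld X\to\bigl(k\mapsto\dN\Qlo{n-1}\Llo{k}X\bigr)$; this is a diagonal $n$-equivalence in the sense of Definition \ref{dndiag}, since in horizontal degree $k\geq 1$ it is an $(n-1)$-equivalence (hence an $(n-k)$-equivalence), while in degree $0$ both $\Dex$ and $\dN\Qlo{n-1}\Dex$ are homotopically trivial --- the latter by (c) for $n-1$ and Corollary \ref{chdng} --- so an $(n-1)$-equivalence between them is already a weak equivalence, in particular an $n$-equivalence. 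By Proposition \ref{pndiag} the diagonal is an $n$-equivalence, and composing with $\Diag\phi$ yields the required natural $n$-equivalence $\psi\sp{X}\lo{n}\colon X\to\dN\Qlo{n}X$.

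\textbf{(d).} I verify conditions (i)--(iv) of Definition \ref{dntng} for $G=\Qlo{n}X$. Condition (i), $G_{0}=\Qlo{n-1}\Dex\in\Ghd{n-1}$, is (c) for $n-1$ applied to the homotopically trivial $\Dex$; condition (ii) holds because $G_{1}=\Qlo{n-1}\Llo{1}X$ and, for $k\geq 2$, $\pro{G_{1}}{G_{0}}{k}\cong\Qlo{n-1}\Llo{k}X$ are values of $\Qlo{n-1}$ on Kan complexes, hence lie in $\Gpt{n-1}$ by the inductive hypothesis (d). For (iii), $\bPz{n}\Qlo{n}X$ is $\pi_{0}$ of the groupoid object $\Qlo{n}X$ in the relevant direction, i.e.\ the coequalizer of $\Qlo{n-1}\Llo{1}X\toto\Qlo{n-1}\Dex$; since $\var\colon\Dex\to X$ is a degreewise surjection, $X$ is the coequalizer of $\Llo{1}X\toto\Dex$ in $\sS$, and $\Qlo{n-1}$ preserves colimits (Remark \ref{reprod}), so $\bPz{n}\Qlo{n}X\cong\Qlo{n-1}X$ --- which lies in $\Gpt{n-1}$ by (d) for $n-1$, and this simultaneously proves the second assertion of (d); the remaining identity $\Nlo{n-1}\bPz{n}\Qlo{n}X=\opz{n}\Nup{n-1}\cdots\Nup{1}\Qlo{n}X$ is then bookkeeping with the conventions of \S\ref{nsimp}. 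Condition (iv) is the crux, and I treat it after realizations: $G_{0}=\Qlo{n-1}\Dex$ is homotopically discrete with $\pi_{0}\diN G_{0}\cong\pi_{0}\Dex\cong X_{0}$ (using (a) for $n-1$), so $G_{0}\sp{d}=(X_{0})\sp{d}\lo{n-1}$ is discrete and $\diN\pro{G_{1}}{G_{0}\sp{d}}{k}\cong\pro{\diN G_{1}}{X_{0}}{k}$, while $\pro{G_{1}}{G_{0}}{k}\cong\Qlo{n-1}\Llo{k}X$ and (a) for $n-1$ gives $\diN\pro{G_{1}}{G_{0}}{k}\simeq_{(n-1)}\Llo{k}X=\pro{\Llo{1}X}{\Dex}{k}$. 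Under these identifications $\diN\iseg{k}$ corresponds, up to $(n-1)$-equivalence, to the map $\pro{\Llo{1}X}{\Dex}{k}\to\pro{\Llo{1}X}{X_{0}}{k}$ induced by the weak equivalence $\Dex\to X_{0}$; because the structure maps $\Llo{1}X\to\Dex$ are fibrations and fibre products over the discrete $X_{0}$ compute homotopy pullbacks, the gluing lemma shows this map is a weak equivalence, and applying the $(n-1)$-equivalence $\psi\sp{\Llo{1}X}\lo{n-1}$ (which lies over the discrete $X_{0}$, where $(n-1)$-equivalences are closed under products and coproducts) then shows $\diN\iseg{k}$ is an $(n-1)$-equivalence. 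Finally both $\diN\pro{G_{1}}{G_{0}}{k}$ and $\diN\pro{G_{1}}{G_{0}\sp{d}}{k}$ are $(n-1)$-types --- the first by (d) for $n-1$ and Theorem \ref{thdnt}, the second because $\pro{G_{1}}{G_{0}\sp{d}}{k}\in\Gpt{n-1}$ by Remark \ref{rgpoid} --- so an $(n-1)$-equivalence between them is a weak equivalence; hence $\iseg{k}$ is a geometric weak equivalence and $\Qlo{n}X\in\Gpt{n}$.

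\textbf{(b), and the main obstacle.} With (a) and (d) established, (b) is immediate: for a weak equivalence $f\colon X\to Y$ of Kan complexes, naturality of $\psi\lo{n}$ makes $\dN\Qlo{n}f$ an $n$-equivalence, and since $\diN\Qlo{n}X$ and $\diN\Qlo{n}Y$ are $n$-types (by (d) and Theorem \ref{thdnt}) it is then a weak equivalence, i.e.\ $\Qlo{n}f$ is a geometric weak equivalence. The main obstacle is condition (iv): pinning down the homotopy types of $\pro{G_{1}}{G_{0}}{k}$ and $\pro{G_{1}}{G_{0}\sp{d}}{k}$ under the discretization $G_{0}\rightsquigarrow G_{0}\sp{d}$ requires combining Lemma \ref{lnerve}, the gluing lemma for homotopy pullbacks, and Theorem \ref{thdnt} in order to upgrade an a priori $(n-1)$-equivalence to an honest weak equivalence; a secondary point, in (a), is that the homotopical triviality of $\Dex$ is precisely what promotes the diagonal $(n-1)$-equivalence to a diagonal $n$-equivalence in horizontal degree $0$.
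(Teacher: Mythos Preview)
Your induction scheme and the treatment of (a), (b), (c), and conditions (i)--(ii) of (d) are essentially the paper's own argument. The divergence is in (d)(iii) and (d)(iv).

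In (d)(iii) there is a genuine gap. You argue that $\Qlo{n-1}$ preserves colimits and that $X$ is the coequalizer of $\Llo{1}X\toto\Dex$, so $\Qlo{n-1}X$ is the coequalizer of $\Qlo{n-1}\Llo{1}X\toto\Qlo{n-1}\Dex$ in $\Gpd^{n-1}$; you then identify this with $\bPz{n}\Qlo{n}X$. But $\bPz{n}$ is \emph{defined} via $\opz{n}$, i.e.\ the \emph{levelwise} $\pi_0$ in $\sCx{n-1}{\Set}$, and condition (iii) of Definition~\ref{dntng} is precisely the requirement that this levelwise object be a multinerve $\Nlo{n-1}(\text{something})$. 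Since $\Nlo{n-1}$ is only a right adjoint, knowing the coequalizer in $\Gpd^{n-1}$ does not tell you that the levelwise coequalizer lands in the image of $\Nlo{n-1}$; the two agree only \emph{after} one has verified (iii), so invoking it here is circular, and calling it ``bookkeeping'' skips the actual content. The paper sidesteps this by a direct levelwise computation: it decomposes $\Qlo{n}X$ in the outer direction via Lemma~\ref{lnerve}(b) and applies the inductive isomorphism $\bPz{n-1}\Qlo{n-1}\cong\Qlo{n-2}$ in each simplicial degree, obtaining $(\Qlo{n-1}X)_{k}$ on the nose.

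For (d)(iv) your route via realizations, the gluing lemma, and the upgrade of an $(n-1)$-equivalence to a weak equivalence through Theorem~\ref{thdnt} can be made to work, but it is more laboured than necessary. The paper instead uses Remark~\ref{reprod}: $\Qlo{n-1}$ commutes with fibre products over discrete simplicial sets, so $\pro{G_{1}}{G_{0}^{d}}{k}\cong\Qlo{n-1}\bigl(\pro{\Llo{1}X}{c(X_{0})}{k}\bigr)$, and $\iseg{k}$ is then $\Qlo{n-1}$ applied to the weak equivalence of Kan complexes $\pro{\Llo{1}X}{\Dex}{k}\to\pro{\Llo{1}X}{c(X_{0})}{k}$; inductive hypothesis (b) finishes immediately. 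This avoids passing to classifying spaces and back.
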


\begin{proof}
By induction on $n$. The claim is immediate for \w{n=1}
(with \w{\Qlo{0}X:=\pi_{0}X} and
\w{\psi\sp{X}\lo{1}\col X\to\cN\hpi X\simeq \Po{1}X} the Postnikov structure map)\vsm.

\noindent (a)~ We assume that we have a map
$$
\psi\sp{X}\lo{n-1}\col X~\to~\Dlo{n-1}\Nlo{n-1}\Qlo{n-1}X~,
$$
\noindent natural in $X$.
Applying this to the simplicial object \w{\Ld X\in\sC{\sS}} (which
is fibrant in each simplicial dimension, by \S \ref{rdex}), we
obtain a map of bisimplicial sets
$$
\psi\sp{\Ld X}\lo{n-1}\col \Ld X~\to\oDlo{n-1}\up{n}\oNl{n-1}\oQl{n}{n-1}\Ld X~,
$$
\noindent which is an \wwb{n-1}equivalence in each simplicial dimension.

However, in simplicial dimension $0$ we have \w[,]{\Llo{0}X=\Dex} which is
homotopically trivial, while \w{\Qlo{n-1}\Dex} is a homotopically discrete
\wwb{n-1}fold groupoid by induction assumption (c) for \w[,]{n-1} so
\w{\dN\Qlo{n-1}\Dex} is homotopically trivial by
Corollary \ref{chdng}. Thus \w{\psi\sp{\Llo{0}X}\lo{n-1}} is actually a
geometric weak equivalence, and thus \w{\psi\sp{\Ld X}\lo{n-1}} is a diagonal
$n$-equivalence (cf.\ \S \ref{dndiag}), which implies that

\begin{myeq}\label{eqndiageq}
\Diag\psi\sp{\Ld X}\lo{n-1}~\col ~\Diag\Ld X~\to~
\Dlo{n}\oNl{n-1}\oQl{n}{n-1}\Ld X
\end{myeq}
\noindent is an $n$-equivalence by Proposition \ref{pndiag}.

Now by \wref[,]{eqnervezero} \w[,]{\Nup{n}\Qlo{n}X=\oQl{n}{n-1}\Ld X} so together
with the map \w{\phi\col \cons X\to\Ld X} of Lemma \ref{lnerve}(a) we have maps of
bisimplicial sets:
$$
\cons X~\xrw{\phi}~\Ld X~\xrw{\psi\sp{\Ld X}\lo{n-1}}~
\oDlo{n-1}\up{n}\oNl{n-1}\oQl{n}{n-1}\Ld X~=~
\oDlo{n-1}\up{n} N\up{n}\Qlo{n}X~.
$$

Applying \w{\Diag} to both maps we see that the first is a weak
equivalence, while the second is an $n$-equivalence, because
\wref{eqndiageq} is such. We define the composite to be
\w[,]{\psi\sp{X}\lo{n}\col X\to\dN\Qlo{n}X} which is therefore an
$n$-equivalence\vsm.

\noindent (b)~ Let \w{f\col X\to Y} be a weak equivalence of Kan
complexes. Since by (a), \w{X\to\Dlo{n}\Qlo{n}X} and \w{Y\to\Dlo{n}\Qlo{n}Y}
are $n$-equivalences, it follows that \w{\Dlo{n}\Qlo{n} f} is an
$n$-equivalence. By Theorem \ref{thdnt}, \w{\Dlo{n}\Qlo{n}X} and
\w{\Dlo{n}\Qlo{n}Y} are $n$-types. Hence \w{\Dlo{n}\Qlo{n}f} is a weak
equivalence\vsm .

\noindent(c)~ Since $X$ is homotopically trivial, by Lemma
\ref{lnerve} for each \w{k\geq 1} we have:
$$
(\Nup{n}\Qlo{n} X)_{k}\!=\!\Qlo{n-1}\Llo{k}X=
\pro{\Qlo{n-1}\Dex}{\Qlo{n-1} X}{k+1}.
$$
\noindent Therefore \w[,]{\Qlo{n} X=A^{f}} where \w{A=\Qlo{n-1}\Dex}
and by induction
$$
f:=\Qlo{n-1}\var\col \Qlo{n-1}\Dex\to\Qlo{n-1} X
$$
\noindent is a map of homotopically discrete \wwb{n-1}fold
groupoids with a section \w{\Qlo{n-1}\sigma} (cf.\ \S \ref{sdec}).
Hence, \w{\Qlo{n}X} is homotopically discrete, by definition\vsm .

\noindent (d)~To show that \w{\Qlo{n} X} is weakly globular
(Definition \ref{dntng}), we think of it as a groupoid in \w[,]{\Gpd^{n-1}}
with \wwb{n-1}fold groupoid of objects \w{(\Qlo{n} X)_{0}} and
\wwb{n-1}fold groupoid of arrows \w[.]{(\Qlo{n} X)_{1}}
Note that \w{(\Qlo{n} X)_{0}=\Qlo{n-1}\Dex} by \wref{eqnervezero}
for \w[,]{k=0} and since \w{\Dex} is
homotopically discrete, \w{(\Qlo{n} X)\up{n}_{0}} is homotopically
discrete, by (c).

Similarly, \w[,]{(\Nup{n}\Qlo{n}X)\up{n}_{1}=\Qlo{n-1}\Llo{1}X\in\Gpt{n-1}}
and by \wref[:]{eqnerveqk}
\begin{equation*}
\begin{split}
(\Nup{n}\Qlo{n}X)_{k}~=&~\pro{(\Qlo{n}X)_{1}}{(\Qlo{n}X)_{0}}{k}\\
~=&~\Qlo{n-1}(\pro{\Llo{1}X}{\Dex}{k})~,
\end{split}
\end{equation*}
\noindent so \w{(\Nup{n}\Qlo{n} X)\sb{k}} is weakly globular for each
\w[.]{k\geq 0}

If we apply \w{\bPz{n-1}} in each simplicial dimension in the $n$-th direction,
by Lemma \ref{lnerve}(b) and the induction hypothesis:
$$
(\oPz\up{n-1}\Nup{n}\Qlo{n}X)\up{n}_{k}~=~\bPz{n-1}\Qlo{n-1}\Llo{k}X~=~
\Qlo{n-2}\Llo{k}X~=~(\Qlo{n-1}X)_{k}~.
$$
\noindent where \w{(\Nup{n-1}\Qlo{n-1} X)\up{n-1}_{k}} is abbreviated
to \w[.]{(\Qlo{n-1} X)_{k}}

This shows that \w{\bPz{n}G} lands in weakly globular
\wwb{n-1}fold groupoids, and that:
$$
\bPz{n}\Qlo{n} X\cong\Qlo{n-1}X~\vsm.
$$

To prove that \w[,]{\Qlo{n} X\in\Gpt{n}} it remains to show that
in each simplicial dimension \w{k\geq 2} (in the $n$-th direction),
the map:
\begin{myeq}\label{eqproq}
\begin{split}
&\pro{(\Qlo{n} X)_{1}}{(\Qlo{n} X)_{0}}{k}~\to\\
&\hspace*{20mm}\pro{(\Qlo{n} X)_{1}}{(\Qlo{n}X)_{0}^{d}}{k}
\end{split}
\end{myeq}
\noindent is a geometric weak equivalence. By Lemma \ref{lnerve}(b)
we have:
\begin{equation*}
\begin{split}
\pro{(\Qlo{n}X)_{1}}{(\Qlo{n}X)_{0}}{k}~=~(\Nup{n}\Qlo{n}X)_{k}&\\
=~\Qlo{n-1}\Llo{k}X~\cong~
\Qlo{n-1}(\pro{\Llo{1}X}{\Dex}{k})&
\end{split}
\end{equation*}
\noindent where the second equality is \wref{eqnervezero} and
the third is \wref[.]{eqnerveqk}

Since \w{\Dex} is homotopically trivial, \w{\Qlo{n-1}\Dex} is homotopically
discrete by (c), so
$$
(\Qlo{n}X)\sb{0}\sp{d}~=~(\Qlo{n-1}\Dex)\sp{d}~=~
\Qlo{n-1}c(\pi_{0}\Dex)~=~\Qlo{n-1}c(X\sb{0})
$$
\noindent by (a) and Lemma \ref{lhdng}(d), where \w{c(X_{0})} is
the constant simplicial set on \w[.]{X_{0}}

Since $X$ is a Kan complex and \w{\Qlo{n-1}} commutes with fiber
products over discrete objects, by Remark \ref{reprod}, we have:
\begin{equation*}
\begin{split}
\pros{(\Qlo{n} X)_{1}}{(\Qlo{n} X)_{0}^{d}}{k}~=&
~\pros{\Qlo{n-1}\Llo{1}X}{\Qlo{n-1}c(X_{0})}{k}\\
~\cong&~\Qlo{n-1}(\pro{\Llo{1}X}{c(X_{0})}{k})~.
\end{split}
\end{equation*}
\noindent Since \w{\Dex\to X} is a fibration, so is \w[,]{\Llo{1}X\to\Dex}
and \w{\Dex\to c(X_{0})} is a weak equivalence; thus the map
$$
\pro{\Llo{1}X}{\Dex}{k}\to\pro{\Llo{1}X}{c(X_{0})}{k}
$$
\noindent is a weak equivalence of Kan complexes. Therefore, by (b),
\wref{eqproq} is a weak equivalence, as required.
\end{proof}

Recall from \S \ref{dpostnikov} that \w{\PT{n}} denotes the full
subcategory of \w{\Top} consisting of spaces $T$ for which the
natural map \w{T\to \Po{n}T} is a weak equivalence, and \w{\ho\PT{n}}
is the corresponding full subcategory of the homotopy category \w{\ho\Top} of
topological spaces.

\smallskip
\begin{defn}\label{dsimb}
Let \w{\ho\Gpt{n}} denote the localization of the category \w{\Gpt{n}}
with respect to the (algebraic) weak equivalences (see
Corollary \ref{chtpygp} and compare \cite{GZiC}).
\end{defn}

\smallskip
\begin{thm}\label{teqcat}
The functors \w{\hQlo{n}\col \Top\to\Gpt{n}} and
\w{\diN\col \Gpt{n}\to\Top} induce functors:
\begin{myeq}\label{eqfaith}
\ho\PT{n}~\adj{\diN}{\hQlo{n}}~\ho\Gpt{n}~,
\end{myeq}
\noindent with \w[,]{\diN\circ\hQlo{n}\cong\Id\sb{\ho\PT{n}}} so
\w{\hQlo{n}\col \ho\PT{n}\to \ho\Gpt{n}} is a faithful embedding.
\end{thm}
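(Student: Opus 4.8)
The plan is to deduce Theorem~\ref{teqcat} from Proposition~\ref{pnequiv} and Theorem~\ref{thdnt}; the only work beyond these is to observe that $\hQlo{n}$ and $\diN$ carry weak equivalences to weak equivalences, so that they descend to the localized categories, and then to unwind the natural $n$-equivalence of Proposition~\ref{pnequiv}(a) into a natural isomorphism $\diN\circ\hQlo{n}\cong\Id$ on $\ho\PT{n}$.

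First I would check that $\diN$ descends. By Corollary~\ref{chtpygp}, the algebraic weak equivalences in $\Gpt{n}$ used to form $\ho\Gpt{n}$ (Definition~\ref{dsimb}) coincide with the geometric ones, i.e.\ the maps $f$ with $\dN f$ a weak equivalence of simplicial sets (Definition~\ref{dwecs}); hence $\diN$ sends them to weak equivalences of spaces, and by the universal property of localization it induces a functor $\diN\colon\ho\Gpt{n}\to\ho\Top$. By Theorem~\ref{thdnt}, $\diN G$ is an $n$-type for every $G\in\Gpt{n}$, so this functor lands in $\ho\PT{n}$. Dually, $\hQlo{n}=\Qlo{n}\circ\cS$ takes a weak equivalence of spaces to a weak equivalence of Kan complexes (via $\cS$), and then, by Proposition~\ref{pnequiv}(b) together with Theorem~\ref{thdnt}, to a map $f$ with $\dN\Qlo{n}f$ a weak equivalence --- that is, a geometric, hence algebraic, weak equivalence. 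So $\hQlo{n}$ likewise induces $\hQlo{n}\colon\ho\PT{n}\to\ho\Gpt{n}$.

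Next I would identify the composite. Fix $T\in\PT{n}$. Proposition~\ref{pnequiv}(a), applied to the Kan complex $\cS T$, gives a natural $n$-equivalence $\psi^{\cS T}_{n}\colon\cS T\to\dN\Qlo{n}\cS T=\dN\hQlo{n}T$. Its source is weakly equivalent to the $n$-type $T$ and its target is an $n$-type by Theorem~\ref{thdnt}, and an $n$-equivalence between $n$-types is a weak equivalence; so $\psi^{\cS T}_{n}$ is one. Realizing and prefixing the counit $\|\cS T\|\to T$ of the adjunction $\|-\|\dashv\cS$ (a natural weak equivalence), we obtain a zig-zag $T\xleftarrow{\ \simeq\ }\|\cS T\|\xrightarrow{\ \simeq\ }\diN\hQlo{n}T$ of weak equivalences, natural in $T$ since each arrow is a natural transformation of functors $\PT{n}\to\Top$. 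Passing to $\ho\PT{n}$ this becomes a natural isomorphism $\Id_{\ho\PT{n}}\cong\diN\circ\hQlo{n}$.

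Finally, a functor admitting a retraction up to natural isomorphism is faithful and injective on isomorphism classes of objects: if $\hQlo{n}\alpha=\hQlo{n}\beta$ for $\alpha,\beta\colon T\to T'$ in $\ho\PT{n}$, applying $\diN$ and the natural isomorphism above forces $\alpha=\beta$, and if $\hQlo{n}T\cong\hQlo{n}T'$ then $T\cong\diN\hQlo{n}T\cong\diN\hQlo{n}T'\cong T'$; thus $\hQlo{n}\colon\ho\PT{n}\to\ho\Gpt{n}$ is a faithful embedding. I do not expect a genuine obstacle here: the substance is already contained in Theorem~\ref{thdnt} (every $\diN G$ is an $n$-type), Proposition~\ref{pnequiv} (the natural $n$-equivalence and preservation of weak equivalences), and Corollary~\ref{chtpygp} (geometric $=$ algebraic weak equivalences), so the remaining steps are formal. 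The one point needing a little care is that $\psi^{\cS T}_{n}$ is a map out of $\cS T$ rather than out of $T$, so one must interpose the singular--realization counit to phrase the comparison in terms of $T$ and verify that the resulting zig-zag is natural in $T$.
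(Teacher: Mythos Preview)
Your proposal is correct and follows essentially the same route as the paper: both argue that $\diN$ and $\hQlo{n}$ preserve weak equivalences via Theorem~\ref{thdnt} and Proposition~\ref{pnequiv}, and then produce the same natural span $T\xleftarrow{\simeq}\|\cS T\|\xrightarrow{\simeq}\diN\hQlo{n}T$ (the paper writes it as \wref[)]{eqzigzagh} to exhibit $\diN\circ\hQlo{n}\cong\Id_{\ho\PT{n}}$. Your write-up is somewhat more explicit---spelling out the role of Corollary~\ref{chtpygp}, the passage from $n$-equivalence to weak equivalence between $n$-types, and the formal deduction of faithfulness from the retraction---but there is no substantive difference in strategy.
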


\begin{proof}
By Theorem \ref{thdnt} and Proposition \ref{pnequiv} both functors
\w{\hQ_n = \Qlo{n} S} and $\diN$ preserve weak equivalences, and therefore
induce corresponding functors on the homotopy categories. Also, for any
\w[,]{T\in\PT{n}} by Theorem \ref{thdnt} and Proposition \ref{pnequiv},
there is a span
\begin{myeq}\label{eqzigzagh}
B\hQlo{n}T \leftarrow |ST| \to T~,
\end{myeq}
\noindent where the map on the left is a homotopy equivalence and the map on
the right is a weak homotopy equivalence. It follows that $T$ and
\w{\diN\hQlo{n}T} are weakly equivalent in \w[;]{\PT{n}} that is,
\w[.]{\diN\circ\hQlo{n}\cong\Id\sb{\ho\PT{n}}}
\end{proof}

\smallskip
\begin{mysubsection}{Weakly globular double groupoids}
\label{swgdg}
For \w[,]{n=2} we can strengthen Theorem \ref{teqcat} to obtain an equivalence
\w[,]{\ho\PT{2}\approx \ho\Gpt{2}} where on the right hand side we use the
(internally defined) algebraic weak equivalences of \w{\Gpt{2}} itself:

As in \cite[Theorem 2.5]{BCDusC}, for any double groupoid $G$ one can
construct a map \w[.]{\var\sb{\bullet}\col \osl{2}\dN G\to\Nlo{2}G}
By \cite[Theorem 8]{CHRemeD}, if $G$ is weakly globular (and
therefore \wwb{2,2}fibrant), \w{\dN G} is a Kan complex. Therefore,
\w{\Plo{2}\osl{2}\dN G} and \w{\Plo{2}\Nlo{2}G=G} are weakly globular double
groupoids. Since we have a homotopy equivalence of Kan complexes
\w[,]{\xi\col \dN G\to\cS\|\dN G\|=\cS\diN G} we also have a geometric
weak equivalence of weakly globular double groupoids:
\begin{myeq}\label{eqhtpyeq}
\Qlo{2}\dN G~\xrw{\Qlo{2}\xi}~\Qlo{2}\cS\diN G=\hQlo{2}\diN G~.
\end{myeq}
\noindent Therefore, the algebraic homotopy groups \w{\omega_{\ast}\Qlo{2}\dN G}
are isomorphic by \wref{eqpinag} to
$$
\pi_{\ast}\diN\Qlo{2}\dN G~\cong~\pi_{\ast}\diN\hQlo{2}\diN G~\cong~\pi_{\ast}\diN G
$$
\noindent (using \wref{eqzigzagh} for \w[).]{T:=\diN G} By Theorem \ref{thdnt},
\w[,]{\omega_{\ast}G\cong\pi_{\ast}\diN G} and since
\w[,]{\omega_{\ast}G\cong\pi_{\ast}\dN G} also by \wref[,]{eqpinag} we
conclude that \w[.]{\omega_{\ast}\Qlo{2}\dN G\cong\omega_{\ast}G}
One can verify that this isomorphism is induced by the map
$$
\Plo{2}\var\sb{\bullet}\col \Plo{2}\osl{2}\dN G=\Qlo{2}\dN G\to\Plo{2}\Nlo{2}G=G~,
$$
which is therefore a geometric weak equivalence of double groupoids.

Together with a map of double groupoids induced by \wref[,]{eqhtpyeq} we
obtain a zig-zag of geometric weak equivalences:
$$
\hQlo{2}\diN G~\xlw{\Qlo{2}\xi}~\Qlo{2}\dN G
~\xrw{\Plo{2}\var\sb{\bullet}}~G~.
$$
\noindent This implies that \wref{eqfaith} is an equivalence of localized
categories when \w[.]{n=2}
\end{mysubsection}

%
%

\medskip
\sect{Tamsamani's model and weakly globular $n$-fold groupoids}
\label{ctmng}

In this section we construct a comparison functor from weakly globular
$n$-fold groupoids to Tamsamani's weak $n$-groupoids, which preserves
homotopy types.

\supsect{\protect{\ref{ctmng}}.A}{Tamsamani's weak $\mathbf{n}$-groupoids}

We begin with a brief recapitulation of the notion of a Tamsamani
weak $n$-groupoid, starting with a modified definition.
This differs somewhat from the original definition in \cite[\S 5]{TamsN}
(compare \cite[\S 15.2]{SimpH} and \cite[\S 8]{PaolW}),
which was motivated by the goal of modelling higher \emph{categories},
rather than groupoids.

\begin{defn}\label{dtamn}
The category \w{\Tam{n}} of \emph{Tamsamani weak $n$-groupoids} is a full
subcategory of \w[,]{\sCx{n-1}{\Gpd}} defined inductively as follows:

\begin{enumerate}
\renewcommand{\labelenumi}{(\alph{enumi})~}
\item \www{\Tam{1}:=\Gpd} is the category of groupoids.
\item Each \w{X\in\Tam{n}} is a simplicial object in
\w{\Tam{n-1}} (in the first simplicial direction). We therefore have an
inclusion functor \w[.]{J\sb{n}\col \Tam{n}\to \sCx{n-1}{\Gpd}}
\item The $0$-th Tamsamani weak \wwb{n-1}groupoid \w{X_{0}} is discrete
(that is, a constant \wwb{n-1}fold simplicial set).
\item The Segal maps \w{\seg{k}\col X_{k}\to\pro{X_{1}}{X_{0}}{k}}
(Definition \ref{dsegal}) are geometric weak equivalences of
Tamsamani weak \wwb{n-1}groupoids for each \w[:]{k\geq 2}
that is, \w{\TamR\seg{k}\col \TamR X_{k}\to\TamR(\pro{X_{1}}{X_{0}}{k})}
is a weak equivalence of topological spaces, where
\w{\TamR\col \Tam{n}\to\Top} is the realization functor of \wref[.]{eqrealize}
\item The \wwb{n-1}simplicial set \w{\opz{n}J\sb{n} X} is the nerve of a
Tamsamani weak \w{(n-1)} groupoid \w[,]{\bPz{n}X} and we have a
commutative diagram
$$
\xymatrix{
\Tam{n} \ar^{J\sb{n}}[rrrr] \ar_{\bPz{n}}[d] &&&& \sCx{n-1}{\Gpd} \ar^{\opz{n}}[d] \\
\Tam{n-1} \ar_{J\sb{n-1}}[rr] && \sCx{n-2}{\Gpd} \ar_{\ovl{N}}[rr] && \sCx{n-1}{\Set}
}
$$
Furthermore,  \w{\bPz{n}} preserves geometric weak equivalences.
\end{enumerate}
\end{defn}

\begin{mysubsection}{Tamsamani's original definition}\label{sbptam}
Tamsamani's original approach (as described in \cite[\S 8]{PaolW}) gave
an inductive definition of the category \w{\tTam{n}\subseteq\sCx{n-1}{\Gpd}}
equipped with a class of maps called \emph{$n$-equivalences} for each
\w[,]{n\geq 1}  The following assumptions must be satisfied\vsm:

\begin{itemize}
\item[(a)] \www{\tTam{1}:=\Gpd} (with $1$-equivalences
being equivalences of groupoids).
\item[(b)] Each \w{X\in\tTam{n}} is a simplicial object in
\w[.]{\tTam{n-1}}
\item[(c)] \www{X_{0}} is discrete.
\item[(d)] The Segal maps \w{\seg{k}\col X_{k}\to\pro{X_{1}}{X_{0}}{k}}
are \wwb{n-1}equivalences in \w{\tTam{n-1}} for each \w[.]{k\geq 2}
\item[(e)] The functor \w[,]{\opz{1}\opz{2}\dotsc\opz{n}\col \tTam{n}\to\Set}
(cf.\ \S \ref{dbpz}) takes $n$-equivalences to bijections and preserves
fiber products over discrete objects.
\end{itemize}

Note that (d) and (e) together imply that the Tamsamani
\emph{fundamental groupoid} functor
$$
\TTm{n}~:=~\opz{2}\cdots\opz{n-1}\opz{n}~,
$$
\noindent when applied to \w[,]{X\in\tTam{n}} lands in groupoids\vsm.

\begin{itemize}
\item[(f)] For every $a$ and $b$ in the set \w[,]{X_{0}} the fiber of
\w{X\lo{a,b}} of
\w{(d_{0},d_{1})\col X_{1}\to X\sb{0}\times X\sb{0}}
is a Tamsamani weak \wwb{n-1}groupoid.
\item[(g)] A map \w{f\col X\to Y} in \w{\tTam{n}}is an $n$-equivalence if
and only if:
\begin{enumerate}
\renewcommand{\labelenumi}{\roman{enumi}.~}
\item The map \w{\TTm{n}f\col \TTm{n}X\to\TTm{n}Y} is an equivalence
of groupoids.
\item \w{f\lo{a,b}\col X\lo{a,b}\to Y\lo{a,b}} is an \wwb{n-1}equivalence
for every \w[.]{(a,b)\in X\sb{0}\times X\sb{0}}
\end{enumerate}
\end{itemize}
\end{mysubsection}

\begin{remark}\label{rtequivdefns}
Note that if \w{g\col X\to Y} is a morphism in \w{\tTam{n}} with $Y$
discrete, then $X$ is isomorphic to
\w[,]{\coprod\sb{y\in Y}~g^{-1}\{y\}} where
the coproduct is taken in \w{\tTam{n}} (compare
Lemma \ref{lpsgcoprod} below).

This implies that if \w[,]{X\in\tTam{n}}  then
\w{X\sb{1}} is isomorphic to the coproduct over all \w{a,b\in X\sb{0}}
of \w{X\sb{1}(a,b)\in\tTam{n-1}} (where \w{X\sb{1}(a,b)} is the fiber of
\w[).]{(d\sb{0},d\sb{1})\col X\sb{1}\to X\sb{0}\times X\sb{0}}

From this and from (e) we deduce that if \w[,]{X\in\tTam{n}} the
\wwb{n-1}simplicial set \w{\opz{n}J\sb{n}X} is the nerve of an object
\w{\bPz{n}X} of \w{\tTam{n-1}} and we have the commutative diagram
$$
\xymatrix{
\tTam{n} \ar^{J_n}[rrrr] \ar_{\bPz{n}}[d] &&&& \sCx{n-1}{\Gpd} \ar^{\opz{n}}[d] \\
\tTam{n-1} \ar_{J_{n-1}}[rr] && \sCx{n-2}{\Gpd} \ar_{\ovl{N}}[rr] && \sCx{n-1}{\Set}
}
$$
\noindent Furthermore, \w{\bPz{n}} takes $n$-equivalences to
\wwb{n-1}equivalences, and one can therefore replace (g) in the definition
above by\vsm:

\begin{enumerate}
\renewcommand{\labelenumi}{\roman{enumi}.~}
\item  The map \w{\bPz{n}f\col \bPz{n}X\to\bPz{n}Y} is an
\wwb{n-1} equivalence in \w[.]{\tTam{n-1}}
\item \w{f\lo{a,b}\col X\lo{a,b}\to Y\lo{a,b}} is an \wwb{n-1}equivalence
for every \w[.]{(a,b)\in X\sb{0}\times X\sb{0}}
\end{enumerate}
\end{remark}

We recall the following fact from \cite[Lemma 10.1]{PaolW}:

\begin{lemma}\label{lnequiv}
A map \w{f\col X\to Y} in \w{\tTam{n}} is an $n$-equivalence if and only if
it is a geometric weak equivalence.
\end{lemma}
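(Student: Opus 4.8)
The plan is to argue by induction on $n$. For $n=1$ the statement is the classical fact that a functor between groupoids is an equivalence of categories if and only if its nerve is a weak homotopy equivalence. For the inductive step I would first establish a precise description of the homotopy groups of the realization $\TamR X$ for $X\in\tTam{n}$, and then read off both directions by comparing with clause (g) of Tamsamani's definition in \S\ref{sbptam} — equivalently, with the reformulation in Remark \ref{rtequivdefns}.

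For the homotopy-group computation, write $X$ as a simplicial object $X_\bullet$ in $\tTam{n-1}$; then $\TamR X$ is the geometric realization of the diagonal of the bisimplicial set obtained by applying $\TamR$ (more precisely, the underlying multinerve and diagonal) to $X_\bullet$ levelwise. Since $X_0$ is discrete and the Segal maps $\seg{k}\col X_k\to\pro{X_1}{X_0}{k}$ are $(n-1)$-equivalences — hence, by the inductive hypothesis, geometric weak equivalences — the simplicial space $\TamR X_\bullet$ has $\TamR X_0$ discrete, has weak equivalences for all its Segal maps, and is group-like (invertibility of the composition of $1$-arrows is exactly what makes $\TTm{n}X$ a genuine groupoid, via clauses (d)--(e)). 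For such a simplicial space the bar construction yields $\pi_0\TamR X\cong\opz{1}\cdots\opz{n}X=\pi_0(\TTm{n}X)$, together with a loop-space identification $\Omega_{x_0}\TamR X\simeq\TamR\bigl(X_1(x_0,x_0)\bigr)$ for each $x_0\in X_0$; hence $\pi_1(\TamR X,x_0)\cong\Aut_{\TTm{n}X}(x_0)$ and $\pi_i(\TamR X,x_0)\cong\pi_{i-1}\bigl(\TamR X_1(x_0,x_0)\bigr)$ for all $i\geq2$. (Alternatively, one may feed the Bousfield-Friedlander spectral sequence of Section \ref{cnt} into an induction to obtain the same formulas.)

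Granting these formulas, both implications reduce to bookkeeping. By definition $f$ is a geometric weak equivalence iff $\TamR f$ is a bijection on $\pi_0$ and an isomorphism on $\pi_i(-,x_0)$ for every $x_0\in X_0$ and every $i\geq1$; by the formulas above this is jointly equivalent to: (1) $\TTm{n}f$ is an equivalence of groupoids; and (2) for every $x_0$, the map $\TamR X_1(x_0,x_0)\to\TamR Y_1(fx_0,fx_0)$ is a weak homotopy equivalence, i.e.\ $f_{x_0,x_0}$ is a geometric weak equivalence, i.e.\ — by the inductive hypothesis — an $(n-1)$-equivalence. It remains to pass from the diagonal pairs $(x_0,x_0)$ to an arbitrary $(a,b)\in X_0\times X_0$: if $a$ and $b$ lie in different path components then (1) forces $X_1(a,b)$ and $Y_1(fa,fb)$ to be empty, and if they lie in the same component then a choice of morphism class provides $(n-1)$-equivalences $X_1(a,b)\simeq X_1(a,a)$ compatible with $f$, reducing to the diagonal case. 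Conditions (1) and (2), for all pairs, are precisely clause (g) (equivalently, the reformulation of Remark \ref{rtequivdefns}), which closes the induction.

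The main obstacle is the homotopy-group computation above, and inside it the loop-space identification $\Omega_{x_0}\TamR X\simeq\TamR X_1(x_0,x_0)$: this requires knowing that $\TamR X_\bullet$ is a group-like (``special'') simplicial space, so that its realization has the homotopy type of a disjoint union, indexed by $\pi_0$, of classifying spaces. Group-likeness holds automatically in this groupoidal setting, but extracting it cleanly from the iterated definition of $\tTam{n}$ — equivalently, controlling the Bousfield-Friedlander spectral sequence so that only the appropriate columns survive — is the one place where genuine work is required; the rest is unwinding definitions.
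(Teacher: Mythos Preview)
The paper does not prove this lemma; it is recalled from \cite[Lemma 10.1]{PaolW} without argument, so there is no proof in the paper to compare against. Your inductive outline via Segal's delooping theorem is the standard direct route and is correct.

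One minor clarification: in the passage from diagonal pairs $(x_0,x_0)$ to arbitrary $(a,b)$, the ``choice of morphism class'' does not literally produce a map in $\tTam{n-1}$ (there is no strict composition there), but it does give a homotopy equivalence $\TamR X_1(a,b)\simeq\TamR X_1(a,a)$ via the identification of $\TamR X_1(a,b)$ with the space of paths from $a$ to $b$ in $\TamR X$; since by induction $(n-1)$-equivalences coincide with geometric weak equivalences, that suffices. Your condition (1) then follows from the homotopy-group isomorphisms by the elementary fact that a functor of groupoids which is bijective on $\pi_0$ and on all automorphism groups is automatically fully faithful.
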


\begin{prop}\label{ptequivdefns}
The categories \w{\Tam{n}} and \w{\tTam{n}} are identical.
\end{prop}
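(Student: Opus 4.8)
The plan is to prove that the two categories coincide by induction on $n$, matching the inductive clauses of Definition \ref{dtamn} against Tamsamani's conditions (a)--(g) of \S\ref{sbptam}. Since both $\Tam{n}$ and $\tTam{n}$ are \emph{full} subcategories of $\sCx{n-1}{\Gpd}$, it suffices to show they have the same objects. I would carry the induction through with the slightly strengthened hypothesis that $\Tam{n-1}=\tTam{n-1}$ \emph{and} that the $(n-1)$-equivalences occurring in Tamsamani's conditions (d), (g) agree with the geometric weak equivalences of $\Tam{n-1}$ appearing in Definition \ref{dtamn}(d), (e); at level $n-1$ this last point is Lemma \ref{lnequiv}. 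The base case $n=1$ is immediate, both categories being $\Gpd$.

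First I would establish $\tTam{n}\subseteq\Tam{n}$. Let $X\in\tTam{n}$. Condition (b) says $X$ is a simplicial object in $\tTam{n-1}=\Tam{n-1}$, which is clause (b) of Definition \ref{dtamn}; condition (c), that $X_0$ is discrete, is clause (c) verbatim; and by the inductive hypothesis condition (d) --- the Segal maps $\seg{k}\colon X_k\to\pro{X_1}{X_0}{k}$ are $(n-1)$-equivalences --- says exactly that they are geometric weak equivalences of Tamsamani weak $(n-1)$-groupoids, which is clause (d). Finally, Remark \ref{rtequivdefns} shows that $\opz{n}J\sb{n}X$ is the nerve of an object $\bPz{n}X$ of $\tTam{n-1}=\Tam{n-1}$, that the defining square commutes, and that $\bPz{n}$ sends $n$-equivalences to $(n-1)$-equivalences, i.e. to geometric weak equivalences; this is clause (e). Hence $X\in\Tam{n}$.

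For the reverse inclusion, take $X\in\Tam{n}$; I must verify Tamsamani's (b)--(f) --- clause (g) is not a constraint on $X$, but the recursive definition of the class of $n$-equivalences, to be dealt with at the end. Conditions (b)--(d) follow from the corresponding clauses of Definition \ref{dtamn} exactly as above, reading Lemma \ref{lnequiv} in the other direction. For (f), since $X_0$ is discrete one has $X_1\cong\coprod_{(a,b)\in X_0\times X_0}X_1(a,b)$, where $X_1(a,b)$ is the fibre of $(d_0,d_1)\colon X_1\to X_0\times X_0$; a routine subsidiary induction --- using that $\pi_0$ commutes with coproducts, so that $\Tam{n-1}$ is closed under coproducts and under restriction along an inclusion of a subset of $\bPz{1}\cdots\bPz{n-1}X$ --- shows each fibre $X_1(a,b)$ lies in $\Tam{n-1}=\tTam{n-1}$. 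For (e), note that $\opz{1}\cdots\opz{n}=\bPz{1}\cdots\bPz{n}$ on $\Tam{n}$, with $\bPz{n}\colon\Tam{n}\to\Tam{n-1}$ the functor of Definition \ref{dtamn}(e), which preserves geometric weak equivalences; together with the inductive hypothesis for $\Tam{n-1}$ and the fact that $\opz{n}=\pi_0$ applied dimensionwise preserves products and coproducts, this gives that $\opz{1}\cdots\opz{n}$ carries geometric weak equivalences to bijections and preserves fibre products over discrete objects. Thus $X\in\tTam{n}$. Once both inclusions are in hand, Lemma \ref{lnequiv} at level $n$ identifies the $n$-equivalences of $\tTam{n}$ with the geometric weak equivalences of $\Tam{n}=\tTam{n}$, closing the strengthened induction.

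I expect the genuine content to be confined to the two bookkeeping points in the reverse inclusion: the closure of $\Tam{n-1}$ under coproducts and connected components --- needed to pass from ``$X_1\in\Tam{n-1}$'' to the fibrewise statement (f) --- and the observation that Definition \ref{dtamn}(e), which bundles all the $\pi_0$/Postnikov data into the single functor $\bPz{n}$, unwinds to Tamsamani's separate conditions (e) and (f). Remark \ref{rtequivdefns} supplies this unwinding in one direction, and the inductive structure of the categories supplies it in the other; neither step is hard, but together they are where the equivalence of the two formulations actually resides.
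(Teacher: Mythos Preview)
Your proposal is correct and follows essentially the same route as the paper: induction on $n$, with Lemma~\ref{lnequiv} identifying $(n-1)$-equivalences with geometric weak equivalences, and Remark~\ref{rtequivdefns} supplying the coproduct/$\bPz{n}$ bookkeeping needed to match the two lists of axioms. The paper's own proof is extremely terse---it simply cites Remark~\ref{rtequivdefns} and Lemma~\ref{lnequiv} for each inclusion---so your version is a faithful expansion rather than a different argument; in particular, the ``subsidiary induction'' you flag for condition~(f) and the unwinding of condition~(e) are exactly the content the paper leaves implicit in its appeal to Remark~\ref{rtequivdefns}.
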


\begin{proof}
By induction on \w[,]{n\geq 1} starting with
\w[.]{\Tam{1}=\Gpd=\tTam{1}} The fact that \w{\tTam{n}} is contained
in \w{\Tam{n}} is immediate (by the induction hypothesis and Lemma
\ref{lnequiv}), while the other direction follows from Remark \ref{rtequivdefns}
and Lemma \ref{lnequiv} again.
\end{proof}

\begin{defn}\label{dsimbtam}
Let \w{\ho\Tam{n}} denote the localization of the category \w{\Tam{n}}
with respect to the $n$-equivalences.
\end{defn}

\begin{thm}[\protect{\cite[Theorem 8.0]{TamsN}}]\label{ttamseq}
There is a \emph{Poincar\'{e} $n$-groupoid} functor \w{\FTm{n}\col \Top\to\Tam{n}}
which, together with \w[,]{\TamR\col \Tam{n}\to\Top} induces equivalences
of categories:
\begin{myeq}\label{eqtamsequiv}
\ho\PT{n}~\adj{\TamR}{\FTm{n}}~\ho\Tam{n}~.
\end{myeq}
\noindent For every \w[,]{T\in\Top} there is a zigzag of weak equivalences in
\w{\PT{n}} between \w{\TamR\FTm{n}T} and \w[,]{\Po{n}T} and for every
\w[,]{X\in\Tam{n}} there is a natural weak equivalence \w[.]{X\to\FTm{n}\TamR X}
\end{thm}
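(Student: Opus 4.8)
The plan is to derive Theorem~\ref{ttamseq} from Tamsamani's original result \cite[Theorem 8.0]{TamsN} by transporting it along the identifications established above. By Proposition~\ref{ptequivdefns} the category \w{\Tam{n}} of Definition~\ref{dtamn} coincides with Tamsamani's \w[,]{\tTam{n}} and by Lemma~\ref{lnequiv} its \w{n}-equivalences are precisely the geometric weak equivalences; hence the localization \w{\ho\Tam{n}} of Definition~\ref{dsimbtam} is literally the homotopy category in Tamsamani's statement, and the realization \w{\TamR} of \wref{eqrealize} is the one he uses. It therefore remains to recall the construction of the Poincar\'{e} \w{n}-groupoid functor \w{\FTm{n}} and to verify, in the present language, that it has the three asserted properties; I would carry this out by induction on \w[.]{n\geq 1}

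For \w{n=1} the statement is the classical equivalence \w{\ho\PT{1}\approx\ho\Gpd} given by the fundamental groupoid and the classifying space. In the inductive step, recall that \w{\FTm{n}T} is a simplicial object in \w{\tTam{n-1}} with \w{(\FTm{n}T)_{0}} discrete, with \w{(\FTm{n}T)_{1}} the \wwb{n-1}groupoid of paths of \w{T} (obtained by applying \w{\FTm{n-1}} to the relevant path object), and with the higher simplicial degrees built from mapping objects out of the spines of the standard simplices. Condition (c) of Definition~\ref{dtamn} then holds by construction, and condition (e) because \w{\bPz{n}\FTm{n}T\cong\FTm{n-1}T} compatibly with the diagram there, while the Segal maps of Definition~\ref{dtamn}(d) are geometric weak equivalences because mapping out of \w{\Delta^{k}} decomposes, up to weak equivalence, into an iterated pullback of copies of the path object over the space of vertices, which is where the inductive hypothesis, applied to the mapping objects, enters. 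Hence \w[.]{\FTm{n}T\in\Tam{n}}

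I would then establish the two comparison statements, from which the equivalence \wref{eqtamsequiv} follows formally. For \w[,]{T\in\Top} one shows that \w{\TamR\FTm{n}T} is an \w{n}-type with the same homotopy groups as \w{T} in degrees \w[,]{\leq n} by running the Bousfield-Friedlander spectral sequence for the bisimplicial nerve in the manner of the proof of Theorem~\ref{thdnt} and invoking the inductive hypothesis on the mapping-object entries; the natural maps among \w[,]{\TamR\FTm{n}T} the model \w{|ST|} of \w[,]{T} and \w{\Po{n}T} then supply the asserted zigzag of weak equivalences in \w[.]{\PT{n}} For \w[,]{X\in\Tam{n}} the unit \w{X\to\FTm{n}\TamR X} is checked to be an \w{n}-equivalence by the criterion recalled in Remark~\ref{rtequivdefns}: one verifies that its image under \w{\bPz{n}} is an \wwb{n-1}equivalence (by the inductive hypothesis for \w[,]{\bPz{n}X} using that \w{\bPz{n}} is compatible with both \w{\FTm{n}} and \w[),]{\TamR} and that on each fibre \w{X_{a,b}\in\Tam{n-1}} of \w{(d_{0},d_{1})\col X_{1}\to X_{0}\times X_{0}} the unit restricts to an \wwb{n-1}equivalence. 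Since both \w{\FTm{n}} and \w{\TamR} preserve geometric weak equivalences (Lemma~\ref{lnequiv}, together with the fact that a geometric weak equivalence realizes by definition to a weak homotopy equivalence), they descend to the localized categories, where the two comparison statements exhibit the unit and counit as isomorphisms; thus \wref{eqtamsequiv} is an equivalence of categories.

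The main obstacle is the inductive verification that the unit \w{X\to\FTm{n}\TamR X} is an \w{n}-equivalence: this amounts to controlling how the homotopy-colimit realization \w{\TamR} of \w{X} interacts with the decomposition of \w{X} into its discrete set of objects \w[,]{X_{0}} its hom-objects \w[,]{X_{a,b}\in\Tam{n-1}} and its truncation \w[,]{\bPz{n}X} i.e.\ a precise compatibility between geometric realization and the algebraic truncation functor \w[.]{\bPz{n}} This is exactly the technical core of \cite[\S 8]{TamsN} (and the reason Tamsamani works with a carefully chosen point-set model before realizing); since it is established there and Proposition~\ref{ptequivdefns} lets us quote it verbatim, we do not reprove it here.
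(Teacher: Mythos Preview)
The paper does not prove this theorem at all: it is stated as a quotation of \cite[Theorem 8.0]{TamsN}, with no accompanying proof. The only content implicit in the citation is the observation you make in your first paragraph, namely that Proposition~\ref{ptequivdefns} identifies \w{\Tam{n}} with Tamsamani's \w{\tTam{n}} and Lemma~\ref{lnequiv} identifies the $n$-equivalences with the geometric weak equivalences, so that Tamsamani's statement transports verbatim. That part of your proposal is exactly right and is all that is needed here.

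The remainder of your proposal (the inductive reconstruction of \w[,]{\FTm{n}} the spectral-sequence argument for \w[,]{\TamR\FTm{n}T\simeq\Po{n}T} and the verification that the unit is an $n$-equivalence) is a sketch of Tamsamani's own proof rather than anything the present paper supplies. As a sketch it is reasonable, and you are honest that the hard step (compatibility of realization with \w{\bPz{n}} on the unit) is simply deferred to \cite{TamsN}. But for the purposes of this paper none of that material is required: the authors are content to cite the result, and your first paragraph already explains why the citation is legitimate given their modified definition.
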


\supsect{\protect{\ref{ctmng}}.B}
{Comparison with weakly globular $\mathbf{n}$-fold groupoids}

We construct iteratively a discretization functor
\w[,]{D\sb{n}\col \Gpt{n}\to\Tam{n}} which preserves the homotopy type.

\begin{mysubsection}{Two simplicial constructions}
\label{ssrcons}
Let $\cC$ be a (co)complete category, \w{X\in\sC{\cC}} a simplicial object,
and \w{\gamma\col X\sb{0}\to W} a map in $\cC$. In this context we mimic the
construction of a new simplicial object \w{Y\in\sC{\cC}} described in
\cite[\S 3]{BPaolT}, as follows:

Consider the pushout in $\cC$
\mydiagrm[\label{eqsimpcons}]{
    X\sb{0} \ar[r]^{s\lo{n}} \ar[d]_{\gamma} & X_{n} \ar[d]^{f_{n}}\\
    W \ar[r]_{\sigma\lo{n}} & Y_{n}
    }
\noindent where \w{s\lo{n}} is induced by the unique morphism
\w{\bze\to\bn} in \w[.]{\Dop} For any morphism \w{\phi\col \bn\to\bm} in
\w[,]{\Dop} \w{\phi s\lo{n}=s\lo{m}} by the uniqueness, so that
$$
f_{m}\phi s\lo{n}=f_{m}s\lo{m}=\sigma_{(m)}f_{0}\col X_{0}\to Y_{m}~.
$$

By the universal property of pushouts there exists a unique
\w{\hphi\col Y_{n}\to Y_{m}} with \w{\hphi f_{n}=f_{m}\hphi} and
\w[.]{\hphi\sigma_{(n)}=\sigma\lo{m}} In particular, we have maps
\w{\hat{d}_{i}\col Y_{n}\to Y_{n-1}} for \w[,]{0\leq i\leq n} and
\w{\hat{\sigma}_{i}\col Y_{n-1}\to Y_{n}} for \w[.]{0\leq i<n} The maps
\w{\hat{d}_{i}} and \w{\hat{\sigma}_{i}} satisfy the simplicial
identities, so that $Y$ is a simplicial object in $\cC$. In fact, if
\w{\bn\rws{\phi}\bm\rws{\psi}\bk} are
morphisms in \w{\Dop} and \w[,]{\xi=\psi\circ\phi} then
$$
\hxi\sigma_{(n)}~=~\sigma_{(k)}~=~\hpsi\sigma_{(m)}~=~
\hpsi\hphi\sigma_{(n)}
$$
and
$$
\hxi f_{n}~=~f_{k}\hxi~=~f_{k}\hpsi\hphi~=~\hpsi f_{m}\hphi~=~
\hpsi\hphi f_{n}~.
$$

It follows by universal property of pushouts that \w[.]{\hxi=\hpsi\hphi} In
particular, since the simplicial identities are satisfied by \w{d_{i}} and
\w[,]{\sigma_{i}} they are satisfied by \w{\hat{d}_{i}} and
\w[.]{\hat{\sigma}_{i}}  So we have a map of simplicial objects \w[.]{f\col X\to Y}

Note that if \w{\gamma'\col W\to X\sb{0}} is a section for $\gamma$
(with \w[),]{\gamma\gamma'=\Id} we may construct a new simplicial object
\w{X\sp{\gamma}\in\sC{\cC}} by setting
$$
X\sp{\gamma}\sb{n}=\begin{cases}
W& \text{if~}n=0\\
X_{n} & \text{if~}n>0~.
\end{cases}
$$
\noindent Let \w{d_{0},d_{1}\col X_{1}\to X_{0}}
and \w{\sigma_{0}=s\lo{1}\col X_{0}\to X_{1}} be the face and degeneracy maps of
$X$, and let \w[,]{d'_{0}} \w[,]{d'_{1}\col X_{1}\to W} and
\w[,]{\sigma'_{0}\col W\to X_{1}} respectively, denote
\w{d'_{i}=\gamma d_{i}} \wb[,]{i=0,1} and
\w[.]{\sigma'_{0}=\sigma_{0}\gamma'} All other face
and degeneracy operators of \w{X\sp{\gamma}} are the same as those of $X$.

Finally, we define a map \w{h\col X\sp{\gamma}\to Y} in \w{\sC{\cC}}
by setting \w{h_{0}:=\Id} and \w{h_{n}:=f_{n}} for
\w[.]{n>0} In fact, \w[;]{d'_{i}=\hat{d}_{i} f_1} also,
\w[,]{f_{1}\sigma_{0}=\hat{\sigma}_{0}\gamma} which implies
\w[.]{f_{1}\sigma_{0}\gamma'=\hat{\sigma}_{0}\gamma\gamma'=\hat{\sigma}_{0}}
All other identities are the same as for $f$.
\end{mysubsection}

\begin{mysubsection}{The functor $D$}
\label{sfuncd}
Let \w{\sS^{2}_{h}} be the full subcategory of bisimplicial sets
$X$ such that the simplicial set \w{X_{0}} is homotopically trivial,
through a weak equivalence \w{\gamma\col X_{0}\to X_{0}^{d}} with a
section \w{\gamma'\col X_{0}^{d}\to X_{0}} with \w[,]{\gamma\gamma'=\Id}
where \w{X_{0}^{d}} is the constant simplicial set on
\w[.]{\pi_{0}X_{0}} Let \w{\sS^{2}_{d}} denote the full subcategory of
bisimplicial sets $X$ such that the simplicial set \w{X_{0}} is
constant. We construct a functor
$$
D~\col ~\sS^{2}_{h}~\to~\sS^{2}_{d}
$$
\noindent by setting \w{DX:=X\sp{\gamma}} (in the notation of \S \ref{ssrcons}).
\end{mysubsection}

\begin{lemma}\label{lhtype}
Let \w{D\col \sS^{2}_{h}\to\sS^{2}_{d}} be as above. Then for each
\w[,]{X\in\sS^{2}_{h}} \w{DX} and $X$ have the same homotopy type.
\end{lemma}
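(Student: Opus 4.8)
The plan is to factor the comparison between $X$ and $DX = X\sp{\gamma}$ through the intermediate bisimplicial set $Y$ built in \S\ref{ssrcons}, applied with $\cC = \sS$, $W = X\sb{0}\sp{d}$, and the maps $\gamma,\gamma'$ supplied by the membership $X\in\sS^{2}_{h}$. That construction comes equipped with maps of bisimplicial sets $f\col X\to Y$ and $h\col X\sp{\gamma}\to Y$, and the strategy is to show that each of them is a weak equivalence in every (outer) simplicial dimension. Granting that, Remark \ref{rgwe} shows that $\Diag f$ and $\Diag h$ are weak equivalences of simplicial sets, so $\Diag X$ and $\Diag DX$ are linked by the two weak equivalences $\Diag f\col \Diag X\to\Diag Y$ and $\Diag h\col \Diag DX\to\Diag Y$, which is exactly the claim that $X$ and $DX$ have the same homotopy type.

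The first step is to treat $f$. In outer dimension $0$ the defining pushout \eqref{eqsimpcons} has $s\lo{0}=\Id$, so $Y_{0}=W=X\sb{0}\sp{d}$ and $f_{0}=\gamma$, which is a weak equivalence by the very definition of $\sS^{2}_{h}$. For $n\geq 1$, the map $s\lo{n}\col X\sb{0}\to X_{n}$ is a composite of degeneracy operators, hence a split monomorphism, and in particular a monomorphism of simplicial sets, i.e.\ a cofibration. Since every object of $\sS$ is cofibrant, $\sS$ is left proper, so the pushout \eqref{eqsimpcons} of the weak equivalence $\gamma$ along the cofibration $s\lo{n}$ is again a weak equivalence; that is, $f_{n}\col X_{n}\to Y_{n}$ is a weak equivalence. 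Thus $f$ is a dimensionwise weak equivalence of bisimplicial sets.

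The map $h$ is then immediate: by the construction in \S\ref{ssrcons} one has $h_{0}=\Id\col W\to Y_{0}=W$ and $h_{n}=f_{n}$ for $n>0$, so $h$ too is a dimensionwise weak equivalence by the previous step. Applying $\Diag$ and invoking Remark \ref{rgwe} concludes the argument.

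I do not expect a genuine obstacle here, since the bulk of the work — the construction of $Y$ together with the maps $f$ and $h$ and the verification of the simplicial identities — has already been carried out in \S\ref{ssrcons}. The only point needing a word of justification is that the square \eqref{eqsimpcons} is a homotopy pushout, which follows from recognizing $s\lo{n}$ as a monomorphism together with the left properness of $\sS$; the section $\gamma'$ plays no homotopy-theoretic role beyond making the face and degeneracy structure of $X\sp{\gamma}$, and hence the functor $D$ itself, well defined.
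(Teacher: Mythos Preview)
Your argument is correct and follows essentially the same route as the paper: both proofs pass through the intermediate pushout object $Y$ from \S\ref{ssrcons}, verify that $f\col X\to Y$ and $h\col DX\to Y$ are levelwise weak equivalences (using that $\gamma$ is a weak equivalence and each $s\lo{n}$ is a cofibration), and then conclude by taking diagonals. Your explicit invocation of left properness and the separate treatment of $n=0$ add a little more detail, but the structure is identical.
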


\begin{proof}
We construct a bisimplicial set $Y$ and weak equivalences
$$
X~\rws{f}~Y~\lws{h}~DX
$$
\noindent using the construction of \S \ref{ssrcons}, for \w[,]{\cC=\sS}
\w{W:=X\sb{0}\sp{d}} and \w{\gamma\col X\sb{0}\to X\sb{0}\sp{d}} as above.
Since $\gamma$ is a weak equivalence and \w{s\lo{n}} is a cofibration
of simplicial sets, the right vertical map \w{f\sb{n}} in \wref{eqsimpcons} is
a weak equivalence for each \w{n\geq 0} \wwh that is, we have a map of
bisimplicial sets \w{f\col X\to Y} which is a levelwise weak equivalence.
Thus \w{\diN f} is also a weak equivalence.

Since the map \w{h\col DX\to Y} of \S \ref{ssrcons} is a levelwise
weak equivalence, \w{\TamR h} is a weak equivalence. In conclusion, $f$ and
$h$ are weak equivalences, so that \w[.]{\Diag X\simeq \Diag DX}
\end{proof}

\begin{defn}\label{ddisc}
We define the $0$-\emph{discretization} functor
$$
\diz\col \Gpt{n}\to\sC{\Gpt{n-1}}
$$
\noindent on any weakly globular $n$-fold groupoid $G$ as follows:
set
$$
(\diz G)_{k}~:=~\begin{cases}
G^{d}_{0} & \text{if~}k=0\\
(\Nup{1}G)_{k} & \text{if~}k>0
\end{cases}
$$
\noindent (cf.\ \S \ref{addnote}). If
\w{d_{0},d_{1}\col G_{1}\to G_{0}} are the source
and target maps, and \w{\sigma_{0}\col G_{0}\to G_{1}} is the degeneracy operator
(all in  \w[),]{\Gpd^{n-1}}   we define \w{d'_{0},d'_{1}\col (\diz
G)_{1}\to(\diz G)_{0}} and \w{\sigma'_{0}\col (\diz G)_{0}\to(\diz G)_{1}} by
\w{d'_{i}=\gamma d_{i}} \wb{i=0,1} and
\w[.]{\sigma'_{0}=\sigma_{0}\gamma'}
All other face and degeneracy operators of \w{\diz G} are those of
$G$. Since \w[,]{\gamma\gamma'=\Id} all simplicial identities hold for
\w[.]{\diz G}
\end{defn}

\begin{lemma}\label{ldisc}
For any weakly globular $n$-fold groupoid \w[,]{G\in \Gpt{n}} \w{\Dlo{n} G}
and \w{\Dlo{n}\diz G} are weakly equivalent.
\end{lemma}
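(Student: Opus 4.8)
The plan is to reduce the statement to Lemma \ref{lhtype}, interpreting $\Dlo{n}G$ as $\Dlo{n}\Nlo{n}G=\dN G$ and $\Dlo{n}\diz G$ as the total diagonal of the $n$-fold nerve of $\diz G$. View $G$ as a simplicial object $\Nup{1}G\in\sC{\Gpd^{n-1}}$ in its first direction, so that $(\Nup{1}G)_{0}=G_{0}$, $(\Nup{1}G)_{1}=G_{1}$ and $(\Nup{1}G)_{k}=\pro{G_{1}}{G_{0}}{k}$ for $k\geq 2$. Applying the realization functor $F:=\Dlo{n-1}\Nlo{n-1}\colon\Gpd^{n-1}\to\sS$ in each simplicial dimension yields a bisimplicial set $W:=\ovl{F}(\Nup{1}G)$, whose $k$-th column is $\diN(\Nup{1}G)_{k}$ and whose $0$-th column is $W_{0}=\diN G_{0}$. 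From the multi-index conventions of \S\ref{nsimp} one checks that $\Diag W=\Dlo{n}\Nlo{n}G=\dN G$, and similarly that $\ovl{F}(\diz G)$ is a bisimplicial set with $\Diag\,\ovl{F}(\diz G)=\Dlo{n}\diz G$.

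The first step is to verify that $W$ lies in the subcategory $\sS^{2}_{h}$ of \S\ref{sfuncd}. Since $G_{0}\in\Ghd{n-1}$, Corollary \ref{chdng} gives that $\diN G_{0}$ is homotopically trivial and that the canonical comparison map $G_{0}\to G_{0}^{d}$ is a geometric weak equivalence; it admits a section, coming from the section data built into the inductive Definition \ref{dhdng} and already used in Definition \ref{ddisc}. Applying $\diN$ produces a weak equivalence $\gamma\colon W_{0}\to\diN G_{0}^{d}$ with a section, and $\diN G_{0}^{d}$ is exactly the constant simplicial set on $\pi_{0}W_{0}$ because $G_{0}^{d}$ is a discrete $(n-1)$-fold groupoid. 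Hence $W\in\sS^{2}_{h}$, and Lemma \ref{lhtype} gives that $\Diag W$ and $\Diag DW$ have the same homotopy type.

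It then remains to identify $DW=W^{\gamma}$ with $\ovl{F}(\diz G)$. Both are obtained from $\Nup{1}G$ by replacing the $0$-th term by the discretization of $G_{0}$ and replacing $d_{0},d_{1},\sigma_{0}$ in the first direction by $\gamma d_{0},\gamma d_{1},\sigma_{0}\gamma'$, with all other structure maps transported unchanged; since the construction $X\mapsto X^{\gamma}$ of \S\ref{ssrcons} and of Definition \ref{ddisc} depends only on this data, it commutes with the functor $F$. Thus $\Diag DW=\Dlo{n}\diz G$, and combining with the previous step yields $\dN G=\Diag W\simeq\Diag DW=\Dlo{n}\diz G$.

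The step needing the most care is the verification that $W\in\sS^{2}_{h}$, namely producing the weak equivalence $W_{0}\to W_{0}^{d}$ together with a section: this amounts to tracking the section hidden in the inductive definition of a homotopically discrete $n$-fold groupoid through the functor $\diN=\Dlo{n-1}\Nlo{n-1}$. The remaining identifications — that $\Dlo{n}$ factors as $\Diag$ of the partially diagonalized $n$-fold nerve, compatibly with the $(-)^{\gamma}$ construction — are routine bookkeeping with the conventions of \S\ref{nsimp}.
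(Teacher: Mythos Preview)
Your proof is correct and follows essentially the same route as the paper: both reduce to Lemma~\ref{lhtype} by applying the $(n-1)$-fold diagonal nerve dimensionwise to $\Nup{1}G$ (the paper writes $\Nup{n}$, a notational inconsistency on its part) to obtain a bisimplicial set in $\sS^{2}_{h}$, and then identify $D$ of that bisimplicial set with the corresponding construction for $\diz G$. You supply more detail than the paper does on verifying membership in $\sS^{2}_{h}$ and on the compatibility of the $(-)^{\gamma}$ construction with the realization functor, but the argument is the same.
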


\begin{proof}
\w{\Dlo{n} G} is the diagonal of the bisimplicial set $X$
with
$$
X_{k}~:=~\Dlo{n-1}(\Nup{n}G)\up{n}_{k}
$$
\noindent for all \w[,]{k\geq 0} while
\w{\Dlo{n}\diz G} is the diagonal of the bisimplicial set $Y$ with
\w{Y_{0}:=G^{d}_{0}} and \w{Y_{k}:=\Dlo{n-1}(\Nup{n}G)\up{n}_{k}} for
\w[.]{k\geq 1} By construction, \w{X\in\sS^{2}_{h}}  and \w[.]{Y=DX}
Hence, by Lemma \ref{lhtype},
\w[.]{\Dlo{n} G=\Diag X\simeq\Diag Y=\Dlo{n}\diz G}
\end{proof}

\begin{notation}\label{ntwg}
Let \w{\Twg{n}\col \Gpt{n}\to\Gpd} denote the weakly globular
\emph{fundamental groupoid} functor \wh that is, the composite
\begin{myeq}\label{eqtn}
\Twg{n}~:=~\bPz{2}\cdots\bPz{n-1}\bPz{n}
\end{myeq}
\noindent (see Definitions \ref{dbpz} and \ref{dntng}).

By construction, for all \w[,]{i\geq 0}
\begin{myeq}\label{eqtnminus}
(\Twg{n}G)_{i}=\pi_{0}\Twg{n-1}G_{i}~.
\end{myeq}
\end{notation}

\begin{defn}\label{dtndn}
For each \w[,]{n\geq 1} we define discretization functors
$$
D\sb{n}\col \Gpt{n}~\to~\sCx{n-1}{\Gpd}
$$
\noindent by induction on $n$, starting with \w[.]{D\sb{1}:=\Id\col \Gpd\to\Gpd}
For \w[,]{n\geq 2} we let \w{D\sb{n}} be the composite:
$$
\Gpt{n}~\supar{\Nup{n}}~\sC{\Gpt{n-1}}~\supar{\diz}~
 \sC{\Gpt{n-1}}~\supar{\ovl{D}_{n-1}}~\sCx{n-1}{\Gpd}
$$
\noindent where \w{\ovl{D}_{n-1}} is obtained by applying \w{D_{n-1}}
in each simplicial dimension.
\end{defn}

\begin{thm}\label{ttamsamani}
The functor \w{D_{n}} lands in \w[.]{\Tam{n}} Furthermore,
\w[,]{\TTm{n}D_{n}=\Twg{n}} and for each
\w[,]{G\in\Gpt{n}} we have a natural weak equivalence
$$
\Dlo{n} G~\simeq~\Dlo{n} D_{n}G~.
$$
\end{thm}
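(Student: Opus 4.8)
I will prove the three assertions of Theorem~\ref{ttamsamani} by induction on $n$, following the recursive definition of $D\sb{n}$ as the composite $\ovl{D}\sb{n-1}\circ\diz\circ\Nup{n}$. The base case $n=1$ is trivial since $D\sb{1}=\Id$ and $\Tam{1}=\Gpd$. For the inductive step, write $G=(G\up{1}\sb{1}\toto G\up{1}\sb{0})\in\Gpt{n}$ and set $Y:=\diz\Nup{n}G\in\sC{\Gpt{n-1}}$; then $D\sb{n}G=\ovl{D}\sb{n-1}Y$.

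\textbf{Step 1: $D\sb{n}G$ is a simplicial object in $\Tam{n-1}$ with discrete $0$-th term.} In each simplicial dimension $k\geq 1$ we have $Y\sb{k}=(\Nup{n}G)\sb{k}$, which by Definition~\ref{dntng}(ii) lies in $\Gpt{n-1}$ (it is $G\sb{1}$ for $k=1$ and $\pro{G\sb{1}}{G\sb{0}}{k}$ for $k\geq 2$); by the induction hypothesis $D\sb{n-1}Y\sb{k}\in\Tam{n-1}$. For $k=0$, $Y\sb{0}=G\sb{0}\sp{d}$ is a discrete $(n-1)$-fold groupoid, and $D\sb{n-1}$ applied to a discrete object is discrete (this follows from the definition of $D\sb{n-1}$, since $\diz$ and $\Nup{\bullet}$ preserve discreteness and $D\sb{1}=\Id$). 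So $(D\sb{n}G)\sb{0}$ is a constant $(n-1)$-fold simplicial set, giving Definition~\ref{dtamn}(c).

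\textbf{Step 2: the Segal maps of $D\sb{n}G$ are geometric weak equivalences, and condition~(e) holds.} The $k$-th Segal map of $Y$ is, for $k\geq 2$, the composite $Y\sb{k}=\pro{G\sb{1}}{G\sb{0}}{k}\xrw{\iseg{k}}\pro{G\sb{1}}{G\sb{0}\sp{d}}{k}=\pro{Y\sb{1}}{Y\sb{0}}{k}$, which is a geometric weak equivalence in $\Gpt{n-1}$ by Definition~\ref{dntng}(iv). Applying $D\sb{n-1}$ preserves this, provided $D\sb{n-1}$ preserves geometric weak equivalences and commutes with the relevant fiber products over discrete objects --- this is where I must be careful. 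The point is that $D\sb{n-1}$ is built from $\diz$, $\Nup{\bullet}$, and (inductively) $D\sb{n-2}$; by Lemma~\ref{ldisc} (applied in each dimension) and the inductive homotopy-invariance statement, $D\sb{n-1}$ preserves geometric weak equivalences, so $D\sb{n-1}\seg{k}$ is again a geometric weak equivalence of Tamsamani $(n-1)$-groupoids. Identifying $\pro{D\sb{n-1}Y\sb{1}}{D\sb{n-1}Y\sb{0}}{k}$ with $D\sb{n-1}(\pro{Y\sb{1}}{Y\sb{0}}{k})$ uses that $Y\sb{0}$ is discrete and that $D\sb{n-1}$ (hence $\diz$ and $\pi\sb{0}$) commutes with fiber products over discrete objects (cf.\ Remark~\ref{rfibprod} and Remark~\ref{rgpoid}). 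For condition~(d) of Definition~\ref{dtamn}, weak equivalences of Tamsamani $(n-1)$-groupoids are geometric (no separate check needed by Lemma~\ref{lnequiv}). Condition~(e) of Definition~\ref{dtamn} --- that $\opz{n}J\sb{n}D\sb{n}G$ is the nerve of $\bPz{n}D\sb{n}G$ with the stated commuting square --- follows by applying $\pi\sb{0}$ dimensionwise and using the induction hypothesis together with the commuting square in Definition~\ref{dntng} for $\bPz{n}$. This establishes $D\sb{n}G\in\Tam{n}$.

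\textbf{Step 3: $\TTm{n}D\sb{n}=\Twg{n}$.} Both sides are computed by iterating $\pi\sb{0}$: by definition $\TTm{n}=\opz{2}\cdots\opz{n}$ and $\Twg{n}=\bPz{2}\cdots\bPz{n}$, and the square in Step~2 identifies $\bPz{n}D\sb{n}G$ with $D\sb{n-1}\bPz{n}G$ (since $\pi\sb{0}$ kills the discretization data of $\diz$), so the claim reduces by induction to $\TTm{n-1}D\sb{n-1}=\Twg{n-1}$.

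\textbf{Step 4: the homotopy equivalence $\Dlo{n}G\simeq\Dlo{n}D\sb{n}G$.} First, $\Dlo{n}G\simeq\Dlo{n}\diz G$ by Lemma~\ref{ldisc}. Next, $\diz G$ and $\diz\Nup{n}G=Y$ have the same diagonal nerve (they differ only in how the $n$-th direction is bundled: $\diz G$ has $(n-1)$-fold groupoids in each dimension, $Y$ is the nerve in the $n$-th direction of the same data). Finally, $\Dlo{n}\ovl{D}\sb{n-1}Y\simeq\Dlo{n}Y$ because, by the induction hypothesis applied in each simplicial dimension, $D\sb{n-1}Y\sb{k}$ has the same homotopy type as $Y\sb{k}$, and a dimensionwise weak equivalence of $(n-1)$-fold-simplicial objects induces a weak equivalence on diagonal nerves (Remark~\ref{rgwe}). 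Composing these gives the required natural weak equivalence $\Dlo{n}G\simeq\Dlo{n}D\sb{n}G$.

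\textbf{Main obstacle.} The delicate point is Step~2: verifying that $D\sb{n-1}$ preserves the weak-equivalence condition~(iv) \emph{and} commutes with the pullbacks $\pro{(-)}{(-)}{k}$ over discrete objects well enough that the Segal maps of $D\sb{n}G$ are identified with $D\sb{n-1}$ applied to the induced Segal maps of $G$. This requires threading the commutation-with-fiber-products-over-discrete-objects property (Remarks~\ref{rfibprod}, \ref{rgpoid}) through the full inductive definition of $D\sb{n-1}=\ovl{D}\sb{n-2}\circ\diz\circ\Nup{n-1}$, checking it at each of the three constituent functors. The homotopy-invariance bookkeeping in Step~4, while more routine, also needs the dimensionwise-weak-equivalence-implies-diagonal-equivalence principle to be invoked correctly for $(n-1)$-fold simplicial objects rather than just bisimplicial sets.
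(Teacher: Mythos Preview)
Your proposal is correct and follows essentially the same inductive strategy as the paper's proof: both argue by induction on $n$, use $\diz$ and Lemma~\ref{ldisc} for the homotopy-type comparison, and verify the Tamsamani conditions for $D\sb{n}G=\ovl{D}\sb{n-1}(\diz\Nup{n}G)$ using the induction hypothesis applied dimensionwise.

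There is one presentational difference worth noting. In your Step~2 you verify the Segal condition by arguing that $D\sb{n-1}$ preserves geometric weak equivalences and commutes (on the nose) with fiber products over discrete objects, so that the Segal map of $D\sb{n}G$ is literally $D\sb{n-1}$ applied to $\iseg{k}$. The paper instead works entirely at the level of realizations $\Dlo{n-1}$: it checks directly that $\Dlo{n-1}(D\sb{n}G)\sb{k}\simeq\Dlo{n-1}(\pro{(D\sb{n}G)\sb{1}}{(D\sb{n}G)\sb{0}}{k})$ by chaining the inductive homotopy equivalence $\Dlo{n-1}D\sb{n-1}(-)\simeq\Dlo{n-1}(-)$ with condition~(iv) of Definition~\ref{dntng}, avoiding any need to prove that $D\sb{n-1}$ itself commutes with pullbacks. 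Your route is slightly more structural (and the commutation claim is indeed true, by a straightforward induction through $\Nup{\bullet}$, $\diz$, and $\ovl{D}\sb{n-2}$), while the paper's route sidesteps that bookkeeping by working with classifying spaces throughout. Similarly, for condition~(e) of Definition~\ref{dtamn} the paper does not verify it directly but instead proves $\TTm{n}D\sb{n}G=\Twg{n}G$ is a groupoid and appeals (implicitly) to the equivalence with Tamsamani's original formulation; your direct verification via $\bPz{n}D\sb{n}G\cong D\sb{n-1}\bPz{n}G$ is an acceptable alternative, though you should spell out that commutation a bit more carefully.
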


\begin{proof}
By induction on \w[.]{n\geq 2} For \w[,]{n=2} note that \w{D_{2}G=\diz\Nup{2}G}
is in \w{\Tam{2}} for any weakly globular double groupoid $G$, since for each
\w{k\geq 2} by Definition \ref{dntng}(iv) we have:
\begin{equation*}
(D_{2}G)_{k}~=~\pro{G_{1}}{G_{0}}{k}\simeq\pro{G_{1}}{G_{0}^{d}}{k}\simeq
\pros{(D_{2}G)_{1}}{(D_{2}G)_{0}}{k}~.
\end{equation*}

Furthermore ,
\w{\TTm{2}D_{2}G=\Twg{2}G=\bPz{2}G} is a groupoid. Hence by
definition, \w[.]{D_{2}G\in\Tam{2}} By Lemma
\ref{ldisc}, \w{\TamR D_{2}G\simeq\dN G} since \w[\vsm .]{G\in\Gpt{2}}

In the induction step, note that \w{(D_{n}G)_{0}=G^{d}_{0}} is
discrete. So to prove that \w{D_{n}G} is in \w[,]{\Tam{n}} it remains
to show:
\begin{enumerate}
\renewcommand{\labelenumi}{(\alph{enumi})~}
\item The Segal maps
$$
\seg{k}~\col ~(D\sb{n} G)\sb{k}~\to~\pro{(D\sb{n} G)\sb{1}}{(D\sb{n}G)\sb{0}}{k}~
$$
\noindent are geometric weak equivalences.
\item \w{\TTm{n} D\sb{n} G} is a groupoid\vsm .
\end{enumerate}

Note that by Definition \ref{dtndn} and by the inductive hypothesis,
for \w{k\geq 2} we have:
$$
 \Dlo{n-1} (D_{n} G)_{k}=\Dlo{n-1} D_{n-1}(\pro{G_{1}}{G_{0}}{k})\simeq
            \dN(\pro{G_{1}}{G_{0}}{k})~,
$$
\noindent and by Definition \ref{dntng}(iv) and the inductive
  hypothesis again this is weakly equivalent to
$$
\dN(\pro{G_{1}}{G_{0}^{d}}{k})~\simeq~\pro{\Dlo{n-1} D_{n-1}G_{1}}{\dN G_{0}^{d}}{k}
$$
\noindent which is \w{\Dlo{n}( \pro{(D\sb{n} G)\sb{1}}{(D\sb{n} G)\sb{0}}{k})}
by Definition \ref{dtndn}. Thus each Segal map \w{\seg{k}} is a geometric weak
equivalence. This proves (a)\vsm .

To prove (b), note that by definition of \w[,]{\TTm{n}}
\wref[,]{eqtn} and \wref[,]{eqtnminus} we have:
\begin{equation*}
\begin{split}
 (\TTm{n}(D\sb{n} G))\sb{0}~=&~\pi\sb{0}\TTm{n-1}( D\sb{n} G)\sb{0}~=~
      \pi\sb{0}\TTm{n-1} G\sp{d}\sb{0}~=~G\sp{d}\sb{0}\\
~=&~
          \pi\sb{0}\cN\Twg{n-1}G\sb{0}~=~(\Twg{n} G)\sb{0}~.
\end{split}
\end{equation*}
\noindent where \w[.]{\pi\sb{0}\TTm{n}X=\bPz{1}\bPz{2}\cdots\bPz{n}X}

Furthermore:
$$
(\TTm{n}D_{n} G)\sb{k}~=~\pi\sb{0}\TTm{n-1}(D\sb{n} G)\sb{k}~=~
\pi\sb{0}\TTm{n-1} D\sb{n-1}(\Nup{n}G)\sb{k}~
$$
\noindent for \w[.]{k\geq 1} By induction we therefore have:
\begin{equation*}
\pi\sb{0}\TTm{n-1} D\sb{n-1} (\Nup{n}G)\sb{k}~=~
\pi_{0}\Twg{n-1}(\Nup{n}G)\sb{k}~=~
(\Twg{n}G)_{k}~,
\end{equation*}

It follows that \w[,]{\TTm{n}D\sb{n} G=\Twg{n} G} as claimed. Since
\w{\Twg{n}G} is a groupoid, so is \w[.]{\TTm{n}D\sb{n} G} This concludes
the proof that \w[.]{D\sb{n}G\in\Tam{n}}

Finally, we show that \w[.]{\Dlo{n} D_{n} G\simeq\dN G}
Let \w[.]{Y=\diz\Nup{n}G\in\sC{\Gpt{n-1}}} By
Lemma \ref{ldisc}, \w[.]{\dN Y\simeq\dN G} Furthermore, \w{\Dlo{n} D_{n} G} is
the realization of the bisimplicial set $Z$ with
\w[.]{Z_{k}:=\Dlo{n-1} D_{n-1}Y_{k}} By induction,
\w[,]{Z_{k}\simeq\Dlo{n-1} Y_{k}} so that \w[,]{\Diag Z\simeq\dN Y\simeq\dN G}
as required.
\end{proof}

\begin{remark}\label{rhtpy}
Since by \cite[\S 8]{TamsN}, \w{\TamR D_{n}G} is an $n$-type, Theorem
\ref{ttamsamani} implies that the realization of a weakly globular
$n$-fold groupoid is an $n$-type. This provides an alternative proof
of the first statement in Theorem \ref{thdnt}. Moreover,
\cite[\S 5]{TamsN} provides a formula for the homotopy groups:
$$
\pi_{n}(\TamR D_{n}G,x)~=~\Aut_{\cC_{n}(D_{n}G)}(\Id_{x})~,
$$
\noindent where \w{\cC_{n}(D_{n}G)} is the groupoid \w[.]{\Wlo{n,n-1}G}
This matches \wref[.]{eqpintg}
\end{remark}

%
%
\sect{Weakly globular pseudo $n$-fold groupoids}
\label{cwgpng}

We now introduce the category \w{\PsG{n}} of weakly globular pseudo
$n$-fold groupoids, and prove Theorem \ref{tpsg}, stating that there
is a zig-zag of weak equivalences between any \w{X\in\PsG{n}} and
\w[.]{\hQlo{n}\diN X} This implies our second main result (Theorem
\ref{tpswgntype}), stating that \w{\hQlo{n}} induces an equivalence
\w[.]{\ho\PT{n}\simeq\ho\PsG{n}}

\supsect{\protect{\ref{cwgpng}}.A}
{Types of pseudo $\mathbf{n}$-fold groupoids}

The notion of a weakly globular pseudo $n$-fold groupoid is a further
relaxation of \w[,]{\Gpt{n}} similarly defined using a subcategory of
homotopically discrete objects.

\begin{defn}\label{dphdng}
For each $n$, we introduce a full subcategory \w{\PhG{n}} of
\w[,]{\sCx{n-1}{\Gpd}} whose objects are called \emph{homotopically discrete
pseudo $n$-fold group\-oids}.
These categories are defined by induction on \w[,]{n\geq 1} as follows:

\begin{enumerate}
\renewcommand{\labelenumi}{(\alph{enumi})~}
\item \www{\PhG{1}=\Ghd{1}} consists of the homotopically discrete groupoids.
\item If \w[,]{X\in\PhG{n}} then \w{X\sb{k}\in\PhG{n-1}} for all
\w[,]{k\geq 0} where $k$ is the simplicial dimension in the first direction
(cf.\ \S \ref{dntng}).
\item If \w[,]{X\in \PhG{n}} the \wwb{n-1}simplicial set \w{\opz{n} J\sb{n}X}
is the nerve of an object \w{\bPz{n}X} of \w{\PhG{n-1}} and the following
diagram commutes (where \w{J\sb{n}} denotes the inclusion)
$$
\xymatrix{
\PhG{n}\ar^{J\sb{n}}[rrrr] \ar_{\bPz{n}}[d] &&&& \sCx{n-1}{\Gpd} \ar^{\opz{n}}[d] \\
\PhG{n-1} \ar_{J\sb{n-1}}[rr] && \sCx{n-2}{\Gpd} \ar_{\ovl{N}}[rr] &&
\sCx{n-1}{\Set}\;.
}
$$
\noindent Furthermore, the map $\ovl{\gamma}$ of \S \ref{dbpz} induces a map
\w{\gamma\up{n}\col X\to\cons\up{n}\bPz{n}X} in \w{\PhG{n}}
which is a weak equivalence of groupoids in each multi-simplicial
dimension (and thus a geometric weak equivalence by Remark \ref{rgwe}).
\item For each \w[,]{k\geq 2} the induced Segal map:
\begin{myeq}\label{eqsegal}
X\sb{k}~\xrw{\iseg{k}}~\pro{X_{1}}{X_{0}^{d}}{k}
\end{myeq}
\noindent of \wref{eqindsegal} is a geometric weak equivalence.
\end{enumerate}

Note that condition (c) implies that the composite \w{\Glo{n}} of
\begin{myeq}\label{eqglon}
X~\supar{\gamma\up{n}}~\cons\up{n}\bPz{n}X~\supar{\cons\up{n}\gamma\up{n-1}}~
\cdots~\cons\up{1}\dotsc\cons\up{n}\bPz{1}\dotsc\bPz{n}X~,
\end{myeq}
\noindent is a geometric weak equivalence, so that \w{\diN X} is a homotopically
trivial simplicial set (i.e., a $0$-type)\vsm .
\end{defn}

\begin{defn}\label{dwgpng}
We now use Definition \ref{dphdng} to specify, for each \w[,]{n\geq 1}
another full subcategory \w{\PsG{n}} of \w[,]{\sCx{n-1}{\Gpd}} whose
objects are called \emph{weakly globular pseudo $n$-fold groupoids},
defined by induction on \w[.]{n\geq 1}

\begin{enumerate}
\renewcommand{\labelenumi}{(\alph{enumi})~}
\item \www[.]{\PsG{1}:=\Gpd}
\item If \w[,]{X\in\PsG{n}} then \w{X\sb{0}\in\PhG{n-1}} and
\w{X\sb{k}\in\PsG{n-1}} for all \w{k\geq 1}
\item If \w[,]{X\in \PhG{n}} the \wwb{n-1}simplicial set \w{\opz{n} J\sb{n}X}
is the nerve of an object \w{\bPz{n}X} of \w{\PhG{n-1}} and the following
diagram commutes (where \w{J\sb{n}} denotes the inclusion)
$$
\xymatrix @R=35pt{
\PsG{n}\ar^{J\sb{n}}[rrrr] \ar_{\bPz{n}}[d] &&&& \sCx{n-1}{\Gpd} \ar^{\opz{n}}[d] \\
\PsG{n-1} \ar_{J\sb{n-1}}[rr] && \sCx{n-2}{\Gpd} \ar_{\ovl{N}}[rr] &&
\sCx{n-1}{\Set}\;.
}
$$
\noindent Furthermore, \w{\bPz{n}} preserves geometric weak equivalences.
\item For each \w[,]{k\geq 2} the induced Segal map
$$
X\sb{k}~\xrw{\iseg{k}}~\pro{X_{1}}{X_{0}^{d}}{k}
$$
\noindent is a geometric weak equivalence.
\end{enumerate}
\end{defn}

\begin{remark}\label{rpsg}
Both \w{\Tam{n}} and \w{\Gpt{n}} are full subcategories of \w[,]{\PsG{n}}
and \w{\Ghd{n}} is a full subcategory of \w[.]{\PhG{n}}
\end{remark}

\smallskip
\begin{example}\label{egpsg}
When \w[,]{n=2} a weakly globular pseudo double groupoid is just a
simplicial object in groupoids \w{X\in\sC{\Gpd}} such that \w{X\sb{0}}
is a homotopically discrete groupoid, the simplicial set \w{\opz{2}X} is
the nerve of a groupoid, and for each \w[,]{k\geq 2} the induced Segal map
$$
X\sb{k}~\xrw{\iseg{k}}~\pro{X_{1}}{X_{0}^{d}}{k}
$$
\noindent is an equivalence of groupoids.
\end{example}

\smallskip
\begin{lemma}\label{lpsgcoprod}
If \w{f\col X\to Y} is a map in \w[,]{\PsG{n}} and $Y$ is discrete (that is, the
constant \wwb{n-1}fold simplicial object on a discrete groupoid), then
$X$ is the coproduct in \w{\PsG{n}} of the fibers \w[,]{X\sp{-1}(a)}
taken over all \w[.]{a\in Y}
\end{lemma}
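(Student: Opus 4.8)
The plan is to reduce the statement to the elementary observation that a functor from a groupoid to a discrete groupoid on a set $S$ splits the source as a disjoint union of its fibres, and then to carry this decomposition through the inductive definition of $\PsG{n}$. First I would note that, since $Y$ has only identity morphisms and $f$ commutes with all face and degeneracy operators of $Y$ (which are identities), every morphism of $X$ — in any of the $n-1$ groupoid directions and in any multisimplicial degree — lies over an identity of $Y$, so the full sub-objects $X\sp{-1}(a)$ ($a\in\Obj Y$) are stable under all structure maps. Hence there is a canonical isomorphism $X\cong\coprod_{a\in\Obj Y}X\sp{-1}(a)$ in $\sCx{n-1}{\Gpd}$. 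Since $\PsG{n}$ is a \emph{full} subcategory, once each $X\sp{-1}(a)$ is known to lie in $\PsG{n}$ the universal property $\Hom_{\PsG{n}}(X,W)\cong\prod_a\Hom_{\PsG{n}}(X\sp{-1}(a),W)$ for $W\in\PsG{n}$ is formal, which is exactly the assertion.

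So the real work is to show $X\sp{-1}(a)\in\PsG{n}$, and I would prove this by induction on $n$, simultaneously with the analogous statement for $\PhG{n}$ (needed because $X\sb{0}\in\PhG{n-1}$ for $X\in\PsG{n}$). The case $n=1$ is immediate, since a full subgroupoid of a (homotopically discrete) groupoid is again a (homotopically discrete) groupoid. For the inductive step, regard $X\in\PsG{n}$ as a simplicial object in $\PsG{n-1}$ in the first direction; then $(X\sp{-1}(a))\sb{k}=(X\sb{k})\sp{-1}(a)$ lies in $\PsG{n-1}$ for $k\geq 1$ and in $\PhG{n-1}$ for $k=0$, by the two inductive hypotheses applied to the maps $f\sb{k}\colon X\sb{k}\to Y\sb{k}$, whose targets are again discrete. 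It then remains to verify conditions (c) and (d) of Definition \ref{dwgpng} for $X\sp{-1}(a)$. For (d): one has $X\sb{0}\sp{d}=\coprod_a(X\sb{0}\sp{-1}(a))\sp{d}$, and since a pullback over a discrete object distributes over the coproduct decomposition of its factors, the induced Segal map $\iseg{k}$ of $X$ is the disjoint union over $a$ of the induced Segal maps of the $X\sp{-1}(a)$; as realization of a coproduct is the disjoint union of realizations, $\iseg{k}$ for $X$ being a geometric weak equivalence forces each $\iseg{k}$ for $X\sp{-1}(a)$ to be one. For (c): $\opz{n}$ preserves coproducts (since $\pi\sb{0}$ does, applied levelwise) and the multinerve preserves coproducts of groupoids, so $\Nlo{n-1}\bPz{n}X=\opz{n}J\sb{n}X=\coprod_a\opz{n}J\sb{n}X\sp{-1}(a)$; comparing summands and applying the inductive hypothesis to the map $\bPz{n}X\to\bPz{n}Y$ (again discrete target) identifies $\bPz{n}X\sp{-1}(a)$ with $(\bPz{n}X)\sp{-1}(a)\in\PsG{n-1}$, and the commuting square for $X\sp{-1}(a)$ is the restriction of that for $X$ to the $a$-summand. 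The $\PhG{n}$ version runs identically with conditions (c)--(d) of Definition \ref{dphdng} in place of those of Definition \ref{dwgpng}; there one additionally notes that the map $\gamma\up{n}$ of \S \ref{dbpz} decomposes over $a$ and restricts on each summand to a levelwise equivalence of groupoids.

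The step I expect to require the most care is precisely this compatibility of the fibre decomposition with the operators $\opz{n}$, $\bPz{n}$ and $(-)\sp{d}$: one must check that passing to the fibre over a point of a discrete object commutes with applying $\pi\sb{0}$, with taking nerves, and with discretization. All of this comes down to $\pi\sb{0}\colon\Gpd\to\Set$ preserving coproducts and fibre products over discrete groupoids (Remarks \ref{addrem} and \ref{rfibprod}), to the nerve sending coproducts of groupoids to coproducts of simplicial sets, and to geometric realization sending coproducts to disjoint unions. Once these bookkeeping facts are in hand, conditions (c) and (d) for each fibre are obtained by restricting the corresponding structure on $X$ to its $a$-summand, and the formal argument of the first paragraph completes the proof.
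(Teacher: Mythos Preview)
Your proposal is correct and follows essentially the same inductive strategy as the paper's proof: reduce to the case $n=1$ and use that coproducts in $\PsG{n}$ are computed dimensionwise in $\sCx{n-1}{\Gpd}$. Your argument is in fact considerably more detailed than the paper's, which is quite terse and does not spell out the verification of conditions (c) and (d) of Definition~\ref{dwgpng} for each fibre; your careful treatment of the compatibility of the fibre decomposition with $\opz{n}$, $\bPz{n}$, $(-)^{d}$, and the induced Segal maps fills in exactly what the paper leaves implicit.
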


\begin{proof}
By induction on \w[,]{n\geq 1} where for \w[,]{n=1} $X$ is a groupoid,
which is a coproduct of its connected components. The $n$-th step follows
from the \wwb{n-1}st, since coproducts in \w{\PsG{n}} are those of
\w[,]{\sCx{n-1}{\Gpd}} namely, disjoint unions, which are therefore
taken dimensionwise.
\end{proof}

\smallskip
\begin{corollary}\label{cpsgcoprod}
If \w[,]{X\in\PsG{n}} then \w{X\sb{1}} is isomorphic to
the coproduct in \w{\PsG{n-1}} of \w{X\sb{1}(a,b)}
(the fiber of
\w[,]{(\gamma\lo{n}d\sb{0},\gamma\lo{n}d\sb{1})\col X\sb{1}\to
X\sp{d}\sb{0}\times X\sp{d}\sb{0}}
taken over all \w[).]{(a,b)\in X\sp{d}\sb{0}\times X\sp{d}\sb{0}}
%
\end{corollary}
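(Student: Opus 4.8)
The plan is to deduce this directly from Lemma~\ref{lpsgcoprod}, applied with $n$ replaced by $n-1$. We may assume $n\geq 2$, since for $n=1$ the statement merely records that a groupoid is the coproduct of its connected components, which is the base case of that lemma. By Definition~\ref{dwgpng}(b) we have $X\sb{1}\in\PsG{n-1}$ and $X\sb{0}\in\PhG{n-1}$; in particular the discretization $X\sb{0}\sp{d}$, and hence the product $X\sb{0}\sp{d}\times X\sb{0}\sp{d}$, is a discrete object of $\sCx{n-2}{\Gpd}$ (a constant $(n-2)$-fold simplicial object on a discrete groupoid), which lies in the full subcategory $\PsG{n-1}$ because conditions (a)--(d) of Definition~\ref{dwgpng} hold trivially for such objects (alternatively, use the inclusion $n\mbox{-}\Gpd\hra\PsG{n}$ of \S\ref{nthreem}).

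Next I would observe that $(\gamma\lo{n}d\sb{0},\gamma\lo{n}d\sb{1})\col X\sb{1}\to X\sb{0}\sp{d}\times X\sb{0}\sp{d}$ is a morphism of $\PsG{n-1}$ (being a morphism in $\sCx{n-2}{\Gpd}$ between two objects of the full subcategory $\PsG{n-1}$) whose codomain is discrete. Applying Lemma~\ref{lpsgcoprod} to it therefore exhibits $X\sb{1}$ as the coproduct in $\PsG{n-1}$ of the fibers of this map over all $(a,b)\in X\sb{0}\sp{d}\times X\sb{0}\sp{d}$. Since by definition $X\sb{1}(a,b)$ is exactly the fiber of $(\gamma\lo{n}d\sb{0},\gamma\lo{n}d\sb{1})$ over $(a,b)$, this is the claimed decomposition.

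I would close by recalling, as in the proof of Lemma~\ref{lpsgcoprod}, that this coproduct is computed dimensionwise as a disjoint union in $\sCx{n-2}{\Gpd}$, so the decomposition of $X\sb{1}$ is compatible with the face and degeneracy operators relating it to the other components of $X$. The only step beyond a pure unwinding of definitions is checking that $X\sb{0}\sp{d}\times X\sb{0}\sp{d}$ belongs to $\PsG{n-1}$, and that is routine once one notes that a product of discrete objects is discrete; so I do not anticipate any real obstacle here.
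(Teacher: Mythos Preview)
Your proposal is correct and is exactly the intended argument: the corollary is stated in the paper without proof precisely because it follows at once from Lemma~\ref{lpsgcoprod} applied (with $n$ replaced by $n-1$) to the map $(\gamma\lo{n}d\sb{0},\gamma\lo{n}d\sb{1})\col X\sb{1}\to X\sp{d}\sb{0}\times X\sp{d}\sb{0}$, whose target is discrete. Your additional remarks about why the discrete target lies in $\PsG{n-1}$ and about the coproduct being computed dimensionwise are fine elaborations but not strictly needed.
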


\smallskip
\begin{defn}\label{dneqpsg}
We now define the notion of \emph{$n$-equivalence} for maps of
weakly globular pseudo $n$-fold groupoids by induction on \w[,]{n\geq 1}
where a $1$-equivalence is simply an equivalence of groupoids:

A map \w{f\col X\to Y} in \w{\PsG{n}} is an $n$-equivalence if:

\begin{enumerate}
\renewcommand{\labelenumi}{(\alph{enumi})~}
\item \www{\bPz{n}f\col \bPz{n}X\to\bPz{n}Y} is an \wwb{n-1} equivalence in
\w[;]{\PsG{n-1}}
\item For every \w[,]{a,b\in X\sb{0}\sp{d}} the map
\w{f(a,b)\col X\sb{1}(a,b)\to Y\sb{1}(f(a),f(b))} is also an \wwb{n-1} equivalence
in \w[.]{\PsG{n-1}}
\end{enumerate}
\end{defn}

\supsect{\protect{\ref{cwgpng}}.B}
{Comparison with Tamsamani's weak $\mathbf{n}$-groupoids}

We describe a procedure for transforming a weakly globular pseudo
$n$-fold groupoid $X$ into a Tamsamani weak $n$-groupoid, without altering
the homotopy type.  The construction is done in two stages:

In the first, we use the general construction of \S \ref{ssrcons} to produce
\w[,]{\diz X\in\PsG{n}} in which only  \w{X\sb{0}} is discretized (as in
Subsection \ref{ctmng}.B).
This time we must proceed by induction on the $n$ simplicial directions
in order to obtain a zig-zag of intermediate objects (in Lemma \ref{ldiscpsg}),
all weakly equivalent in \w{\PsG{n}} (which was not possible in \w[).]{\Gpt{n}}

In the second stage, we define the full discretization
functors \w{D\sb{n}\col \PsG{n}\to\Tam{n}} by induction on \w[,]{n\geq 2}
with \w[,]{D\sb{2}:=\diz} so as to make each \w{X\sb{k}} a Tamsamani weak
\wwb{n-1}groupoid\vsm .

First, we need some technical facts about weakly globular pseudo
$n$-fold groupoids:

\begin{lemma}\label{lsegal}
If \w{f\col X\to Y} is a map in \w{\sC{\PsG{n-1}}} which is a weak equivalence in
each simplicial dimension, with \w{Y\sb{0}\in\PhG{n-1}} and \w[,]{X\in\PsG{n}}
then for each \w{k\geq 2} the induced Segal maps of \wref{eqsegal}
for $Y$ are geometric weak equivalences.
\end{lemma}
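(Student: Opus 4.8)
The plan is to compare, for each $k\geq 2$, the induced Segal map $\iseg{k}\col Y\sb{k}\to\pro{Y\sb{1}}{Y\sb{0}\sp{d}}{k}$ with the corresponding map for $X$, and to conclude by two-out-of-three. First I would use the naturality of the Segal maps (Definition \ref{dsegal}) and of the canonical maps to the discretizations to record the commutative square
$$
\xymatrix@C=30pt@R=20pt{
X\sb{k} \ar[r]^{\iseg{k}} \ar[d]_{f\sb{k}} & \pro{X\sb{1}}{X\sb{0}\sp{d}}{k} \ar[d]^{(f\sb{1})\sb{\ast}} \\
Y\sb{k} \ar[r]^{\iseg{k}} & \pro{Y\sb{1}}{Y\sb{0}\sp{d}}{k}
}
$$
of $(n-1)$-fold simplicial objects in groupoids. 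Here $f\sb{k}$ is a geometric weak equivalence by hypothesis, and the top arrow is a geometric weak equivalence since $X\in\PsG{n}$ (Definition \ref{dwgpng}(d)). So the whole argument reduces to showing that the right vertical arrow $(f\sb{1})\sb{\ast}$ is a geometric weak equivalence; once that is known, $\iseg{k}\circ f\sb{k}=(f\sb{1})\sb{\ast}\circ\iseg{k}$ is a geometric weak equivalence, and cancelling the geometric weak equivalence $f\sb{k}$ yields the claim for $Y$.

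To treat $(f\sb{1})\sb{\ast}$ I would first note that $f\sb{0}\col X\sb{0}\to Y\sb{0}$, being a geometric weak equivalence, is a $\pi\sb{0}$-bijection, so the discretization $f\sb{0}\sp{d}\col X\sb{0}\sp{d}\to Y\sb{0}\sp{d}$ is an isomorphism of discrete $(n-1)$-fold groupoids. Then, applying Corollary \ref{cpsgcoprod} to $X\in\PsG{n}$, and Lemma \ref{lpsgcoprod} to the map $(\gamma\lo{n}d\sb{0},\gamma\lo{n}d\sb{1})\col Y\sb{1}\to Y\sb{0}\sp{d}\times Y\sb{0}\sp{d}$, I would obtain coproduct decompositions $X\sb{1}\cong\coprod\sb{(a,b)}X\sb{1}(a,b)$ and $Y\sb{1}\cong\coprod\sb{(a,b)}Y\sb{1}(a,b)$ in $\PsG{n-1}$, indexed by $(\pi\sb{0}\diN X\sb{0})\sp{2}$ and $(\pi\sb{0}\diN Y\sb{0})\sp{2}$ respectively, matched up by $f\sb{0}\sp{d}\times f\sb{0}\sp{d}$ and compatible with $f\sb{1}$. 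Distributing the $k$-fold fibre product over the discrete base then gives
$$
\pro{X\sb{1}}{X\sb{0}\sp{d}}{k}~\cong~\coprod\sb{(a\sb{0},\dotsc,a\sb{k})}X\sb{1}(a\sb{0},a\sb{1})\times\dotsb\times X\sb{1}(a\sb{k-1},a\sb{k})
$$
and likewise for $Y$. Since $\diN$ carries coproducts of multisimplicial groupoids to disjoint unions of spaces, $\diN f\sb{1}$ is a coproduct, over a bijection of index sets, of the maps $\diN X\sb{1}(a,b)\to\diN Y\sb{1}(f(a),f(b))$, whence each such map is a weak homotopy equivalence because $\diN f\sb{1}$ is; and since $\diN$ also commutes with finite products, applying $\diN$ to the last display exhibits $(f\sb{1})\sb{\ast}$ as a coproduct of finite products of weak homotopy equivalences, hence a geometric weak equivalence, as needed.

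The main obstacle is exactly this last step — making precise that the $k$-fold fibre product over a discrete object distributes over the relevant coproduct decompositions, and that $\diN$ is compatible both with these coproducts and with the ensuing finite products. This uses Lemma \ref{lpsgcoprod} and Corollary \ref{cpsgcoprod} together with the fact (implicit in Remarks \ref{rgwe} and \ref{rfibprod}) that $\diN$ commutes with coproducts and with fibre products over discrete objects; the two-out-of-three bookkeeping in the square is then routine.
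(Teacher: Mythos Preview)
Your proposal is correct and follows essentially the same approach as the paper's proof: both set up the commuting square of induced Segal maps, identify three of the four arrows as geometric weak equivalences (using the coproduct decomposition \wref{eqsegaldeco} over the discrete base to handle the right vertical map), and conclude by two-out-of-three. Your treatment is in fact slightly more careful in one spot, explicitly invoking Lemma \ref{lpsgcoprod} for $Y\sb{1}$ rather than Corollary \ref{cpsgcoprod} (since $Y$ itself is not assumed to lie in \w[).]{\PsG{n}}
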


\begin{proof}
First note that $f$ induces an isomorphism
\w[,]{X\sb{0}\sp{d}\cong Y\sb{0}\sp{d}} so by Corollary \ref{cpsgcoprod}
\w{f_{1}\col X\sb{1}\to Y\sb{1}} is the coproduct over
\w{(a,b)\in X\sp{d}\sb{0}\times X\sp{d}\sb{0}} of its restrictions
\w[.]{f\sb{1}(a,b)\col X\sb{1}(a,b)\to Y\sb{1}(a,b)}
Since the classifying space functor \w{\diN\col \PsG{n-1}\to\Top} commutes
with disjoint unions, the fact that \w{f\sb{1}} is a weak equivalence
implies that each \w{f\sb{1}(a,b)} is a geometric weak equivalence in
\w[.]{\PsG{n-1}}

Moreover, since \w{X\sb{0}\sp{d}} is discrete,
\begin{myeq}\label{eqsegaldeco}
\pro{X\sb{1}}{X\sb{0}\sp{d}}{k}~\cong~
\coprod\sb{a_{0},\dotsc, a_{k}\in X\sb{0}\sp{d}}\
X\sb{1}(a\sb{0},a\sb{1})\times X\sb{1}(a\sb{1},a\sb{2})\times\dotsc\times
X\sb{1}(a\sb{k-1},a\sb{k})
\end{myeq}
\noindent and similarly for $Y$.

Now consider the commutative diagram of induced Segal maps:
\mydiagrm[\label{eqisegalmaps}]{
X\sb{k} \ar[rr]^{\iseg{k}\sp{X}}_{\simeq} \ar[d]_{f\sb{k}}^{\simeq} &&
\pro{X\sb{1}}{X\sb{0}\sp{d}}{k} \ar[d]^{(f\sb{1}\times\dotsc\times f\sb{1})}\\
Y\sb{k} \ar[rr]^{\iseg{k}\sp{Y}}  && \pro{Y\sb{1}}{Y\sb{0}\sp{d}}{k}
}
\noindent where the left vertical map is a geometric weak equivalence by
assumption, as is the top horizontal induced Segal map
(since \w[),]{X\in\PsG{n}} while the right horizontal map is a geometric
weak equivalence because of \wref[.]{eqsegaldeco}  Therefore,
\w{\seg{k}\sp{Y}} is a geometric weak equivalence, too.
\end{proof}

\begin{lemma}\label{lpushouts}
Consider a pushout diagram in \w[:]{\sCx{n-1}{\Gpd}}
\mydiagrm[\label{eqpopsg}]{
A \ar@{^{(}->}[rr]^{j} \ar[d]_{\gamma\up{n}}^{\simeq} && B \ar[d]^{g} \\
\cons\up{n}\bPz{n} A \ar[rr]^{h} && C
}
\noindent with \w{A\in\PhG{n}} and $j$ monic. Then:

\begin{enumerate}
\renewcommand{\labelenumi}{(\alph{enumi})~}
\item If \w[,]{B\in\PsG{n}} so is $C$.
\item If \w[,]{B\in\PhG{n}} so is $C$.
\end{enumerate}
\end{lemma}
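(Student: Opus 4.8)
The plan is to prove (a) and (b) together by induction on $n$. For $n=1$ the diagram \wref{eqpopsg} is a pushout of groupoids, so (a) is automatic; for (b), since $j$ is monic --- hence injective on objects, i.e.\ a cofibration in the folk model structure on $\Gpd$ --- and $\gamma\up{n}$ is an equivalence of groupoids (with a section afforded by the homotopically discrete structure of $A$), left properness of $\Gpd$ makes $g\col B\to C$ an equivalence of groupoids, so $B$ homotopically discrete forces $C$ homotopically discrete.

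For the inductive step, recall that colimits in $\sCx{n-1}{\Gpd}\cong\sC{\sCx{n-2}{\Gpd}}$ are formed one simplicial dimension at a time, hence in every multi-simplicial dimension. Thus for each $k\ge 0$ the $k$-th component of \wref{eqpopsg} is again a pushout of the same shape, with $A\sb{k}\in\PhG{n-1}$, $j\sb{k}$ monic, and $B\sb{k}$ in $\PsG{n-1}$ (resp.\ $\PhG{n-1}$); by the inductive hypothesis $C\sb{k}\in\PsG{n-1}$ for $k\ge 1$ and $C\sb{0}\in\PhG{n-1}$ (and, in case (b), $C\sb{k}\in\PhG{n-1}$ for all $k$), which is condition~(b) of Definition \ref{dwgpng} (resp.\ \ref{dphdng}). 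Moreover, evaluating \wref{eqpopsg} at an arbitrary multi-index turns $\gamma\up{n}$ into the equivalence of groupoids $G\to c\pi\sb{0}G$ of a homotopically discrete groupoid $G$ and $j$ into a cofibration, so by left properness of $\Gpd$ the map $g\col B\to C$ is a weak equivalence of groupoids in every multi-simplicial dimension, hence a geometric weak equivalence by Remark \ref{rgwe}; in particular each $g\sb{k}\col B\sb{k}\to C\sb{k}$ is a geometric weak equivalence in $\PsG{n-1}$.

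It remains to check conditions~(c) and~(d). Since $\pi\sb{0}\col\Gpd\to\Set$ is a left adjoint, $\opz{n}$ preserves the pushout \wref{eqpopsg}, and $\opz{n}\gamma\up{n}$ is the identity (dimensionwise, $\pi\sb{0}G\to\pi\sb{0}c\pi\sb{0}G$ is the identity); hence $\opz{n}J\sb{n}C\cong\opz{n}J\sb{n}B=\Nlo{n-1}\bPz{n}B$, so one takes $\bPz{n}C:=\bPz{n}B$, the required square commutes by naturality, $\bPz{n}g$ is an isomorphism, and in case (b) $\bPz{n}C\in\PhG{n-1}$ while two-out-of-three applied to the naturality of $\gamma\up{n}$ with respect to $g$ shows $\gamma\up{n}\sb{C}\col C\to\cons\up{n}\bPz{n}C$ is a dimensionwise weak equivalence. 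Finally, for the induced Segal maps of $C$ one applies Lemma \ref{lsegal} to $g\col B\to C$, which is a weak equivalence in each simplicial dimension with $C\sb{0}\in\PhG{n-1}$ and $B\in\PsG{n}$. The one genuinely substantive point is the claim, used twice, that the pushout of $\gamma\up{n}$ along the monomorphism $j$ is a dimensionwise weak equivalence --- this is exactly where properness of $\Gpd$ (or the explicit section of $\gamma\up{n}$) enters, and it is what makes Lemma \ref{lsegal} applicable; everything else is bookkeeping with the conventions of \S \ref{nsimp}.
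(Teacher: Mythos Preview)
Your proof is correct and follows essentially the same inductive strategy as the paper's: verify the conditions of Definitions \ref{dphdng} and \ref{dwgpng} levelwise using the inductive hypothesis, identify $\bPz{n}C$ with $\bPz{n}B$ via the fact that $\pi_0$ preserves pushouts and inverts $\gamma\up{n}$, and invoke Lemma \ref{lsegal} for the induced Segal maps. The one substantive difference is in how you show that $g$ is a weak equivalence: the paper applies $\Dlo{n}$ and uses left properness of $\sS$ (so $\Dlo{n}j$ is a cofibration and $\Dlo{n}\gamma\up{n}$ a weak equivalence), whereas you work dimensionwise in $\Gpd$, using that a monomorphism of groupoids is injective on objects and hence a cofibration in the folk model structure, together with left properness there. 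Your route has the small advantage of directly yielding that $g$ is a weak equivalence of groupoids in every multi-simplicial dimension, which is exactly the hypothesis Lemma \ref{lsegal} requires and which is also what you need for the two-out-of-three argument verifying condition (c) of Definition \ref{dphdng} in case (b); the paper states this dimensionwise fact when invoking Lemma \ref{lsegal} but does not spell out the argument. For the base case $n=1$ of (b) the paper cites \cite{JStreP}, while your direct argument (equivalence of groupoids preserves the property of having at most one arrow between any two objects) is perfectly adequate.
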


\begin{proof}
By induction on \w[:]{n\geq 1}

First note that for any \w[,]{n\geq 1} $g$ is a geometric weak equivalence,
since $f$ is, because \w{\Dlo{n}} preserves pushouts, $A$ is in \w[,]{\PhG{n}}
and \w{\Dlo{n}j} is a cofibration of simplicial sets.

When \w[,]{n=1} \wref{eqpopsg} is a diagram of groupoids, so (a) is clear, and
(b) follows from \cite[Corollary 3]{JStreP}.

In general, since the pushout is taken in a diagram category, \w{C\sb{0}} is
the pushout of the objects in simplicial dimension $0$, which is therefore
in \w{\PhG{n-1}} by (b) for \w[,]{n-1} while for \w[,]{k\geq 1} \w{C_{k}} is
in \w{\PsG{n-1}} by (a) for \w[.]{n-1}

Since the functor \w{\bPz{n}} is defined by applying \w{\pi_{0}} to each
groupoid, \w{\pi_{0}} commutes with pushouts of groupoids, and \w{\pi_0\gamma}
is an isomorphism, we see that \w{\bPz{n}C=\Pi_0\up{n}B} is in
\w{\PsG{n-1}} by (a) for \w[.]{n-1}

Finally, the Segal condition follows from Lemma \ref{lsegal} for $g$, since
\w{g\sb{k}} is a weak equivalence for each \w[,]{k\geq 0} \w[,]{B\in\PsG{n}}
and \w[.]{C\sb{0}\in\PhG{n-1}}

 This shows (a). (b) is immediate.
\end{proof}

\begin{prop}\label{ppsgpushout}
Assume given a weakly globular pseudo $n$-fold groupoid $X$, and let
\w{Y\in\sCx{n-1}{\Gpd}} be the result of applying the construction of
\S \ref{ssrcons} to the map \w{\gamma\col X\sb{0}\to W} for
\w{W:=(\cons\up{n}\bPz{n}X)\sb{0}} and \w[;]{\cC=\sCx{n-1}{\Gpd}}
then $Y$ is actually in \w[.]{\PsG{n}} Moreover, the maps
\begin{myeq}\label{eqpsgpushout}
X~\xrw{f}~Y~\xlw{h}~X\sp{\gamma}
\end{myeq}
\noindent are geometric weak equivalences in \w[,]{\PsG{n}} where
\w{X\sp{\gamma}} is as in \S \ref{ssrcons}.
\end{prop}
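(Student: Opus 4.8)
The plan is to exhibit $Y$ as a single pushout in $\sCx{n-1}{\Gpd}$, so that the assertion ``$Y\in\PsG{n}$ and $f\col X\to Y$ is a geometric weak equivalence'' falls out of Lemma \ref{lpushouts}, and then to dispose of $X\sp{\gamma}$ by comparing it with $Y$ along $h$. The whole argument runs by induction on $n$.

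First I would observe that the degreewise pushouts \wref{eqsimpcons} defining $Y$ assemble into one pushout square in $\sCx{n-1}{\Gpd}$, whose upper edge is the canonical monomorphism $\cons X\sb{0}\hookrightarrow X$ onto the $0$-skeleton in the first (distinguished) simplicial direction --- its component in degree $m$ being the iterated degeneracy $s\lo{m}\col X\sb{0}\to X\sb{m}$, a split monomorphism --- and whose left edge is $\cons\gamma\col \cons X\sb{0}\to\cons W$, where $\cons(-)$ denotes the constant simplicial object in that direction. To apply Lemma \ref{lpushouts} with $A=\cons X\sb{0}$, $B=X$, $C=Y$, I must check that $\cons X\sb{0}\in\PhG{n}$ and that $\cons\gamma$ coincides with the discretisation map $\gup{n}\col \cons X\sb{0}\to\cons\up{n}\bPz{n}(\cons X\sb{0})$; both follow by an easy induction on $n$ from $X\sb{0}\in\PhG{n-1}$, using that a constant simplicial object on a homotopically discrete pseudo $(n-1)$-fold groupoid has identity Segal maps, that $\bPz{n}$ commutes with $\cons(-)$, and that $W=(\cons\up{n}\bPz{n}X)\sb{0}$ is exactly $\bPz{n-1}X\sb{0}$ with a trivial direction adjoined, so that $\cons W\cong\cons\up{n}\bPz{n}(\cons X\sb{0})$. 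Granting this, Lemma \ref{lpushouts}(a) gives $Y\in\PsG{n}$, and its proof shows that the pushout map $f\col X\to Y$ of the geometric weak equivalence $\cons\gamma$ along a monomorphism is a geometric weak equivalence.

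For the second map I would argue as in Lemma \ref{lhtype}: by construction $h\col X\sp{\gamma}\to Y$ of \S \ref{ssrcons} has $h\sb{0}=\Id\sb{W}$ and $h\sb{m}=f\sb{m}$ for $m\geq 1$, and each $f\sb{m}$ is a pushout of the geometric weak equivalence $\gamma\col X\sb{0}\to W$ along the monomorphism $s\lo{m}$, hence (apply $\Dlo{n-1}$, which preserves pushouts, to a pushout of a weak equivalence along a cofibration of simplicial sets) a geometric weak equivalence in $\PsG{n-1}$. Thus $h$ is a geometric weak equivalence in each simplicial degree, so $\Dlo{n}h$ is a weak equivalence by Remark \ref{rgwe}. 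It remains to see $X\sp{\gamma}\in\PsG{n}$: condition (b) of Definition \ref{dwgpng} is immediate, since $(X\sp{\gamma})\sb{0}=W\in\PhG{n-1}$ and $(X\sp{\gamma})\sb{k}=X\sb{k}\in\PsG{n-1}$; condition (d) is literally condition (d) for $X$, because the geometric weak equivalence $\gamma$ yields a canonical identification $W\sp{d}\cong X\sb{0}\sp{d}$ and $(X\sp{\gamma})\sb{1}=X\sb{1}$, under which the induced Segal map $(X\sp{\gamma})\sb{k}\to\pro{(X\sp{\gamma})\sb{1}}{W\sp{d}}{k}$ becomes $X\sb{k}\to\pro{X\sb{1}}{X\sb{0}\sp{d}}{k}$, a geometric weak equivalence since $X\in\PsG{n}$; and condition (c) follows by identifying $\bPz{n}X\sp{\gamma}$ --- whose nerve is $\opz{n}J\sb{n}X\sp{\gamma}$, agreeing with $\opz{n}J\sb{n}X$ above degree $0$ and, in degree $0$, with the nerve of the (weakly globular, by the inductive hypothesis) object $\bPz{n-1}W$ --- as a weakly globular pseudo $(n-1)$-fold groupoid, the commuting square then being inherited from that for $X$, and $\bPz{n}$ preserving geometric weak equivalences by hypothesis. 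This proves that $f$ and $h$ are geometric weak equivalences in $\PsG{n}$.

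The main obstacle I anticipate is entirely in the bookkeeping of the first step: verifying cleanly that $\cons X\sb{0}$ lies in $\PhG{n}$ and that the left-hand map of the pushout square is precisely the map $\gup{n}$ of Definition \ref{dphdng}(c) for this object, so that Lemma \ref{lpushouts} applies verbatim --- together with the parallel identification of $\bPz{n}X\sp{\gamma}$ needed for condition (c). Once these are settled, the rest is a faithful transcription of the $\Gpt{n}$ argument (Lemma \ref{ldisc}) into the pseudo setting, with Lemma \ref{lsegal} available as a backstop for any Segal-map verification that does not reduce directly to the corresponding statement for $X$.
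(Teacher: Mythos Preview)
Your approach is correct, but the organization differs from the paper's. The paper does not package $Y$ as a single pushout in $\sCx{n-1}{\Gpd}$; instead it works directly with the degreewise pushouts \wref[,]{eqsimpcons} invoking Lemma~\ref{lpushouts} at level $n-1$ (with $A=X\sb{0}\in\PhG{n-1}$, $B=X\sb{k}$, and left map the structure map $\gup{n-1}$ for $X\sb{0}$) to get each $Y\sb{k}\in\PsG{n-1}$, then uses Lemma~\ref{lsegal} applied to $f$ for the Segal condition and the identification $\bPz{n}Y\cong\bPz{n}X$ (since $\pi\sb{0}\gamma$ is an isomorphism) to finish checking Definition~\ref{dwgpng}. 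This bypasses the verifications you flag as the main obstacle --- that $\cons X\sb{0}\in\PhG{n}$ and that $\cons\gamma$ is precisely $\gup{n}$ for this object. Your route trades those checks for a single clean invocation of Lemma~\ref{lpushouts} at level $n$; this is legitimate, since the proof of that lemma already contains the dimensionwise argument together with Lemma~\ref{lsegal}, but it is not how the paper proceeds.

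Two minor points on your execution. First, when checking condition~(d) of Definition~\ref{dphdng} for $\cons X\sb{0}$, note that the ordinary Segal maps are identities, but the \emph{induced} Segal maps land in the iterated fibre product over $X\sb{0}\sp{d}$, so you still need the (easy) observation that the diagonal $X\sb{0}\to\pro{X\sb{0}}{X\sb{0}\sp{d}}{k}$ is a geometric weak equivalence. Second, your treatment of condition~(c) for $X\sp{\gamma}$ is more elaborate than needed: since $\gamma=\gup{n-1}$ induces an isomorphism on $\pi\sb{0}$ in every multi-simplicial dimension, one has $\opz{n}X\sp{\gamma}=\opz{n}X$ on the nose, hence $\bPz{n}X\sp{\gamma}=\bPz{n}X$, and condition~(c) transfers immediately. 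The paper, incidentally, does not spell out that $X\sp{\gamma}\in\PsG{n}$ at all, so your explicit verification is a net improvement.
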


\begin{proof}
First, note that \w{Y\sb{0}:=W} is in \w[,]{\PhG{n-1}} by
\S \ref{dwgpng}. Furthermore, for any \w{k\geq 1} \w{Y\sb{k}} is defined by
the pushout square of \wref[:]{eqsimpcons}
\mydiagrm[\label{eqsimpconspsg}]{
    X\sb{0} \ar[r]^{s\lo{k}} \ar[d]_{\gamma} & X_{k} \ar[d]^{f_{k}}\\
    (\cons\up{n}\bPz{n}X)\sb{0} \ar[r]_{\sigma\lo{k}} & Y_{k}
    }
\noindent where $\gamma$ is a geometric weak equivalence since \w{X\sb{0}}
is in \w[,]{\PhG{n-1}} and the iterated degeneracy map \w{s\lo{k}} is
one-to-one since it has a left inverse \w[.]{d\lo{k}} Thus by
Lemma \ref{lpushouts}, \w[.]{Y_k\in\PsG{n-1}}

The maps \w{f\sb{k}} in \wref{eqsimpconspsg} are geometric weak equivalences,
since after applying \w{\Dlo{n}} we obtain a pushout of a weak equivalence
along a cofibration in \w[.]{\sS} Therefore, by Lemma \ref{lsegal} applied
to $f$, the induced Segal maps for $Y$ are weak equivalences.

Finally, \w{\bPz{n}Y} is obtained by applying \w{\pi_{0}} to each groupoid of
\w[,]{Y\in\sCx{n-1}{\Gpd}} and since this commutes with pushouts and
\w{\pi\sb{0}\gamma} is an isomorphism, we see that
\w[,]{\bPz{n}Y\cong\bPz{n}X} so in particular it is in \w[.]{\PsG{n-1}}
This shows that \w[.]{Y\in\PsG{n}}

Since each \w{f\sb{k}} is a geometric weak equivalence, as is
\w{h\sb{0}=\gamma} and \w{h\sb{k}=\Id} for \w[,]{k\geq 1}
the two maps $f$ and $h$ are geometric weak equivalences in \w[.]{\PsG{n}}
\end{proof}

\begin{notation}\label{nfundgppsg}
Let \w{\Tps{n}\col \PsG{n}\to\Gpd} denote the \emph{fundamental groupoid} functor
for \w{\PsG{n}} \wwh that is, the composite
\begin{myeq}\label{eqtsgn}
\Tps{n}~:=~\bPz{2}\cdots\bPz{n-1}\bPz{n}
\end{myeq}
\end{notation}

\begin{defn}\label{ddiscpsg}
For each \w{n\geq 2} we define a sequence of functors
\w{\diz\up{k}\col \PsG{n}\to\PsG{n}} \wb{1\leq k\leq n} by setting
\w{\diz\up{k}X:=X\sp{\Glo{n}\up{k}}} (in the notation of \S \ref{ssrcons}),
where
$$
\Glo{n}\up{k}\col X\sb{0}\to
(\cons\up{k}\dotsc\cons\up{n}\bPz{k}\dotsc\bPz{n}X)\sb{0}
$$
\noindent is the composite of the first $k$ maps of \wref{eqglon} in
dimension $0$. We write \w{\diz} for \w[.]{\diz\up{1}}
\end{defn}

\begin{lemma}\label{ldiscpsg}
For each \w{X\in\PsG{n}} we have a sequence of natural geometric
weak equivalences
$$
\xymatrix@R=15pt@C=15pt{
X \ar[rd]^{f\up{n}} && \diz\up{n}X \ar[ld]_{h\up{n}}\ar[rd]^{f\up{n-1}} &&&&
\diz X \ar[ld]_{h\up{1}} \\
& Y\up{n} && Y\up{n-1}&\dotsc& Y\up{1} &
}
$$
\end{lemma}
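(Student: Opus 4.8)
The plan is to build the zig-zag by iterating the pushout construction of \S\ref{ssrcons}, discretizing the object part $X\sb{0}$ one internal groupoid direction at a time; Proposition \ref{ppsgpushout} will supply the first of the $n$ steps, and the remaining $n-1$ steps will be obtained by running exactly the same argument with a different discretization map. Write $\diz\up{n+1}X:=X$. For each $k$ with $1\leq k\leq n$ one checks that $(\diz\up{k+1}X)\sb{0}$ still lies in $\PhG{n-1}$ — it is $X\sb{0}\in\PhG{n-1}$ with the internal directions $k+1,\dotsc,n$ collapsed, and collapsing a direction of a homotopically discrete object keeps it homotopically discrete — and that the relevant component
$$
\delta\sb{k}\col (\diz\up{k+1}X)\sb{0}~\lto~(\diz\up{k}X)\sb{0}
$$
of the map $\Glo{n}$ of \eqref{eqglon}, taken in simplicial dimension $0$ of the first direction, is a geometric weak equivalence equipped with a section coming from the homotopically discrete structure (Definitions \ref{dphdng} and \ref{dhdng}). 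I would then apply the construction of \S\ref{ssrcons} in $\cC=\sCx{n-1}{\Gpd}$ to $\delta\sb{k}$, obtaining $Y\up{k}\in\sCx{n-1}{\Gpd}$ and maps
$$
\diz\up{k+1}X~\xrw{f\up{k}}~Y\up{k}~\xlw{h\up{k}}~\diz\up{k}X\,,
$$
noting that $(\diz\up{k+1}X)^{\delta\sb{k}}=\diz\up{k}X$, so that the left map of \S\ref{ssrcons} is $f\up{k}$ and the right map is $h\up{k}$ (with $h\up{k}\sb{0}=\delta\sb{k}$ and $h\up{k}\sb{j}=\Id$ for $j\geq 1$); for $k=n$ this step is precisely Proposition \ref{ppsgpushout} applied to $X$ with $W=(\cons\up{n}\bPz{n}X)\sb{0}$.

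Next I would verify that $Y\up{k}\in\PsG{n}$ and that $f\up{k}$ and $h\up{k}$ are geometric weak equivalences in $\PsG{n}$, repeating the argument used for Proposition \ref{ppsgpushout}. Applying $\Dlo{n}$ to the defining pushout square \eqref{eqsimpconspsg} turns it into a pushout of a weak equivalence along a cofibration of simplicial sets, so each $f\up{k}\sb{j}$ is a geometric weak equivalence and hence $f\up{k}$ is a levelwise, so (Remark \ref{rgwe}) a geometric, weak equivalence. Lemma \ref{lpushouts} gives $(Y\up{k})\sb{0}=(\diz\up{k}X)\sb{0}\in\PhG{n-1}$ and $(Y\up{k})\sb{j}\in\PsG{n-1}$ for $j\geq 1$; since $\pi\sb{0}$ commutes with the pushouts and $\pi\sb{0}\delta\sb{k}$ is an isomorphism, $\bPz{n}Y\up{k}$ again lies in $\PsG{n-1}$; and Lemma \ref{lsegal}, applied to $f\up{k}$ (whose target has its object part in $\PhG{n-1}$), shows that the induced Segal maps of $Y\up{k}$ are geometric weak equivalences. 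As $h\up{k}\sb{0}=\delta\sb{k}$ is a geometric weak equivalence of homotopically discrete objects and $h\up{k}\sb{j}=\Id$ for $j\geq 1$, the map $h\up{k}$ is also a geometric weak equivalence in $\PsG{n}$. Functoriality of the pushout \eqref{eqsimpconspsg} and naturality of the maps $\delta\sb{k}$ make $Y\up{k}$, $f\up{k}$ and $h\up{k}$ natural in $X$.

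Concatenating the $n$ zig-zags produced this way, and using $\diz\up{1}X=\diz X$, gives the displayed sequence. The step I expect to be the real obstacle is not any single computation but the direction-bookkeeping threaded through the induction: one must be sure that at every stage $k$ the intermediate object $(\diz\up{k+1}X)\sb{0}$ is genuinely in $\PhG{n-1}$ and that $\delta\sb{k}$ admits a section compatible with the simplicial structure, so that Lemmas \ref{lpushouts} and \ref{lsegal} — and with them the proof of Proposition \ref{ppsgpushout} — apply verbatim in each of the $n$ directions in turn. This is exactly what the inductive form of Definitions \ref{dphdng} and \ref{dwgpng}, together with the factorization of $\Glo{n}$ through the partial composites $\Glo{n}\up{k}$ of Definition \ref{ddiscpsg}, are set up to deliver.
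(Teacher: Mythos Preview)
Your proposal is correct and is exactly the approach the paper takes, only spelled out in considerably more detail: the paper's entire proof is the single sentence ``Each $Y\up{k}$ is obtained by applying Proposition \ref{ppsgpushout} to $\diz\up{k+1}X$,'' and you have correctly unpacked this to mean running the \emph{argument} of that proposition with the partial discretization map $\delta_k$ in place of its fixed $\gamma$. Your closing paragraph rightly locates the only real content in checking that Lemmas \ref{lpushouts} and \ref{lsegal} still apply with $\delta_k$; they do, since $\delta_k$ is $\gamma\up{k-1}$ padded out by discrete directions, so the inductive proof of Lemma \ref{lpushouts} passes through those directions trivially until it reaches the one on which $\gamma$ actually acts.
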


\begin{proof}
Each \w{Y\up{k}} is obtained by applying Proposition \ref{ppsgpushout} to
\w[,]{\diz\up{k+1}X} where \w[,]{\diz\up{n+1}X:=X} and using
\wref[.]{eqpsgpushout}
\end{proof}

\begin{defn}\label{ddnpsg}
We now define \emph{discretization functors}
$$
D\sb{n}~\col ~\PsG{n}~\to~\sCx{n-1}{\Gpd}
$$
\noindent for each \w{n\geq 1} by induction on $n$, starting with
\w[.]{D\sb{1}:=\Id\col \Gpd\to\Gpd}
For \w[,]{n\geq 2} we define \w{D\sb{n}} inductively to be the composite:
$$
\PsG{n}~\hra~\sC{\PsG{n-1}}~\supar{\diz}~
 \sC{\PsG{n-1}}~\supar{\ovl{D}_{n-1}}~\sCx{n-1}{\Gpd}
$$
\noindent where \w{\ovl{D}_{n-1}} is obtained by applying \w{D_{n-1}}
in each simplicial dimension.
\end{defn}

Note that \w{D\sb{2}} is simply \w[.]{\diz\col \PsG{2}\to\Tam{2}}

\begin{thm}\label{tpsg}
The functor \w{D_{n}} lands in \w{\Tam{n}} and preserves geometric weak
equivalences and fiber products over discrete objects. Moreover,
for every weakly globular pseudo $n$-fold groupoid \w[,]{X\in\PsG{n}}
the groupoid \w{\TTm{n} D\sb{n}X} is isomorphic to
\w[,]{\Tps{n}X} and there is a zigzag of weak equivalences in \w{\PsG{n}}
between \w{D\sb{n}X} and $X$.
\end{thm}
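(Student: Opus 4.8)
The plan is to argue by induction on $n\geq 2$, the base case being the definition $D_2=\diz$: for a weakly globular pseudo double groupoid $X$ we have $(\diz X)_0=X_0^d$ discrete and $(\diz X)_k=X_k$ for $k\geq 1$, the induced Segal maps $\iseg{k}$ of $X$ are the Segal maps of $\diz X$ and are geometric weak equivalences by Definition \ref{dwgpng}(d), and $\opz{2}\diz X=\opz{2}X$ is the nerve of the groupoid $\bPz{2}X=\Tps{2}X$; hence $\diz X\in\Tam{2}$ with $\TTm{2}D_2X\cong\Tps{2}X$, while Proposition \ref{ppsgpushout} (equivalently Lemma \ref{ldiscpsg}) supplies the zigzag of geometric weak equivalences in $\PsG{2}$ between $X$ and $\diz X=D_2X$.

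For the inductive step, write $D_nX=\ovl{D}_{n-1}\diz X$, where $\diz X\in\PsG{n}\subseteq\sC{\PsG{n-1}}$ satisfies $(\diz X)_0=X_0^d$ and $(\diz X)_k=X_k$ for $k\geq 1$. To see that $D_nX$ lies in $\Tam{n}$ I would verify the clauses of Definition \ref{dtamn}. That $(D_nX)_0=D_{n-1}(X_0^d)$ is discrete is immediate, since $D_{n-1}$ fixes discrete objects. For the Segal condition, the inductive hypothesis that $D_{n-1}$ preserves fiber products over discrete objects gives
$$
\pro{(D_nX)_1}{(D_nX)_0}{k}\;=\;\pro{D_{n-1}X_1}{D_{n-1}X_0^d}{k}\;\cong\;D_{n-1}\!\left(\pro{X_1}{X_0^d}{k}\right),
$$
so the $k$-th Segal map of $D_nX$ is $D_{n-1}$ applied to the induced Segal map $\iseg{k}$ of $X$; the latter is a geometric weak equivalence by Definition \ref{dwgpng}(d), and $D_{n-1}$ preserves geometric weak equivalences by induction, so the Segal maps of $D_nX$ are geometric weak equivalences. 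Computing $\TTm{n}$ dimensionwise, $(\TTm{n}D_nX)_k=\pi_0\TTm{n-1}(D_nX)_k=\pi_0\TTm{n-1}D_{n-1}(\diz X)_k\cong\pi_0\Tps{n-1}(\diz X)_k$ by the inductive identity $\TTm{n-1}D_{n-1}\cong\Tps{n-1}$; this equals $(\Tps{n}X)_k$ for $k\geq 1$ by \wref{eqtsgn}, and for $k=0$ it is $\pi_0\Tps{n-1}X_0^d=X_0^d=(\Tps{n}X)_0$. Thus $\TTm{n}D_nX\cong\Tps{n}X$, which is a groupoid because $\Tps{n}=\bPz{2}\cdots\bPz{n}$ iteratively lands in $\PsG{1}=\Gpd$; the commuting $\bPz{n}$-square and preservation of geometric weak equivalences for the $\Tam{n}$ structure follow from the corresponding facts for $\opz{n}$ and $\pi_0$.

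For the zigzag of weak equivalences in $\PsG{n}$ between $X$ and $D_nX$, I would splice two zigzags. Lemma \ref{ldiscpsg} already provides a natural zigzag of geometric weak equivalences in $\PsG{n}$ from $X$ to $\diz X$, so it remains to connect $\diz X$ to $\ovl{D}_{n-1}\diz X=D_nX$. The inductive version of the theorem gives, naturally in $Z\in\PsG{n-1}$, a zigzag of geometric weak equivalences in $\PsG{n-1}$ between $Z$ and $D_{n-1}Z$; applying it with $Z=(\diz X)_k$, naturally in the simplicial coordinate $k$, yields a zigzag of simplicial objects in $\PsG{n-1}$ whose $k=0$ term is constantly $X_0^d$. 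Each intermediate simplicial object $W$ has $W_0=X_0^d\in\PhG{n-1}$ and $W_k\in\PsG{n-1}$ for $k\geq 1$; its comparison map to (or from) $\diz X$ is a geometric weak equivalence in each simplicial dimension, so Lemma \ref{lsegal} shows $W$ satisfies the induced Segal condition, while $\bPz{n}W$ is the nerve of an object of $\PsG{n-1}$ isomorphic to $\bPz{n}\diz X\cong\bPz{n}X$, exactly as in Proposition \ref{ppsgpushout}, since $\pi_0$ is unchanged along such weak equivalences. Hence each $W$ lies in $\PsG{n}$ and every map in the dimensionwise zigzag is a geometric weak equivalence in $\PsG{n}$; splicing with the zigzag of Lemma \ref{ldiscpsg} gives the required zigzag. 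Finally, $D_n$ preserves geometric weak equivalences and fiber products over discrete objects because $\diz$ does — it discretizes only $X_0$, so it commutes with $\pi_0$ and with fiber products over discrete objects — and $D_{n-1}$ does by induction. The step I expect to be the main obstacle is precisely showing that the intermediate objects $W$ of the dimensionwise zigzag genuinely lie in $\PsG{n}$: Lemma \ref{lsegal} is the tool that converts ``dimensionwise geometric weak equivalence with discrete $0$-th object'' into the induced Segal condition with no extra hypotheses, and the remaining verification that $\bPz{n}W$ stays a nerve in $\PsG{n-1}$, together with the naturality needed to assemble everything into a single zigzag, is routine but must be carried out with care.
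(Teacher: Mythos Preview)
Your proof is correct and follows essentially the same inductive scheme as the paper: verify $D_nX\in\Tam{n}$ by checking discreteness at level $0$, the Segal condition via preservation of fiber products over discrete objects and of geometric weak equivalences by $D_{n-1}$, and the identification $\TTm{n}D_nX\cong\Tps{n}X$; then build the zigzag as Lemma \ref{ldiscpsg} from $X$ to $\diz X$ followed by the levelwise inductive zigzag from $\diz X$ to $D_nX$. You are in fact more explicit than the paper about the point you flag as the main obstacle---showing the intermediate simplicial objects $W$ lie in $\PsG{n}$---which the paper leaves implicit; your appeal to Lemma \ref{lsegal} is the right tool (its proof is symmetric in $X$ and $Y$, so it applies to both directions of the zigzag), and your justification that $\bPz{n}W\cong\bPz{n}X$ is correct for the specific pushout and $\diz^{(i)}$ constructions involved, though the phrase ``$\pi_0$ is unchanged along such weak equivalences'' should be read as a property of those particular maps rather than of arbitrary geometric weak equivalences.
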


\begin{proof}
By induction on \w[.]{n\geq 2} For \w[,]{n=2} \w{D\sb{2}X=\diz X} is
clearly in \w{\Tam{2}} for any \w[.]{X\in\PsG{2}}

In the induction step, note that \w{(D\sb{n}X)\sb{0}=X\sp{d}\sb{0}} is
discrete and \w{(D\sb{n}X)\sb{k}=D\sb{n-1}X\sb{k}} is in
\w[,]{\Tam{n-1}} by induction. So to prove that \w{D\sb{n}X} is in
\w[,]{\Tam{n}} it remains to show:
\begin{enumerate}
\renewcommand{\labelenumi}{(\alph{enumi})~}
\item The Segal maps
\begin{myeq}\label{eqsegaltams}
\seg{k}\col (D\sb{n}X)_{k}\to\pro{(D\sb{n}X)\sb{1}}{(D\sb{n}X)\sb{0}}{k}
\end{myeq}
\noindent are \wwb{n-1}equivalences.
\item \w{\bPz{n}D\sb{n}X} is in \w[\vsm.]{\Tam{n-1}}
\end{enumerate}

To show (a), note that since \w[,]{X\in\PsG{n}} the induced Segal maps
$$
X_{k}~\xrw{\iseg{k}}~\pro{X\sb{1}}{X\sp{d}\sb{0}}{k}
$$
\noindent are geometric weak equivalences for all \w[.]{k\geq 2}
Since by induction \w{D\sb{n-1}} preserves geometric weak equivalences,
we have weak equivalences:
$$
D\sb{n-1}X_{k}~\xrw{\simeq}~D\sb{n-1}(\pro{X\sb{1}}{X\sp{d}\sb{0}}{k})~.
$$
\noindent Moreover, \w{(D\sb{n}X)\sb{1}=D\sb{n-1}X\sb{1}} and
\w{(D\sb{n}X)\sb{0}=X\sp{d}\sb{0}} is discrete, so the right hand side is an
iterated fiber product over discrete objects, and thus (again by induction)
$$
D\sb{n-1}(\pro{X\sb{1}}{X\sb{0}}{k})~=~
\pro{(D\sb{n}X)\sb{1}}{(D\sb{n}X)\sb{0}}{k}~,
$$
\noindent which proves (a) for $n$\vsm.

To show (b), by \S \ref{sbptam} and Proposition \ref{ptequivdefns} it
suffices to show that \w{\TTm{n}D\sb{n}X} is a groupoid, which we do
by showing that it is isomorphic to \w[.]{\Tps{n}X} We have
$$
 (\TTm{n}D\sb{n} X)_{0}~=~\pi\sb{0}\TTm{n-1}( D\sb{n}X)\sb{0}~=~
      \pi\sb{0}\TTm{n-1}X\sp{d}\sb{0}~=~X\sp{d}\sb{0}~.
$$
\noindent and
\begin{equation*}
\begin{split}
(\TTm{n}D\sb{n} X)\sb{k}~=&~\pi\sb{0}\TTm{n-1}(D\sb{n}X)\sb{k}~=~
\pi\sb{0}\TTm{n-1}D\sb{n-1}X\sb{k}\\
~=&~\pi\sb{0}\Tps{n-1}X\sb{k}~=~(\Tps{n}X)\sb{k}~.
\end{split}
\end{equation*}
\noindent for \w[,]{k\geq 1} where we use the induction hypothesis for the
equality before last.

It follows that \w[,]{\TTm{n} D_{n} X=\Tps{n} X} and since the latter is a
groupoid, so is \w[.]{\TTm{n}D_{n} X} This concludes the proof that
\w{D_{n}X} is in \w[\vsm.]{\Tam{n}}

Finally, we obtain the required natural zig-zag of geometric
weak equivalences:
$$
D\sb{n}X \to \dotsc \leftarrow \diz X \to \dotsc \leftarrow X~,
$$
\noindent by induction on \w[,]{n\geq 1} where the righthand zig-zag is
provided by Lemma \ref{ldiscpsg}:

For \w[,]{n=1}  we have \w[,]{D\sb{1}X=X}  while for \w{n\geq 2} we use
Definition \ref{ddnpsg} to identify \w{(D\sb{n}X)\sb{k}} with
\w{(\ovl{D}\sb{n-1}X)\sb{k}} for \w[:]{k\geq 1}
$$
\xymatrix@R=25pt{
D\sb{n}X \ar[d] &&
\dotsc D\sb{n-1}X\sb{2}\ar[d]\ar@<1.5ex>[r]\ar[r]\ar@<-1.5ex>[r]
& D\sb{n-1}X\sb{1} \ar[d] \ar@<1ex>[r] \ar@<-1ex>[r]  &
X\sb{0}\sp{d} \ar[d]^{=} \\
\vdots && \vdots & \vdots & X\sb{0}\sp{d} \\
\diz X \ar[u] && \dotsc X\sb{2}\ar[u]\ar@<1.5ex>[r]\ar[r]\ar@<-1.5ex>[r]  &
X\sb{1}\ar[u] \ar@<1ex>[r] \ar@<-1ex>[r]  & X\sb{0}\sp{d} \ar[u]_{=}
}
$$
\noindent using the induction to obtain the righthand vertical zig-zag in
each simplicial dimension.
\end{proof}

\begin{remark}\label{rextendd}
The functor \w{D\sb{n}\col \PsG{n}\to\Tam{n}} extends the functor
\w{D\sb{n}\col \Gpt{n}\to\Tam{n}} of \S \ref{dtndn}.
\end{remark}

\begin{remark}\label{rextendb}
It follows from Theorem \ref{tpsg} that if \w[,]{X\in\PsG{n}} \w{\diN X} is
an $n$-type.
\end{remark}

\begin{defn}\label{dsimbps}
Let \w{\ho\PsG{n}} denote the localization of the category \w{\PsG{n}}
with respect to the geometric weak equivalences.
\end{defn}

\begin{thm}\label{tpswgntype}
The functors \w{\hQlo{n}\col \Top\to\Gpt{n}} and \w[,]{\diN\col \PsG{n}\to\Top}
together with the inclusion \w[,]{J\col \Gpt{n}\hra\PsG{n}}
induce equivalences of categories
\begin{myeq}\label{eqpswgequiv}
\ho\PT{n}~\adj{\diN}{J\circ\hQlo{n}}~\ho\PsG{n}~.
\end{myeq}
\noindent Moreover, for every \w[,]{T\in\Top} there is a zigzag of
weak equivalences in \w{\PT{n}} between \w{\Po{n}T} and \w[,]{\diN\hQlo{n}T}
and for \w{X\in\PsG{n}} there is a zig-zag of geometric weak equivalences
in \w{\PsG{n}} between $X$ and \w[.]{\hQlo{n}\diN X}
\end{thm}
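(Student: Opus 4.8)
The plan is to derive Theorem \ref{tpswgntype} from three inputs already established: Theorem \ref{teqcat} (the same statement for \w[),]{\Gpt{n}} Theorem \ref{tpsg} (the comparison of \w{\PsG{n}} with \w{\Tam{n}} via the discretization functor \w[),]{D\sb{n}} and Tamsamani's equivalence \wref{eqtamsequiv} of Theorem \ref{ttamseq}, glued together with some formal facts about localizations. First I would record that both functors in \wref{eqpswgequiv} descend to the homotopy categories. By the very definition of a geometric weak equivalence (\S \ref{dwecs}), \w{\diN\col \PsG{n}\to\Top} sends geometric weak equivalences to weak equivalences, and by Remark \ref{rextendb} it takes values in \w[,]{\PT{n}} hence it induces \w[.]{\diN\col \ho\PsG{n}\to\ho\PT{n}} On the other hand, the restriction of \w{\hQlo{n}} to \w{\PT{n}} sends weak equivalences of $n$-types to geometric weak equivalences in \w{\Gpt{n}} (by Proposition \ref{pnequiv}(b) and Corollary \ref{chtpygp}), so post-composing with \w{J\col \Gpt{n}\hra\PsG{n}} we obtain \w[.]{J\circ\hQlo{n}\col \ho\PT{n}\to\ho\PsG{n}} Since \w{\diN\circ J} is just \w{\diN} restricted to \w[,]{\Gpt{n}} Theorem \ref{teqcat} gives at once \w[,]{\diN\circ J\circ\hQlo{n}\cong\Id\sb{\ho\PT{n}}} and the natural span \wref{eqzigzagh} together with the Postnikov map \w{|\cS T|\to\Po{n}T} produces the asserted zig-zag of weak equivalences between \w{\Po{n}T} and \w{\diN\hQlo{n}T} in \w[.]{\PT{n}}

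The heart of the argument is to show that \w{\diN\col \ho\PsG{n}\to\ho\PT{n}} is an \emph{equivalence} of categories. Here I would use the discretization \w{D\sb{n}\col \PsG{n}\to\Tam{n}} of Definition \ref{ddnpsg}: by Theorem \ref{tpsg} it preserves geometric weak equivalences and comes with a \emph{natural} zig-zag of geometric weak equivalences between \w{X} and \w{D\sb{n}X} in \w[.]{\PsG{n}} Writing \w{\iota\col \Tam{n}\hra\PsG{n}} for the inclusion, this yields \w{\iota\circ D\sb{n}\cong\Id} on \w[,]{\ho\PsG{n}} and since \w{\TamR} is the restriction of \w{\diN} to \w[,]{\Tam{n}} also \w{\diN\cong\TamR\circ D\sb{n}} as functors \w[.]{\ho\PsG{n}\to\ho\PT{n}} It therefore suffices to prove that \w{D\sb{n}\col \ho\PsG{n}\to\ho\Tam{n}} is an equivalence with quasi-inverse \w[.]{\iota} Applying Theorem \ref{tpsg} to objects in the image of \w{\iota} and using \w{\TamR=\diN\circ\iota} once more gives a natural isomorphism \w{\TamR\circ(D\sb{n}\circ\iota)\cong\TamR} on \w[;]{\ho\Tam{n}} since \w{\TamR\col \ho\Tam{n}\to\ho\PT{n}} is an equivalence (Theorem \ref{ttamseq}), hence fully faithful, this natural isomorphism of realizations lifts uniquely to a natural isomorphism \w[.]{D\sb{n}\circ\iota\cong\Id\sb{\ho\Tam{n}}} Thus \w{\iota} and \w{D\sb{n}} are mutually quasi-inverse equivalences, and \w{\diN\cong\TamR\circ D\sb{n}} is a composite of equivalences.

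Granting this, \w{\diN} is an equivalence with \w[,]{\diN\circ(J\circ\hQlo{n})\cong\Id\sb{\ho\PT{n}}} so \w{J\circ\hQlo{n}} is forced to be a quasi-inverse of \w[;]{\diN} in particular \w[,]{(J\circ\hQlo{n})\circ\diN\cong\Id\sb{\ho\PsG{n}}} and \wref{eqpswgequiv} is an equivalence of categories. For the remaining assertion, given \w{X\in\PsG{n}} and \w[,]{T:=\diN X\in\PT{n}} I would spell out an explicit zig-zag in \w{\PsG{n}} between \w{X} and \w{\hQlo{n}\diN X} by concatenating four pieces: the natural zig-zag connecting \w{X} with \w{D\sb{n}X} (Theorem \ref{tpsg}); the natural weak equivalence \w{D\sb{n}X\to\FTm{n}\TamR D\sb{n}X} (Theorem \ref{ttamseq}); the image under \w{\FTm{n}} of the zig-zag of spaces \w{\TamR D\sb{n}X\simeq T} obtained by applying \w{\diN} to the first zig-zag; and a zig-zag connecting \w{\FTm{n}T} with \w[,]{\hQlo{n}T} obtained by applying Theorem \ref{tpsg} and \wref{eqzigzagh} to the $n$-type \w[.]{T} Every arrow here is a geometric weak equivalence in \w[,]{\PsG{n}} since an \w{n}-equivalence in \w{\Tam{n}} is a geometric weak equivalence (Lemma \ref{lnequiv}) and \w{\Tam{n}\subset\PsG{n}} carries the same realization functor.

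I expect the main obstacle to be the middle step — verifying that \w{D\sb{n}} and \w{\iota\col \Tam{n}\hra\PsG{n}} are mutually inverse on homotopy categories. The direction \w{\iota\circ D\sb{n}\cong\Id} is free from Theorem \ref{tpsg}, but \w{D\sb{n}\circ\iota\cong\Id} does \emph{not} follow formally (localization need not preserve full faithfulness of a full-subcategory inclusion) and genuinely requires transferring the isomorphism of realizations along the fully faithful functor \w[.]{\TamR\col \ho\Tam{n}\to\ho\PT{n}} An alternative route, avoiding this transfer, would be to prove directly — by the same inductive construction used for Theorem \ref{tpsg} — that \w{D\sb{n}} restricted to \w{\Tam{n}} is naturally weakly equivalent to the identity functor.
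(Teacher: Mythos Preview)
Your proposal is correct and uses the same three inputs as the paper (Theorems \ref{teqcat}, \ref{tpsg}, \ref{ttamseq}); the logic is sound, including your careful handling of the non-automatic direction \w{D\sb{n}\circ\iota\cong\Id} via full faithfulness of \w{\TamR} on \w[.]{\ho\Tam{n}}

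The packaging differs slightly from the paper's proof.  You first isolate the intermediate equivalence \w{\ho\PsG{n}\simeq\ho\Tam{n}} (a fact the paper relegates to Remark \ref{rzigzag}) and then compose with Tamsamani's equivalence; the paper instead goes straight for the zig-zag between $X$ and \w[.]{\hQlo{n}\diN X}  Its zig-zag is shorter than yours: rather than routing through the Poincar\'{e} functor \w[,]{\FTm{n}} the paper observes that \w{\diN D\sb{n}X\simeq\diN X\simeq\diN\hQlo{n}\diN X\simeq\diN D\sb{n}\hQlo{n}\diN X} in \w[,]{\Top} and then invokes Theorem \ref{ttamseq} once to conclude that \w{D\sb{n}X} and \w{D\sb{n}\hQlo{n}\diN X} are isomorphic in \w{\ho\Tam{n}} (hence joined by a zig-zag of geometric weak equivalences there), and splices this between the two zig-zags from Theorem \ref{tpsg}.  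Your route through \w{\FTm{n}} works but is longer; the paper's use of Tamsamani is the same transfer-along-a-fully-faithful-functor you already invoked, just applied directly to the two objects in question rather than to an endofunctor.
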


\begin{proof}
All three functors preserve weak equivalences, so we have induced functors
as in \wref[.]{eqpswgequiv} For any $n$-type $T$, we have an
isomorphism in \w{\ho\PT{n}} between \w{T} and \w{\diN\hQlo{n}T}
 by Theorem \ref{teqcat}, which also
implies (see Remark \ref{rextendb}) that for any \w{X\in\PsG{n}} we have a
homotopy equivalence (of CW complexes) in \w[:]{\Top}
\begin{myeq}\label{eqbqb}
\diN X~\xrw{\simeq}~\diN\hQlo{n}\diN X~.
\end{myeq}

By Theorem \ref{tpsg} we also have zig-zags of geometric weak equivalences in
 \w[:]{\PsG{n}}
\begin{myeq}\label{eqbqbqn}
D\sb{n}X~\to\dotsc \leftarrow ~X\hspace*{7mm}\text{and}\hspace*{5mm}
D\sb{n}\hQlo{n}\diN X~~\to\dotsc \leftarrow  ~\hQlo{n}\diN X
\end{myeq}
\noindent Therefore, after applying $\diN$ to \wref{eqbqbqn} we have
homotopy equivalences of CW complexes:
$$
\diN D\sb{n}X~\xrw{\simeq}~\diN X\hspace*{7mm}\text{and}\hspace*{5mm}
\diN \hQlo{n}\diN X~\xrw{\simeq}~\diN D\sb{n}\hQlo{n}\diN X~.
$$
Combining these with \wref{eqbqb} yields a weak equivalence
$$
\diN D\sb{n}X~\to~\diN D\sb{n}\hQlo{n}\diN X
$$
\noindent in \w[,]{\Top} which by
Theorem \ref{ttamseq} implies that \w{D\sb{n}X} and
\w{D\sb{n}\hQlo{n}\diN X} are isomorphic in \w[,]{\ho\Tam{n}}
and thus in \w[.]{\ho\PsG{n}} By \wref{eqbqbqn} we see that $X$ and
\w{J\hQlo{n}\diN X} are weakly equivalent through a zig-zag in \w[.]{\PsG{n}}
\end{proof}

\begin{remark}\label{rzigzag}
Note that Theorem \ref{tpsg} implies that the functor \w{D\sb{n}} induces
an equivalence of categories
$$
\w{\ho\PsG{n} \simeq \ho\Tam{n}}\;.
$$
Together with Theorem \ref{teqcat} and Theorem \ref{ttamseq} this implies
the equivalence of categories \eqref{eqpswgequiv}.
In the course of the proof of Theorem \ref{tpswgntype}
we have further shown that any weakly globular pseudo $n$-fold
groupoid \w{X\in\PsG{n}} has two different
functorial partial strictifications:  the Tamsamani weak $n$-groupoid
\w[,]{D\sb{n}X} and the weakly globular $n$-fold groupoid
\w[,]{\hQlo{n}\diN X\in\Gpt{n}} each equipped with zig-zags of weak
equivalences in \w{\PsG{n}} from $X$:
\begin{myeq}\label{eqtwozig}
D\sb{n}X~\to\dotsc \leftarrow ~X~\to\dotsc \leftarrow  ~\hQlo{n}\diN X~.
\end{myeq}
\end{remark}

\begin{defn}\label{diaopsg}
As in \S \ref{diao}, for any weakly globular pseudo $n$-fold groupoid $X$
and \w[,]{1\leq k\leq n} we define its \emph{$k$-fold object of arrows} to
be the pseudo \wwb{n-k}fold groupoid
\w[.]{\Wlo{n,k}X~:=~X\up{1\dotsc k}_{1\underset{k}{\cdots}1}}
\end{defn}

\begin{mysubsection}{Algebraic homotopy groups and algebraic weak
equivalences}
\label{sahgpps}
In analogy to \S \ref{sahgp}, for any weakly globular pseudo $n$-fold
groupoid \w[,]{X\in\PsG{n}} we define the \emph{$k$-th algebraic homotopy
group} of $X$ at \w{x\sb{0}\in X\sb{0\underset{n}{\cdots}0}} to be:
\begin{myeq}[\label{eqpinagps}]
\omega_{k}(X;x_{0})~\cong~
\begin{cases}
\Wlo{n,n}X(x_{0},x_{0}) & \hs\text{if}~k=n\\
\Wlo{n-k,n-k}(\bPz{k+1}\dotsc\bPz{n}X)(x_{0},x_{0}) &
\hs\text{if}~0<k<n\\
\end{cases}
\end{myeq}
with the \emph{$0$-th algebraic homotopy set} of $X$ defined:
$$
\omega_{0}(X)~:=~\bPz{1}\dotsc\bPz{n}X~.
$$

A map \w{f\col X\to Y} of weakly globular pseudo $n$-fold groupoids is called an
\emph{algebraic weak equivalence} if it induces bijections on the $k$-th
algebraic homotopy groups (set) for all \w{x_{0}\in X_{0\underset{n}{\cdots}0}} and
\w[.]{0\leq k\leq n}
\end{mysubsection}

\begin{remark}\label{rhtpyps}
As for weakly globular $n$-fold groupoids (see Remark \ref{rhtpy}), our
definition of algebraic homotopy groups for \w{\PsG{n}}
generalizes that of \cite[\S 5]{TamsN}, and since \w{D\sb{n}X} and $X$
by Remark \ref{rzigzag} have the same algebraic homotopy groups,
by construction, both provide an algebraic way of calculating the
homotopy groups of \w[,]{\diN X} as in Theorem \ref{thdnt}.

Using this fact, one can show that a map \w{f\col X\to Y} in \w{\PsG{n}} is an
$n$-equivalence (Definition \ref{dneqpsg}) if and only if it is a geometric
weak equivalence.
\end{remark}

%
%
\sect{Applications and further directions}
\label{cappl}

In this section we provide an application for our model of $n$-types,
and indicate some directions for future work.


\supsect{\protect{\ref{cappl}}.A}
{Modelling $(\mathbf{k-1})$-connected $\mathbf{n}$-types}

We now provide an algebraic model of \wwb{k-1}connected
$n$-types, and relate it to the homotopy types of iterated loop
spaces. This was mentioned in \cite{BDolaH} as a desirable feature for
models of $n$-types (see also \cite{BergD}).

Recall that a space $X$ is \emph{\wwb{k-1}connected} if \w{\pi_{0}X=0}
and  \w{\pi_{i}(X,x)=0} for \w[,]{1\leq i\leq k-1} and all \w[.]{x\in X}
We denote the category of \wwb{k-1}connected pointed $n$-types by
\w[.]{\Pud{n}{k}}

\begin{lemma}\label{lkconn}
If $X$ is a $k$-connected pointed Kan complex, $X$ is naturally weakly
equivalent to a \wwb{k-1}\emph{reduced} Kan complex $\hat{X}$ \wh that is,
\w{\hat{X}_{i}=\{\ast\}} for \w[.]{1\leq i\leq k-1}
\end{lemma}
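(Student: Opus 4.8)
The plan is to take $\hat X$ to be the $k$-th \emph{Eilenberg subcomplex} of the pointed Kan complex $(X,\ast)$. Recall that $E(X,k)\subseteq X$ is the simplicial subset whose $q$-simplices are those $x\in X_{q}$ all of whose faces of dimension $\leq k-1$ coincide with the appropriate iterated degeneracy of the basepoint; equivalently, $x\col \Delta^{q}\to X$ lies in $E(X,k)$ iff it carries the $(k-1)$-skeleton of $\Delta^{q}$ to $\ast$. Directly from the definition, $E(X,k)_{q}=\{\ast\}$ for all $q\leq k-1$, so $E(X,k)$ is $(k-1)$-reduced; and since a basepoint-preserving simplicial map sends basepoint-faces to basepoint-faces, $X\mapsto E(X,k)$ is a functor on pointed simplicial sets, with the inclusion $\iota_{X}\col E(X,k)\hookrightarrow X$ a natural transformation. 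We set $\hat X:=E(X,k)$.

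First I would check that $E(X,k)$ is again a Kan complex. For $q\leq k$ this is immediate: $E(X,k)$ is constant at $\ast$ in dimensions $<k$, so any horn $\Lambda^{q}_{j}\to E(X,k)$ with $q\leq k$ is the constant map, and the totally degenerate simplex on $\ast$ fills it. For $q>k$, fibrancy of $X$ provides a filler $\sigma\col \Delta^{q}\to X$ of the given horn; every $(k-1)$-dimensional face of $\Delta^{q}$ omits at least two vertices, hence lies in $\Lambda^{q}_{j}$, so all $(k-1)$-faces of $\sigma$ are already at $\ast$ and $\sigma$ lands in $E(X,k)$. This is the usual argument for Eilenberg subcomplexes, and in the write-up I would simply cite it rather than repeat the bookkeeping (cf.\ the treatment of simplicial homotopy groups and minimal complexes in \cite{GJarS}).

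Next I would record the behaviour of $\iota_{X}$ on homotopy. Since $\pi_{i}$ of a Kan complex is computed by $i$-simplices with constant boundary modulo homotopy rel boundary, and for $i\geq k$ any such $i$-simplex of $X$ automatically has all its $(k-1)$-faces at $\ast$ (each being a face of a codimension-one face, which is $\ast$ by hypothesis), the inclusion induces isomorphisms $\pi_{i}(E(X,k),\ast)\cong\pi_{i}(X,\ast)$ for every $i\geq k$; and $\pi_{i}(E(X,k),\ast)=0$ for $i<k$ because $E(X,k)$ is $(k-1)$-reduced. By hypothesis $X$ is $k$-connected, so $\pi_{i}(X,\ast)=0$ for $i\leq k-1$ as well; combining, $\iota_{X}$ is an isomorphism on all homotopy groups, and, both sides being Kan complexes, it is a weak equivalence. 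Thus $\hat X=E(X,k)$ is a $(k-1)$-reduced Kan complex equipped with the natural weak equivalence $\iota_{X}\col \hat X\to X$, which is exactly the assertion of the lemma.

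There is no real obstacle here: the argument rests on the two classical facts about Eilenberg subcomplexes used above—that the Eilenberg subcomplex of a Kan complex is Kan, and that its inclusion is a $\pi_{i}$-isomorphism for $i\geq k$—so the only care needed is in quoting these correctly and in tracking degree conventions (here ``$k$-connected'' means $\pi_{i}=0$ for $i\leq k$, precisely what makes $\iota_{X}$ an equivalence for the subcomplex $E(X,k)$). If one wished to avoid even this, one could instead pass to a minimal subcomplex of $X$, which is automatically $k$-reduced when $X$ is $k$-connected; but minimal subcomplexes are not functorial, whereas the statement asks for a \emph{natural} equivalence, so the Eilenberg-subcomplex construction is the one to use.
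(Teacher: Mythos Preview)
Your proof is correct. The paper's own proof is simply the one-line citation ``See \cite[III, \S 3]{GJarS}'', and your construction via the $k$-th Eilenberg subcomplex is precisely the standard argument that reference points to, so the two approaches coincide; you have merely unpacked the citation rather than taken a different route.
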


\begin{proof}
See \cite[III, \S 3]{GJarS}.
\end{proof}

\begin{defn}\label{dredsing}
For any $k$-connected pointed topological space \w[,]{T\in\Topa} let
\w{\cS\red T} denote the canonical $k$-reduced version \w{\widehat{\cS T}}
of the singular set \w[.]{\cS T}
\end{defn}

\begin{defn}\label{dcont}
A homotopically discrete pseudo $n$-fold groupoid \w{X\in\PhG{n}} is
\emph{contractible} if \w{\pi_{0}\diN X} is trivial (so that
\w{\diN X} is contractible).

More generally, a weakly globular pseudo $n$-fold groupoid \w{X\in\PsG{n}}
is called \wwb{n,k}\emph{weakly globular} if for each \w[,]{0\leq r< k} the
homotopically discrete pseudo \wwb{n-r-1}fold groupoid
$$
X\up{1\dotsc r+1}_{{1\underset{r}{\cdots}10}}~=~(\Wlo{n,r}X)\up{r+1}_{0}
$$
\noindent is contractible. This is the pseudo \wwb{n-r-1}fold groupoid of
objects of the pseudo \wwb{n-r}fold groupoid \w{\Wlo{n,r}X\in\PsG{n-r}}
(see \S \ref{diaopsg}).

In particular, when \w[,]{r=0} this just means that the pseudo \wwb{n-1}fold
 groupoid of objects \w{X\up{n}_{0}} of $X$ in the $n$-th direction
 (which is a homotopically discrete pseudo \wwb{n-1}fold groupoid) is in fact
contractible.

We let \w{\PsGk{n,k}} denote the full subcategory of
\wwb{n,k}weakly globular pseudo $n$-fold groupoids in \w[.]{\PsG{n}}
Similarly, \w{\Gptk{n,k}} is the full subcategory of
\wwb{n,k}weakly globular pseudo $n$-fold groupoids in \w[.]{\Gpt{n}}
\end{defn}

We now want to show that \w{\PsGk{n,k}} is an algebraic model of
\wwb{k-1}connected $n$-types. For this, we need the following:

\begin{lemma}\label{lcontract}
If $X$ is a \wwb{k-1}reduced Kan complex, then \w{\Qlo{n}X} is
\wwb{n,k}weakly globular.
\end{lemma}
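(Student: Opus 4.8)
The plan is to argue by induction on $n$, using the iterative description of $\Qlo{n}X$ developed in Lemma \ref{lnerve} and Proposition \ref{pnequiv}. Recall that for a Kan complex $X$ the $n$-fold groupoid $\Qlo{n}X$ is weakly globular (Proposition \ref{pnequiv}(d)), and that in the $n$-th direction its nerve is given by $(\Nup{n}\Qlo{n}X)_{k}=\Qlo{n-1}\Llo{k}X$, with $(\Qlo{n}X)_{0}=\Qlo{n-1}\Dex$. The $k$-fold object of arrows $\Wlo{n,k}\Qlo{n}X$ is obtained by iterating the ``arrows in the first direction'' functor $\Wlo{n,1}$; and by the indexing conventions of \S \ref{nsimp}, $(\Wlo{n,r}\Qlo{n}X)\up{r+1}_{0}$ is exactly the $(n-r-1)$-fold groupoid of objects in the $(r+1)$-st direction. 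So what must be shown is that, for each $0\le r<k$, this homotopically discrete pseudo $(n-r-1)$-fold groupoid has trivial $\pi_{0}\diN$.

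First I would dispose of the case $r=0$. Since $X$ is $(k-1)$-reduced with $k\ge 1$, in particular $X_{0}=\{\ast\}$ is a point. By Lemma \ref{lnerve}(b), $(\Qlo{n}X)\up{n}_{0}=\Qlo{n-1}\Llo{0}X=\Qlo{n-1}\Dex$, and $\pi_{0}\diN\Qlo{n-1}\Dex\cong\pi_{0}\Dex=\pi_{0}X$ (using Corollary \ref{chdng} and Proposition \ref{pnequiv}(a), since $\Dex$ is homotopically trivial hence $\Qlo{n-1}\Dex$ is homotopically discrete). As $X$ is connected, this is trivial, so the $r=0$ condition holds.

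For the inductive step, the key observation is a compatibility between $\Wlo{n,r}$, the functor $\Qlo{\bullet}$, and décalage/path constructions. Iterating Lemma \ref{lnerve}(b), one sees that $\Wlo{n,1}\Qlo{n}X=(\Qlo{n}X)\up{1}_{1}$ is built from $\Qlo{n-1}\Llo{1}X$, and since $X$ is $(k-1)$-reduced the simplicial set $\Llo{1}X=\tens{\Dex}{X}$ is $(k-2)$-reduced (its low-dimensional simplices are forced to be degenerate because those of $X$ are, and $\Dec$ shifts degree down by one). More precisely I would show, by a direct simplicial computation using \eqref{eqllo} and the fact that $X_{i}=\{\ast\}$ for $i\le k-1$, that $\Wlo{n,r}\Qlo{n}X\cong\Qlo{n-r}(\Llo{1})^{(r)}X$ for an appropriate iterate, where $(\Llo{1})^{(r)}X$ is $(k-r-1)$-reduced. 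Then $(\Wlo{n,r}\Qlo{n}X)\up{r+1}_{0}=\Qlo{n-r-1}\Dec((\Llo{1})^{(r)}X)$ by Lemma \ref{lnerve}(b) in dimension $0$, and its $\pi_{0}\diN$ is $\pi_{0}((\Llo{1})^{(r)}X)$, which vanishes for $r<k$ since that simplicial set is at least $0$-connected. One then invokes Corollary \ref{chdng} to conclude $\diN$ of this object is contractible, i.e.\ the object is contractible in the sense of Definition \ref{dcont}.

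The main obstacle will be bookkeeping: tracking precisely how the reduction level of a simplicial set propagates through the path/décalage construction $\Llo{k}$ and through the identification $\Wlo{n,r}\Qlo{n}X\cong\Qlo{n-r}(\text{something reduced})$, making sure the indexing in \S \ref{nsimp}(b) lines up with the inductive formulas of Lemma \ref{lnerve}. In particular one must check that $\Llo{1}$ of a $(j-1)$-reduced Kan complex is $(j-2)$-reduced (using that $\Dec$ drops the reduction level by exactly one and fibre products over $X$ preserve reducedness since $X$ is reduced in the relevant range), and that this interacts correctly with the fact that $\Qlo{m}$ of a homotopically trivial simplicial set is homotopically discrete (Proposition \ref{pnequiv}(c)). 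Once the reduction-level calculus is set up cleanly, the vanishing of the relevant $\pi_{0}$'s is immediate from connectivity, and Corollary \ref{chdng} finishes each case.
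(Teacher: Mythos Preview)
Your proposal is correct and follows essentially the same route as the paper: establish by induction the identification \w{\Wlo{n,r}\Qlo{n}X\cong\Qlo{n-r}\Llo{1}^{r}X} via Lemma~\ref{lnerve}(b) and \wref[,]{eqiao} track that \w{\Llo{1}} lowers the reduction level by one so that \w{\Llo{1}^{r}X} is \wwb{k-r-1}reduced, and then use \w{(\Wlo{n,r}\Qlo{n}X)_{0}=\Qlo{n-r-1}\Dec\Llo{1}^{r}X} together with \w{\Dec Y\simeq c(Y_{0})} to conclude contractibility. One small slip: in general \w{\pi_{0}\Dex=X_{0}} rather than \w[,]{\pi_{0}X} but since $X$ is $0$-reduced both are a point, so your argument goes through unchanged.
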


\begin{proof}
By Lemma \ref{lnerve}(b), \w[.]{(\Qlo{n}X)\up{n}_{0}=\Qlo{n-1}\Dex} Since
\w{\Dex\simeq c(X_{0})=c(\ast}) and \w{\Qlo{n-1}} preserves weak equivalences
of Kan complexes by Proposition \ref{pnequiv}(b), we have
\w[.]{\Qlo{n-1}\Dex\simeq\Qlo{n-1}(\ast)=\ast}
Therefore, \w{\dN(\Qlo{n} X)\up{n}_{0}} is contractible.

We now show by induction on \w{1\leq r<k} that
\begin{myeq}\label{eqqn}
\Wlo{n,r}\Qlo{n-1}X~:=~
(\Nup{n-r+1,\dotsc,n}\Qlo{n}X)\up{n-r+1,\dotsc,n}_{{1\underset{r}{\cdots}1}}~=
~\Qlo{n-r}\Llo{1}^{r}X~
\end{myeq}
\noindent (in the notation of \wref[,]{eqllo} where
\w{\Llo{1}^{r}X:=\Llo{1}^{r-1}(\Llo{1}X)} for \w[,]{r\geq 2} and
\w[).]{\Llo{1}^1 X=\Llo{1}X)} The case \w{r=1} is
Lemma \ref{lnerve}(b) for \w[,]{k=1} which implies that
we have an isomorphism of \wwb{n-1}fold groupoids:
\begin{myeq}\label{eqllonr}
\Wlo{n,1}\Qlo{n-1}X~:=~(\Nup{n}\Qlo{n}X)\up{n}_{1}~\cong~\Qlo{n-1}\Llo{1}X~.
\end{myeq}

In the induction step,  since \w{\Llo{1}X} is still a Kan complex, by
\wref{eqiao} we can apply the induction hypothesis to the right hand side
of \wref{eqllonr} (using the fact that \w[,]{\Wlo{n,r}=\Wlo{n-1,r-1}\Wlo{n,1}}
by \S \ref{eqiao}), to deduce that:
$$
\Wlo{n,r}\Qlo{n-1}\Llo{1}X~\cong~\Qlo{n-r}\Llo{1}^{r-1}(\Llo{1}X)~,
$$
\noindent which yields \wref[.]{eqqn} From this and Lemma \ref{lnerve}(a)
(for \w[)]{k=0} we have
$$
(\Wlo{n,r}\Qlo{n}X)_{0}~=~
\Qlo{n-r-1}\Dec\Llo{1}^{r}X~,
$$
\noindent and since \w[,]{\Dec\Llo{1}^{r}X\simeq c(\Llo{1}^{r}X)_{0}}
we have \w[.]{\dN(\Wlo{n,r}\Qlo{n}X)_{0}\simeq c(\Llo{1}^{r}X)_{0}}

Note that since $X$ is \wwb{k-1}reduced, \w[,]{\Dex} and thus
\w[,]{\Llo{1}X} are \wwb{k-2}reduced, so by induction \w{\Llo{1}^{r}X} is
\wwb{k-r-1}reduced. Thus as long as \w[,]{r<k} \w{\Llo{1}^{r}X} is
$0$-reduced, so \w{\dN(\Wlo{n,r}\Qlo{n}X)_{0}} is contractible.
\end{proof}

\begin{prop}\label{pkconn}
The functors \w{\Qlo{n}} and $\diN$ induce equivalences of categories:
\begin{equation*}
\ho\Pud{n}{k}~\adj{\diN}{J\circ\Qlo{n}\circ\cS\red}~\ho\PsGk{n,k}~.
\end{equation*}
\end{prop}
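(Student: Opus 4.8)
The plan is to obtain Proposition \ref{pkconn} by restricting the equivalence $\ho\PT{n}\approx\ho\PsG{n}$ of Theorem \ref{tpswgntype} to the full subcategories $\Pud{n}{k}\subseteq\PT{n}$ and $\PsGk{n,k}\subseteq\PsG{n}$. Both $\ho\Pud{n}{k}$ and $\ho\PsGk{n,k}$ are localizations of full subcategories closed under weak equivalences, hence embed fully faithfully into $\ho\PT{n}$ and $\ho\PsG{n}$ respectively; so it is enough to check that the two functors carry one subcategory into the other and that their essential images correspond.

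First I would check that $J\circ\Qlo{n}\circ\cS\red$ sends $\Pud{n}{k}$ into $\PsGk{n,k}$. For $T\in\Pud{n}{k}$ the Kan complex $\cS\red T$ is \wwb{k-1}reduced by Definition \ref{dredsing}, so $\Qlo{n}\cS\red T$ is \wwb{n,k}weakly globular by Lemma \ref{lcontract}; it lies in $\Gptk{n,k}\subseteq\PsGk{n,k}$ since $\Qlo{n}$ of a Kan complex lies in $\Gpt{n}$ by Proposition \ref{pnequiv}(d), and it is canonically pointed because its $(0,\dots,0)$-entry is a point. Moreover, by Lemma \ref{lkconn} the natural map $\cS\red T\to\cS T$ is a weak equivalence of Kan complexes, so by Proposition \ref{pnequiv}(b) we get a geometric weak equivalence $\Qlo{n}\cS\red T\to\Qlo{n}\cS T=\hQlo{n}T$. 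Hence on homotopy categories $J\circ\Qlo{n}\circ\cS\red$ is naturally isomorphic to the restriction of the functor $J\circ\hQlo{n}$ of Theorem \ref{tpswgntype}.

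Next I would show that $\diN$ sends $\PsGk{n,k}$ into $\Pud{n}{k}$. By Remark \ref{rextendb}, $\diN X$ is an $n$-type for every $X\in\PsG{n}$, so the point is that \wwb{n,k}weak globularity forces \wwb{k-1}connectedness, which I would prove by induction on $k$. The case $r=0$ of Definition \ref{dcont} says that the homotopically discrete pseudo \wwb{n-1}fold groupoid of objects of $X$ is contractible, so $\pi_{0}\diN X$ is trivial; this settles $k=1$. For $k\geq 2$, the composition identity \eqref{eqiao} gives $\Wlo{n-1,r}\Wlo{n,1}=\Wlo{n,r+1}$, and tracing the indexing conventions of \S \ref{nsimp}(b) one sees that the defining conditions of $\PsGk{n,k}$ with $r\geq 1$ are precisely the defining conditions of $\PsGk{n-1,k-1}$ for $\Wlo{n,1}X\in\PsG{n-1}$; thus $\Wlo{n,1}X\in\PsGk{n-1,k-1}$, and by the inductive hypothesis $\diN\Wlo{n,1}X$ is \wwb{k-2}connected. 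Since the object of objects of $X$ is contractible, $\Wlo{n,1}X$ models the loop space of $\diN X$: this is transparent for $X$ in the image of $\Qlo{n}$ by \eqref{eqllonr} and Remark \ref{rkan}, and in general it follows either by replacing $X$ with $\hQlo{n}\diN X$ via Theorem \ref{tpswgntype} or by running the iterated Bousfield--Friedlander spectral sequence of Theorem \ref{thdnt} with the contractible homotopically discrete layer. Hence $\Omega\diN X$ is \wwb{k-2}connected, so $\diN X$ is \wwb{k-1}connected.

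Finally, combining the two preceding steps: the restricted functor $\diN\col\ho\PsGk{n,k}\to\ho\Pud{n}{k}$ is fully faithful, being the restriction of the fully faithful functor $\diN$ of Theorem \ref{tpswgntype}, and it is essentially surjective, since for $T\in\Pud{n}{k}$ we have $T\cong\diN\hQlo{n}T\cong\diN J\Qlo{n}\cS\red T$ with $J\Qlo{n}\cS\red T\in\PsGk{n,k}$. Therefore it is an equivalence, with quasi-inverse $J\circ\Qlo{n}\circ\cS\red$, and the adjunction of Theorem \ref{tpswgntype} restricts to the displayed one. I expect the main obstacle to be the loop-space identification $\diN\Wlo{n,1}X\simeq\Omega\diN X$ for general reduced $X\in\PsG{n}$: although immediate in the image of $\Qlo{n}$, extending it requires knowing that $\Wlo{n,1}$ preserves geometric weak equivalences and a careful bookkeeping of the homotopically discrete layers inside the iterated spectral-sequence argument underlying Theorem \ref{thdnt}.
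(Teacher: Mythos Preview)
Your first paragraph matches the paper's proof exactly: check via Lemma~\ref{lcontract} that \w{\Qlo{n}\cS\red} lands in \w[,]{\PsGk{n,k}} then invoke Theorem~\ref{tpswgntype}. The paper stops there, writing only ``the result follows immediately from Theorem~\ref{tpswgntype}''; everything else you do is filling in what ``immediately'' means. In particular, the paper never verifies that \w{\diN} carries \w{\PsGk{n,k}} into \w[,]{\Pud{n}{k}} whereas you do.

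Your inductive argument for the \w{\diN} direction, via the identification \w[,]{\diN\Wlo{n,1}X\simeq\Omega\diN X} is essentially the \w{k=1} case of the \emph{next} result in the paper, Proposition~\ref{pkloop}. The obstacle you flag is not serious: Segal's delooping criterion (used in the proof of Proposition~\ref{pkloop} for \w[)]{\Gpt{n}} applies verbatim to any \w{X\in\PsG{n}} with \w{X\sb{0}} contractible, because then \w{X\sb{0}\sp{d}=\ast} so the induced Segal maps become weak equivalences \w[,]{X\sb{k}\to X\sb{1}\sp{k}} and \w{\Tps{n}X} is a one-object groupoid, hence a group. You do not need to pass through \w{\hQlo{n}\diN X} or invoke that \w{\Wlo{n,1}} preserves weak equivalences.

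One genuine gap in your framing: \w{\PsGk{n,k}} is \emph{not} closed under geometric weak equivalences in \w{\PsG{n}} (already for \w[,]{n=k=1} a one-object groupoid is equivalent to any connected groupoid with the same \w[),]{\pi\sb{1}} so the sentence ``hence embed fully faithfully into \w{\ho\PT{n}} and \w{\ho\PsG{n}}'' and the deduction ``fully faithful, being the restriction of the fully faithful functor \w{\diN}'' are not justified as written. The zigzag of Theorem~\ref{tpswgntype} between \w{X} and \w{\hQlo{n}\diN X} need not stay inside \w[.]{\PsGk{n,k}} The paper's terse proof glosses over this as well. You can sidestep it by arguing directly with \w{\Qlo{n}\cS\red\diN} (which does land in \w[)]{\PsGk{n,k}} in place of \w[,]{\hQlo{n}\diN} and checking the two composites against the identity without appealing to an ambient fully faithful embedding.
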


\begin{proof}
If \w[,]{T\in\Pud{n}{k}} then \w{\cS\red T} is \wwb{k-1}reduced, so
 by Lemma \ref{lcontract}, \w[.]{\Qlo{n}X\in \PsGk{n,k}} The result follows
immediately from Theorem \ref{tpswgntype}.
\end{proof}

\begin{remark}\label{rnkloop}
Note that the composition of \w{\Wlo{n,k}} of \S \ref{diaopsg} with
the classifying space functor \w{\diN} lands in \w[,]{\PT{n}} by Theorem
\ref{teqcat}, so its restriction to \w{\PsG{(n,k)}} lands in the category
\w{\PT{n-k}} of \wwb{n-k}types.

Moreover, if \w{T=\Omega^{k}Y} is an \wwb{n-k}type $k$-fold loop space, applying
the $k$-fold delooping functor \w{E\lo{k}\col \Po{n-k}_{\Omega^k}\to\Pud{n}{k}} of
\cite[Theorem 13.1]{MayG} yields the \wwb{k-1}connected $n$ type
\w[.]{Y=E\lo{k}T\in\Pud{n}{k}} In fact:
\end{remark}

\begin{prop}\label{pkloop}
For any \wwb{k-1}connected $n$ type \w[,]{Y\in\Pud{n}{k}} we have a
zigzag of weak equivalences in  \w{\PT{n-k}} between \w{\diN\Wlo{n,k}\hQlo{n}Y} and
\w[,]{\Omega^{k}Y} so the weakly globular \wwb{n-k}fold groupoid
\w{\Wlo{n,k}\hQlo{n}Y} is an algebraic model for \w[.]{\Omega^{k}Y}
\end{prop}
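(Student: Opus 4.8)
The plan is to reduce to the reduced singular complex, apply the isomorphism extracted in the proof of Lemma \ref{lcontract}, and recognise $\Llo{1}^{k}(-)$ as an iterated loop space functor. We work with the model $\Qlo{n}\cS\red Y$ of $Y$ singled out by Proposition \ref{pkconn} (this is what $\hQlo{n}Y$ stands for in the\wwb{k-1}connected setting); the non-reduced object $\hQlo{n}Y=\Qlo{n}\cS Y$ is reconciled with it at the end. Since $Y\in\Pud{n}{k}$, the complex $\cS\red Y$ is a\wwb{k-1}reduced Kan complex weakly equivalent to $\cS Y$, hence to $Y$ (Lemma \ref{lkconn}, Definition \ref{dredsing}).

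The first ingredient is equation \eqref{eqqn} from the proof of Lemma \ref{lcontract}: the induction carried out there in fact establishes, for \emph{every} Kan complex $Z$ and every $1\le r\le n$, a natural isomorphism of\wwb{n-r}fold groupoids
$$\Wlo{n,r}\Qlo{n}Z\;\cong\;\Qlo{n-r}\Llo{1}^{r}Z,$$
the hypothesis $r<k$ being used only to deduce contractibility of the objects, not the identity itself. With $Z=\cS\red Y$ and $r=k$ this gives $\Wlo{n,k}\Qlo{n}\cS\red Y\cong\Qlo{n-k}\Llo{1}^{k}\cS\red Y$, so it remains to compute the homotopy type of $\Llo{1}^{k}\cS\red Y$. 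As in the proof of Lemma \ref{lcontract}, $\Llo{1}^{j}\cS\red Y$ is a\wwb{k-j-1}reduced Kan complex for $0\le j\le k$, and for a reduced Kan complex $W$ the simplicial set $\Llo{1}W=\Dec W\times_{W}\Dec W$ models $\Omega W$ (Remark \ref{rkan}; here $\var\col\Dec W\to W$ is a fibration by Remark \ref{rdex} and $\Dec W$ is contractible). Iterating this $k$ times along the reduced complexes $\Llo{1}^{j}\cS\red Y$, $0\le j\le k-1$, shows that $\Llo{1}^{k}\cS\red Y$ is weakly equivalent to $\Omega^{k}Y$.

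Now apply $\diN$. By Proposition \ref{pnequiv}(a) there is an\wwb{n-k}equivalence $\Llo{1}^{k}\cS\red Y\to\diN\Qlo{n-k}\Llo{1}^{k}\cS\red Y$, and by Proposition \ref{pnequiv}(d) and Theorem \ref{thdnt} its target is an\wwb{n-k}type; hence $\diN\Qlo{n-k}\Llo{1}^{k}\cS\red Y\simeq\Po{n-k}\Omega^{k}Y$. Since $Y$ is an $n$-type we have $\pi_{j}\Omega^{k}Y=\pi_{j+k}Y=0$ for $j>n-k$, so $\Omega^{k}Y$ is already an\wwb{n-k}type and $\Po{n-k}\Omega^{k}Y\simeq\Omega^{k}Y$ in $\PT{n-k}$. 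Combining, $\diN\Wlo{n,k}\Qlo{n}\cS\red Y\simeq\Omega^{k}Y$. Finally, for $\hQlo{n}Y=\Qlo{n}\cS Y$: it lies in $\Gpt{n}$ with $\diN\hQlo{n}Y\simeq Y$ (Theorem \ref{teqcat}), so by Theorem \ref{tpswgntype} the objects $\Qlo{n}\cS Y$ and $\Qlo{n}\cS\red Y$ are joined by a zig-zag of geometric weak equivalences in $\PsG{n}$; applying $\diN\circ\Wlo{n,k}$ transports the conclusion, provided $\Wlo{n,k}$ preserves geometric weak equivalences on $\PsG{n}$ (and $\diN\Wlo{n,k}\Qlo{n}\cS Y$ is an\wwb{n-k}type by Remark \ref{rextendb}).

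The last-mentioned preservation property is the point I expect to be the main obstacle. By Remark \ref{rhtpyps} (geometric $=$ algebraic weak equivalences in $\PsG{n}$) and the definition of the algebraic homotopy groups in \S\ref{sahgpps}, it suffices to show $\omega_{j}(\Wlo{n,k}X;x)\cong\omega_{j+k}(X;x)$ for $X\in\PsG{n}$ and all $j$; this should follow from \eqref{eqpinagps}, the factorisation \eqref{eqiao} of iterated objects of arrows, and the compatibility of the functors $\bPz{i}$ with $\Wlo{\cdot,1}$, but disentangling the indexing conventions of \S\ref{nsimp}(b) will require some care. (Alternatively one may prove the proposition only for the reduced model $\Qlo{n}\cS\red Y$, which is the object actually produced by Proposition \ref{pkconn}; note that $\Llo{1}$ — like $\Dec$ — is genuinely not homotopy invariant, so the reduced singular complex is the correct input for a loop-space statement.)
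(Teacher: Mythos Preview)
Your core argument for the reduced model is correct and takes a genuinely different route from the paper's. You identify $\Wlo{n,k}\Qlo{n}\cS\red Y$ with $\Qlo{n-k}\Llo{1}^{k}\cS\red Y$ via \eqref{eqqn}, recognise $\Llo{1}$ as a loop-space functor on reduced Kan complexes (Remark \ref{rkan}), and conclude using Proposition \ref{pnequiv}. The paper instead argues abstractly by induction on $k$, proving in effect that $\diN\Wlo{n,k}G\simeq\Omega^{k}\diN G$ for \emph{every} $G\in\Gpt{(n,k)}$: one forms the simplicial space $j\mapsto\diN(\Nup{n}G)_{j}$, observes that it satisfies the Segal condition with contractible $0$-th space and group-like $\pi_{0}$, and invokes Segal's delooping \cite[Prop.~1.5]{SegCC} to get $\diN\Wlo{n,1}G\simeq\Omega\diN G$; then one iterates via $H:=\Wlo{n,1}G\in\Gptk{n-1,k-1}$. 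The paper's approach is thus more general (independent of the specific form $\Qlo{n}\cS\red Y$) at the price of importing an external delooping theorem; yours is more elementary but tied to that particular model.

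Your worry about reconciling with the literal $\hQlo{n}Y=\Qlo{n}\cS Y$ is well-founded but should be resolved the other way: the paper, which opens its proof with ``Let $G:=\hQlo{n}Y\in\Gpt{(n,k)}$'', is itself tacitly using $\cS\red$ here (consistently with Proposition \ref{pkconn}), since $\Qlo{n}\cS Y\notin\Gpt{(n,k)}$ in general. So take your parenthetical alternative as the main line and discard the transport argument. That transport would in any case fail: $\Wlo{n,k}$ does \emph{not} preserve geometric weak equivalences on $\PsG{n}$ (already for $n=k=1$, equivalent groupoids need not have isomorphic arrow-sets), and the shifting identity $\omega_{j}(\Wlo{n,k}X;x)\cong\omega_{j+k}(X;x)$ fails for $X\notin\PsG{(n,k)}$ for the same reason.
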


\begin{proof}
By induction on $k$. Let \w[,]{G:=\hQlo{n}Y\in\Gpt{(n,k)}} so
\w{\diN G\cong Y} in \w[.]{\ho\Pud{n}{k}}

For \w[,]{k=1} consider the simplicial \wwb{n-1}fold groupoid
\w[.]{\Nup{n}G} Applying the classifying space functor
\w{\diN\col \Gpd^{n-1}\to\Top} in each simplicial dimension yields a
simplicial space \w[.]{\Yd=(\odiN{n}\Nup{n}G)\sb{\bullet}} Thus
\w{Y_{0}=\diN(\Nup{n}G)_{0}\up{n}} is contractible, and the Segal
maps for \w{\Yd} are isomorphisms (since  \w{\Nup{n}\Gd\up{n}} is the
nerve of an internal groupoid), hence in particular geometric
weak equivalences.

As $G$ is weakly globular, applying the functor \w{\Twg{n}} of
\S \ref{dtndn} yields a groupoid, and \w[.]{\pi_{0}\Yd=\cN\Twg{n}G}
Since \w{Y_{0}} is contractible, \w{\pi_{0}\Yd} is the nerve
of a group. Thus \w{Y_{1}} has a homotopy inverse (cf.\ \cite[(6.3,4)]{DolP}),
so it follows from \cite[Proposition 1.5]{SegCC} that
\w[.]{Y_{1}\simeq\Omega\|\Yd\|} That is,
$$
\diN G\up{n}_{1}~=~\diN\Wlo{n,1}G~\simeq~\Omega\diN G~.
$$
Since \w{\Omega\diN G\cong\Omega Y} in \w[,]{\ho\Pud{n-1}{k-1}} it follows
that \w{\diN\Wlo{n,1}G\cong \Omega Y} in \w[.]{\ho\Pud{n-1}{k-1}}
In the induction step, let
$$
H~:=~\Wlo{n,1}G~=~(\Nup{n}G)\up{n}_{1}
$$
\noindent in \w[,]{\Gptk{n-1,k-1}} where
by the inductive hypothesis \w{\diN\Wlo{n-1,k-1}H\cong\Omega^{k-1}\diN H}
in \w[.]{\ho\Pud{n-k}{}} By what we have shown above for \w{k=1} we have
\w[.]{\diN H=\diN(\Nup{n}G)\up{n}_{1}\simeq\Omega\diN G} It follows that
there are isomorphisms in \w{\ho\Pud{n-k}{}}

\medskip
$\; \quad\diN\Wlo{n,k}G~=~\diN\Wlo{n-1,k-1}H~\cong~\Omega\sp{k-1}\diN H~
\cong~\Omega\sp{k-1}(\Omega\diN G)=\Omega\sp{k} \diN G~. $
\end{proof}

\supsect{\protect{\ref{cappl}}.B}{Further directions}
One  motivation in constructing our
model for $n$-types was to obtain useful algebraic approximations of
homotopy theories \wh that is, of simplicially enriched categories.

Recall that if \w{\lra{\cV,\otimes,I}} is any monoidal category, we denote by
\w{\VC} the collection of all (not necessarily small)
$\cV$-categories, that is, categories enriched in $\cV$ (see
\cite[Vol.~II, \S 6.2]{BorcH}).
We obtain further variants by applying any (strictly) monoidal functor
\w{P\col \lra{\cV,\otimes}\to\lra{\cV',\otimes'}} to a \ww{\cV}-category
$\cC$. For example, given a simplicially enriched category \w[,]{\Xd} for each
\w{n\geq 1} we have a \ww{\PS{n}}-category \w[,]{\Yd:=\Po{n}\Xd} in which each
mapping space \w{\Yd(a,b)} is the $n$-th Postnikov section \w[.]{\Po{n}\Xd(a,b)}

\begin{mysubsection}{$\mathbf{n}$-Track categories}\label{sntc}
For \w[,]{n\geq 2} an \emph{$n$-track category} is a category enriched
in weakly globular $n$-fold groupoids \w[,]{(\Gpt{n},\times)} with
respect to the cartesian monoidal structure. The category of $n$-track
categories is denoted by \w[.]{\Track{n}}
\end{mysubsection}

Since \w{\Qlo{n}\col \sS\to\Gpt{n}} preserves products (see \S
\ref{reprod}), it induces a functor
\begin{equation*}
S\lo{n}\col \sS\text{-}\Cat~\lto~\Track{n}
\end{equation*}
\noindent from simplicial categories to $n$-track categories. Furthermore,
the functors \w{\bPz{n}\col \Gpt{n}\to\Gpt{n-1}} giving the
Postnikov decomposition of \w{\Gpt{n}} induce functors
\begin{equation*}
\Po{n-1}\col \Track{n}\to\Track{n-1}
\end{equation*}
\noindent providing the Postnikov decomposition of simplicially
enriched categories.

For \w[,]{n=1} the corresponding $k$-invariant was
described in \cite{BWirC} in terms of the Baues-Wirsching
cohomology of categories, and a similar result was obtained in
\cite{BPaolT} for \w[,]{n=2} using an algebraically-defined cohomology
of track categories. The extension of this to general $n$
via an appropriate cohomology of \wwb{n-1}track categories will be investigated
in the future.

\begin{mysubsection}{Spectral sequences}\label{sss}
In \cite{BBlaS}, the authors introduced the notion of the
\emph{Postnikov $n$-stem} \w{\cP{n}X} of a topological space $X$ \wh
that is, the system of \wwb{k-1}connected \wwb{n+k}Postnikov sections
\w{\Po{n+k}X\lra{k-1}} \wb[,]{k=0,1,\dotsc} with the natural maps between them.

They then show that the \ww{E^{n+2}}-term of the homotopy spectral
sequence of a (co)simplicial space \w{\Wd} (respectively, \w[)]{\Wu}
 depends only on the simplicial $n$-stems \w{\cP{n}\Wd} or
 \w[.]{\cP{n}\Wu} Thus we can in principle use the \wwb{n+k}fold
 groupoid models of each \w{W_{m}} or \w[,]{W^{m}} as in \S
 \ref{dcont} to extract information about the \ww{d^{n+1}}-differentials.

 However, in many cases of interest \wh including the (stable or
 unstable) Adams spectral sequence, the Eilenberg-Moore spectral
 sequence, and others \wh a more ``algebraic'' approach can be used,
 using the notion of \emph{$n$-th order derived functors} introduced
 in \cite{BBlaH}.

For example, the (unstable) \ww{\FFp}-Adams spectral sequence for a
(simply connected) space $X$  constructed in \cite{BKanH} is the
homotopy spectral sequence of a cosimplicial space
\w{\Wu} obtained as a \ww{\FFp}-resolution of $X$.
It can be shown that the \ww{E^{n+2}}-term of this spectral sequence
depends only on the $n$-Postnikov sections of the mapping spaces
\w{\map(X,E)} and \w{\map(E,E')} for various products of
\ww{\FFp}-Eilenberg-Mac~Lane space $E$ and \w[.]{E'}
Thus we do not need a full algebraic model for the \ww{\PS{n}}-category
\w[,]{\Top} but only for the small subcategory with objects $X$ and
$E$ as above. Since all mapping spaces in this category are themselves
simplicial \ww{\FFp}-vector spaces, the associated $n$-track
category is correspondingly simplified.
The case \w{n=1} was treated in great detail in \cite{BauAS},
and some progress on the case \w{n=2} has been made in \cite{BFran}. However,
it is clear from \cite{BBlaHD} that a better conceptual framework, such
as an algebraic model for such ``linear'' $n$-track categories, will be needed
before any further progress can be made for \w[.]{n\geq 2}
\end{mysubsection}
\newpage
%
%
\section*{Appendix:  \ Fibrancy conditions on $n$-fold simplicial sets}
\label{afibgpd}

In this appendix we prove some technical facts about \w[:]{\osl{n}}

\begin{remark}\label{rfunor}
Given a map of simplicial sets \w{f\col A\to B} and \w[,]{m\geq 2} let
\w[,]{P:=\osl{m}A} \w[,]{Q:=\osl{m}B} and \w[.]{F=\osl{m}f\col P\to Q}
From the description in \S \ref{sorsum} we see by induction on $m$
(using \wref[)]{eqovt} that for every multi-index \w{(p\sb{1}\dotsc p\sb{m})}
the map of sets
\w{F\sb{(p\sb{1}\dotsc p\sb{m})}\col P\sb{(p\sb{1}\dotsc p\sb{m})}\to
Q\sb{(p\sb{1}\dotsc p\sb{m})}}
is simply \w[,]{f_{\ell}\col A_{\ell}\to B_{\ell}} for
\w{\ell:=m-1+p\sb{1}+\dotsc p\sb{m}} (cf.\ \wref[).]{eqorn}
\end{remark}

\begin{lemma}\label{lordec}
If \w{Y=\osl{n}X\in\Sx{n}} for some \w{X\in\sS} and \w[,]{n\geq 2} then for
any two of its $n$ directions \w[,]{1\leq p\neq q\leq n} the lower right corner
of the bisimplicial set \w{Z=Y^{(p,q)}\in\Sx{2}} (see \S \ref{nsimp}(b))
has the form:
\vspace{-3mm}
\mydiagrm[\label{eqsquare}]{
&& X_{s+2} \ar@<2ex>[rr]^{d_{k+3}} \ar@<-1ex>[rr]^{d_{k+2}}
     \ar@<4ex>[d]^{d_{i+2}} \ar[d]^{d_{i+1}}\ar@<-2.5ex>[d]^{d_{i}} &&
X_{s+1}  \ar@<4ex>[d]^{d_{i+2}} \ar[d]^{d_{i+1}}\ar@<-2.5ex>[d]^{d_{i}} \\
X_{s+2} \ar@<2.5ex>[rr]^{d_{k+3}} \ar[rr]^{d_{k+2}}\ar@<-2ex>[rr]^{d_{k+1}}
     \ar@<0.5ex>[d]^{d_{i+1}} \ar@<-0.5ex>[d]_{d_{i}} &&
X_{s+1} \ar@<1ex>[rr]^{d_{k+2}} \ar@<-1ex>[rr]^{d_{k+1}}
     \ar@<0.5ex>[d]^{d_{i+1}} \ar@<-0.5ex>[d]_{d_{i}} &&
X_{s}       \ar@<0.5ex>[d]^{d_{i+1}} \ar@<-0.5ex>[d]_{d_{i}} \\
X_{s+1} \ar@<2.5ex>[rr]^{d_{k+2}} \ar[rr]^{d_{k+1}}\ar@<-2ex>[rr]^{d_{k}} &&
X_{s} \ar@<1ex>[rr]^{d_{k+1}} \ar@<-1ex>[rr]^{d_{k}} && X_{s-1}
}
\noindent for some \w{s\geq n} and \w[.]{0\leq i<k<s}
\end{lemma}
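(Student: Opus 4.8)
The plan is to prove this by a direct combinatorial computation, unwinding the definition $\osl{n}=\ordin\sb{n}\sp{\ast}$ of \S \ref{sorsum}. Recall that the ordinal sum $\ordin\sb{n}\col \bDelta^{n}\to\bDelta$ sends $([p\sb{1}],\dotsc,[p\sb{n}])$ to the ordinal $[p\sb{1}+\cdots+p\sb{n}+(n-1)]$ appearing in \eqref{eqorn}, with the $\ell$-th summand $[p\sb{\ell}]$ occupying inside it the consecutive block of vertices
$$\{\,P\sb{\ell},\ P\sb{\ell}+1,\ \dotsc,\ P\sb{\ell}+p\sb{\ell}\,\}\subseteq\{0,1,\dotsc,p\sb{1}+\cdots+p\sb{n}+(n-1)\},\qquad P\sb{\ell}:=\Big(\sum\sb{j<\ell}p\sb{j}\Big)+(\ell-1)\,.$$
Consequently, for a fixed multidegree, the $c$-th coface operator ($0\leq c\leq p\sb{\ell}$) in the $\ell$-th simplicial direction of $\osl{n}X$ is carried by $\ordin\sb{n}$ to the coface of $\bDelta$ omitting the vertex $P\sb{\ell}+c$, so the corresponding face map of the simplicial set $X$ is $d\sb{P\sb{\ell}+c}\col X\sb{N}\to X\sb{N-1}$ with $N=p\sb{1}+\cdots+p\sb{n}+(n-1)$. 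This is exactly the content of Figure \ref{for} in the case $n=2$.

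First I would pin down the data. Relabelling the two chosen directions if necessary, we may assume $p<q$, and we fix a multi-index $\va\in\bDelta^{n-2}$ in the other $n-2$ directions, with components $a\sb{j}$ for $j\neq p,q$, so that $Z=Y^{(p,q)}(\va)\in\Sx{2}$ is the bisimplicial set referred to in the statement, in the notation of \S \ref{nsimp}(b); its first (vertical) simplicial direction is direction $p$ of $Y$ and its second (horizontal) direction is direction $q$. Put $A:=\sum\sb{j<p}a\sb{j}$, $B:=\sum\sb{p<j<q}a\sb{j}$, $C:=\sum\sb{j>q}a\sb{j}$. By \eqref{eqorn}, at bidegree $(r,t)$ (vertical degree $r$, horizontal degree $t$) the object of $Z$ is $X\sb{N}$ with $N=(A+B+C)+r+t+(n-1)$; hence, setting $s:=A+B+C+n$, the bottom-right corner $(0,0)$ carries $X\sb{s-1}$ and bidegree $(r,t)$ carries $X\sb{s-1+r+t}$, precisely as in the displayed square. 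Since $A,B,C\geq 0$, we have $s\geq n$.

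Next I would read off the two families of face operators from the formula for $P\sb{\ell}$. For the vertical direction, $\ell=p$: every index $j<p$ is one of the fixed ``other'' directions (as $q>p$), so $\sum\sb{j<p}p\sb{j}=A$ is independent of $(r,t)$; thus $P\sb{p}=A+(p-1)=:i$, and the vertical faces at bidegree $(r,t)$ are $d\sb{i},d\sb{i+1},\dotsc,d\sb{i+r}$, independent of $t$ --- matching the diagram. For the horizontal direction, $\ell=q$: the indices $j<q$ now consist of $p$ itself (degree $r$) together with the ``other'' directions below $q$, so $\sum\sb{j<q}p\sb{j}=r+A+B$; thus $P\sb{q}=r+(A+B+q-1)=r+k$ with $k:=A+B+q-1$, and the horizontal faces at bidegree $(r,t)$ are $d\sb{k+r},d\sb{k+r+1},\dotsc,d\sb{k+r+t}$ --- again matching the diagram, including the upward shift of the horizontal face indices by $1$ from each row to the next. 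The bisimplicial identities among these operators are automatic, since $Z$ is a bona fide bisimplicial set. Finally one checks the ranges: $i=A+p-1\geq 0$; $k-i=B+(q-p)\geq 1$ since $p<q$, so $i<k$; and $s-k=C+(n-q)+1\geq 1$ since $q\leq n$, so $k<s$.

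I do not anticipate a serious obstacle: the whole argument is the bookkeeping of which global coface of $\bDelta$ a given coface becomes after the ordinal sum, which is already implicit in \S \ref{sorsum} and in Figure \ref{for}. One could equally organize the computation as an induction on $n$ using $\osl{n}X=\ovt{2}{n-1}\osl{2}X$ of \eqref{eqovt}, peeling off one simplicial direction at a time. In either approach the single delicate point is the dependence of $P\sb{q}$ --- and hence of the horizontal faces --- on the vertical degree $r$, which is what produces the shift of the horizontal faces between consecutive rows of the square; everything else is routine.
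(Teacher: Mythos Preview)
Your proof is correct. You take a genuinely different route from the paper: the paper argues by induction on $n$ using the decomposition $\osl{n}X=\ovt{2}{n-1}\osl{2}X$ of \eqref{eqovt}, first reducing to the case where both chosen directions lie inside the $\osl{n-1}$ factor (handled by the induction hypothesis applied to a column of $\osl{2}X$), and then treating separately the residual case where one of the two directions is the outer one coming from $\osl{2}X$, analysed via the $\Dec$/$\Decp$ description of Figure~\ref{for}. Your argument instead unwinds the ordinal-sum functor directly, reading off the global coface index $P_\ell+c$ that each local coface in direction $\ell$ is sent to. This yields explicit closed formulas $i=A+p-1$ and $k=A+B+q-1$ for the two offsets --- more information than the paper's inductive argument actually extracts --- and the required inequalities $0\leq i<k<s$ drop out immediately from $1\leq p<q\leq n$. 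What the paper's approach buys is consonance with the iterative viewpoint used throughout (notably in Lemma~\ref{lnuph} and Theorem~\ref{tadjoint}); what your approach buys is transparency and brevity. One minor point: your ``relabel if necessary so that $p<q$'' is exactly the right move --- the displayed diagram \eqref{eqsquare} is visibly for the ordering in which the vertical direction carries the lower block in the ordinal sum, and the other ordering gives the transpose --- and this matches how the lemma is actually used in the proof of Proposition~\ref{pntfibor}, where both $\phi$ and $\psi$ are checked and the two are exchanged by $I^\ast$.
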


\begin{proof}
By induction on \w[,]{n\geq 2} where the case \w{n=2} is depicted in Figure
\ref{for} of Section \ref{cfng}. Using \wref[,]{eqovt} we see that
\w[,]{Y=\ovt{2}{n-1}\osl{2}X} so if we number the $n$ directions of $Y$ so
as to the start with the horizontal direction of \w[,]{\osl{2}X}
then for any \w{1< p\neq q\leq n}
the bisimplicial set \w{Z=Y^{(p,q)}\in\Sx{n}} is contained in the \wwb{n-1}fold
simplicial set \w[,]{\osl{n-1}Q_{t\bullet}} for one of the vertical simplicial
sets of \w[.]{Q:=\osl{2}X} Thus the claim for such a $Z$ follows by the
induction hypothesis.

Thus it suffices to treat the case \w[.]{1=p<q}
Since the corresponding \emph{vertical} maps in each of the vertical
simplicial sets \w[,]{Q_{t\bullet}} for various $t$, have the same labels
(in terms of the original face maps of $X$), the same will be true after
applying the functor \w{\osl{n-1}} to each of them. This implies that the
vertical maps in \wref{eqsquare} are indeed both labelled \w[,]{d_{i},d_{i+1}}
for some \w[.]{i<k+1} However, since each of the simplicial sets \w{Q_{t\bullet}}
is obtained by repeated applications of \w{\Dec} to $X$ (see \wref[),]{eqorder}
we must have omitted at least the maximally labelled face map
\w[,]{d_{k+1}\col X_{k+1}\to X_{k}} by definition of \w[.]{\Dec} Therefore, among
the various face maps of $X$ appearing in \w[,]{\osl{n-1}Q_{t\bullet}} the map
\w{d_{k+1}\col X_{k+1}\to X_{k}} cannot appear. Thus, we actually have \w[.]{i<k}

From Figure \ref{for} (or from the fact that the bisimplicial set $Q$, as
a (vertical) simplicial object over \w[,]{\sS} is the resolution of $X$ produced
by the comonad \w[),]{\Decp} we see that the horizontal maps in $Q$ are always
the face maps of \emph{maximal} consecutive indices for any given
\w[:]{Q_{i,j}=X_{i+j+1}} e.g., the bottom left horizontal maps in Figure \ref{for}
are \w[.]{d_{1},d_{2}\col X_{2}\to X_{1}}

On the other hand, by Remark \ref{rfunor} (for \w[),]{m=n-1} the two pairs of
\emph{horizontal} maps in \wref{eqsquare} are just those that appear in
the rightmost sequence of horizontal maps in Figure \ref{for}:  namely,
\w{d_{k},d_{k+1}\col X_{k+1}\to X_{k}} and \w[.]{d_{k+1},d_{k+2}\col X_{k+2}\to X_{k+1}}
Thus when \w[,]{p=1} in fact \w{k=s-1} in \wref{eqsquare} (as for the front
and back squares in Figure \ref{fort}).
\end{proof}

%
%
\begin{propa}[\protect{\ref{pntfibor}}]
If \w{X\in\sS} is a Kan complex, then \w{Y:=\osl{n} X} is \wwb{n,2}fibrant.
\end{propa}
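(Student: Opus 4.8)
The plan is to unwind Definition \ref{dfibrant} and then reduce, via Lemma \ref{lordec}, to an explicit $2$-dimensional statement about the d\'{e}calage comonads \w{\Dec} and \w{\Decp}. By definition, to prove \w{Y=\osl{n}X} is \wwb{n,2}fibrant one must check, for every ordered pair of distinct directions \w{1\leq p\neq q\leq n} and every \w[,]{\va\in\bDelta\sp{n-2}} that the bisimplicial set \w{Z:=Y\up{p,q}(\va)} satisfies two things: first, that \w{\csk{2}Z\sb{m\bullet}} is a Kan complex for \w[;]{m=0,1,2} and second, that the horizontal face map \w{d\sb{0}\col \csk{2}Z\sb{1\bullet}\to\csk{2}Z\sb{0\bullet}} is a Kan fibration. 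Both conditions depend only on the lower-right \w{3\times 3} corner of \w{Z} together with its face maps, which Lemma \ref{lordec} describes completely as Diagram \wref[,]{eqsquare} for suitable \w{s\geq n} and \w[.]{0\leq i<k<s} From that picture (together with Remark \ref{rfunor} and \wref[)]{eqorder} one reads off that each \w{Z\sb{m\bullet}} arises from \w{X} by a finite word in \w{\Dec} and \w[,]{\Decp} and that the horizontal face map \w{d\sb{0}} is, on $2$-truncations, a ``forget the top face'' map --- that is, (an iterate of) a d\'{e}calage counit \w{\var} or \w[.]{\var'}

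For the first condition: since \w{X} is a Kan complex, iterating Remark \ref{rdex} shows that \w{Z\sb{m\bullet}} is again a Kan complex, and the $2$-coskeleton of a Kan complex is a Kan complex --- it is a model for the first Postnikov section (cf.\ Definition \ref{dpostnikov} and \cite[I.3]{GJarS}) --- so \w{\csk{2}Z\sb{m\bullet}} is Kan.

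For the second condition --- the crux --- it suffices, after the reduction above, to prove the following lemma: if \w{W\in\sS} is a Kan complex then \w{\csk{2}\var\col \csk{2}\Dec W\to\csk{2}W} is a Kan fibration (and symmetrically for \w[).]{\var'\col \Decp W\to W} This does \emph{not} follow formally: \w{\csk{2}} does not preserve Kan fibrations in general, and a map between homotopically trivial Kan complexes need not be a fibration. Instead one argues by hand. A lifting problem for \w{\csk{2}\var} against a horn inclusion \w{\Lambda\sp{r}\sb{j}\hookrightarrow\Delta\sp{r}} is, by adjunction, a lifting problem against \w[;]{\sk{2}\Lambda\sp{r}\sb{j}\hookrightarrow\sk{2}\Delta\sp{r}} for \w{r\geq 4} this is an isomorphism and the lift is automatic, for \w{r=1,2} it is ordinary horn-filling in \w[,]{W} and for \w{r=3} it becomes a lift against \w[.]{\Lambda\sp{3}\sb{j}\hookrightarrow\partial\Delta\sp{3}} Unwinding this last case, using that the \w{\Dec}-faces on the relevant truncations are \w{d\sb{0},\dots,d\sb{r-1}} while the counit is \w[,]{d\sb{r}} turns it into the task of completing an (inner horn \w{\Lambda\sp{3}\sb{j}} when \w[,]{j=1,2} outer horn when \w[)]{j=0,3} and then correcting the filler by a homotopy rel boundary --- all of which are possible because \w{W} is Kan (Kan complexes fill all horns, and any two fillers of a fixed \w{\partial\Delta\sp{2}} are homotopic rel boundary). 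For the iterated case one finally uses that \w{\Dec} and \w{\Decp} preserve Kan fibrations: under the adjunction isomorphism \w[,]{\mapp(K,\Dec W)\cong\mapp(K\star\Delta\sp{0},W)} joining a horn inclusion with a point is again a horn inclusion, so \w{\Dec f} is a Kan fibration whenever \w{f} is.

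The transposed bidirection \w{(q,p)} is handled identically with \w{\Decp} replacing \w{\Dec} (cf.\ Remarks \ref{rinvolution} and \ref{rdex}), so \w{\osl{n}X} is \wwb{n,2}fibrant. I expect the main obstacle to be precisely the second condition: because \w{\csk{2}} does not preserve fibrations, the fibrancy of the horizontal face map is genuinely a consequence of the combinatorics of \w[,]{\osl{n}X} namely of the fact --- guaranteed by Lemma \ref{lordec} --- that after passage to $2$-truncations the only horn-filling problems that arise in the Kan complex \w{X} are ones a Kan complex can solve.
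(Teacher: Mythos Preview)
Your overall strategy --- reduce via Lemma \ref{lordec} to the picture \wref[,]{eqsquare} verify Kan-ness of the columns via iterated d\'{e}calage, then establish the fibration condition by a direct horn-filling analysis --- is exactly the paper's. The gap is in the fibration step. In \wref{eqsquare} the vertical simplicial structure of each column uses the $X$-face maps \w{d_{i},d_{i+1},\dotsc} while the horizontal map $\phi$ uses \w[,]{d_{k},d_{k+1},\dotsc} with \w[.]{i<k} Only when \w{k=i+1} is $\phi$ literally the counit \w{\var\col \Dec Z\to Z} of the right-hand column $Z$; for \w{n\geq 3} one can have \w{k>i+1} (already in the front face of Figure \ref{fort}, where \w{i=0} and \w[),]{k=2} and then $\phi$ is not a counit. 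One can write \w{\phi=(\Decp)^{i}\Dec^{\,k-i-1}\var} for a suitable counit $\var$, so your remark that \w{\Dec} and \w{\Decp} preserve Kan fibrations does show that $\phi$ itself is a fibration --- but what is needed is that \w{\csk{2}\phi} is a fibration, and \w{\csk{2}} does not commute with \w[,]{\Dec} so your ``iterated case'' step does not deliver this.

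There is also a problem inside your proof of the counit lemma: the claim ``any two fillers of a fixed \w{\partial\Delta^{2}} are homotopic rel boundary'' is false for a general Kan complex --- it is exactly the vanishing of \w[.]{\pi_{2}} It happens to be true here, because every column of \wref{eqsquare} is an iterated d\'{e}calage of $X$ and hence homotopically trivial, but that hypothesis must be stated and used. The paper sidesteps both issues by not abstracting to a counit lemma at all: unwinding the lifting problem for \w{\csk{2}\phi} against \w{\Lambda^{j}[m]\hookrightarrow\Delta[m]} directly in terms of $X$, one finds that a lift amounts to an \wwb{s+m}simplex of $X$ with only two or three prescribed faces, one at an index near $i$ and one near \w[;]{k+m} since \w[,]{i<k} these indices are distinct, and the resulting partial boundary is a contractible subcomplex of \w[,]{\Delta[s+m]} so Kan-ness of $X$ (not of the column) provides the filler immediately.
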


\begin{proof}
For every \w[,]{1\leq p\leq n} the simplicial set \w{Y^{(p)}} is obtained from
$X$ by repeated applications of \w{\Dec} and \w[,]{\Decp} so it is still
a Kan complex, and the same is true of \w[.]{\csk{2}Y^{(p)}}

For each bisimplicial set of the form \wref[,]{eqsquare} denote by $W$ and $Z$
the middle and right vertical simplicial sets, respectively, with
\w{\phi\col W\to Z} the horizontal map in \w{\sS} given by
\w[,]{d_{k}\col W_{0}=X_{s}\to Z_{0}=X_{s-1}} \w[,]{d_{k+1}\col W_{1}=X_{s+1}\to Z_{1}=X_{s}}
and so on.
Similarly, denote by $U$ and $V$ the middle and bottom horizontal simplicial
sets, respectively, with \w{\psi\col U\to V} the vertical map in \w{\sS} given by
\w{d_{i}\col U_{j}=X_{s+j}\to V_{j}=X_{s+j-1}} for all \w[.]{j\geq 0}

By Definition \ref{dfibrant} and Lemma \ref{lordec}, in order to verify that
$Y$ is \wwb{n,2}fibrant, we must check that \w{\csk{2}\phi} and
\w{\csk{2}\psi} are fibrations for any choice of \wref{eqsquare} with
\w[.]{i<k} This means that we must show that a lifting $\widehat{g}$ exists
for every solid commuting square of one of the two following forms:
\mydiagram[\label{eqlift}]{
\Lambda^{j}[m] \ar[d]^{i_{j}} \ar[rr]^{f} && U\ar[d]^{\psi}&&&
\Lambda^{j}[m] \ar[d]^{i_{j}} \ar[rr]^{f} && W\ar[d]^{\phi}\\
\Delta[m]\ar[rr]^{g} \ar@{.>}[rru]^{\hat{g}} && V &&&
\Delta[m]\ar[rr]^{g} \ar@{.>}[rru]^{\hat{g}} && Z\\
}
\vspace{-4mm}
$$
\xymatrix @R=2pt{&(a)&&&&&&(b)}
$$
\noindent for \w{m=1,2} and \w{0\leq j\leq m} (where
\w{\Lambda^{j}[m]\subseteq\partial\Delta[m]} consists of all but the $j$-th
face of \vsm\w[,]{\Delta[m]} and \w{i_{j}\col \Lambda^{j}[m]\hra\Delta[m]} is
the inclusion).

\noindent\textbf{Case I:} \ \ When \w{m=1} in \wwref{eqlift}(a), the map
\w{f\col \Lambda^{j}[1]\to U} \wb{j=0,1} corresponds to a $0$-simplex
\w{\tilde{\sigma}\in U_{0}} \wwh that is, an $s$-simplex \w{\sigma\in X_{s}}
(since \w{U_{0}=X_{s}} and  \w[),]{\Lambda^{j}[1]\cong\Delta[0]}
and the map \w{g\col \Delta[1]\to V} corresponds to a $1$-simplex
\w{\tilde{\tau}\in V_{1}} \wwh that is, an $s$-simplex \w[.]{\tau\in X_{s}}

Commutativity of the solid square in \wwref{eqlift}(a) \wh that is,
\w{\psi\circ f=g\circ i_{j}} \wwh means that
\w[,]{d_{j}^{V_{1}}(\tilde{\tau})=\psi(\tilde{\sigma})} i.e.,
\begin{myeq}[\label{eqoneverta}]
d_{k+j}^{X_{s}}\,\tau~=~d_{i}^{X_{s}}\,\sigma~.
\end{myeq}

A lift \w{\hat{g}\col \Delta[1]\to U} corresponds to a $1$-simplex
\w{\tilde{\omega}\in U_{1}} \wwh that is, an \wwb{s+1}simplex
\w[,]{\omega\in X_{s+1}} and commutativity of the two triangles in
\wwref{eqlift}(a) translates into the two conditions
\w{d_{j}^{U_{1}}(\tilde{\omega})=\tilde{\sigma}} and
\w[,]{\psi(\tilde{\omega})=\tilde{\tau}} that is:
\begin{myeq}[\label{eqonevertb}]
d_{k+1+j}^{X_{s+1}}\,\omega~=~\sigma\hspace*{8mm}\text{and}\hspace*{5mm}
d_{i}^{X_{s}}\,\omega~=~\tau~.
\end{myeq}
Combining \wref{eqoneverta} and \wref{eqonevertb} yields the simplicial
identity:
\begin{myeq}[\label{eqonevertc}]
d_{i}\,d_{k+1+j}\,\omega~=~d_{k+j}\,d_{i}\,\omega~,
\end{myeq}
\noindent since \w[.]{i<k}

The two $s$-simplices $\sigma$ and $\tau$ satisfying \wref{eqoneverta}
define a map from the following pushout $P$ in \w[:]{\sS}
$$
\xymatrix@R=25pt{
\ar @{} [drr]|<<<<<<<<<<<<<<<<{\framebox{\scriptsize{PO}}}
\Delta[s-1] \ar[d]_{\eta_{k+j}} \ar[rr]^{\eta_{i}} &&
\Delta[s] \ar[d] \ar@/^2pc/[ddr]^{\sigma} \\
\Delta[s] \ar[rr] \ar@/_2pc/[drrr]_{\tau} && P \ar@{.>}[rd]^{(\sigma,\tau)} & \\
&&& X
}
$$
\noindent Since $P$ is a union of two $s$-simplices along a common face, it
is a contractible subspace of \w[,]{\Delta[s+1]} so \w{P\hra\Delta[s+1]} is
an acyclic cofibration in \w[.]{\sS} Because $X$ is fibrant, a lift
\w{\omega\col \Delta[s+1]\to X} for \w{(\sigma,\tau)} \wwh and thus
\w{\hat{g}\col \Delta[m]\to U} \wwh always exists\vsm.

\noindent\textbf{Case II:} \ \ When \w{m=2} in \wwref{eqlift}(a), the map
\w{f\col \Lambda^{j}[2]\to U} \wb{j=0,1,2} corresponds to a pair of
$1$-simplices \w{\tilde{\alpha},\tilde{\beta}\in U_{1}} with
\w[,]{d_{p}\tilde{\alpha}= d_{q}\tilde{\beta}} where
\begin{myeq}[\label{eqpq}]
(p,q)=\begin{cases}
(1,1) & \text{if}\ j=0\\
(0,1) & \text{if}\ j=1\\
(0,0) & \text{if}\ j=2~.
\end{cases}
\end{myeq}
\noindent This means that we have \w{\alpha,\beta\in X_{s+1}=U_{1}} with
\begin{myeq}[\label{eqtwovertz}]
d_{k+1+p}^{X_{s+1}}\,\alpha~=~d_{k+1+q}^{X_{s}}\,\beta~.
\end{myeq}

The map \w{g\col \Delta[2]\to V} corresponds to \w[,]{\sigma\in X_{s+1}=V_{2}} and
the map \w{\hat{g}\col \Delta[2]\to U} corresponds to \w[.]{\omega\in X_{s+2}}

Commutativity of the solid square in \wwref{eqlift}(a) means that
\begin{myeq}[\label{eqtwoverta}]
d_{k+p}^{X_{s+1}}\,\sigma~=~d_{i}^{X_{s}}\,\alpha\hspace*{8mm}\text{and}\hspace*{5mm}
d_{k+q}^{X_{s+1}}\,\sigma~=~d_{i}^{X_{s}}\,\beta~.
\end{myeq}
\noindent Commutativity of the upper triangle in \wwref{eqlift}(a) means:
\begin{myeq}[\label{eqtwovertb}]
d_{k+1+p}^{X_{s+2}}\,\omega~=~\alpha\hspace*{8mm}\text{and}\hspace*{5mm}
d_{k+1+q}^{X_{s+2}}\,\omega~=~\beta~,
\end{myeq}
\noindent and commutativity of the lower triangle in \wwref{eqlift}(a) means:
\begin{myeq}[\label{eqtwovertc}]
d_{i}^{X_{s+1}}\,\omega~=~\sigma~.
\end{myeq}

Combining \wref[,]{eqtwoverta} \wref[,]{eqtwovertb} and \wref{eqtwovertc}
yields the two simplicial identities:
\begin{myeq}[\label{eqtwovertd}]
d_{i}\,d_{k+1+p}\,\omega~=~d_{k+p}\,d_{i}\,\omega\hspace*{8mm}
\text{and}\hspace*{5mm}
d_{i}\,d_{k+1+q}\,\omega~=~d_{k+q}\,d_{i}\,\omega~.
\end{myeq}
\noindent since \w[.]{i<k} The existence of $\omega$ follows as above\vsm .

The analogous cases for \wwref{eqlift}(b) are obtained from these by applying
the inversion \w{I^{\ast}} of \S \ref{rinvolution}.
\end{proof}

\end{document}